\definecolor{limegreen}{rgb}{0.196,0.804,0.196}
\definecolor{darkgreen}{rgb}{0.0,0.5,0.0}
\definecolor{darkbluegreen}{rgb}{0,0.3,0.6}
\definecolor{badgerred}{rgb}{0.715,0.004,0.004}
\newcommand{\interior}{\mathop{\rm int}}
\newcommand{\hu}{{\mathcal H}}  
\newcommand{\gvol}{\mathfrak{m}}         
\newcommand{\rto}{\stackrel{\rho}\to}
\newcommand{\Tx}{{T_{\max}}}
\newcommand{\sphere}{{\mathbb S}}
\newcommand{\R}{{\mathbb R}}
\newcommand{\Z}{{\mathbb Z}}
\newcommand{\N}{{\mathbb N}}
\newcommand{\SO}{{\mathrm{SO}}}
\newcommand{\taum}{\tau_{\max}}
\newcommand{\bu}{{\bar u}}
\newcommand{\cC}{{\mathcal C}}
\newcommand{\cD}{{\mathcal D}}
\newcommand{\hM}{C} 
\newcommand{\mf}[1]{{\mathfrak #1}}
\newcommand{\mc}[1]{{\mathcal #1}}
\newcommand{\bs}[1]{{\boldsymbol #1}}
\newtheorem{theorem}{Theorem}[section]
\newtheorem{lemma}[theorem]{Lemma}
\newtheorem{definition}[theorem]{Definition}
\newtheorem{prop}[theorem]{Proposition}
\newtheorem{conjecture}{Conjecture}
\newtheorem{claim}[theorem]{Claim}
\theoremstyle{remark}
\newtheorem{remark}[theorem]{Remark}
\numberwithin{equation}{section}
\newcommand{\ba}{{\bar a}}
\newcommand{\e}{\epsilon}
\begin{document}

\title{Dynamics of Convex Mean Curvature Flow} \author{S.B.Angenent}
\author{P.Daskalopoulos} \author{N.Sesum}
\begin{abstract}
There is an extensive and growing body of work analyzing convex ancient solutions to
Mean Curvature Flow (MCF), or equivalently of Rescaled Mean Curvature Flow
(RMCF). The goal of this paper is to complement the existing literature, which
analyzes ancient solutions one at a time, by considering the space $X$ of all convex
hypersurfaces $M\subset\R^{n+1}$, regard RMCF as a semiflow on this space, and study
the dynamics of this semiflow.  To this end, we first extend the well known existence
and uniqueness of solutions to MCF with smooth compact convex initial data to include
the case of arbitrary non compact and non smooth initial convex hypersurfaces.  We
identify a suitable weak topology with good compactness properties on the space $X$
of convex hypersurfaces and show that RMCF defines a continuous local semiflow on $X$
whose fixed points are the shrinking cylinder solitons $S^k\times\R^{n-k}$, and for
which the Huisken energy is a Lyapunov function.  Ancient solutions to MCF are then
complete orbits of the RMCF semiflow on $X$.  We consider the set of all
hypersurfaces that lie on an ancient solution that in backward time is asymptotic to
one of the shrinking cylinder solitons and prove various topological properties of
this set.  We show that this space is a path connected, compact subset of $X$, and,
considering only point symmetric hypersurfaces, that it is topologically trivial in
the sense of Čech cohomology.  We also give a strong evidence in support of the
conjecture that the space of all convex ancient solutions with a point symmetry in
$\R^{n+1}$ is homeomorphic to an $n-1$ dimensional simplex.

\end{abstract}

\maketitle
\setcounter{tocdepth}{1} 
\begingroup\footnotesize\sffamily\tableofcontents \endgroup

\section{Introduction}

\subsection{Ancient solutions and rescaled MCF}

We consider convex, not necessarily compact hypersurfaces $\hat M_t\subset\R^{n+1}$
that evolve by Mean Curvature Flow, i.e.~that have a parametrization
$\hat F:\mc M^n\times(0,T)\to\R^{n+1}$ satisfying
\begin{equation}
\label{eq-mcf}
\bigl(\partial_t\hat F\bigr)^{\perp} = \Delta_{\hat F}(\hat F)
\tag{MCF}
\end{equation}
Here $X^\perp$ is the component perpendicular to $T_{\hat F(p, t)}\hat M_t$ of any
vector $X\in T_{\hat F(p,t)}\R^{n+1}$, and $\Delta_{\hat F}$ is the Laplacian of the
pullback of the Euclidean metric under the immersion $p\mapsto \hat F(p, t)$.

If the hypersurfaces are compact and convex, then Huisken's theorem
\cite{Hu} implies that they contract to a point in finite time.

A family of hypersurfaces $M_t$ evolving by MCF is by definition an
\emph{ancient solution} to \eqref{eq-mcf} if it exists for all $t\in
(-\infty,T)$ for some $T\in\R$.

For any family $\hat M_t$ of hypersurfaces that evolves by MCF
throughout some time interval $t_0 < t <T$ with $t_0 < 1$, the
hypersurfaces
\[
M_\tau := e^{\tau/2}\, \hat M_{1-e^{-\tau}}, \qquad (-\ln(1-t_0)\leq \tau<\taum)
\]
evolve by \emph{Rescaled Mean Curvature Flow.}  If $\hat F$ parametrizes $\hat M_t$,
then $F:\mc M\times(0, \infty) \to \R^{n+1}$ defined by
\[
F(p, \tau) := e^{-\tau/2}\hat F\bigl(p, 1-e^{-\tau}\bigr), \text{ i.e.  } \hat F(p,
t) = \sqrt{1-t}\,F(p,t)
\]
parametrizes $M_\tau$ and satisfies
\begin{equation}
\label{eq-rmcf}
\bigl(\partial_\tau F\bigr)^\perp
= \Delta_F(F)+\frac 12 F^\perp
\tag{RMCF}
\end{equation}
Its lifespan $\taum$ is given by
\[
\taum =
\begin{cases}
+\infty \quad & \text { if } \,\, T  \geq 1, \\
-\ln(1-T) \quad & \text{ if } \,\, T<1.
\end{cases}
\]

\medskip

Much is known about the existence and classification of convex ancient solutions to
MCF \cite{ADS1,ADS2,BLL,BLT21,CDDHS,CHHW,DH,HH,HuSin2015,Wh}.  More precisely, for
ancient noncollapsed flows in $\mathbb{R}^3$, or more generally in $\mathbb{R}^{n+1}$
under the additional assumption that the flow is uniformly two-convex, a complete
classification has been obtained in significant works by Brendle-Choi \cite{BC1,BC2}
and in our papers \cite{ADS1,ADS2}.  Specifically, any such flow is, up to parabolic
rescaling and space-time rigid motions, either the flat plane, the round shrinking
sphere, the round shrinking neck, the rotationally symmetric translating bowl soliton
from \cite{AltschulerWu}, or the rotationally symmetric ancient oval from
\cite{Wh,HH}.  In \cite{CHHW} the same classification was obtained, but under the
only assumption that a tangent flow at infinity to an ancient solution was a round
cylinder $S^{n-1}\times\R$. In stark contrast, in higher dimensions, in general,
there are multi-parameter families of examples of ancient noncollapsed flows that are
not rotationally symmetric, and not even cohomogeneity-one, see \cite{W,HIMW,DH}. For
this reason, the classification of ancient noncollapsed flows in higher dimensions
without assuming two-convexity (a condition that guarantees that the tangent flow is
a round cylinder) is significantly more difficult. The solutions constructed in
\cite{DH} will be of special interest in this paper. These solutions are compact
ancient solutions in $\mathbb{R}^{n+1}$, with $O(k)$ symmetry where $2 \le k \le n$
(but in general not $O(k) \times O(n+1-k)$ symmetric) and they are asymptotic to
$S^k \times \mathbb{R}^{n-k}$.

In \cite{DH}, the authors introduced a classification program for ancient
noncollapsed flows in $\mathbb{R}^4$. Recall first that if $M_t$ is an ancient
noncollapsed flow in $\mathbb{R}^4$, then its tangent flow at $-\infty$ is always
either a round shrinking sphere, a round shrinking neck, a round shrinking
bubble-sheet\footnote{In the terminology of \cite{CDDHS,DH} a \textit{bubble-sheet}
  is a generalized cylinder, i.e.~up to a rigid motion and dilation, a bubble sheet
  is the hypersurface $\R^{n-k}\times S^k\subset\R^{n+1}$ with $k>1$.}, or a static
plane.  The first and last scenario are of course trivial, and ancient noncollapsed
flows whose tangent flow at $-\infty$ are a round cylinder have been discussed above.
Hence, assume that the tangent flow at $-\infty$ is a bubble-sheet, specifically,
$ \lim_{\lambda \to 0} \lambda M_{\lambda^{-2}t}=\mathbb{R}^{2}\times
S^{1}(\sqrt{2|t|})$. The classification of complete noncompact ancient solutions
under this assumption has been considered in \cite{CHH_wing} and
\cite{CHH_translator}. The uniqueness of the one-parameter family of compact
$O(2)$-symmetric ancient solutions constructed in \cite{DH} has been proved in
\cite{CDDHS}.  Analogous results in higher dimensions are believed to be true but
they haven't been shown yet.

Our goal in this paper is to apply ideas from topological dynamics to the initial
value problem defined by \eqref{eq-rmcf}.  Specifically, we introduce a topology on
the space $X$ of convex hypersurfaces of $\R^{n+1}$, and show that RMCF defines a
continuous local semiflow $\phi^\tau$ on this space.  Shrinking solitons are then
fixed points of the semiflow while ancient solutions are connecting orbits between
these fixed points.  The topology of the space $X$ of hypersurfaces together with the
continuity of the semiflow will allow us to draw conclusions about the existence and
multiplicity of connecting orbits, i.e.~of ancient convex solutions to MCF.

\subsection{The space of convex hypersurfaces}
We call a set $M\subset\R^{n+1}$ a \emph{complete convex hypersurface} if it is the
boundary of some closed convex set $C\subset \R^{n+1}$ with nonempty interior.  The
phase space on which we intend to let RMCF act is
\begin{equation}
X=\left\{ C\subset\R^{n+1} \mid
C\text{ is closed, convex and has nonempty interior}
\right\}.
\end{equation}
We do not impose any regularity conditions beyond what follows from convexity, and
the sets $C\subset \R^{n+1}$ and their boundaries $M=\partial C$ need not be bounded.

Occasionally we abuse language and refer to a convex set $C$ and its boundary
$M=\partial C$ interchangeably.  We will often refer to a family of closed convex
sets $\{C_t\subset\R^{n+1}\mid t_0<t<t_1\}$ and say that it evolves by MCF (or RMCF)
if the boundaries $M_t=\partial C_t$ are smooth hypersurfaces in $\R^{n+1}$ evolving
by MCF (or RMCF).  Sometimes we will also say that a family of convex sets
$\{C_t\}_{t\in (-\infty,T)}$ is an ancient solution to MCF if the boundaries
$M_t = \partial C_t$ are an ancient solution to MCF.

To define a topology on the space $X$ we use the distance function $d_C$ to a convex
set $C$, defined by
\begin{equation}
\label{eq-dist-fun}
d_C(x) \stackrel{\rm def}{=} \sup_{y\in C} \|x-y\|.
\end{equation}
We endow $X$ with a topology in which a sequence $C_k \in X$ converges to $C\in X$ if
the distance functions $d_{C_k}$ converge pointwise to $d_C$.  This topology is
metrizable and turns out to have good compactness properties which will allow us to
use arguments from topological dynamics.  The topology is equivalent to the topology
of local Hausdorff convergence.  See section~\ref{sec-convex-topology}.

\subsection{The semiflow on $X$}
To construct the semiflow on $X$ we establish well-posedness of the initial value
problem.  This is well-known in many cases, such as for compact initial
hypersurfaces, smooth hypersurfaces with bounded curvature, or hypersurfaces that are
graphs, but we could not find a reference for the existence, and especially
uniqueness of solutions starting from arbitrary convex hypersurfaces without any
further assumptions\footnote{Existence and uniqueness in the sense of viscosity
  solutions is established in full generality in \cite{GigaBook}, however viscosity
  solutions may ``fatten,'' in which case the viscosity solution becomes the region
  contained between two different smooth solutions.  In the terminology of viscosity
  solutions our uniqueness result shows that solutions do not fatten.}.  By combining
Ilmanen's construction \cite{IlmanenIUMJ} of the ``maximal subsolution'' for
existence with recent work of the second author and M.Saez \cite{DS} for uniqueness
we obtain in section \ref{sec-ex-uniq} the following theorem.

\begin{theorem}\label{thm-mcf-exist-unique} 
Let $C_0\subset \R^{n+1}$ be any closed convex set, bounded or unbounded, with
nonempty interior.  Then there exist a number $T >0$ and unique family of smooth
convex sets $\{ \hat C_t \}_{t \in (0,T)}$ with non-empty interior, such that
$\hat M_t := \partial \hat C_t$ evolves by classical MCF and for which
$\hat C_t \to C_0$, locally in the Hausdorff metric, as $t \to 0$. Equivalently,
$M_t$ is a smooth MCF solution starting at~$M_0 = \partial C_0$.
\end{theorem}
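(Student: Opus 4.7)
The plan is to obtain existence by a monotone smoothing approximation in the spirit of Ilmanen's maximal subsolution \cite{IlmanenIUMJ}, and then to invoke the cited work of Daskalopoulos--Saez \cite{DS} for uniqueness.

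For existence, I would approximate $C_0$ from inside by a nested family $C_0^{(k)} \nearrow C_0$ of smooth, compact, strongly convex sets; if $C_0$ is unbounded, I would first intersect with balls $B_{R_k}$ of radii $R_k\to\infty$ and then round off the corners, arranging $C_0^{(k)}\subset C_0^{(k+1)}$.  Huisken's theorem \cite{Hu} yields a classical strictly convex MCF $\{\hat C_t^{(k)}\}_{t\in(0,T_k)}$ starting from each $C_0^{(k)}$, and the avoidance principle gives $\hat C_t^{(k)}\subset\hat C_t^{(k+1)}$ on common time intervals, so that
\begin{equation*}
\hat C_t \stackrel{\rm def}{=} \overline{\bigcup_k \hat C_t^{(k)}}
\end{equation*}
is a monotone limit of closed convex sets. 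Since $C_0$ has nonempty interior, it contains some ball $B_r(x_0)$, which for $k$ large is contained in $C_0^{(k)}$; comparison with the evolving ball then gives a uniform lifespan estimate $T_k \geq r^2/(2n)$, so that the limit exists on a common time interval $(0,T)$ with $T>0$.

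To upgrade $\{\hat C_t\}$ to a smooth classical solution on $(0,T)$, I would apply local interior estimates of Ecker--Huisken type: on any compact region, each $\hat M_t^{(k)}=\partial\hat C_t^{(k)}$ is a convex graph over its tangent plane, and convexity forces the pinching $0 \leq A \leq H\cdot I$; combined with the Ecker--Huisken interior gradient and curvature bounds these yield uniform $C^\infty_{\rm loc}$ estimates on $\R^{n+1}\times[\delta,T-\delta]$. After passing to a subsequence, $\hat M_t^{(k)}\to\hat M_t$ smoothly on compact subsets, so $\hat M_t=\partial\hat C_t$ is a smooth MCF, and strict convexity is preserved in the limit. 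That $\hat C_t\to C_0$ in local Hausdorff distance as $t\downarrow 0$ follows from the inside Hausdorff convergence $C_0^{(k)}\to C_0$ together with a sandwich from outside by classical MCF solutions starting from large smooth convex enclosures of $C_0$.

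For uniqueness, suppose $\{\hat C_t^a\}$ and $\{\hat C_t^b\}$ are two smooth convex MCF solutions on $(0,T)$ both converging to $C_0$ locally in Hausdorff distance as $t\downarrow 0$. The hypotheses of \cite{DS} then apply to both and force $\hat C_t^a=\hat C_t^b$. The main obstacle is exactly this uniqueness step: for convex $C_0$ with flat facets or non-smooth edges, a priori the level-set viscosity solution could fatten, as noted in the footnote of the paper, which would manifest itself as a nontrivial gap between two candidate smooth convex solutions. The role of \cite{DS} is precisely to rule this out for convex initial data, and once uniqueness is in hand the maximal subsolution constructed above is seen to be independent of the approximating sequence, completing the proof.
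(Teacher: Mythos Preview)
Your existence argument is essentially the paper's: the monotone limit of nested smooth compact convex MCFs is exactly Ilmanen's construction (Definition~\ref{def:emcf-solution} and Proposition~\ref{prop-solution-from-sequences}), and the regularity step via local Lipschitz graph representation plus Ecker--Huisken interior estimates is Lemma~\ref{lemma-reg}. The details you sketch (inscribed ball for a uniform lifespan, convexity of the limit, local Hausdorff convergence to $C_0$) all match the paper's treatment.

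The genuine gap is in uniqueness. You write that ``the hypotheses of \cite{DS} then apply to both,'' but \cite{DS} treats \emph{entire graphs}, and a general convex hypersurface $\partial C_0$ need not be an entire graph over any hyperplane. The paper (Theorem~\ref{thm-uniqueness}) handles this by first invoking the structure theorem for convex sets (Proposition~\ref{prop-classification}): $C_0 = \R^k\times\hat C_0$ with $\hat C_0$ either compact or non-compact containing no line. The compact case reduces to standard smooth uniqueness. In the non-compact, no-line case, if $\partial C_0$ happens to be an entire graph then \cite{DS} applies; otherwise the paper runs an \emph{induction on dimension}. The key new ingredient is that the shadow $D_t = \pi(C_t)$ of each solution itself evolves by MCF (Lemma~\ref{lemma-shadow-moves-by-mcf}), so by the induction hypothesis the two shadows agree. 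Both solutions are then graphs $u_1, u_2$ over the \emph{same} time-dependent shrinking domain $D_t$, with $u_i\to+\infty$ at $\partial D_t$ (Lemma~\ref{lemma-shadow-separation}). On this setup the paper adapts the barrier method of \cite{DS}, comparing $u_1 - u_2(\cdot,\cdot+\epsilon)$ against $\epsilon(t+\epsilon)u_1^2$ and running a maximum-principle argument that must separately rule out the supremum escaping to spatial infinity, to the moving boundary $\partial D_{t+\epsilon}$, and to $t=0$.

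So your uniqueness step as stated would fail: you need the shadow-evolves-by-MCF lemma, the dimensional induction, and the boundary blow-up to close the argument when $\partial C_0$ is not an entire graph.
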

\bigskip

This theorem provides local existence and uniqueness for evolution by the unrescaled
MCF.  To define solutions to RMCF we let $C_0\in X$ be any given initial set, and
consider the solution $\{\hat C_t \mid 0\leq t<T\}$ to MCF that
Theorem~\ref{thm-mcf-exist-unique} provides.  Then the evolution by RMCF starting at
$C_0$ is
\[
\phi^\tau(C_0) = C_\tau := e^{\tau/2} \, \hat C_{1-e^{-\tau}}
\]
for $0\leq \tau<\taum$ where $\taum = -\ln(1-T)$ if $T<\infty$ and $\taum=\infty$ if
$T\geq 1$.

The map $(C_0, \tau)\mapsto \phi^\tau(C_0)$ defines a local semiflow on the space
$X$.  Its domain is an open subset of $X\times[0, \infty)$ and the map is continuous
on its domain (see section~\ref{ss-semiflow-continuous}).

\subsection{Rotation invariance}
The group of rotations $\SO(n+1, \R) = \SO_{n+1}$ acts on $\R^{n+1}$, and hence it
acts on the space $X$ of convex hypersurfaces.  The group action of $\SO_{n+1}$ on
$X$ is continuous, $X$ is metrizable, and because the group $\SO_{n+1}$ is compact,
the quotient space $X/\SO_{n+1}$ is again metrizable.

Since MCF and RMCF are invariant under rotations, the semiflow $\phi^\tau$ is
equivariant with respect to the action of $\SO_{n+1}$,
i.e.~$\phi^\tau\bigl(\mc R\cdot C\bigr)=\mc R\cdot \phi^\tau(C)$ for all $C\in X$ and
$\mc R\in \SO_{n+1}$.  It follows that $\phi^\tau$ defines a local semiflow on
$X/\SO_{n+1}$, which by abuse of notation we again denote by $\phi^\tau$.

\subsection{Huisken's functional}\label{sec-Huisken}\itshape
For each convex subset $C\in X$ we define the \emph{Huisken energy}
\begin{equation}\label{eqn-intro-HE}
\mc H(C) = \frac{1}{(4\pi)^{n/2}}\int_{\partial C} e^{-\|x\|^2/4} dH^n_C
\end{equation}
in which $H^n_C$ is $n$-dimensional Hausdorff measure on $\partial C$.
\upshape\medskip

Huisken's monotonicity formula \cite{Hu1990} implies that
$\tau\mapsto \hu\bigl(\phi^\tau(C)\bigr)$ is a non increasing function, which is in
fact strictly decreasing unless $C$ is a shrinking soliton.

The normalizing factor $(4\pi)^{n/2}$ is chosen so that
\[
(4\pi)^{-n/2} \int_Pe^{-\|x\|^2/4} dH^n = 1
\]
for any hyperplane $P\subset\R^{n+1}$ containing the origin.

\begin{conjecture}
$\hu(C) < 2$ for every closed convex set $C$ with nonempty interior.
\end{conjecture}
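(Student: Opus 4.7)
The plan is to first establish the weak inequality $\hu(C)\le 2$, then upgrade it to the strict inequality using $\interior(C)\ne\emptyset$. The conjecture is sharp in the weak sense: the slabs $C_h=\{x\in\R^{n+1}:|\langle x,v\rangle|\le h\}$ satisfy $\hu(C_h)=2e^{-h^2/4}\nearrow 2$ as $h\downarrow 0$, so the interior assumption is essential for strictness.

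The starting point will be the two-to-one projection property of convex bodies. Fix $v\in S^n$ and set $H=v^\perp$. Since every line parallel to $v$ meets $\partial C$ in at most two points, the area formula applied to $\pi_v\colon\partial C\to H$, together with $\|x\|^2\ge\|\pi_v(x)\|^2$, gives the directional estimate
\[
\int_{\partial C} e^{-\|x\|^2/4}\,|\nu(x)\cdot v|\,dH^n \ \le\ 2\int_H e^{-\|y\|^2/4}\,dH^n \ =\ 2(4\pi)^{n/2},
\]
valid for every $v$. Averaging this crudely over $v\in S^n$ only produces the weak bound $\hu(C)\le n|S^n|/|S^{n-1}|$, which equals $\pi$ for $n=1$ and grows with $n$. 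To reach the target constant $2$, one must localize the direction $v$ to the Gauss map $\nu(x)$ pointwise.

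The natural device for this localization is the Gauss-map reparametrization of $\partial C$. Writing $h(\omega)=\sup_{x\in C}\langle x,\omega\rangle$ for the support function, one obtains $\|x(\omega)\|^2=h^2+|\nabla_{S^n}h|^2$ and the identity
\[
\hu(C) = \frac{1}{(4\pi)^{n/2}}\int_{S^n} e^{-(h^2+|\nabla_{S^n} h|^2)/4}\,\det\bigl(\nabla^2_{S^n} h + hI\bigr)\,d\sigma,
\]
so the conjecture becomes a Gauss-weighted Monge--Amp\`ere-type integral inequality for support functions. The plan is to integrate by parts via the divergence-free cofactor matrix of $\nabla^2_{S^n}h+hI$, peel off the determinant one factor at a time, and control the residual gradient terms via the Gaussian factor $e^{-|\nabla h|^2/4}$. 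This is the main obstacle: the required inequality is not classical, and forcing the error terms to sum to at most $2$ will likely require new input from affine isoperimetric geometry or optimal transport.

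Once the weak bound is in hand, strict inequality should follow by rigidity. Equality in the projection step would force $\pi_v$ to be exactly two-to-one onto all of $H$, with both sheets collapsing onto the hyperplane $H$ through the origin, whence $C$ degenerates to a hyperplane, contradicting $\interior(C)\ne\emptyset$. Combining this rigidity with the compactness of $X$ and the continuity of $\hu$ established earlier in the paper then yields the desired strict bound $\hu(C)<2$.
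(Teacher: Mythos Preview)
The statement you are attempting to prove is labeled \emph{Conjecture~I} in the paper; the authors do not prove it and offer no argument beyond the heuristic remark that the ``double hyperplane'' $\Sigma^0$ would have Huisken energy exactly $2$. So there is no paper proof to compare against.

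Your proposal is not a proof either, and you acknowledge this yourself. The directional projection estimate
\[
\int_{\partial C} e^{-\|x\|^2/4}\,|\nu(x)\cdot v|\,dH^n \ \le\ 2(4\pi)^{n/2}
\]
is correct and is a reasonable opening move, and your observation that the slabs $C_h$ saturate the bound as $h\to 0$ is exactly the paper's $\Sigma^0$ heuristic made precise. But from there the argument stalls: you rewrite $\hu(C)$ via the support function as a Gauss-weighted Monge--Amp\`ere integral over $S^n$, note that bounding it by $2$ ``will likely require new input,'' and leave it there. That is the entire content of the conjecture, so at this point you have restated the problem rather than solved it.

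The closing paragraph also has issues. The rigidity step (equality forces $C$ to degenerate to a hyperplane) is plausible but is only useful once the weak bound $\hu(C)\le 2$ is in hand, which it is not. And the appeal to ``compactness of $X$'' is both incorrect ($X$ itself is not compact; only the subsets $\bar X_R$ and $X_{\gvol\ge\gvol_0}$ from Lemmas~\ref{lemma-compactness-1} and~\ref{lemma-compactness-2} are) and unnecessary: if you had $\hu\le 2$ with equality never attained in $X$, the strict inequality $\hu(C)<2$ would follow pointwise without any compactness. Your own slab example shows that no \emph{uniform} gap $\hu\le 2-\epsilon$ can hold, so compactness cannot be doing work here.

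In short: the conjecture remains open, and your sketch does not close the gap.
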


\subsection{The fixed points}\label{sec-fixed-points}
A convex set $C\in X$ is a fixed point for the semiflow $\phi^\tau$ exactly if the
hypersurface $\partial C$ is a shrinking soliton.  Huisken \cite{Hu1990} showed that
there are only two kinds of convex shrinking solitons:
\begin{enumerate}[-]
\item \emph{hyperplanes: } up to a rotation $\partial C$ is the hyperplane
$\mathbb P :=\{0\}\times\R^n$ and $C\subset\R^{n+1}$ is the half-space
$[0, \infty)\times\R^n$
\item \emph{generalized cylinders: } up to a rotation
$\partial C = \sphere^k\times\R^{n-k}$ for some $k\in\{1, \dots, n\}$, where
$\sphere^k$ is the $k$-dimensional sphere in $\R^{k+1}$ with radius $\sqrt{2k}$.  The
convex set~$C$ is $\mathbb B^{k+1}\times \R^{n-k}$, i.e.~the convex hull of
$\sphere^k\times\R^{n-k}$.
\end{enumerate}
The semiflow $\phi^\tau$ on the quotient space $X/\SO_{n+1}$ therefore has finitely
many fixed points, which we denote by $\Sigma^1$, \dots, $\Sigma^n$, and $\Pi$: by
definition $\Sigma^k\in X/\SO_{n+1}$ is the equivalence class of the cylinders
$\sphere^k\times\R^{n-k}\subset\R^{n+1}$, i.e.
\begin{equation}
\label{eq-sigma-k}
\Sigma^k := \SO_{n+1}\cdot \bigl(\sphere^k\times\R^{n-k}\bigr),
\end{equation}
where we abuse language and identify the hypersurfaces $\sphere^k\times\R^{n-k}$ with
the convex sets they enclose.  Similarly, $\Pi$ is the equivalence class of
hyperplanes through the origin,
\begin{equation}
\label{eq-plane}
\Pi := \SO_{n+1}\cdot \mathbb P
\end{equation}
where we have committed another abuse of language by identifying the half-space
$[0, \infty)\times\R^n$ with its boundary $\mathbb P$.

We will refer to the numbers $\hu(\Pi)$, and $\hu(\Sigma^k) (k=1, \dots, n)$ as the
\emph{critical values} of the Huisken functional.  They are ordered by
\begin{equation}\label{eq-critical-values}
1=\hu(\Pi) < \hu(\Sigma^n) < \hu(\Sigma^{n-1}) < \cdots < \hu(\Sigma^1)<2.
\end{equation}

Even though $\Sigma^0$, the generalized cylinder with $k=0$, is excluded from the
class~$X$, it is tempting to include it and regard $\sphere^0 = S^0_0$ as the ``zero
dimensional sphere in $\R^1$ with radius zero,'' i.e.~the origin in $\R^1$ counted
with multiplicity two.  In this interpretation the self-shrinkers $\Sigma^0$ are
\emph{double} hyperplanes in $\R^{n+1}$ through the origin, and could be seen as the
backward limits of the Bourni-Langford-Tinaglia pancake solutions \cite{BLT21}.  The
Huisken energy of $\Sigma^0$ would be twice that of the plane $\Pi$,
i.e.~$\hu(\Sigma^0) = 2$, which is why the assumption $\hu(C)<2$ appears frequently
in this paper.

\subsection{The invariant set  $I(h_0, h_1)$}\label{dfn-invariant}
\itshape For any $h_0, h_1>0$ with $h_0<h_1$ we set
\[
X(h_0,h_1) := \bigl\{ C\in X \mid h_0\leq \hu(C) \leq h_1\bigr\},
\]
and we define the \emph{invariant set} $I(h_0, h_1)$ of RMCF to be the set of all
$C\in X(h_0, h_1)$ for which there is an entire solution $\{C_\tau\}_{\tau\in\R}$ of
RMCF with $C_0=C$ and with $h_0\leq \hu(C_\tau)\leq h_1$ for all
$\tau\in\R$.\upshape\medskip

\noindent
Isolated invariant sets in dynamical systems were introduced by Charles Conley who
described his theory in \cite{Conley}.  While our topological arguments will be
inspired by Conley's theory this paper is meant to be self-contained, and will not
directly rely on results from \cite{Conley}.  In Conley's terminology $X(h_0,h_1)$ is
an \emph{isolating block} for the semiflow on $X$ defined by RMCF, and $I(h_0,h_1)$
is the \emph{isolated invariant set} it contains.

If $0<h_0<h_1<2$ then $I(h_0, h_1)$ is a compact subset of the space $X$, as we show
in Lemma~\ref{lemma-I-compact}.  Since the orthogonal group $\SO_{n+1}$ is compact,
the quotient $I(h_0,h_1)/\SO_{n+1}$ is also a compact metrizable space.  By
construction $I(h_0,h_1)$ is invariant under the semiflow $\phi^\tau$, and Huisken's
monotonicity formula implies that $\hu$ is a Lyapunov function for the semiflow.  It
follows that the invariant set $I(h_0, h_1)$ consists of fixed points of the flow and
orbits connecting these fixed points (see Proposition \ref{prop-descr-I}).  The
semiflow on the quotient has finitely many fixed points, so the invariant quotient
set $I(h_0, h_1)/\SO_{n+1}$ consists exactly of those fixed points $\Sigma^k$ and
possibly $\Pi$ whose Huisken energy lies between $h_0$ and $h_1$, as well as all
connecting orbits between such fixed points.

\subsection{Point symmetric sets}\label{sec-point-symm}
We now consider the set of closed convex $C\subset\R^{n+1}$ that are invariant under
point reflection
\[
X_s = \{C\in X \mid C = -C\},
\]
as well as the invariant set $I_s(h_0,h_1) := I(h_0,h_1)\cap X_s$.  We will also
consider the corresponding quotient space $X_s /\SO_{n+1}$, and its invariant set
$I_s(h_0,h_1)/\SO_{n+1}$.

The generalized cylinders $\Sigma^k$ belong to $X_s$, but the plane $\Pi$ does not,
because the plane is the boundary of the half-space $[0,\infty)\times\R^n$, which is
not point symmetric.

Our motivation for restricting to this class is that it provides a natural way to
single out solutions $M_t$ of RMCF for which $M_t$ is either compact or of the form
$K^k\times \R^{n-k}$ for some compact convex hypersurface $K^k\subset\R^{k+1}$.  See
Lemma~\ref{lemma-rescaling-time}.

If $h_0 \in\bigl(0, \mc H(\Sigma^n)\bigr)$ and $h_1\in\bigl(\mc H(\Sigma^1), 2\bigr)$
then any $C\in I(h_0, h_1)$ either is a fixed point or it lies on an orbit connecting
two fixed points.  In both cases monotonicity of the Huisken energy implies
$\hu(\Sigma^n)\leq \hu(C)\leq \hu(\Sigma^1)$ and so the set
\begin{equation}
\label{eq-Isn}
I_s^n := I(h_0, h_1)
\end{equation}
does not depend on $h_0,h_1$ as long as $h_0\in\bigl(0, \hu(\Sigma^n)\bigr)$ and
$h_1\in\bigl(\hu(\Sigma^1), 2\bigr)$.

\begin{figure}[t]
\centering \includegraphics[width=\textwidth]{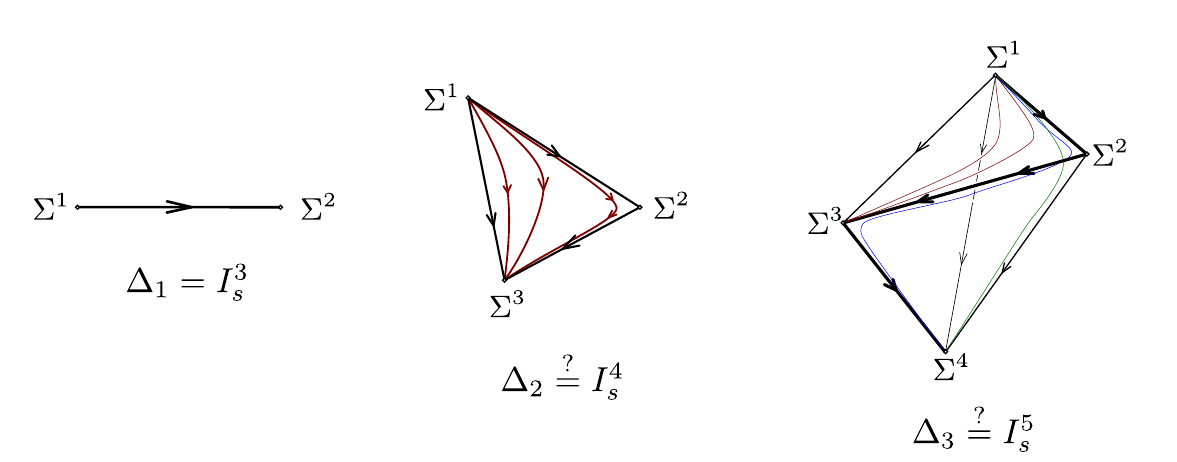}
\caption{The standard flow on the $n$ simplex for $n=1,2, 3$ (from left to right).
  Conjecture~\ref{conj-main} asserts that the flow on the invariant set $I_s^n$ is
  topologically conjugate to the standard flow on $\Delta_{n-1}$.  The fixed points
  $\Sigma^k$ are the equivalence classes under rotation of the generalized cylinders
  $S^k_{\sqrt{2k}}\times\R^{n-k}\subset\R^{n+1}$.}
\label{fig:standardflow}
\end{figure}

The set $I_s^n$ and the semiflow $\phi^\tau$ on $I_s^n$ are the objects that interest
us in this paper.  The following theorem gathers several topological properties of
the set $I_s^n$.

\begin{theorem}
\label{thm-main}
Assume that $0 < h_0 < \mc H(\Sigma^n)$ and $\mc H(\Sigma^1) < h_1 < 2$.  Then
\begin{enumerate}[{\em (a)}]
\item The invariant set $I_s^n$ consists exactly of all fixed points $\Sigma^k$
$(1\leq k\leq n)$, and all connecting orbits between these fixed points.
\item $I_s^n$ is path connected.
\item $I_s^n$ is a compact subset of $X_s$.
\item There exists a nested sequence of contractible compact neighborhoods
$N_1\supset N_2\supset\cdots$ of $I_s^n$ with $I_s^n = {\bigcap}_{k\geq 1}N_k$, and
hence the Čech cohomology groups of $I_s^n$ are those of a one-point space,
i.e.~$\check H^k(I_s^n)=0$ for $k\geq 1$ and $\check H^0(I_s^n)=\Z$.
\end{enumerate}
\end{theorem}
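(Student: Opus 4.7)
The plan is to handle the four parts in the order (c), (a), (b), (d), each building on results already in the paper.

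\emph{Parts (a) and (c).} Compactness follows immediately from Lemma~\ref{lemma-I-compact}: the point-symmetry condition is closed in $X$ because the map $C \mapsto -C$ is continuous in the distance-function topology (indeed $d_{-C}(x) = d_C(-x)$), so $X_s$ is closed and $I_s^n = I(h_0,h_1)\cap X_s$ is a closed subset of a compact set. For the structural description (a) I appeal to Proposition~\ref{prop-descr-I}, which says that $I(h_0,h_1)$ is the union of its fixed points together with the orbits connecting them. Intersecting with $X_s$ discards $\Pi$ (since the half-space $[0,\infty)\times\R^n$ is not point-symmetric) but keeps every $\Sigma^k$, $1\le k\le n$, because the chosen $h_0,h_1$ bracket all of their Huisken energies. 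That an orbit starting in $X_s$ stays in $X_s$, so that its $\alpha$- and $\omega$-limits are themselves point-symmetric fixed points, follows from equivariance of MCF under the central reflection $x\mapsto -x$ together with uniqueness (Theorem~\ref{thm-mcf-exist-unique}): if $C_0=-C_0$ then $\phi^\tau(C_0)=\phi^\tau(-C_0)=-\phi^\tau(C_0)$.

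\emph{Part (b).} It suffices to path-connect each $\Sigma^k$ to $\Sigma^n$ inside $I_s^n$. For $k=1$ the Angenent-Daskalopoulos-Sesum compact ancient oval \cite{ADS1}, which is $O(n)\times \mb Z_2$-symmetric and hence point-symmetric, is an entire RMCF orbit whose $\alpha$-limit is $\Sigma^1$ and whose $\omega$-limit is the round shrinking sphere $\Sigma^n$. For $2\le k\le n-1$ I invoke the corresponding $O(k+1)\times O(n-k)$-symmetric compact ancient ovals of \cite{Wh,HH,DH}, which are point-symmetric and connect $\Sigma^k$ to $\Sigma^n$. Reparametrizing $\tau\in\R$ by $[0,1]$ via a homeomorphism and using the definition of the $\alpha$- and $\omega$-limits in the topology on $X$ yields a continuous path in $I_s^n$ from $\Sigma^k$ to $\Sigma^n$. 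Concatenating with the obvious parametrization by time of any orbit in $I_s^n$ (which by (a) has both endpoints among the $\Sigma^k$'s) settles path connectedness for arbitrary points of $I_s^n$.

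\emph{Part (d).} This is the main technical step. The guiding idea is that $X_s$ is contractible through Minkowski averaging: the map $\Phi_t(C) = (1-t)C + t\,\Sigma^n$ is continuous in the weak topology on $X$, preserves point-symmetry and convexity, and sends $X_s$ into $X_s$ since $\Sigma^n$ is bounded. I would first choose a decreasing sequence of closed Hausdorff-metric neighborhoods $U_k$ of $I_s^n$ in $X_s$ with $\bigcap_k U_k = I_s^n$, and then form the Minkowski star-hull $N_k := \Phi([0,1]\times U_k)$ about $\Sigma^n$. Each $N_k$ is compact (the continuous image of a compact set), contains $U_k$ and so is a neighborhood of $I_s^n$, and is Minkowski star-shaped about $\Sigma^n$, hence contractible through $\Phi$.

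The principal obstacle is to enforce $\bigcap_k N_k = I_s^n$: \emph{a priori} the intersection contains every Minkowski segment from $I_s^n$ to $\Sigma^n$, which lies outside $I_s^n$ unless $I_s^n$ happens to be Minkowski-convex with respect to $\Sigma^n$, for which there is no reason. I expect to address this by refining the construction so that the star-center varies adaptively, e.g.\ by replacing the fixed $\Sigma^n$ with the closest point of $I_s^n$ to $C$, or, more robustly, by combining Minkowski contraction with the forward RMCF flow: an element $C\in N_k$ is first flowed forward by a large time $T_k$ (which, for points close to $I_s^n$, lies near an orbit in $I_s^n$) and only then averaged with $\Sigma^n$ to contract to a point, with $T_k\to\infty$ as $k\to\infty$. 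Once such a nested family is produced, continuity of Čech cohomology on compact metric inverse limits,
\[
\check H^*(I_s^n) \;=\; \varinjlim_k \check H^*(N_k),
\]
immediately yields $\check H^k(I_s^n) = 0$ for $k\ge 1$ and $\check H^0(I_s^n) = \Z$ as claimed.
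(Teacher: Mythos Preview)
Your treatment of parts (a), (b), (c) is sound and matches the paper's approach: compactness from Lemma~\ref{lemma-I-compact} plus closedness of $X_s$, the structural description from Proposition~\ref{prop-descr-I} restricted to $X_s$, and path connectedness via the White/Haslhofer--Hershkovits ovals connecting each $\Sigma^k$ to $\Sigma^n$.

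Part (d) has a genuine gap, which you yourself flag: your Minkowski star-hull $N_k=\Phi([0,1]\times U_k)$ has $\bigcap_k N_k$ containing every Minkowski segment from $I_s^n$ to $\Sigma^n$, and there is no reason these segments lie in $I_s^n$. Your proposed fixes (moving the star-center, or flowing forward by $T_k$ before averaging) do not repair this: as long as the neighborhoods are built by \emph{adding} Minkowski segments to a shrinking base, the intersection will still contain all such segments emanating from $I_s^n$ itself. The construction, as stated, cannot yield $\bigcap_k N_k = I_s^n$.

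The paper avoids the problem by a different mechanism. It works not in $X_s$ but in the slice $Z_s=\{C\in X_s:\Tx(C)=1\}/\SO_{n+1}$, shows $Z_s$ is contractible (via support functions and then renormalising by extinction time), and proves that the sublevel set $N(h_1)=\{C\in Z_s:\hu(C)\le h_1\}$ is a retract of $Z_s$ via the entrance time $T_{h_1}(C)=\inf\{\tau\ge0:\hu(\phi^\tau(C))\le h_1\}$, hence contractible. The nested sequence is then simply $N_k=\phi^k(N(h_1))$. Forward invariance of $N(h_1)$ gives the nesting; the semiflow $\phi^\tau$ for $\tau\in[0,k]$ makes the inclusion $\phi^k(N(h_1))\hookrightarrow N(h_1)$ a homotopy equivalence, so each $N_k$ is contractible; and $\bigcap_k\phi^k(N(h_1))=I_s^n$ holds precisely because $I_s^n$ is the maximal invariant set in $N(h_1)$. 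The key idea you are missing is that the \emph{semiflow itself} produces the contractible neighborhoods---no extrinsic averaging is needed once you have one contractible forward-invariant neighborhood to start from.
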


\subsection{Uniqueness of simple connecting orbits}\label{sec-simple-connections-unique}
Our classification \cite{ADS1,ADS2} of noncollapsed solutions connecting cylinders
$\sphere^{n-1}\times\R$ with the sphere $\sphere^n$, and the subsequent
classification by Du and Haslhofer \cite{DH} of noncollapsed solutions connecting
generalized cylinders $\sphere^{n-k}\times\R^k$ with $\sphere^n$, combined with the
non-collapsing results of Bourni, Langford, and Lynch \cite{BLL} imply the following:
\begin{theorem}\label{thm-uniqueness-of-simple-connections}~
\begin{enumerate}[\upshape(a)~]
\item For each $k\in\{1, \dots, n-2\}$ there is a unique connecting orbit from
$\Sigma^k$ to $\Sigma^{k+1}$.
\item For any integers $k,\ell$ with $1\leq k < \ell\leq n-1$ there is a unique
connecting orbit from $\Sigma^k$ to $\Sigma^\ell$ that is
$\SO_{k+1}\times\SO_{\ell-k}$ symmetric.
\end{enumerate}
\end{theorem}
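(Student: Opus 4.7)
The plan is to reduce Theorem~\ref{thm-uniqueness-of-simple-connections} to the known classifications of compact noncollapsed ancient MCF in a lower-dimensional ambient space.  Three ingredients are needed: (i) the non-collapsing theorem of Bourni-Langford-Lynch \cite{BLL}, which guarantees that every convex ancient orbit of RMCF in $I(h_0,h_1)$ is uniformly noncollapsed; (ii) a splitting lemma saying that if the forward tangent flow of a convex ancient MCF is a static cylinder $\Sigma^\ell = S^\ell \times \R^{n-\ell}$ with $\ell<n$, then the whole orbit splits off the $\R^{n-\ell}$-axis; and (iii) the ADS classification \cite{ADS1,ADS2} for compact noncollapsed ancient solutions in $\R^{k+2}$ asymptotic to $S^k\times\R$, together with the Du-Haslhofer classification \cite{DH} for their higher-cohomogeneity counterparts.

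Given a connecting orbit $\{C_\tau\}_{\tau\in\R}$ from $\Sigma^k$ to $\Sigma^\ell$ (with $\ell=k+1$ in part (a)), apply \cite{BLL} to the corresponding MCF $\hat M_t$ to obtain an $\alpha$-noncollapsing constant $\alpha>0$.  Let $L\cong \R^{n-\ell}$ denote the Euclidean axis of $\Sigma^\ell$.  Because the forward tangent flow of $\{C_\tau\}$ is the static $\Sigma^\ell$, along any sequence $\tau_j\to+\infty$ the hypersurfaces $\partial C_{\tau_j}$ converge smoothly on compacta to $\Sigma^\ell$ by the noncollapsing bound and standard interior curvature estimates.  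The limit carries an exact null direction of its second fundamental form along $L$, and Hamilton's strong maximum principle for the second fundamental form of an ancient MCF propagates this null direction to every spacetime point of $\hat M_t$.  Consequently $\hat M_t = \hat K_t \times L$ with $\hat K_t \subset L^{\perp}\cong \R^{\ell+1}$ a compact convex noncollapsed ancient MCF.

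The RMCF rescaling of $\hat K_t$ is now a connecting orbit in $\R^{\ell+1}$ from the cylinder $S^k\times\R^{\ell-k}$ to the round sphere $S^\ell$.  In part (a) one has $\ell-k=1$, so $\hat K_t$ is a compact noncollapsed ancient MCF in $\R^{k+2}$ asymptotic to $S^k\times\R$, which by \cite{ADS1,ADS2} is the unique rotationally symmetric ancient oval in $\R^{k+2}$, up to the rigid motions and parabolic dilations that the RMCF normalization has already removed.  In part (b) the assumed $\SO_{k+1}\times\SO_{\ell-k}$-symmetry of $\{C_\tau\}$ descends to $\hat K_t$, making it a cohomogeneity-one compact ancient noncollapsed convex MCF in $\R^{\ell+1}$ asymptotic to $S^k\times\R^{\ell-k}$; uniqueness of such a solution is furnished by the classification in \cite{DH}.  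In both cases uniqueness of $\hat K_t$ translates into uniqueness of $\{C_\tau\}$ in the quotient $X/\SO_{n+1}$, since any two choices of the axis $L$ differ by an $\SO_{n+1}$-rotation.

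The main obstacle is the splitting lemma in (ii).  The convergence $\partial C_\tau \to \Sigma^\ell$ as $\tau\to+\infty$ is only asymptotic, so producing an \emph{exact} interior null direction of the second fundamental form at some finite spacetime point — which is required to activate Hamilton's strong maximum principle — entails a careful blow-down argument relying on the refined asymptotics to the forward cylindrical tangent flow from \cite{BC1,BC2,CDDHS,CHHW} combined with noncollapsedness and convexity.  Once the exact interior null direction is secured, the splitting and the subsequent appeals to the ADS/DH classifications are routine.
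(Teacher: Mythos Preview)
Your overall plan matches the paper's: apply \cite{BLL} for noncollapsing, split off a Euclidean factor, then invoke \cite{ADS2} for (a) and \cite{DH} for (b).  The paper's proof is a terse sketch doing exactly this; it simply asserts the splitting without argument.

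Where you diverge is in justifying the splitting, and here you make life harder than necessary.  In the setting where the paper actually uses this theorem the orbits are point-symmetric, and then the splitting is elementary: by Lemma~\ref{lemma-rescaling-time}(a) every $C\in X_s$ factors as $\tilde C\times\R^{m}$ with $\tilde C$ compact; Huisken's theorem forces the forward RMCF limit of such an orbit to be $\Sigma^{n-m}$, so matching with $\Sigma^\ell$ gives $m=n-\ell$.  No strong maximum principle is needed.

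Your route through Hamilton's tensor maximum principle would establish the splitting without point symmetry, which is more than the paper needs, but as you yourself flag it is incomplete: smooth convergence $\partial C_\tau\to\Sigma^\ell$ as $\tau\to+\infty$ furnishes no null eigenvector of the second fundamental form at any \emph{finite} spacetime point, so there is nothing for the maximum principle to propagate.  The ``careful blow-down'' you invoke to manufacture one is not a routine step.  A cleaner general argument goes through the extinction set of the unrescaled flow: $\hat C_1:=\bigcap_{t<1}\hat C_t$ is closed, convex, with empty interior (Proposition~\ref{prop-interior-empties}), and once one shows it is an $(n-\ell)$-dimensional affine subspace, every convex $\hat C_t\supset\hat C_1$ automatically splits as $D_t\times\hat C_1$.
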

\begin{proof}
(a)~Since $\Sigma^k = \SO_{n+1}\cdot\bigl( S^k\times\R^{n-k}\bigr)$ a connecting
orbit from $\Sigma^k$ to $\Sigma^{k+1}$ must be of the form $C_\tau\times\R^{n-k}$
for $\tau\in\R$, where $C_\tau$ is a $k$ dimensional solution to RMCF in $\R^{k+2}$
which connects $S^k\times\R$ with $S^{k+1}$.  By \cite{BLL} such a solution is
non-collapsed, and hence the uniqueness result in \cite{ADS2} applies.

(b)~This was shown by Haslhofer and Du in \cite{DH} under the assumption that the
solutions are non collapsed.  Bourni, Langford, and Lynch \cite{BLL} showed that the
non-collapsedness condition is always satisfied.
\end{proof}

\subsection{There are no isolated orbits} The following result, which we prove in
\S\ref{sec-proof-no-isolation}, is a direct consequence of Theorem~\ref{thm-main}.
It shows that any complete solution to RMCF can be approximated by other complete
orbits of the semiflow.

\begin{theorem}
\label{thm-no-isolated-orbits} If $n > 3$ then the invariant set $I_s^n$ contains no
isolated orbits in the following sense: if
$\Gamma=\{C_\tau \mid \tau\in\R\}\subset I_s^n$ is a complete orbit, and if
$\mc U\subset X_s$ is an open set with $\mc U\cap \Gamma\neq\varnothing$, then there
is another complete orbit
\(\tilde \Gamma = \{\tilde C_\tau \mid \tau\in\R\}\subset I_s^n\) with
$\tilde\Gamma\cap\mc U\neq \varnothing$ and $\Gamma\cap\tilde\Gamma=\varnothing$.
\end{theorem}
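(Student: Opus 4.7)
The plan is to produce the second orbit $\tilde\Gamma$ primarily by an $\SO_{n+1}$-rotation of $\Gamma$, supplemented by a path-connectedness argument in a single degenerate case. Since $I_s^n\subset X_s$ is invariant under the $\SO_{n+1}$-action and the semiflow $\phi^\tau$ is $\SO_{n+1}$-equivariant, for any $R\in\SO_{n+1}$ the rotate $R\cdot\Gamma=\{R\cdot C_\tau\mid\tau\in\R\}$ is again a complete orbit lying in $I_s^n$, and for $R$ sufficiently close to the identity this rotated orbit meets $\mc U$.

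The central sub-claim would be that $R\cdot\Gamma\cap\Gamma\neq\varnothing$ forces $R$ to lie in the pointwise stabilizer $G_\Gamma:=\{R\in\SO_{n+1}\mid R\cdot C_\tau=C_\tau\text{ for every }\tau\in\R\}$. For a non-constant orbit, strict monotonicity of the Huisken energy (the rigidity case of Huisken's monotonicity formula) makes $\tau\mapsto\hu(C_\tau)$ injective, so $R\cdot C_{\tau_0}=C_{\tau_1}$ combined with $\hu(R\cdot C_{\tau_0})=\hu(C_{\tau_0})$ forces $\tau_0=\tau_1$; semiflow equivariance then propagates $R\cdot C_{\tau_0}=C_{\tau_0}$ to the whole orbit. (For a constant orbit the claim reduces trivially to the stabilizer of the single point.) Now $G_\Gamma=\SO_{n+1}$ would force every $C_\tau$ to be a round sphere about the origin, which among complete RMCF orbits in $I_s^n$ can occur only for the sphere fixed-point orbit $\Sigma^n=\{S^n_{\sqrt{2n}}\}$. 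In every other case $G_\Gamma$ is a proper closed subgroup of $\SO_{n+1}$, so I would pick $R$ near the identity but outside $G_\Gamma$; the resulting $\tilde\Gamma=R\cdot\Gamma$ is a distinct complete orbit with $\tilde\Gamma\cap\mc U\neq\varnothing$ and $\tilde\Gamma\cap\Gamma=\varnothing$, as required.

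For the remaining sphere case $\Gamma=\Sigma^n$, the rotation argument degenerates. Here I would invoke path-connectedness of $I_s^n$ (Theorem~\ref{thm-main}(b)) to obtain a continuous path in $I_s^n$ from $\Sigma^n$ to, say, $\Sigma^{n-1}$; by Theorem~\ref{thm-main}(a) its image decomposes into fixed points and connecting orbits, so must traverse at least one non-trivial connecting orbit $\tilde\Gamma$ with $\Sigma^n$ as its $\omega$-limit---the unique simple connection $\Sigma^{n-1}\to\Sigma^n$ of Theorem~\ref{thm-uniqueness-of-simple-connections}(a) being the natural choice. Any such $\tilde\Gamma$ enters every neighborhood of $\Sigma^n$ and thus meets $\mc U$, and is disjoint from the single-point orbit $\Gamma=\Sigma^n$.

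The principal technical subtlety is the injectivity of $\tau\mapsto\hu(C_\tau)$ along orbits of RMCF in $I_s^n$ in the weak topology on $X$ developed earlier in the paper---this is the rigidity half of Huisken's monotonicity formula applied to the not-necessarily-smooth initial data admitted by Theorem~\ref{thm-mcf-exist-unique}. The hypothesis $n>3$ does not enter the rotation argument directly; I suspect it is imposed so that $\tilde\Gamma$ can be obtained not merely as a rotate of $\Gamma$ but as an orbit genuinely inequivalent to $\Gamma$ modulo $\SO_{n+1}$ (and hence representing a distinct point in the simplex of Figure~\ref{fig:standardflow}), which would require the Du--Haslhofer moduli analysis of \cite{DH}, available only in ambient dimension $n+1\ge 5$.
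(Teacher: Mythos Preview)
Your rotation idea is quite different from the paper's proof, which is purely topological: assuming for contradiction that $\mc U\cap I_s^n\subset\Gamma$, the paper sets $\mc K=I_s^n\setminus(\mc U\cap\Gamma)$, shows that $\mc K$ is still path connected (this uses the existence of several distinct chains of simple connecting orbits among the $\Sigma^k$, and is where the dimensional hypothesis enters), and then feeds the pair $(I_s^n,\mc K)$ into the long exact \v{C}ech sequence together with $\check H^1(I_s^n)=0$ from Theorem~\ref{thm-main}(d) to force $\check H^1(I_s^n,\mc K)=0$, contradicting excision since the removed arc is an interval.

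Your argument has a genuine gap. From $R\cdot C_{\tau_0}=C_{\tau_0}$ semiflow equivariance propagates only \emph{forward}, giving $R\cdot C_\tau=C_\tau$ for $\tau\geq\tau_0$; the extension to $\tau<\tau_0$ is backward uniqueness for MCF, which the paper does not supply. What you actually need for $R\cdot\Gamma\cap\Gamma=\varnothing$ is $R\notin\bigcup_{\tau}\mathrm{Stab}(C_\tau)$, not $R\notin G_\Gamma=\bigcap_{\tau}\mathrm{Stab}(C_\tau)$. The stabilizers are nondecreasing in $\tau$, so by monotonicity this union equals a countable increasing union of closed subgroups and hence has measure zero in $\SO_{n+1}$ \emph{unless} some $\mathrm{Stab}(C_{\tau^{**}})$ is already all of $\SO_{n+1}$. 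In that case $C_{\tau^{**}}$ is a centred ball, which on a complete orbit in $I_s^n$ forces $C_{\tau^{**}}$ to be the round sphere $\Sigma^n$; the non-constancy of $\Gamma$ would then exhibit a non-round convex hypersurface flowing to the round sphere in \emph{finite} rescaled time---again a backward-uniqueness (or time-analyticity) assertion outside the scope of the paper. The ``rigidity half of Huisken's monotonicity'' you invoke does not address this. The cohomological route sidesteps the issue entirely; note also that your $\tilde\Gamma=R\cdot\Gamma$ always projects to the same orbit as $\Gamma$ in $I_s^n/\SO_{n+1}$, so even after the gap is filled your argument says nothing at the quotient level.
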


By letting the open set $\mc U$ be an arbitrarily small neighborhood of some given
point $C_{\tau_0}\in\Gamma$ we conclude that there is a sequence of orbits
$\Gamma_i = \{C_\tau^i\mid \tau\in\R\}$ with $C_{\tau_0}^i\to C_{\tau_0}$, and
$\Gamma_i\cap\Gamma=\varnothing$ for all $i$.  \medskip

\subsection{A shadowing lemma}
The classical Shadowing Lemma from smooth dynamical systems \cite[Theorem 18.1.7,
page 569]{KatokHasselblatt95} concerns hyperbolic invariant sets for a smooth flow on
a finite dimensional manifold.  In particular, if suitable transversality and
hyperbolicity conditions hold, it implies that a flow with three given fixed points
$A$, $B$, and $C$, and two connecting orbits, one from $A$ to $B$, and another from
$B$ to $C$ will also have connecting orbits from $A$ to $C$ which are arbitrarily
close to the union of the two orbits $A\to B$ and $B\to C$. The proof involves a
gluing construction involving the Banach fixed point theorem and the linearization of
the flow near the two connecting orbits $A\to B\to C$.

We prove an analogous result for the semiflow $\phi^\tau$ on $I^n_s$ in the case
$n=4$.  A remarkable difference with the proof of the Shadowing Lemma from smooth
dynamics is that our arguments do not require any transversality or hyperbolicity.
Instead, the main technical ingredient of the proof is the Huisken-Gage-Hamilton
theorem which says that smooth compact convex solutions of MCF shrink to round
points.

To formulate our shadowing lemma, we let $\Gamma(k,\ell)$ be the unique connecting
orbit from $\Sigma^k$ to $\Sigma^\ell$ with $\SO_{k+1}\times\SO_{\ell-k}$ symmetry
mentioned in Theorem \ref{thm-uniqueness-of-simple-connections}.
\begin{theorem}\label{thm-shadowing}
Let $\mc N\subset X_s$ be an open neighborhood of $\Gamma(1, 2)\cup\Gamma(2, 3)$.
Then $\mc N$ contains a connecting orbit $\Gamma_*$ from $\Sigma^1$ to $\Sigma^3$
$($see Figure \ref{fig:shadowing}$)$.
\end{theorem}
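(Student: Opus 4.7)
The plan is to produce $\Gamma_*$ via a one-parameter bifurcation argument along a continuous path of complete orbits in $I_s^4$ that interpolates between $\Gamma(1,2)$ and $\Gamma(2,3)$ near their common limit $\Sigma^2$. I would work throughout in the invariant subclass $X_s^{\mathrm{sym}}\subset X_s$ of point-symmetric convex sets in $\R^5$ preserving the joint rotational symmetries of $\Gamma(1,2)$ and $\Gamma(2,3)$; this subclass is semiflow-invariant and contains $\Sigma^1,\Sigma^2,\Sigma^3$.

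For each large $N$ let $A_N\in\Gamma(1,2)$ be the point at time $\tau=+N$ and $B_N\in\Gamma(2,3)$ the point at $\tau=-N$; both lie in an arbitrarily small neighborhood of $\Sigma^2$ inside $\mc N$ for $N$ large. Using path connectedness of $I_s^4$ (Theorem~\ref{thm-main}(b)), construct a continuous path $\lambda\mapsto P_N(\lambda)\in I_s^4\cap X_s^{\mathrm{sym}}$ from $A_N$ to $B_N$, lying in $\mc N$, avoiding the fixed point $\Sigma^2$, and lying in the non-compact subclass (each $P_N(\lambda)$ preserves an unbounded $\R$-direction aligned with $\Sigma^3$). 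Write $\Gamma_N^\lambda$ for the complete orbit through $P_N(\lambda)$, and $\alpha(\lambda),\omega(\lambda)$ for its limits. By Huisken monotonicity and the classification of fixed points, $\alpha(\lambda)\in\{\Sigma^1,\Sigma^2\}$ and $\omega(\lambda)\in\{\Sigma^2,\Sigma^3,\Sigma^4\}$. Here the Huisken-Gage-Hamilton theorem replaces hyperbolicity: any compact convex smooth RMCF orbit has $\omega$-limit $\Sigma^4$, so the non-compactness constraint rules out $\omega(\lambda)=\Sigma^4$.

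The central step is to find $\lambda_N^*\in(0,1)$ with $(\alpha,\omega)(\lambda_N^*)=(\Sigma^1,\Sigma^3)$. Set $S=\{\lambda:\omega(\lambda)=\Sigma^3\}$ and $T=\{\lambda:\alpha(\lambda)=\Sigma^1\}$; note $1\in S$ and $0\in T$. Since $\mc H$ is a strict Lyapunov function on $I_s^4$ away from fixed points, no non-constant orbit can satisfy $(\alpha,\omega)=(\Sigma^2,\Sigma^2)$, so (using the exclusion of $\Sigma^4$) every non-constant $\Gamma_N^\lambda$ has $\lambda\in S\cup T$. Because $P_N$ avoids $\Sigma^2$, each orbit is non-constant, hence $S\cup T=[0,1]$. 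Next, energy monotonicity combined with compactness of $I_s^4$ (Theorem~\ref{thm-main}(c)) yields lower semicontinuity of $\mc H(\omega(\lambda))$ and upper semicontinuity of $\mc H(\alpha(\lambda))$ in $\lambda$; together with the discreteness of the fixed-point energy values this forces both $S$ and $T$ to be closed in $[0,1]$. Two disjoint non-empty closed subsets cannot cover the connected interval $[0,1]$, so $S\cap T\neq\varnothing$; any $\lambda_N^*\in S\cap T$ gives $(\alpha,\omega)(\lambda_N^*)=(\Sigma^1,\Sigma^3)$, and the full orbit $\Gamma_N^{\lambda_N^*}$ lies inside $\mc N$ by continuity (for $N$ large).

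Finally, I would pass $N\to\infty$: by compactness of $I_s^4$ the orbits $\Gamma_N^{\lambda_N^*}$ subconverge to a complete orbit $\Gamma_*\subset\overline{\mc N}\cap I_s^4$ with $\alpha$-limit $\Sigma^1$ and $\omega$-limit $\Sigma^3$; shrinking $\mc N$ slightly if necessary yields $\Gamma_*\subset\mc N$. The principal obstacle is establishing the semicontinuity properties that make $S$ and $T$ closed, and ensuring that the path $P_N$ can be chosen in the correct non-compact symmetric subclass of $I_s^4$ avoiding $\Sigma^2$; the Huisken-Gage-Hamilton theorem is the essential ingredient that rules out the pathological escape to $\Sigma^4$ in the absence of hyperbolicity, and it is the substitute for the transversality and linearization used in the classical Shadowing Lemma.
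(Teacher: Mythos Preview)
Your proposal has a genuine gap at the construction of the path $P_N$. You invoke Theorem~\ref{thm-main}(b) to obtain a continuous path in $I_s^4\cap\mc N$ from $A_N\in\Gamma(1,2)$ to $B_N\in\Gamma(2,3)$ that avoids $\Sigma^2$, but that theorem gives no such path: its proof connects any point to a fixed point by following the forward orbit, so the only path it supplies from $A_N$ to $B_N$ passes \emph{through} $\Sigma^2$ (the alternative route via $\Sigma^1$, some $\Gamma(1,3)$ orbit, and $\Sigma^3$ leaves $\mc N$). Worse, a path in $I_s^n\setminus\{\Sigma^2\}$ that stays in $\mc N$ and joins $\Gamma(1,2)$ to $\Gamma(2,3)$ must contain a point on a complete orbit other than $\Gamma(1,2)\cup\{\Sigma^2\}\cup\Gamma(2,3)$ that meets $\mc N$ --- i.e.\ a $\Sigma^1\to\Sigma^3$ orbit passing near $\Sigma^2$ --- which is precisely the conclusion you want. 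So the path hypothesis is circular. (Your $S,T$ argument is sound in spirit, though note that $\mc H\circ\omega=\inf_\tau\mc H\circ\phi^\tau$ is \emph{upper} semicontinuous, making $S$ and $T$ open rather than closed; connectedness of $[0,1]$ still forces $S\cap T\neq\varnothing$.)

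The paper avoids this circularity by not working inside $I_s^n$ at all. It takes an explicit family of ellipsoids $D(\varepsilon,\theta)=E(1,1,\varepsilon,\theta\varepsilon)\in Z_s$, which are \emph{not} ancient, and runs only the forward flow. At $\theta=0$ the orbit converges to $\Sigma^2$; at $\theta=1$ it is $O(2)\times O(2)$ symmetric and goes to $\Sigma^3$ without approaching $\Sigma^2$. An intermediate-value argument on $\theta$ locates a critical $\theta_1(\varepsilon)$ at which the forward orbit just grazes $\partial\mc K$ for a fixed compact $\mc K\subset\mc U_2\ni\Sigma^2$. Sending $\varepsilon\to 0$ pushes the initial data to $\Sigma^1$ and the grazing time $\tau_\varepsilon$ to $+\infty$; the time-shifted orbits then subconverge to a \emph{complete} orbit that touches $\partial\mc K$ but never $\mathring{\mc K}$, so it cannot limit to $\Sigma^2$ in either direction and must connect $\Sigma^1$ to $\Sigma^3$. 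Lemmas~\ref{prop-nbhd-sigma2} and~\ref{prop-nbhd-sigma2-backwards} then guarantee this limit orbit stays in $\mc N$. The ancient half of $\Gamma_*$ thus appears only in the limit of non-ancient data; that limiting procedure is what replaces your assumed path in $I_s^n$.
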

The proof is given in Section~\ref{sec-hetero}.
\begin{figure}[hb]
\centering \includegraphics[width=0.3\textwidth]{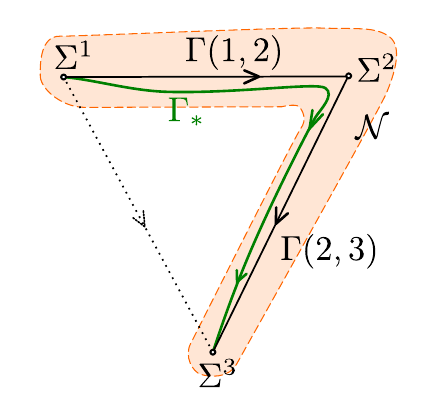}
\caption{The shadowing theorem~\ref{thm-shadowing} }
\label{fig:shadowing}
\end{figure}

\subsection{The standard flow on the $n$-simplex}
Let
\[
\Delta_n = \bigl\{(a_0, \dots, a_n)\in\R^{n+1} \mid a_i\geq 0,\;
a_0+\cdots+a_n=1\bigr\}
\]
be the standard $n$-dimensional simplex.  The standard flow on $\Delta_n$ is the flow
given by
\[
\sigma^\tau (a) = \frac{e^{\tau A}a}{\langle u, e^{\tau A}a\rangle}
\]
where $u = (1, 1, \dots, 1)\in \R^{n+1}$ and where $A$ is the $(n+1)\times(n+1)$
diagonal matrix with diagonal entries $a_{kk}=k$ ($k=0, 1, 2\dots, n$).

In this flow the unit vectors $e_k$ are fixed points, $e_n$ is an attractor, and
$e_0$ is a repeller.

The existing uniqueness results about mean curvature flow ancient solutions
(\cite{ADS1}, \cite{CHHW}, \cite{DH} and \cite{CDDHS}) and arguments in the proof of
Theorem \ref{thm-main} inspire us to propose the following conjecture.

\begin{conjecture}\label{conj-main}
Let $n\geq 2$ and suppose that $0<h_0<\hu(\Sigma^n)$ and $\hu(\Sigma^1)<h_1<2$.  Then
the invariant set $I_s(h_0,h_1)/\SO_{n+1} = I(h_0, h_1) \cap X_s/\SO_{n+1}$ is
homeomorphic to an $n-1$ dimensional simplex, and the semiflow $\phi^t$ on
$I_s(h_0,h_1)/\SO_{n+1}$ is topologically conjugate\footnote{~This means that there
  is a homeomorphism $h:\Delta_{n-1}\to I_s^n$ such that
  $h\circ \sigma^\tau = \phi^\tau\circ h$ for all~$\tau\geq 0$.} to the standard flow
on $\Delta_{n-1}$.
\end{conjecture}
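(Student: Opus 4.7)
The plan is to construct a conjugating homeomorphism $h:\Delta_{n-1}\to I_s(h_0,h_1)/\SO_{n+1}$, under the identification $e_{k-1}\leftrightarrow\Sigma^k$, by combining the asymptotic analysis of ancient solutions near their backward tangent cylinders with an induction on $n$ that tracks the face structure of the simplex. The overall philosophy mirrors Conley's principle: two isolated invariant sets with the same Morse decomposition and the same parametrized connection data are topologically conjugate, and the Huisken energy plays the role of the common Lyapunov function whose critical values \eqref{eq-critical-values} order the fixed points in exactly the same way on the two sides.

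The base case $n=2$ follows directly from Theorem~\ref{thm-uniqueness-of-simple-connections}(a): $I_s^2/\SO_3$ consists of two fixed points and one connecting orbit, matching $\Delta_1$. For the inductive step, assume the conjecture in all dimensions less than $n$. Each proper face $F\subsetneq\Delta_{n-1}$ is itself a standard simplex carrying the restriction of the standard flow, and should correspond under $h$ to an invariant subset of $I_s^n/\SO_{n+1}$. Faces that omit $e_{n-1}$ correspond to orbits whose forward limit is $\Sigma^{k_j}$ for some $k_j<n$, and by Lemma~\ref{lemma-rescaling-time} these orbits are forced to split as $K\times\R^{n-k_j}$ with $K\subset\R^{k_j+1}$ compact; the inductive hypothesis applied to $I_s^{k_j}/\SO_{k_j+1}\cong\Delta_{k_j-1}$ classifies them and supplies the conjugation on these faces. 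The remaining faces, which contain $e_{n-1}$, are handled jointly with the interior by the asymptotic parametrization described below.

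For the interior and the top-dimensional faces through $e_{n-1}$, we parametrize compact point-symmetric orbits $M_\tau$ by their backward asymptotic profile. Such an orbit has tangent flow $\Sigma^{k_1}=S^{k_1}\times\R^{n-k_1}$ at $-\infty$ for some $k_1\in\{1,\dots,n-1\}$, and in a neck chart is a normal graph $u(y,\theta,\tau)$ over $\Sigma^{k_1}$. The linearization of RMCF at $\Sigma^{k_1}$ has a finite-dimensional unstable subspace in the $y\in\R^{n-k_1}$ direction, spanned by $1,\ y_i,\ y_iy_j$; point symmetry kills the constant and linear modes, so the leading behavior of $u$ as $\tau\to-\infty$ is a quadratic form $y^{\top}Qy$ with $Q\in\mathrm{Sym}(n-k_1)$ negative semidefinite, in the spirit of the expansions of \cite{ADS1,ADS2,DH}. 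After quotienting by the stabilizer $\SO(k_1+1)\times\SO(n-k_1)\subset\SO_{n+1}$ and absorbing the overall scale into time translation, the ordered spectrum of $Q$ becomes an intrinsic invariant of the orbit. Assigning to $M_\tau$ this ordered signed spectrum defines the candidate map into $\Delta_{n-1}$: the number of strictly negative eigenvalues equals $k_j-k_1$ (determining the forward limit $\Sigma^{k_j}$), and the relative magnitudes encode the simplex coordinates; intertwining with the standard flow reduces to the computation that $\sigma^{\tau}$ rescales each eigenmode by its expected exponential factor. Continuity in both directions should follow from parabolic stability of the asymptotic expansion together with Huisken's monotonicity, which rules out loss of compactness.

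The main obstacle is injectivity: two compact point-symmetric ancient solutions in $\R^{n+1}$ whose asymptotic matrices $Q$ have the same spectrum must coincide up to a rigid motion of~$\R^{n+1}$. This is exactly the higher-dimensional analogue of the uniqueness theorem of \cite{CDDHS} for $n=3$ with $O(2)$-symmetry, and is presently open; establishing it in full generality is essentially equivalent to the conjecture itself. A secondary difficulty is the compatibility across the boundary-interior interface of $\Delta_{n-1}$: as the spectrum of $Q$ degenerates (some eigenvalues tending to zero), the corresponding ancient solution must converge in the topology of $X$ to the expected product $K\times\R^{n-k}$ with $K$ classified by the inductive hypothesis. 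This compatibility should follow from the continuity of the semiflow on $X$ established in \S\ref{ss-semiflow-continuous}, combined with uniform control of the asymptotic profile, but matching the stratifications on the two sides will require care.
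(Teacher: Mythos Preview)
This statement is Conjecture~\ref{conj-main}, which the paper does \emph{not} prove in general. The paper verifies $n=2$ as an immediate consequence of Theorem~\ref{thm-uniqueness-of-simple-connections}, and in \S\ref{sec-conj} gives a conditional argument for $n=3$ (Theorem~\ref{thm-conjecture}) that assumes the rank-two statement of Remark~\ref{rem-bel}. So your proposal is not to be compared against a complete proof, and you rightly present it as a strategy with a declared gap.

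You correctly isolate the main obstacle: uniqueness of compact point-symmetric ancient solutions with a prescribed asymptotic quadratic form $Q$. This is open for $n\geq 4$, and even for $n=3$ the paper has to import it from \cite{CDDHS} and supplement it with the hypothesis of Remark~\ref{rem-bel}. Your proposed parametrization by the ordered spectrum of $Q$ is in the spirit of \cite{DH,CDDHS}, but the paper's conditional $n=3$ argument takes a different and less intrinsic route: it uses the result $\mathcal{A}^\circ\cong[0,1)$ of \cite{CDDHS} as a black box, pairs the abstract parameter $\alpha\in[0,1)$ with the Huisken energy level $\eta\in(\hu(\Sigma^3),\hu(\Sigma^1))$, and shows that the resulting map from the rectangle $[0,1]\times[\hu(\Sigma^3),\hu(\Sigma^1)]$ collapses the two horizontal edges to $\Sigma^1$ and $\Sigma^3$ while sending $\{1\}\times(\hu(\Sigma^3),\hu(\Sigma^1))$ onto $\Gamma(1,2)\cup\Gamma(2,3)$. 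This yields only the homeomorphism part of the conjecture; the paper does not establish the topological conjugacy of the flows, which your eigenmode-scaling argument at least attempts.

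One small point on your induction: the faces of $\Delta_{n-1}$ not containing $e_{n-1}$ are precisely the sub-faces of the single opposite face $\{e_0,\dots,e_{n-2}\}$, and the clean way to handle them all at once is to observe that an orbit whose forward limit is not $\Sigma^n$ must split as $\tilde C_\tau\times\R$ with $\tilde C_\tau\in I_s^{n-1}$, so that one invocation of the inductive hypothesis for $I_s^{n-1}\cong\Delta_{n-2}$ suffices. Your phrasing, with a varying $k_j$ and separate appeals to $I_s^{k_j}$, is not wrong but obscures this.
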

The uniqueness theorem~\ref{thm-uniqueness-of-simple-connections} implies that the
conjecture holds for $n=2$, and in section \ref{sec-conj} we present strong evidence
that the conjecture holds in dimension $n = 3$.

\subsection{Outline of the paper} In section \ref{sec-ex-uniq} we prove Theorem
\ref{thm-mcf-exist-unique}.  We use Ilmanen's definition \ref{def:emcf-solution} to
get the existence of a solution to the mean curvature flow equation \ref{eq-mcf} for
a short time, and then show that the solution becomes instantaneously smooth for
$t > 0$.  After that, using the arguments in \cite{DS} and convexity we show the
uniqueness of our solution.  In section \ref{sec-convex-topology} we introduce the
weighted measure on a set $\bar X$ of all closed convex sets in $\R^{n+1}$, a
weighted distance function between any two closed convex sets in $\R^{n+1}$.  We show
some properties of that distance function, continuity of the Huisken's functional
$\mathcal{H} : X \to \R$, and compactness of certain subsets of $\bar{X}$ with
respect to the topology induced by the introduced weighted distance function (here
$X$ is the set of all closed convex sets in $\R^{n+1}$ with nonempty interior).  In
section \ref{sec-semiflow} we define the semiflow by the rescaled mean curvature flow
(RMCF) n $X$.  We show that the semiflow defined that way is a continuous function in
our topology induced by the distance function.  In section \ref{sec-invariant-set} we
introduce the invariant set $I(h_0,h_1)$, as the set consisting of all $C \in X$ for
which there is an entire solution $\{C_{\tau}\}_{\tau\in \R}$ of RMCF with $C_0 = C$
and with $h_0 \le \mc H(C_{\tau} \le h_1$, for all $\tau\in \R$.  We show that this
invariant set is a compact subset of $X$.  We also give here a nice description of
the invariant set as the set consisting exactly of fixed points of RMCF, as well as
all connecting orbits between the fixed points.  In section \ref{sec-topology} we
show several topological features of the invariant set $I_s(h_0,h_1)$.  More
precisely, we show that the set $I_s(h_0,h_1)$ is path connected in the distance
topology and that it can be written as an intersection of a nested sequence of
contractible neighborhoods of $I_s(h_0,h_1)$, implying that the Čech cohomology
groups of $I_s^n$ are those of one point space.

In sections \ref{sec-hetero} and \ref{sec-conj} we mostly consider the case $n = 3$,
in which case Conjecture \ref{conj-main} states that the set $I(h_0,h_1)\cap X_s)$ is
homeomorphic to a two simplex, whose edges are $\Gamma(i,j)$, where $i < j$ and
$i,j\in \{1,2,3\}$ are defined by \eqref{eq-edges}.  Section \ref{sec-hetero} is
dedicated to proving Theorem \ref{thm-shadowing}, which says that every
neighborhood of $\Gamma(1,2)\cup \Gamma(2,3)$ contains an orbit from $\Sigma^1$ to
$\Sigma^3$, or in other words that there is a sequence of orbits from $\Sigma^1$ to
$\Sigma^3$ converging to $\Gamma(1,2)\cup\Gamma(2,3)$.  Note that the uniqueness of
$\Gamma(1,2)$ and $\Gamma(2,3)$, up to rigid motions follows by \cite{CHHW}, and the
uniqueness of one parameter family of orbits connecting $\Sigma^1$ to $\Sigma^3$
`almost' follows by \cite{CDDHS}.  Theorem \ref{thm-shadowing} follows by
Lemmas \ref{prop-nbhd-sigma2} and \ref{prop-nbhd-sigma2-backwards} which discuss the
behavior of rescaled mean curvature solution $\{C_{\tau}\}_{\tau\in \R}$, as
$\tau\to\infty$ and $\tau\to -\infty$, respectively, given that $C_0$ lies in a
neighborhood of $\Sigma^2$.  In section \ref{sec-conj} we prove Theorem
\ref{thm-conjecture} which gives us a strong evidence that Conjecture \ref{conj-main}
holds at least for dimension $n = 3$.  The proof of Theorem \ref{thm-conjecture}
yields the proof of Conjecture \ref{conj-main} modulo a hypothesis (see Remark
\ref{rem-bel} that is expected to be removed in future works.)

\section{Mean curvature flow with convex initial data}
\label{sec-ex-uniq}

If $C_0 \subset \R^{n+1}$ is compact and if its boundary $\partial C_0$ is a smooth
hypersurface, then the short time existence theory for parabolic equations provides a
smooth family of convex sets $C_t$ on a short time interval whose boundaries
$\partial C_t$ evolve by MCF.  Huisken \cite{Hu} showed that this smooth solution
shrinks to a point as $t\nearrow T$ for some $T<\infty$.  We will often denote this
solution by $C_t$ (rather than $M_t:=\partial C_t$) and say that it evolves by Mean
Curvature Flow, meaning that $C_t$ is a family of smooth convex sets whose boundaries
$M_t=\partial C_t$ are hypersurfaces in $\R^{n+1}$ evolving by classical MCF and
$C_t \to C_0$, as $t \to 0$.

The goal in this section is to establish Theorem \ref{thm-mcf-exist-unique} as stated
in the Introduction, that is to show the existence, uniqueness, and regularity for
mean curvature flow with convex initial data, without assuming the initial
hypersurface is necessarily bounded or smooth beyond what is required by convexity.

In the next subsection \ref{ss-defn} we will define a notion of weak MCF starting at
any closed convex set $C_0$ (bounded or unbounded) simply by approximation by smooth
compact convex solutions.  For general closed convex initial sets $C_0$ our solutions
coincide with Ilmanen's maximal subsolutions and for this reason we will refer to
them as the \emph{Ilmanen evolution by MCF}.  We will then present the basic
properties of such solutions which will lead us to the regularity result of Lemma
\ref{lemma-reg}, which states that the Ilmanen evolution by MCF is smooth for $t >0$
and therefore a classical solution to MCF.  This leads to Theorem
\ref{prop-existence} establishing the existence of a smooth MCF solution $C_t$, as
stated in Theorem \ref{thm-mcf-exist-unique}.  The uniqueness of such solutions will
then be shown in Theorem \ref{thm-uniqueness} in section~\ref{ss-uniq}.

\subsection{Definition and basic properties of Ilmanen's evolution by MCF}
\label{ss-defn}

\begin{definition}[Ilmanen convex evolution by MCF]
\label{def:emcf-solution}
If $C_0\subset \R^{n+1}$ is a closed convex set with nonempty interior then we define
its \emph{Ilmanen evolution by MCF} to be
\[
C_t^{\rm Ilm} := \overline{\bigcup_D D_t}
\]
where the union is over all smooth compact solutions $\{D_s:0\leq s <S\}$ to MCF with
$D_0\subset \interior C_0$.
\end{definition}

We now prove some basic properties of the solutions $C_t^{\rm Ilm}$ defined above.

\begin{prop}\label{prop-solution-from-sequences}
Let $C_0\subset\R^{n+1}$ be closed convex with nonempty interior.  For any sequence
of compact smooth bounded convex subsets $D_m\subset\interior C_0$ with
\[
D_m\subset\interior D_{m+1}\text{ and } C_0=\overline{\cup_{m=1}^\infty D_m}
\]
the Ilmanen flow $C^{\rm Ilm}_t$ of $C_0$ is given by
$\overline{\cup_1^\infty D_{m,t}}$ where $D_{m,t}$ is the smooth MCF starting from
$D_m$.
\end{prop}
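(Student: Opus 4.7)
The plan is to establish the two inclusions $\overline{\bigcup_m D_{m,t}} \subset C^{\rm Ilm}_t$ and $C^{\rm Ilm}_t \subset \overline{\bigcup_m D_{m,t}}$ separately. The first is immediate from Definition \ref{def:emcf-solution}: each $D_m$ is a smooth compact convex subset of $\interior C_0$, hence $\{D_{m,s}\}$ is one of the admissible smooth compact MCFs appearing in the union defining $C^{\rm Ilm}_t$, so $D_{m,t} \subset C^{\rm Ilm}_t$ for every $m$; and since $C^{\rm Ilm}_t$ is already closed by construction, taking the closure of the union preserves the inclusion.

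The real content lies in the reverse inclusion. Given any smooth compact MCF $\{D_s\}$ with $D_0 \subset \interior C_0$, I would show $D_t \subset \overline{\bigcup_m D_{m,t}}$ on the full lifespan of $D_t$ by producing a single index $M$ with $D_0 \subset \interior D_M$ and then comparing. To extract such an $M$, the first step is to verify that $\interior C_0 = \bigcup_m \interior D_m$. The inclusion $\supset$ is trivial. For $\subset$, given $x \in \interior C_0$, pick $r > 0$ with $\bar B(x,r) \subset C_0 = \overline{\bigcup_m D_m}$ and choose points $p_0, \dots, p_{n+1} \in \bigcup_m D_m$ approximating the vertices of a non-degenerate simplex that has $x$ in its interior. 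By the nesting $D_m \subset \interior D_{m+1}$, all $p_i$ lie in some common $D_M$; convexity of $D_M$ then places the entire simplex, and in particular $x$, inside $\interior D_M$. Since $D_0$ is compact and covered by the increasing open sets $\interior D_m$, the nesting yields $D_0 \subset \interior D_M$ for some $M$.

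Once that containment is secured, the avoidance principle for MCF does the rest. Since $D_0 \subset \interior D_M$ and both solutions are smooth and compact on their lifespans, the standard comparison principle yields $D_t \subset \interior D_{M,t}$ as long as both exist; moreover, $D_0 \subset D_M$ forces the extinction time of $D_t$ to be no greater than that of $D_{M,t}$, so the inclusion holds throughout the entire lifespan of $D_t$. Hence $D_t \subset D_{M,t} \subset \bigcup_m D_{m,t}$, and taking closures gives $D_t \subset \overline{\bigcup_m D_{m,t}}$. Since the defining union in $C_t^{\rm Ilm}$ ranges over all such $\{D_s\}$, the second inclusion follows.

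The only step that requires thought is the convex-geometric identity $\interior C_0 = \bigcup_m \interior D_m$, which is where the hypothesis $C_0 = \overline{\bigcup_m D_m}$ together with convexity is actually used; the avoidance argument afterwards is entirely routine. I do not anticipate any further obstacles, and no new regularity theory is needed since the $D_m$ are already smooth and compact, making classical MCF and its comparison principle directly applicable.
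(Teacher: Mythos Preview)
Your proposal is correct and follows essentially the same two-inclusion strategy as the paper: the first inclusion is immediate from the definition, and for the reverse you show that any competitor $D$ with $D_0\subset\interior C_0$ satisfies $D_0\subset\interior D_M$ for some $M$, then invoke comparison. The only difference is that the paper simply asserts that $\{\interior D_m\}$ is an open cover of any compact $D\subset\interior C_0$, whereas you spell out the underlying convex-geometric fact $\interior C_0=\bigcup_m\interior D_m$ via a simplex argument; this extra care is harmless and the arguments are otherwise identical.
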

\begin{proof}
By definition $\overline{\cup_{m=1}^\infty D_{m,t}} \subset C_t$.  On the other hand,
if $D\subset C_0$ is compact, convex, and smoothly bounded, then
$\{\interior D_m\}_{m\in\N}$ is an open covering of $D$, so that for some $m\in\N$
one has $D\subset D_m$.  The evolution $D_t$ by MCF of $D$ therefore satisfies
$D_t\subset D_{m,t}$.  This implies
$C_t \subset \overline{\cup_{m=1}^\infty D_{m,t}}$.
\end{proof}

\begin{prop}
If $\{C_t \mid 0\leq t<T\}$ is an Ilmanen evolution of $C_0$, and if $a\in(0,T)$,
then $\{C_{t+a} \mid 0\leq t<T-a\}$ is the Ilmanen evolution of $C_a$.
\end{prop}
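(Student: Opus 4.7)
The plan is to bootstrap this semigroup property entirely from Proposition~\ref{prop-solution-from-sequences}. Namely, I would choose an exhausting sequence for $C_0$, propagate it forward by classical MCF to time $a$, verify that the propagated sequence is itself an exhausting sequence for $C_a$ as required by Proposition~\ref{prop-solution-from-sequences}, and then read off that the Ilmanen evolution of $C_a$ at time $s$ is $C_{a+s}$.

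More concretely, I would pick smooth compact convex sets $D_m\subset\R^{n+1}$ with $D_m\subset\interior D_{m+1}$ and $C_0=\overline{\cup_m D_m}$, and let $D_{m,t}$ be the classical smooth MCF starting from $D_m$ on its maximal interval $[0,T_m)$. By Proposition~\ref{prop-solution-from-sequences} we have $C_t=\overline{\cup_m D_{m,t}}$ for every $t\in[0,T)$, where we interpret $D_{m,t}=\varnothing$ once $t\ge T_m$. Since $C_a$ has nonempty interior, there is an $m_0$ with $T_m>a$ for $m\ge m_0$; from the strict avoidance principle applied to the strict initial nesting $D_m\subset\interior D_{m+1}$, one gets $D_{m,a}\subset\interior D_{m+1,a}$ for $m\ge m_0$, and hence $D_{m,a}\subset\interior C_a$ (the open set $\interior D_{m+1,a}$ is contained in $C_a$, so in $\interior C_a$). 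Finally, $C_a=\overline{\cup_{m\ge m_0}D_{m,a}}$ by the same application of Proposition~\ref{prop-solution-from-sequences}.

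The sequence $\{D_{m,a}\}_{m\ge m_0}$ therefore satisfies all the hypotheses of Proposition~\ref{prop-solution-from-sequences} for the initial set $C_a$. Applying that proposition to $C_a$ with this exhausting sequence, the Ilmanen evolution of $C_a$ at time $s\in[0,T-a)$ equals $\overline{\bigcup_{m\ge m_0}D_{m,a+s}}$, which is exactly $C_{a+s}$ by Proposition~\ref{prop-solution-from-sequences} applied to $C_0$ at time $a+s$. This yields the desired identification $\{C_{t+a}\mid 0\leq t<T-a\}$ with the Ilmanen evolution of $C_a$.

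The only substantive point to justify carefully is the strict nesting $D_{m,a}\subset\interior D_{m+1,a}$, which is where the strong/strict maximum principle for smooth compact MCF enters; everything else is bookkeeping about which $D_m$ have not yet shrunk to a point by time $a$, and an application of Proposition~\ref{prop-solution-from-sequences} in both directions. I do not expect any additional technical obstacle.
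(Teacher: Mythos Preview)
Your proposal is correct and follows essentially the same route as the paper: choose an exhausting sequence $D_m$ for $C_0$, push it forward to time $a$, and apply Proposition~\ref{prop-solution-from-sequences} twice. The paper's proof is terser and does not spell out the strict nesting $D_{m,a}\subset\interior D_{m+1,a}$ or the discarding of indices with $T_m\le a$, so your version is in fact a slightly more careful rendering of the same argument.
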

\begin{proof}
Let $D_m$ be a smooth compact and convex sequence of sets with
$\overline{\cup_m D_m} = C_0$, and let $\{D_{m,t} \mid 0\leq t<T_m\}$ be the
corresponding smooth MCF solutions.  Then $D_{m,a}$ are smooth, compact, convex, and
by Proposition~\ref{prop-solution-from-sequences}, $C_a=\overline{\cup_m D_{m,a}}$.
Since $\{D_{m, a+t}\mid 0\leq t<T_m-a\}$ are the smooth MCFs starting from
$D_{m, a}$, Proposition~\ref{prop-solution-from-sequences} implies that the Ilmanen
evolution starting from $C_a$ is indeed $\overline{\cup_m D_{m, a+t}} = C_{a+t}$.
\end{proof}

\begin{prop}\label{prop-interior-empties}
If $\{C_t \mid 0\leq t < T\}$ is an Ilmanen evolution then $C_t\supset C_s$ for all
$0\leq t\leq s<T$.  Moreover, if $T<\infty$, then the set $\bigcap_{t\in[0, T)} C_t$
is closed and has empty interior.
\end{prop}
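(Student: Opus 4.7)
Fix an exhausting sequence $D_m$ as in Proposition~\ref{prop-solution-from-sequences}, so $D_m \subset \interior D_{m+1}$ and $C_t = \overline{\bigcup_m D_{m,t}}$ for every $t \in [0,T)$. For each $m$ the smooth compact convex MCF $D_{m,\cdot}$ has nonnegative mean curvature on its boundary, so it shrinks monotonically in time: $D_{m,s} \subset D_{m,t}$ whenever $t \le s$ lies in its domain of existence. Taking unions over $m$ and then closures gives $C_s \subset C_t$. The set $\bigcap_{t \in [0,T)} C_t$ is closed as an intersection of closed sets.

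\textbf{Finding a contributor that contains a fixed ball.} Suppose for contradiction that $K := \bigcap_{t \in [0,T)} C_t$ has nonempty interior, and pick a closed ball $B_r(x_0) \subset K$ with $r > 0$. Then $B_r(x_0) \subset C_{t_0}$ for every $t_0 \in [0,T)$, so $B_{r/2}(x_0) \subset \interior C_{t_0}$. The strong maximum principle applied along each $D_{m,\cdot}$ preserves strict nesting, so $D_{m,t_0} \subset \interior D_{m+1,t_0}$ whenever both exist, and the standard convex-analysis identity $\interior \overline U = U$ for an open convex $U$ yields
\[
\interior C_{t_0} = \bigcup_{m:\, T_m > t_0} \interior D_{m,t_0}.
\]
Compactness of $B_{r/2}(x_0)$ together with this nested open cover produces an index $M = M(t_0)$ with $B_{r/2}(x_0) \subset D_{M,t_0}$.

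\textbf{Avoidance and the contradiction.} Let $\delta := r^2/(8n)$. The family $s \mapsto B_{\sqrt{(r/2)^2 - 2ns}}(x_0)$ is a smooth MCF on $[0,\delta)$ starting from $B_{r/2}(x_0)$, so the avoidance principle applied to this ball and $D_{M,\cdot}$ gives
\[
B_{\sqrt{(r/2)^2 - 2ns}}(x_0) \subset D_{M,\, t_0 + s} \quad\text{for}\quad 0 \le s < \min\bigl(\delta,\; T_M - t_0\bigr).
\]
If $T_M - t_0 < \delta$, Huisken's theorem forces $D_{M,t_0+s}$ to collapse to a single point as $s \nearrow T_M - t_0$, yet the enclosed ball would still have radius bounded below by $\sqrt{(r/2)^2 - 2n(T_M - t_0)} > 0$, a contradiction. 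Hence $T_M \ge t_0 + \delta$, and since $T = \sup_m T_m$ we conclude $T \ge t_0 + \delta$ for every $t_0 \in [0,T)$; letting $t_0 \nearrow T$ produces the absurdity $T \ge T + \delta$. The main obstacle in this plan is the middle paragraph --- identifying $\interior C_{t_0}$ with the increasing union $\bigcup_m \interior D_{m,t_0}$ and extracting the index $M$ by compactness, both of which rely on the strict nesting furnished by the strong maximum principle. Once that bookkeeping is in place, avoidance combined with Huisken's shrinks-to-a-point theorem closes the argument immediately.
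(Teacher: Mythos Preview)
Your proof is correct and follows essentially the same strategy as the paper's: assume a ball persists in every $C_t$, locate it inside one of the smooth compact approximants $D_{M,t_0}$ at a time $t_0$ close to $T$, and use comparison with shrinking spheres to force that approximant (hence the Ilmanen flow) to survive past $T$. The paper is terser about the step you flag as the main obstacle---it simply invokes the definition of $C_{t_1}^{\rm Ilm}$ to produce the contributor $D$---whereas you spell out the identification $\interior C_{t_0} = \bigcup_m \interior D_{m,t_0}$ via strict nesting and the convex-analysis identity; your claim $T = \sup_m T_m$ is correct and is exactly what the paper uses implicitly when it says the flow ``is defined and nonempty beyond $t=T$.''
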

\begin{proof}
All approximating compact smooth convex mean curvature flows $\{D_t\}$ in the
definition \ref{ss-defn} of the Ilmanen flow $\{C_t\}$ are shrinking, so by that
definition the sets $C_t$ also shrink in time.

Each $C_t$ is closed, so their intersection is also closed.

Suppose that the interior of $\bigcap_{0\leq t<T} C_t$ is nonempty, i.e.~suppose that
there exist a point $p\in \R^{n+1}$ and $r>0$ such that $B_{2r}(p) \subset C_t$ for
all $t<T$.  Choose $t_1$ with $T-\frac{r^2}{2n} <t_1 <T$.  By definition of the
Ilmanen flow there is a smooth compact MCF $D_t$ that is defined for $0\leq t < t_2$
for some $t_2\in (t_1, T)$, and that satisfies $D_0\subset C_0$ and
$B_r(p)\subset D_{t_1}$.  The maximum principle implies that $D_t$ contains
$B_{\sqrt{r^2-2n(t-t_1)}}(p)$ for all $t\leq t_1+\frac{r^2}{2n}$.  In particular,
$D_t$ is nonempty and contains the point $p$ for all $t<t_1+\frac{r^2}{2n}$.  Since
$t_1+\frac{r^2}{2n} > T$ it follows that the Ilmanen flow is defined at least for all
$t<t_1+\frac{r^2}{2n}$, and in particular is defined and nonempty beyond $t=T$.

This contradicts the assumption that $\{C_t\mid 0\leq t< T\}$ is an Ilmanen flow, and
thus we have shown that $\bigcap_{t<T}C_t$ has no interior.
\end{proof}
\begin{definition} \label{def-epsilon-core} For any convex set $C$ we define the
\emph{$\varepsilon$-core of $C$} to be
\[
K_\varepsilon C := \{ x\mid \bar B_\varepsilon(x)\subset C\}.
\]
\end{definition}

\begin{lemma}\label{lemma-Ilm-is-Hu}
If $C_0$ is compact and has smooth boundary, then $C_t^{\rm Ilm} = C_t$, where
$C_t^{\rm Ilm}$ is the Ilmanen evolution of $C_0$, and $C_t$ is the smooth classical
solution that appears in Huisken's theorem.
\end{lemma}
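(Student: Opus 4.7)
The plan is to prove the two inclusions $C_t^{\rm Ilm}\subset C_t$ and $C_t\subset C_t^{\rm Ilm}$ separately: the first via the avoidance principle for smooth MCF, and the second by an inner approximation of $C_0$ by rescaled copies of itself centered at an interior point.

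For the inclusion $C_t^{\rm Ilm}\subset C_t$, I would take any smooth compact MCF $\{D_s\mid 0\le s<S\}$ with $D_0\subset\interior C_0$ appearing in Definition~\ref{def:emcf-solution}. Both $D_s$ and Huisken's solution $C_s$ are smooth compact classical MCFs on their common lifespan, and their initial data satisfy the strict inclusion $D_0\subset\interior C_0$, so the avoidance principle for classical MCF yields $D_s\subset\interior C_s\subset C_s$ for all times where both flows are defined. Taking the union over all admissible $D$ and then the closure, and using that $C_t$ is already closed, gives $C_t^{\rm Ilm}\subset C_t$.

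For the reverse inclusion $C_t\subset C_t^{\rm Ilm}$, I would pick a point $p\in\interior C_0$ and, for $\lambda\in(0,1)$, consider the smooth compact convex set
\[
D_\lambda := p+\lambda(C_0-p).
\]
Choosing $r>0$ with $B_r(p)\subset C_0$ and using convexity shows $D_\lambda+B_{(1-\lambda)r}(0)\subset C_0$, so $D_\lambda\subset\interior C_0$ and $D_\lambda$ is admissible in Definition~\ref{def:emcf-solution}. Because $\partial C_0$ is smooth, so is $\partial D_\lambda$. By translation invariance and parabolic scaling of MCF, the classical smooth flow starting at $D_\lambda$ is
\[
D_{\lambda,t} = p+\lambda\bigl(C_{t/\lambda^2}-p\bigr),\qquad 0\le t<\lambda^2 T.
\]
For any fixed $t\in[0,T)$ and $\lambda$ close enough to $1$ that $\lambda^2 T>t$, the set $D_{\lambda,t}$ is well defined and lies inside $C_t^{\rm Ilm}$. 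Continuity in time of the Huisken flow gives $C_{t/\lambda^2}\to C_t$ in Hausdorff distance, and hence $D_{\lambda,t}\to C_t$ as $\lambda\to 1^-$. Therefore every $x\in C_t$ is a limit of points of $\bigcup_{\lambda<1}D_{\lambda,t}\subset C_t^{\rm Ilm}$, and closedness of $C_t^{\rm Ilm}$ gives $C_t\subset C_t^{\rm Ilm}$.

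I do not anticipate a serious obstacle here: both directions reduce to standard features of classical MCF from smooth compact convex data, namely the avoidance principle and continuous dependence on initial data (in fact, exact scaling in our case). The only mildly subtle point is that the lifespan $\lambda^2 T$ of $D_{\lambda,t}$ is strictly less than $T$, but it tends to $T$ as $\lambda\to 1^-$, so any fixed $t<T$ is eventually covered, which is all that is needed.
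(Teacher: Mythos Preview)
Your proof is correct, and the first inclusion matches the paper exactly. For the reverse inclusion $C_t\subset C_t^{\rm Ilm}$ you take a genuinely different route from the paper. The paper approximates $C_0$ from inside by its $\varepsilon$-cores $K_\varepsilon C_0$ (Definition~\ref{def-epsilon-core}), notes that these are smooth compact convex for small $\varepsilon$, and then invokes the regularity theory for smooth compact MCF to conclude that the flows $C_{\varepsilon,t}$ starting from $K_\varepsilon C_0$ converge smoothly to $C_t$. Your approach instead uses the scaled copies $D_\lambda = p+\lambda(C_0-p)$, for which the evolution is \emph{explicitly} $p+\lambda(C_{t/\lambda^2}-p)$ by the parabolic scaling and translation invariance of MCF. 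This reduces the needed convergence to mere continuity in time of the single fixed solution $s\mapsto C_s$, which is immediate from smoothness, whereas the paper's argument appeals to continuous dependence on initial data. Your argument is therefore somewhat more elementary and self-contained; the paper's argument, on the other hand, does not rely on $C_0$ being star-shaped with respect to an interior point (which is automatic here by convexity, so this is not a real advantage in the present setting).
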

\begin{proof}
By the maximum principle for compact smooth solutions of MCF, any solution
$D_t^{\rm Ilm}$ of MCF with $D_0\subset C_0$ is contained in the classical solution
$C_t$.  Thus $C_t^{\rm Ilm} \subset C_t$.

On the other hand, the sets $K_\varepsilon C_0 $ are compact, convex, and smoothly
bounded, at least if $\varepsilon>0$ is small enough.  It follows from the regularity
theory for smooth compact solutions of MCF that the classical solutions
$C_{\varepsilon, t}$ to MCF starting from $K_\varepsilon C_0$ converge smoothly to
$C_t$ on any compact time interval $[0, T']\subset[0, T)$.  This implies
$C_t\subset C_t^{\rm Ilm}$.
\end{proof}

\begin{prop}[Continuity in time of MCF] The Ilmanen flow
$\{C_t^{\rm Ilm} \mid 0\leq t<T\}$ of any closed convex $C\subset \R^{n+1}$ with
nonempty interior satisfies
\[
K_{\varepsilon}C_t^{\rm Ilm} \subset C_s^{\rm Ilm} \subset C_t^{\rm Ilm} \quad \mbox{
  whenever } \quad 0\leq t\leq s\leq t+\varepsilon^2/2n\leq T.
\]

\end{prop}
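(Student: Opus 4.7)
The second inclusion $C_s^{\rm Ilm}\subset C_t^{\rm Ilm}$ is the monotonicity already recorded in Proposition~\ref{prop-interior-empties}, so I focus on the erosion bound $K_\varepsilon C_t^{\rm Ilm}\subset C_s^{\rm Ilm}$. By the time-translation property proved earlier in this section, it suffices to treat $t=0$, i.e.\ to show $K_\varepsilon C_0\subset C_s^{\rm Ilm}$ whenever $0\leq s\leq \varepsilon^2/2n\leq T$. The strategy is to take a point $x\in K_\varepsilon C_0$, approximate the inscribed ball $\bar B_\varepsilon(x)\subset C_0$ by balls of strictly smaller radius that fit inside the smooth interior approximations of $C_0$, and then compare the corresponding smooth MCFs with the explicit shrinking-sphere solution via the maximum principle.

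Concretely, fix $x\in K_\varepsilon C_0$, so $\bar B_\varepsilon(x)\subset C_0$; since the open ball $B_\varepsilon(x)$ is open, $B_\varepsilon(x)\subset \interior C_0$. Pick a nested sequence $\{D_m\}$ of smooth compact convex bodies with $D_m\subset\interior D_{m+1}$ and $\overline{\bigcup_m D_m}=C_0$; by Proposition~\ref{prop-solution-from-sequences}, $C_s^{\rm Ilm}=\overline{\bigcup_m D_{m,s}}$, where $D_{m,s}$ is the classical MCF starting from $D_m$. Using the convex-analysis identity $\interior \overline{U}=\interior U$ applied to $U=\bigcup_m D_m$, together with a compactness-and-nesting argument, one has $\interior C_0=\bigcup_m \interior D_m$. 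For any $r<\varepsilon$ the closed ball $\bar B_r(x)$ is a compact subset of $\interior C_0$, so $\bar B_r(x)\subset D_m$ for $m$ sufficiently large. Huisken's maximum principle applied to the smooth compact solution $D_{m,s}$ and to the explicit MCF of $\bar B_r(x)$ then gives
\[
\bar B_{\sqrt{r^2-2ns}}(x)\subset D_{m,s}\subset C_s^{\rm Ilm}\qquad \text{for every } s\in[0,r^2/2n].
\]
Unioning over $r\in(\sqrt{2ns},\varepsilon)$ and invoking closedness of $C_s^{\rm Ilm}$ yields $\bar B_{\sqrt{\varepsilon^2-2ns}}(x)\subset C_s^{\rm Ilm}$, and in particular $x\in C_s^{\rm Ilm}$, for every $s<\varepsilon^2/2n$.

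The main obstacle I anticipate is the endpoint $s=\varepsilon^2/2n$: the shrinking-sphere comparison collapses to the single point $\{x\}$ exactly at that time, so the argument above only places $x$ in $\bigcap_{s<\varepsilon^2/2n}C_s^{\rm Ilm}$. To deduce $x\in C_{\varepsilon^2/2n}^{\rm Ilm}$ one needs left-continuity of the Ilmanen flow in time, which I plan to establish by combining the Hausdorff continuity in time of each approximating smooth MCF $s\mapsto D_{m,s}$ on its open interval of existence with the monotonicity-in-$m$ structure of Proposition~\ref{prop-solution-from-sequences} via a diagonal extraction. Modulo this continuity step, the erosion bound $K_\varepsilon C_0\subset C_{\varepsilon^2/2n}^{\rm Ilm}$ follows.
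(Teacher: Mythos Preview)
Your approach is essentially the paper's: reduce to $t=0$ by time translation, then compare with shrinking spheres centered at $x\in K_\varepsilon C_0$. You are in fact more careful than the paper on one point: the Ilmanen definition requires $D_0\subset\interior C_0$, and you correctly pass through radii $r<\varepsilon$ to ensure $\bar B_r(x)\subset\interior C_0$, whereas the paper simply writes ``the definition of the Ilmanen flow implies $\bar B_{\varepsilon(s)}(x)\subset C_s^{\rm Ilm}$'' without this intermediate step.

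Your endpoint worry at $s=\varepsilon^2/2n$ is legitimate, and the paper's proof glosses over it as well (the ``and hence $x\in C_s^{\rm Ilm}$ for $0\leq s\leq \varepsilon^2/2n$'' is not justified by what precedes it). However, your proposed remedy via a diagonal extraction on the approximating flows is heavier than necessary. The clean fix is exactly Lemma~\ref{lemma-spacetime-trace-closed} (closedness of the space-time trace), whose proof does not use the present proposition; once you know $(s,x)\mapsto x\in C_s^{\rm Ilm}$ defines a closed set, $x\in C_s^{\rm Ilm}$ for all $s<\varepsilon^2/2n$ immediately yields $x\in C_{\varepsilon^2/2n}^{\rm Ilm}$. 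Alternatively, simply note that every later use of this proposition in the paper only needs the strict inequality $s<t+\varepsilon^2/2n$.
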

\begin{proof}
We first consider the case $t=0$.  If $x\in K_\varepsilon C_0^{\rm Ilm}$ then
$\bar B_\varepsilon(x)\subset C_0^{\rm Ilm}$.  The MCF at time $s$ starting from
$\bar B_\varepsilon(x)$ is the ball $\bar B_{\varepsilon(s)}(x)$ with radius
$\varepsilon(s) = \sqrt{\varepsilon^2-2ns}$.  The definition of the Ilmanen flow
implies $\bar B_{\varepsilon(s)}(x) \subset C_s^{\rm Ilm}$ for
$s < \varepsilon^2/2n$, and hence $x\in C_s^{\rm Ilm}$ for
$0\leq s\leq \varepsilon^2/2n$.  Thus we have shown
\[
K_\varepsilon C_0^{\rm Ilm} \subset \hat C_s^{\rm Ilm} \subset C_0^{\rm Ilm} \text{
  for }0\leq s\leq {\varepsilon^2 / 2n}
\]
The general case follows by considering $\hat C_s^{\rm Ilm} = C_t^{\rm Ilm}+s$ for
$0\leq s<T-t$ and applying the same reasoning.
\end{proof}

\begin{lemma}[Comparison with compact smooth solutions of MCF] Assume that
$\{C_t^{\rm Ilm} \mid 0\leq t <T\}$ is an Ilmanen evolution by MCF and let
$\{K_t \mid t_0\leq t <t_1\}$ be a family of compact smoothly bounded domains whose
boundary evolves by classical MCF.
\begin{enumerate}[{\em (a)}]
\item If $K_{t_0} \subset \interior C_{t_0}^{\rm Ilm} $ then
$K_t\subset C_t^{\rm Ilm} $ for all $t\in [t_0, t_1)$.
\item If $K_{t_0}$ and $C_{t_0}^{\rm Ilm} $ are disjoint then $K_t$ and
$C_t^{\rm Ilm} $ are disjoint for all $t\in [t_0, t_1).$
\end{enumerate}
\end{lemma}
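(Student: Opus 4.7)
The plan is to handle both parts by first reducing to the case $t_0=0$ via the time-translation proposition proved just above, so that $C_0^{\mathrm{Ilm}}=C_0$ (established in the discussion immediately following Definition~\ref{def:emcf-solution}), and then to exploit the fact that $C_t^{\mathrm{Ilm}}$ is by definition the closure of the union of all smooth compact classical MCF solutions starting inside $\interior C_0$.

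For part~(a), after the reduction the hypothesis $K_0\subset\interior C_0$ makes the family $\{K_t\}_{0\le t<t_1}$ itself one of the admissible $D$'s appearing in Definition~\ref{def:emcf-solution}. Hence $K_t\subset\bigcup_D D_t\subset\overline{\bigcup_D D_t}=C_t^{\mathrm{Ilm}}$ directly, with no further work required.

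For part~(b), I would argue by a thickening of $K$. After the reduction $K_0$ is compact, $C_0^{\mathrm{Ilm}}=C_0$ is closed, and $\delta:=\dist(K_0,C_0)>0$. Fix any $T<t_1$. I enlarge $K_0$ to a compact smoothly bounded domain $\tilde K_0$ with $K_0\subset\interior\tilde K_0$ and $\dist(\tilde K_0,C_0)>\delta/2$; a suitably regularized outer parallel set $K_0+\bar B_\varepsilon$ for small $\varepsilon$ works since $K_0$ has positive reach. For $\varepsilon$ sufficiently small, continuous dependence of smooth compact MCF on its initial data gives that the classical flow $\tilde K_t$ remains smooth on $[0,T]$, and the strong maximum principle yields the strict inclusion $K_t\subset\interior\tilde K_t$ throughout $[0,T]$. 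Now for each admissible $D$ in Definition~\ref{def:emcf-solution}, $D_0\subset\interior C_0$ is disjoint from $\tilde K_0$, and the classical avoidance principle for two smooth compact MCFs forces $D_t\cap\tilde K_t=\varnothing$ for all $t$ in the common smooth lifespan. Therefore $\bigcup_D D_t\subset\R^{n+1}\setminus\tilde K_t$, an open set, and taking closures yields $C_t^{\mathrm{Ilm}}\subset\R^{n+1}\setminus\interior\tilde K_t$; the inclusion $K_t\subset\interior\tilde K_t$ then gives $K_t\cap C_t^{\mathrm{Ilm}}=\varnothing$ on $[0,T]$, and letting $T\nearrow t_1$ completes the proof.

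The main obstacle I anticipate is ensuring that the thickened flow $\tilde K_t$ is smooth on the entire fixed interval $[0,T]$; this is standard but not entirely trivial, requiring continuous dependence of smooth compact MCF on smooth initial data together with uniform curvature bounds on $[0,T]$ that allow $\varepsilon\to 0$ without shortening the smooth lifespan below $T$. Conceptually, the key observation that makes the closure argument go through is that disjointness from an open complement $\R^{n+1}\setminus\tilde K_t$ passes to the closure as disjointness from $\interior\tilde K_t$, which is the set that contains $K_t$.
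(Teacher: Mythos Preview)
Your proposal is correct. For part~(a), your route is in fact more direct than the paper's: rather than reducing to $t_0=0$, the paper invokes the approximation from Proposition~\ref{prop-solution-from-sequences} to produce a single smooth compact convex $D_0\subset\interior C_0$ whose flow at time $t_0$ already contains $K_{t_0}$, and then applies the classical comparison principle to the pair $(K_t,D_t)$ to get $K_t\subset D_t\subset C_t^{\mathrm{Ilm}}$. Your observation that, after time translation, $\{K_t\}$ is itself one of the admissible families in Definition~\ref{def:emcf-solution} is shorter and avoids the intermediary convex $D$.

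For part~(b), the paper's argument is much lighter than your thickening construction. It simply notes that the classical avoidance principle gives $K_t\cap D_t=\varnothing$ for every smooth compact $\{D_t\}$ with $D_0\subset\interior C_0$; the passage to the closure $C_t^{\mathrm{Ilm}}=\overline{\bigcup_D D_t}$ is (implicitly) justified by the quantitative form of avoidance, namely
\[
\dist(K_t,D_t)\ \ge\ \dist(K_{t_0},D_{t_0})\ \ge\ \dist\bigl(K_{t_0},C_{t_0}^{\mathrm{Ilm}}\bigr)\ >\ 0,
\]
a bound uniform in $D$ that survives taking the closure. This completely sidesteps the continuous-dependence issue for the enlarged flow $\tilde K_t$ that you flagged as the main obstacle; your argument is valid, but the extra machinery is not needed here.
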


\begin{proof}
(a) The compactness of $K_{t_0}\subset\interior C_{t_0}^{\rm Ilm}$ implies that there
is a compact smooth convex $D_0\subset \interior C_0$ with
$K_{t_0}\subset D_{t_0}^{\rm Ilm} $.  The comparison principle for smooth compact
solutions of MCF then implies that $K_t \subset D_t \subset C_t^{\rm Ilm} $ for all
$t\in (t_0, t_1)$.

(b) The statement is true for any smooth compact solution $\{D_t \mid 0\leq t<T\}$
contained in $C_t^{\rm Ilm}$, and therefore also holds for $C_t^{\rm Ilm}$.
\end{proof}

\begin{lemma}\label{lemma-spacetime-trace-closed}
The space-time trace of an Ilmanen evolution by MCF is closed, i.e.~if
$\{C_t^{\rm Ilm} \mid 0\leq t <T\}$ is an Ilmanen evolution by MCF, then the set
\[ [C]^{\rm Ilm} \stackrel{\rm def}= \{(t, x)\in[0, T)\times \R^{n+1} \mid x\in
C_t^{\rm Ilm}\}
\]
is a closed subset of $[0, T)\times\R^{n+1}$.
\end{lemma}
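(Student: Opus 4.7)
The plan is to check sequential closedness of $[C]^{\rm Ilm}$ in $[0, T)\times \R^{n+1}$ directly. Let $(t_k, x_k) \in [C]^{\rm Ilm}$ converge to some $(t_*, x_*) \in [0, T)\times \R^{n+1}$, and aim for $x_* \in C_{t_*}^{\rm Ilm}$. Each slice $C_t^{\rm Ilm}$ is closed by its very definition, and is also convex: by Proposition \ref{prop-solution-from-sequences} it is the closure of the increasing union $\bigcup_m D_{m,t}$, with each $D_{m,t}$ convex (Huisken). After passing to a subsequence I split into two cases according to whether $t_k \geq t_*$ for all $k$ or $t_k < t_*$ for all $k$.

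If $t_k \geq t_*$ along a subsequence, the monotonicity $C_{t_k}^{\rm Ilm}\subset C_{t_*}^{\rm Ilm}$ from Proposition \ref{prop-interior-empties} places $x_k$ in $C_{t_*}^{\rm Ilm}$, and closedness yields $x_* \in C_{t_*}^{\rm Ilm}$. The substantive case is $t_k < t_*$ for every $k$. Here the preceding continuity-in-time proposition supplies $K_\varepsilon C_{t_k}^{\rm Ilm}\subset C_{t_*}^{\rm Ilm}$ whenever $t_* \leq t_k + \varepsilon^2/(2n)$, but we only know $x_k \in C_{t_k}^{\rm Ilm}$ and $x_k$ may sit on the boundary. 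The trick is to nudge $x_k$ inward by convexity. Since $t_* < T$ lies strictly before extinction, pick $p \in \interior C_{t_*}^{\rm Ilm}$ with $\bar B_\rho(p)\subset C_{t_*}^{\rm Ilm}$; by the monotonicity of the easy case, $\bar B_\rho(p)\subset C_{t_k}^{\rm Ilm}$ as well. Set $\varepsilon_k := \sqrt{2n(t_*-t_k)}$ and $\lambda_k := \varepsilon_k/\rho$, valid and in $(0,1]$ for all large $k$, and define
\[
z_k := (1-\lambda_k)\, x_k + \lambda_k\, p .
\]
An elementary convex-combination computation gives $\bar B_{\varepsilon_k}(z_k)\subset \mathrm{conv}\bigl(\{x_k\}\cup \bar B_\rho(p)\bigr)\subset C_{t_k}^{\rm Ilm}$, so $z_k \in K_{\varepsilon_k} C_{t_k}^{\rm Ilm}$; the continuity-in-time inclusion then places $z_k \in C_{t_*}^{\rm Ilm}$. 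Since $|z_k - x_k| = \lambda_k\, |p - x_k| \to 0$, we have $z_k \to x_*$, and closedness of $C_{t_*}^{\rm Ilm}$ completes the argument.

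The main obstacle is exactly this second case: the continuity-in-time estimate only controls the $\varepsilon$-core $K_\varepsilon C_{t_k}^{\rm Ilm}$, and a naive limit argument breaks down because $x_k$ may approach $x_*$ from outside $C_{t_*}^{\rm Ilm}$ along the shrinking boundaries $\partial C_{t_k}^{\rm Ilm}$. What saves the argument is the combination of convexity of each Ilmanen slice with the nonemptiness of $\interior C_{t_*}^{\rm Ilm}$ for $t_* < T$; together these provide exactly the room needed to perform the perturbation $x_k \mapsto z_k$ with $z_k$ sitting inside the $\varepsilon_k$-core while still converging to $x_*$.
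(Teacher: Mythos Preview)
Your proof is correct and follows essentially the same approach as the paper: the same case split on $t_k \geq t_*$ versus $t_k < t_*$, the same use of monotonicity for the first case, and in the second case the same combination of an interior ball $\bar B_\rho(p)\subset C_{t_*}^{\rm Ilm}$, convexity of the slices, and comparison with shrinking balls (packaged as the continuity-in-time inclusion $K_\varepsilon C_{t_k}^{\rm Ilm}\subset C_{t_*}^{\rm Ilm}$). The only cosmetic difference is that the paper fixes an arbitrary interior point $z$ of $\mathrm{conv}(\{x_*\}\cup B_\rho(p))$, shows every such $z$ lies in $C_{t_*}^{\rm Ilm}$, and then takes the closure, whereas you produce an explicit sequence $z_k\in C_{t_*}^{\rm Ilm}$ with $z_k\to x_*$; both are equivalent implementations of the same convex-perturbation idea.
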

\begin{proof}
Consider a sequence of points $(t_k, q_k)\in [C]^{\rm Ilm}$ that converges to
$(\bar t, \bar q)$.  We will show that $(\bar t, \bar q)\in[C]^{\rm Ilm}$.

If there are infinitely many $k$ with $t_k\geq \bar t$ then we can pass to a
subsequence and assume that $t_k\geq \bar t$ for all $k$.  It follows from
$C_{t_k}\subset C_{\bar t}$ that $q_k\in C_{\bar t}$ for all $k$.  Since $C_{\bar t}$
is closed this implies $\bar q\in C_{\bar t}$.

If only finitely many $k$ satisfy $t_k\geq \bar t$ then we may assume after passing
to a subsequence that $t_k<\bar t$ for all $k$.  In this case we choose a point
$p\in \interior C_{\bar t}^{\rm Ilm}$, and let $\delta>0$ be small enough to ensure
$B_\delta(p)\subset C_{\bar t}^{\rm Ilm}$.  Since the sets $C_t^{\rm Ilm}$ shrink, it
follows that $B_{\delta}(p)\subset C_t^{\rm Ilm}$ for all $t\leq \bar t$.

Since $C_{t_k}^{\rm Ilm}$ is convex it contains the convex hull of $B_\delta(p)$ and
$q_k$.  If $z$ is an interior point of the convex hull of $\bar q$ and $B_\delta(p)$,
then $z$ also is an interior point for the convex hull of
$\{q_k\}\cup B_{\delta}(p)$, if $k$ is large enough.  It follows that there is an
$\varepsilon>0$ such that $B_\varepsilon(z)\subset C_{t_k}^{\rm Ilm}$ for all large
$k$.  Choose $k$ so large that $2n(\bar t-t_k)<\varepsilon^2$.  Then, by comparison,
we find that $B_r(z)\subset C_{\bar t}$ if $r=\sqrt{\varepsilon^2-2n(\bar t-t_k)}$.
In particular, $z\in C_{\bar t}^{\rm Ilm}$.

Thus we have shown that $C_{\bar t}^{\rm Ilm}$ contains the interior of the convex
hull of $\{\bar q\} \cup B_\delta(p)$.  Since $C_{\bar t}^{\rm Ilm}$ is by definition
closed, this implies $\bar q\in C_{\bar t}^{\rm Ilm}$.
\end{proof}

\medskip
\subsection{Regularity of Ilmanen's evolution by MCF}
\label{ss-reg}
\begin{lemma}\label{lemma-reg} Let $C_0\subset \R^{n+1}$ be a closed convex set
with nonempty interior, and let $\{C_t^{\rm Ilm} \mid 0\leq t <T\}$ be its Ilmanen
evolution by MCF.  Then $\partial C_t^{\rm Ilm}$ is smooth for each $t>0$, and the
family of hypersurfaces $\{\partial C_t^{\rm Ilm} \mid 0<t<T\}$ is a smooth solution
of MCF.
\end{lemma}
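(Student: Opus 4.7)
The plan is to derive smoothness at each interior time $t_0\in(0,T)$ by approximating $C_0$ from inside by smooth compact convex bodies, running their classical MCFs, and extracting a smooth limiting flow by means of interior estimates for convex graphical MCF. Fix $t_0 \in (0,T)$ and any $x_0 \in \partial C_{t_0}^{\rm Ilm}$; it suffices to prove that $\partial C_t^{\rm Ilm}$ is smooth on a spacetime neighborhood of $(t_0,x_0)$ and evolves classically there. As in Proposition~\ref{prop-solution-from-sequences}, choose smooth compact convex $D_m$ with $D_m \subset \interior D_{m+1} \subset \interior C_0$ and $\overline{\bigcup_m D_m}=C_0$, let $D_{m,t}$ be the classical MCF of $D_m$ produced by Huisken's theorem, and recall $C_t^{\rm Ilm}=\overline{\bigcup_m D_{m,t}}$.

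The continuity-in-time lemma, combined with the ball comparison principle, shows that $C_{t_0}^{\rm Ilm}$ has nonempty interior and that some closed ball $\bar B_\rho(p)\subset C_{t_0}^{\rm Ilm}$ survives as a shrinking ball inside $C_t^{\rm Ilm}$ for $t\in[t_0, t_0+\rho^2/(8n)]$. Locally uniform Hausdorff convergence $D_{m,t_0}\to C_{t_0}^{\rm Ilm}$ then implies $\bar B_{\rho/2}(p)\subset D_{m,t_0}$ for $m$ large, and ball comparison forces the lifespan $T_m$ of $D_{m,t}$ to exceed $t_0+\delta$ for some $\delta>0$ independent of $m$. At the boundary point $x_0$, convexity of $C_{t_0}^{\rm Ilm}$ together with the interior ball $\bar B_\rho(p)$ produces a uniform exterior cone: on a small $B_r(x_0)$ with $r\ll \dist(p, x_0)$, the set $\partial C_{t_0}^{\rm Ilm}$ is the graph of a convex function over a supporting hyperplane $P$ at $x_0$, with Lipschitz constant $L$ depending only on $\rho$ and $\dist(p, x_0)$. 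The same geometric argument, applied with the persistent interior ball $\bar B_{\rho/4}(p)\subset D_{m,t}$ in place of $\bar B_\rho(p)$, shows that for $m$ large and $t$ in a small interval around $t_0$ the hypersurface $\partial D_{m,t}$ is likewise a convex graph over $P\cap B_r(x_0)$ with Lipschitz constant bounded by $2L$, uniformly in $m$ and $t$.

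Applying the Ecker-Huisken interior gradient and curvature estimates to each $\partial D_{m,t}$ on the cylinder $(t_0-\delta, t_0+\delta)\times B_r(x_0)$, and then parabolic Schauder theory, I obtain $C^\infty$ bounds on $\partial D_{m,t}$ on any slightly smaller cylinder that are uniform in $m$. A subsequence then converges in $C^\infty_{\mathrm{loc}}$ to a smooth convex MCF whose trace must coincide with $\partial C_t^{\rm Ilm}$ near $(t_0, x_0)$, by the identity $C_t^{\rm Ilm}=\overline{\bigcup_m D_{m,t}}$ and convexity; uniqueness of the limit upgrades the subsequential convergence to full $C^\infty_{\mathrm{loc}}$ convergence. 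Hence $\partial C_t^{\rm Ilm}$ is smooth on a spacetime neighborhood of $(t_0, x_0)$ and evolves there by classical MCF, and since $(t_0, x_0)$ was arbitrary, the lemma follows. The main technical step is the uniform local graph representation with uniform Lipschitz bound; once that is established, interior regularity for convex graphical MCF and a diagonal extraction deliver the rest.
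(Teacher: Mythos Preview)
Your proof follows the same strategy as the paper's: approximate from inside by smooth compact convex MCFs $D_{m,t}$, show that near any point of $\partial C_{t_0}^{\rm Ilm}$ the hypersurfaces $\partial D_{m,t}$ are uniformly Lipschitz graphs over a fixed hyperplane, apply Ecker--Huisken interior estimates to get uniform $C^\infty$ bounds, and pass to the limit.

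The one step where your argument is thinner than the paper's is the uniform Lipschitz graph claim for $\partial D_{m,t}$ over the fixed hyperplane $P$. Your ``same geometric argument'' invokes only the persistent interior ball $\bar B_{\rho/4}(p)\subset D_{m,t}$, but $P$ is a supporting hyperplane for $C_{t_0}^{\rm Ilm}$ and not, in general, for $D_{m,t}$ (in particular for $t<t_0$ the sets $D_{m,t}$ may protrude past $P$), so the interior ball alone does not force $\partial D_{m,t}\cap B_r(x_0)$ to be a graph over $P$ with a uniform Lipschitz bound. The paper closes this by also fixing an \emph{exterior} point $q\notin C_{t_0}^{\rm Ilm}$, using Lemma~\ref{lemma-spacetime-trace-closed} to keep a ball $B_\delta(q)$ disjoint from $C_t^{\rm Ilm}$ (hence from every $D_{m,t}$) for $|t-t_0|<\tau$, and then applying Lemma~\ref{lem:convex-in-cylinder-is-graph}: a convex set containing $B_\delta(p)$ and disjoint from $B_\delta(q)$ has boundary which, inside the cylinder around the segment $pq$, is a Lipschitz graph with constant $2\|p-q\|/\delta$. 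Your argument is easily repaired by the same device, or alternatively by observing $D_{m,t}\subset C_t^{\rm Ilm}$ and using the continuity-in-time proposition to bound the height of $C_t^{\rm Ilm}$ above $P$ near $x_0$; but as written, the ``interior ball only'' step does not quite deliver the graph bound you need.
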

\begin{proof}
Let $\{C_t^{\rm Ilm} \mid 0\leq t <T \}$ be an Ilmanen evolution by MCF, and choose a
sequence $D_{k, t}\subset D_{k+1, t}$ of compact smooth convex solutions of MCF whose
union is $C_t^{\rm Ilm}$.

For a given time $t_0\in (0, T)$ and any point $z\in \partial C_{t_0}^{\rm Ilm}$
choose a line segment $pq$ whose midpoint is $z$ and whose two endpoints $p, q$
satisfy $p\in \interior C_{t_0}^{\rm Ilm}$, $q\not\in C_{t_0}^{\rm Ilm}$.  Since the
space time track of the evolution $\{C_t^{\rm Ilm}\}$ is closed it follows that there
is a small $\tau>0$ such that $q\not\in C_t^{\rm Ilm}$ for
$t\in (t_0-\tau, t_0+\tau)$.  Comparison with shrinking spheres shows that we may
also assume that $p\in \interior C_t^{\rm Ilm} $ for the same range of times $t$.  In
fact for sufficiently small $\delta>0$ we may assume that
$B_\delta(p)\subset C_t^{\rm Ilm}$ and $B_\delta(q)\cap C_t^{\rm Ilm}=\varnothing$
for $|t-t_0|<\tau$.

Since the sequence of approximating smooth compact sets $D_{k, t}$ increases to
$C_t^{\rm Ilm}$ we may even assume that $B_\delta(p)\subset D_{k, t}$ and
$B_\delta(q)\cap D_{k, t}=\varnothing$ for $|t-t_0|<\tau$.

We are now in the situation where we can apply Lemmas \ref{lem:convex-is-Lipschitz}
and \ref{lem:convex-in-cylinder-is-graph} to conclude that the portion of
$\partial D_{k, t}$ inside the convex hull of $B_{\delta/2}(p)\cup B_{\delta/2}(q)$
is a uniformly Lipschitz graph.  The Ecker--Huisken estimates \cite{EH2} or,
alternatively, the interior estimates for non-divergence form quasilinear parabolic
equations in Ladyzhenskaya-Solonikov-Uralceva \cite[Theorem 1.1, Ch VI, \S1]{LSU}
imply that the curvatures of $\partial D_{k, t}$ and all their higher derivatives are
locally uniformly bounded.  Since the boundaries converge monotonically, they
converge uniformly to $\partial C_t^{\rm Ilm}$ and we conclude that
$\partial C_t^{\rm Ilm}$ is indeed smooth.
\end{proof}

\subsection{The shadow of a MCF}\label{ss-shadow}

Here we derive some properties of MCF solutions $\partial C_t$, $t \in [0,T)$ for
which $C_t$ are non-compact, closed with nonempty interior, and contain no infinite
line.  It follows that each $C_t$ contains a half line whose direction we may assume
is $e_{n+1}$.  If $\pi:\R^{n+1}\to\R^n$ is the orthogonal projection along the
$x_{n+1}$-axis then we call $D_t:=\pi(C_t)\subset\R^n$ the \emph{shadow} of $C_t$.

\begin{lemma}[Evolution of the shadow $D_t$]
\label{lemma-shadow-moves-by-mcf}
Let $C_t$, $t \in (0, T)$ be any smooth MCF solution starting at a closed convex set
$C_0$ with nonempty interior.  Then the shadow $D_t = \pi (C_t)$ of $C_t$ is a smooth
MCF evolution starting at $D_0 = \pi (C_0)$.
\end{lemma}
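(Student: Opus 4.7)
The plan is to leverage convexity of $C_t$ together with the recession direction $e_{n+1}$ to decompose $\partial C_t$ into a \emph{cap} --- the graph of the convex function $h_t(y) := \inf\{s \in \R : (y,s) \in C_t\}$ --- and a \emph{side} forming a vertical cylinder over $\partial D_t$, then to extract the MCF equation for $\partial D_t$ by restricting the MCF equation for $\partial C_t$ to the side. First I would note that $e_{n+1}$ remains a recession direction of each $C_t$ (since $C_0 + r e_{n+1} \supset C_0$ combined with the translation-invariance of MCF gives $C_t + r e_{n+1} \supset C_t$ for all $r \geq 0$), and that the no-infinite-line property is similarly preserved. These two facts make $h_t$ a proper lower semi-continuous convex function on $\R^n$ with effective domain exactly $D_t$, and they imply the decomposition $\partial C_t = \{(y, h_t(y)) : y \in \overline{D_t}\}\ \cup\ \{(y,s) : y \in \partial D_t,\ s \geq h_t(y)\}$, with the cap and side characterized by whether the outward unit normal at the point has strictly negative or vanishing $x_{n+1}$-component.

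The key local claim is that at any point $(y_0, s_0) \in \partial C_t$ in the interior of the side (so $y_0 \in \partial D_t$ and $s_0 > h_t(y_0)$), there is a spacetime neighborhood $U$ in which $\partial C_t$ has product structure $\partial C_t \cap U = (\partial D_t \cap V) \times I$ for some open $V \ni y_0$ and open interval $I \ni s_0$. Convexity plus the recession property force the side to be invariant under upward vertical translation, and the horizontal outward normal at $(y_0, s_0)$ together with smoothness of $\partial C_t$ then produce an open neighborhood $V$ of $y_0$ in $\partial D_t$ whose entire vertical lifts land in $\partial C_t \cap U$. This product structure upgrades $\partial D_t$ immediately to a smooth $(n-1)$-dimensional submanifold of $\R^n$.

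From the product structure, the $n$ principal curvatures of $\partial C_t$ at $(y_0, s_0)$ consist of the $(n-1)$ principal curvatures of $\partial D_t$ at $y_0$ together with a single vanishing curvature in the vertical direction, so $H_{\partial C_t}(y_0, s_0) = H_{\partial D_t}(y_0)$ and the outward unit normals coincide under the identification $\R^n \simeq \R^n \times \{0\}$. Projecting the MCF equation for $\partial C_t$ through $\pi$ at points of the side therefore gives exactly the MCF equation for $\partial D_t$. The initial condition $D_t \to D_0 = \pi(C_0)$ in the local Hausdorff sense follows from $C_t \to C_0$ combined with closedness of $\pi(C_s)$ (guaranteed by the recession hypothesis).

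The hardest step will be the cylindrical product structure of the second paragraph. It requires combining three ingredients at once --- convexity of $C_t$ (to propagate a side point vertically), the recession hypothesis (so that $e_{n+1}$ is a tangent direction along the whole vertical ray), and smoothness of $\partial C_t$ (to upgrade to a \emph{smooth} product neighborhood) --- whereas everything after this local model, including the identification of mean curvatures and the projection of the evolution equation through $\pi$, is routine.
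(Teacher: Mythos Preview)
Your approach has a genuine gap: for $t>0$ the ``side'' is empty, so there is no portion of $\partial C_t$ on which to read off the MCF equation for $\partial D_t$. Indeed, suppose some $(y_0,s_0)\in\partial C_t$ lies in the interior of the side, so that $(y_0,s_0-r)\in\partial C_t$ as well for small $r>0$. Then the two smooth MCF solutions $\partial C_t$ and $\partial C_t + r e_{n+1}$ --- one enclosed by the other via the recession property --- touch at $(y_0,s_0)$ at a positive time. The strong maximum principle forces them to coincide, so $C_t = C_t + r e_{n+1}$, whence $C_t$ contains an infinite line, contradicting the standing hypothesis of \S\ref{ss-shadow}. (This is essentially Lemma~\ref{lemma-shadow-separation}, proved immediately after the lemma in question.) Thus for every $t>0$ the hypersurface $\partial C_t$ is entirely the graph of $u(\cdot,t)$ over the open set $\interior D_t$, with $u\to+\infty$ at $\partial D_t$; the cylinder $\partial D_t\times\R$ is only the asymptotic shape of $\partial C_t$ at infinity, never an actual piece of it. Even ignoring this, your local product claim does not follow from convexity, recession, and smoothness alone: one can build smooth concave $g(y',s)$, non-decreasing in $s$, with $g(0,s)\equiv 0$ but $\partial_s g(y',s)>0$ for $y'\neq 0$, so the ``side'' is a single vertical line rather than an open product neighborhood.

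The paper's proof works precisely because it extracts $\partial D_t\times\R$ as a \emph{limit} rather than a \emph{subset}: the translates $C_t - k e_{n+1}$ are themselves MCF solutions, and as $k\to\infty$ they converge locally uniformly (by convexity) and hence smoothly (by the Ecker--Huisken interior estimates) to the cylinder $\partial D_t\times\R$, which is therefore an MCF solution; projecting out the trivial $\R$ factor gives the claim. Your cap/side intuition is morally correct --- the evolution of $\partial D_t$ is encoded in the vertical asymptotics of $\partial C_t$ --- but one must pass to this blow-down limit to access it.
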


\begin{proof}
For any $k >0$, let $C^k_t := C_t - k \, e_{n+1}$ be the translation of the
hypersurface $C_t$ in the $-e_{n+1}$ direction by $k$.  Denote by $\cC_t$ the
cylinder
\[
\cC_t := \{ (x', x_{n+1} ): x' \in \partial D_t, x_{n+1} \in \R \}.
\]
Since $ C_t $ is asymptotic to $\cC_t$ at infinity, for every $t >0$, the convexity
of $C_t$ implies that $C^k_t \to \cC_t$, as $k \to +\infty$, uniformly on compact
subsets of $\R^{n+1} \times (0, T)$.  The uniform convergence combined again with the
convexity of $C^k_t$ imply that the sequence $\{ C^k_t \}$ is uniformly Lipschitz on
compact subsets of $\R^{n+1} \times (0, T)$ and hence $C_t^k \to \cC_t$ smoothly by
the well known results in \cite{EH2}.  This implies that $\cC_t$ is a MCF solution
and in turn gives that $\partial D_t$ evolves by MCF as well.
\end{proof}

For every $t \in [0,T)$ the hypersurface $\partial C_t$ is a graph
$u(\cdot, t): D_t \to \R$ over the shadow $D_t$ of the set $C_t$.  At time $t=0$ the
shadow need not be an open set as $\partial C_0$ could contain vertical half lines
--- e.g.~consider $C_0=\{(x, y)\in\R^2 : |x|\leq 1, y\geq 0\}$.  In the following
Lemma we use the strong maximum principle to show that this is not the case for
$t>0$.

\begin{lemma}
\label{lemma-shadow-separation}
If $\partial D_t \neq \varnothing$ then the cylinder
$\cC_t := \{ (x', x_{n+1} ): x' \in \partial D_t, x_{n+1} \in \R \}$ does not touch
$\partial C_t$ and therefore we have
\begin{equation}
\label{eq-boundary-beh}
\lim_{x \to \partial D_t} u(x,t) = +\infty, \qquad \mbox{for}\,\, t \in (0,T).
\end{equation}

\end{lemma}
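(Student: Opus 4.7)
The plan is to argue by contradiction and then deduce~\eqref{eq-boundary-beh} as a corollary. Suppose that for some $t_0\in(0,T)$ the hypersurfaces $\partial C_{t_0}$ and $\cC_{t_0}$ meet at a point $(p,L)$ with $p\in\partial D_{t_0}$. The implication in~\eqref{eq-boundary-beh} is then immediate: if $u(x_k,t)\to L<\infty$ along some sequence $x_k\to p\in\partial D_t$, then $(p,L)\in\partial C_t\cap\cC_t$ by closedness of $\partial C_t$, which is precisely the forbidden configuration. So it suffices to rule out this configuration.

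Near the touching point both hypersurfaces are smooth MCF solutions. Smoothness of $\partial C_{t_0}$ is Lemma~\ref{lemma-reg}, and by Lemma~\ref{lemma-shadow-moves-by-mcf} the shadow $D_t$ evolves by MCF in $\R^n$, so the same lemma applied in $\R^n$ shows $\partial D_{t_0}$ is smooth. Hence $\cC_t=\partial D_t\times\R$ is smooth, and the product structure shows it evolves by MCF in $\R^{n+1}$, since the flat $e_{n+1}$ direction contributes nothing to the mean curvature. Because $C_t\subset\overline{D_t}\times\R$, the hypersurface $\partial C_t$ lies on the concave side of the convex cylinder $\cC_t$, so at $(p,L)$ the two smooth MCF solutions touch tangentially from one side. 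The strong maximum principle for MCF then yields a space-time neighborhood of $(p,L)$ on which $\partial C_t$ and $\cC_t$ coincide.

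To finish, I would propagate this local coincidence globally. The set $S:=\partial C_{t_0}\cap\cC_{t_0}$ is closed in $\partial C_{t_0}$, and by repeated application of the strong maximum principle at every point of $S$ it is also open in $\partial C_{t_0}$. Since $\partial C_{t_0}$ is connected, being the boundary of a convex body in $\R^{n+1}$, and $S\neq\varnothing$, we obtain $\partial C_{t_0}\subset\cC_{t_0}$. Thus $\partial C_{t_0}$ is a union of connected components of the $n$-dimensional manifold $\cC_{t_0}=\partial D_{t_0}\times\R$. Combined with the convexity of $C_{t_0}$ and the presence of a half-line in the $e_{n+1}$ direction, this forces $C_{t_0}$ to be either $\overline{D_{t_0}}\times\R$ (when $n\geq2$, so $\cC_{t_0}$ is connected) or a vertical half-plane (when $n=1$, in which case $\partial C_{t_0}$ is a single vertical line). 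Either way $C_{t_0}$ contains an infinite line, contradicting the standing hypothesis of \S\ref{ss-shadow}.

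The main technical input is the strong maximum principle for smooth MCF applied to two solutions meeting tangentially from one side, which is what drives both the local coincidence at $(p,L)$ and, through the open-closed argument, its global propagation. Everything else is assembled from Lemmas~\ref{lemma-reg} and~\ref{lemma-shadow-moves-by-mcf} together with elementary facts about boundaries of convex sets.
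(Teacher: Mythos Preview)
Your proof is correct and follows the same line as the paper's, which simply invokes the strong maximum principle for the two MCF solutions $\partial C_t$ and $\cC_t$ with $C_t\subset \overline{D_t}\times\R$; you have supplied the details (local coincidence, open--closed propagation along the connected hypersurface $\partial C_{t_0}$, and the resulting infinite line contradicting the standing hypothesis of \S\ref{ss-shadow}) that the paper leaves implicit. One minor remark: the smoothness of $\partial D_{t_0}$ is already part of the conclusion of Lemma~\ref{lemma-shadow-moves-by-mcf}, so no further appeal to a regularity lemma ``in $\R^n$'' is needed.
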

\begin{proof} The convexity of $\hM_t$ implies that $\hM_t \subset D_t \times \R$.
At time $t=0$ it may be that $\partial C_0 \cap \cC_0 \neq \varnothing$.  However,
the strong maximum principle guarantees that the MCF solution $\partial C_t$ and
$\cC_t$ do not touch for $t >0$.  Since $\partial C_t$ can be represented as a graph
$x_{n+1} = u(x,t)$ over the interior of $D_t$ and it is asymptotic to the cylinder
$\partial D_t \times \R$ at infinity, we conclude that \eqref{eq-boundary-beh} holds.
\end{proof}

\subsection{Existence of mean curvature flow starting at any convex initial data }
\label{ss-exist}

The discussion in subsections \ref{ss-defn} and \ref{ss-reg} together with known
results about the structure of convex sets lead to the following existence result for
smooth mean curvature flow of any closed convex set with nonempty interior $C_0$,
bounded or unbounded.  Similarly to the weak case in Definition
\ref{def:emcf-solution} we will say that $C_t$ is a smooth MCF evolution starting at
$C_0$ if $\partial C_t$ is a classical solution to MCF.

\begin{theorem}[Existence of smooth MCF]
\label{prop-existence} Let $C_0\subset \R^{n+1}$ be a closed convex set with nonempty
interior.  Then, there is $T>0$ and a smooth MCF solution $C_t$, $t \in (0, T)$
starting at $C_0$.  In addition, we have
\begin{enumerate}[{\em (a)}]
\item if the initial set $C_0$ is compact, then $C_t$ is compact and $T < \infty$.
\item if $C_0=\R^k \times D_0$, for some $0 < k <n$, then $C_t=\R^k \times D_t$,
where $D_t\subset\R^{n+1-k}$ is the mean curvature flow evolution starting from
$D_0$.
\item if $\partial C_0$ is complete, non-compact, and contains no infinite line, then
$\partial C_t$ is complete, non-compact, and contains no infinite line.
\end{enumerate}
In the latter case, the shadow $D_t = \pi (C_t)$ is a smooth mean curvature flow
evolution starting at $D_0 := \pi(C_0)$.

\end{theorem}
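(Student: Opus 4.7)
The plan is to take $C_t := C_t^{\rm Ilm}$, the Ilmanen evolution from Definition \ref{def:emcf-solution}. Lemma \ref{lemma-reg} immediately gives that $\partial C_t$ is smooth and evolves by classical MCF for $t \in (0, T)$, while the continuity estimate $K_\varepsilon C_0 \subset C_s \subset C_0$ for $s \leq \varepsilon^2/2n$ proven above supplies $C_t \to C_0$ in local Hausdorff metric as $t \to 0$. To verify $T > 0$, use $\interior C_0 \neq \varnothing$ to pick a closed ball $\bar B_r(p) \subset \interior C_0$; by Huisken's theorem its MCF is defined on $[0, r^2/2n)$, and the definition of the Ilmanen flow then forces $T \geq r^2/2n$.

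For (a), suppose $C_0 \subset \bar B_R(0)$. Comparison with the MCF of $\bar B_R(0)$, which shrinks to a point at time $R^2/2n$, forces $C_t \subset \bar B_{\sqrt{R^2-2nt}}(0)$, yielding compactness of each $C_t$ together with $T \leq R^2/2n < \infty$.

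For (b), I would exploit translation invariance. For any compact smooth convex $K \subset \interior C_0 = \R^k \times \interior D_0$ and any $v \in \R^k \times \{0\}$, the translate $K + v$ is again a compact smooth convex subset of $\interior C_0$, and uniqueness of smooth compact MCF yields $(K+v)_t = K_t + v$. Taking the closure of the union over all such $K$ shows $C_t^{\rm Ilm}$ is itself $\R^k$-translation invariant, hence of the form $\R^k \times E_t$. Since $\partial C_t = \R^k \times \partial E_t$ evolves by MCF and the mean curvature of a product with a flat factor equals that of the non-flat factor, $\partial E_t$ is a smooth MCF in $\R^{n+1-k}$ starting from $\partial D_0$, so $E_t = D_t$.

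For (c), completeness of $\partial C_t$ is automatic since it is a smoothly embedded closed hypersurface in $\R^{n+1}$. Non-compactness is preserved by the same translation trick: $C_0$ contains a ray $p + \R_+ v$, so for any compact smooth convex $K \subset \interior C_0$ the sets $K + sv$, $s \geq 0$, are compact smooth convex subsets of $\interior C_0$, giving $K_t + sv \subset C_t$ and hence unboundedness of $C_t$ in the direction $v$. Absence of lines is preserved because Proposition \ref{prop-interior-empties} gives $C_t \subset C_0$, so any line in $C_t$ would also sit in $C_0$, contradicting the assumption. The final assertion that $\pi(C_t)$ is a smooth MCF starting at $\pi(C_0)$ is exactly Lemma \ref{lemma-shadow-moves-by-mcf}. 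The main delicate point is the product structure in (b): even though $C_0$ itself has the product form, the compact approximations $K$ cannot be chosen as products without introducing corners, so one has to verify the invariance statement at the level of the closed union of \emph{all} compact smooth convex approximants rather than on any individual one.
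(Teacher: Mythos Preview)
Your proposal is correct and follows exactly the paper's approach: take $C_t = C_t^{\rm Ilm}$ and invoke Lemma~\ref{lemma-reg} for smoothness, then cite Lemma~\ref{lemma-shadow-moves-by-mcf} for the shadow statement. The paper's own proof dispatches (a), (b), (c) in a single sentence (``clear from the definition of $C_t^{\rm Ilm}$ and convexity''), whereas you spell out the comparison and translation-invariance arguments explicitly; your elaborations are all sound, including the point you flag about (b), where translation invariance must be checked on the full union of approximants rather than on any individual compact $K$.
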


\begin{proof} Given any closed convex set with nonempty interior $C_0$, let
$C^{\rm Ilm}_t$ be its Ilmanen MCF evolution according to the Definition
\ref{def:emcf-solution}.  Lemma \ref{lemma-reg} shows that $C^{\rm Ilm}_t$ is
$C^\infty$ smooth for all $t >0$ and therefore it is a smooth classical MCF solution.
It is clear that (a), (b) and (c) hold by the definition of $C^{\rm Ilm}_t$ and
convexity.  Finally, the fact that that the shadow $D_t = \pi (C_t)$ evolves by MCF,
is shown in Lemma \ref{lemma-shadow-moves-by-mcf} (which holds for any MCF evolution,
not necessarily the one constructed here).

\end{proof}

\subsection{Uniqueness of convex MCF}
\label{ss-uniq}

It is well known that solutions to MCF that are defined using level set methods and
viscosity solutions can ``fatten.''  From the viewpoint of classical solutions
fattening occurs when one given initial hypersurface $\partial C$ admits more than
one smooth evolution by MCF.  For the Ilmanen flow as we have described them here,
fattening would correspond to a failure of the solution to depend continuously on its
initial data: fattening would occur if there were a decreasing family of Ilmanen
flows $C_{j, t}\supset C_{j+1, t}$ such that $\cap_{j=1}^\infty C_{j,t}$ is strictly
larger than the Ilmanen flow of $\cap_{j=1}^\infty C_{j,0}$.

In 1990 Ilmanen gave an example of fattening \cite{IlmanenProcSymp} in which
$\partial C_0$ was smooth, but not compact\footnote{~Ilmanen's example was
  $C=\{(x, y)\in\R^2 \mid 0\leq y\leq (1+x^2)^{-1}, x\in\R\}$.  One forward evolution
  is $C_t=\{(x, y)\mid 0\leq y\leq u(x, t), x\in\R\}$ where $u$ is the solutions to
  $u_t=\frac{u_{xx}}{1+u_x^2}$, $u(x, 0) = (1+x^2)^{-1}$.  In the other forward
  solution we imagine the two ends of the region $C$ to be connected ``at
  $(\pm\infty, 0)$'' and construct a closed curve $\gamma_t$ that evolves by Curve
  Shortening and that converges to $C$ as $t\searrow0$.  This is made possible by the
  fact that the area of $C$ is finite.  The second solution $\bar C_t$ is the region
  enclosed by $\gamma_t$ and is strictly contained in the first solution $C_t$.}.
Many other examples were discovered afterwards.  On the other hand, it is also well
known that this phenomenon does not appear when the initial hypersurface
$\partial C_0$ is smooth, mean convex, and \emph{compact}.  In this section we prove
the uniqueness of smooth convex solutions starting at convex hypersurfaces that are
not necessarily smooth or compact.  Note also that in view of Lemma~\ref{lemma-reg}
the Ilmanen evolution starting at any convex initial data is smooth and convex and
consequently our uniqueness result shows that uniqueness holds in this class as well.

\begin{theorem} \label{thm-uniqueness} Let $C_0\subset \R^{n+1}$ be a closed convex
set with nonempty interior.  Assume that
$M_t^1=\partial C^1_t, M_t^2=\partial C_t^2 \in \R^{n+1}$, $t \in (0, T]$, are two
smooth convex MCF solutions with the same initial data $M_0=\partial C_0$ in the
sense that
\[
\lim_{t \to 0} \partial C_t^1 = \lim_{t \to 0} \partial C_t^2=\partial C_0,
\]
where convergence is locally uniform.  Then,
\begin{equation}
\label{eqn-zero}
C_t^1= C_t^2, \qquad \forall t \in (0, T].
\end{equation}
\end{theorem}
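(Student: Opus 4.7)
The plan is to show that any smooth convex MCF solution starting at $C_0$ coincides with the Ilmanen evolution $C_t^{\rm Ilm}$ of Definition~\ref{def:emcf-solution}. Since by Lemma~\ref{lemma-reg} the Ilmanen evolution is itself a smooth convex classical MCF starting at $C_0$, this identification will yield $C_t^1 = C_t^{\rm Ilm} = C_t^2$ and hence \eqref{eqn-zero}.

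First I would establish $C_t^{\rm Ilm} \subset C_t^i$ for $i=1,2$ by an interior barrier argument. Let $D \subset \interior C_0$ be any smooth compact convex set with classical smooth compact MCF $D_t$. Because $D$ is compact and $C_t^i \to C_0$ locally uniformly as $t \searrow 0$, for sufficiently small $\epsilon>0$ we have $D \subset \interior C_\epsilon^i$; by continuity of $D_t$ at $t=0$ we may further arrange $D_\epsilon \subset \interior C_\epsilon^i$. The compact-vs-smooth avoidance principle for classical MCF on $[\epsilon, T]$ then gives $D_t \subset C_t^i$ for $t \in [\epsilon, T]$. Letting $\epsilon \searrow 0$ and taking the closed union over all admissible $D$ in the sense of Definition~\ref{def:emcf-solution} yields $C_t^{\rm Ilm} \subset C_t^i$.

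The reverse inclusion $C_t^i \subset C_t^{\rm Ilm}$ is the main difficulty and is where convexity is indispensable, since in general fattening can occur as in Ilmanen's example cited earlier. I would approximate $C_0$ from outside by the convex sets $C_0^\delta := C_0 + \bar B_\delta(0)$ for $\delta>0$; each has $C^{1,1}$ boundary, contains $C_0$ in its interior, and satisfies $\bigcap_{\delta>0} C_0^\delta = C_0$. Let $C_t^{(\delta)}$ denote the Ilmanen evolution of $C_0^\delta$, which is smooth for $t>0$ by Lemma~\ref{lemma-reg}. Two claims would then suffice:
\begin{enumerate}[\upshape(i)]
\item $C_t^i \subset C_t^{(\delta)}$ for each $\delta>0$ and $t\in(0, T]$;
\item $\bigcap_{\delta>0} C_t^{(\delta)} = C_t^{\rm Ilm}$ for each $t\in(0, T]$.
\end{enumerate}
For (i) in the compact case, $C_t^i$ is compact for small $t$ (by comparison with a large circumscribing shrinking sphere), and the standard compact avoidance principle applied at time $\epsilon>0$, using $C_\epsilon^i \subset \interior C_\epsilon^{(\delta)}$ by continuity, closes the argument. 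In the non-compact case I would invoke the machinery of \cite{DS}: since both $\partial C_t^i$ and $\partial C_t^{(\delta)}$ are smooth and convex, the signed distance between the two hypersurfaces satisfies a parabolic differential inequality to which a maximum principle applies, with the convexity of each evolution controlling growth at infinity and preventing escape. For (ii), the inclusion $\supset$ follows from the first step applied with $C_0$ replaced by $C_0^\delta$, while $\subset$ follows from a monotonicity-in-$\delta$ argument: any compact smooth convex $D$ compactly contained in $\interior C_0^\delta$ can, after a slight radial shrinking, be placed inside $\interior C_0$, so the families of admissible approximators in Definition~\ref{def:emcf-solution} nest correctly as $\delta \searrow 0$ and the intersection reduces to $C_t^{\rm Ilm}$.

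The main obstacle is the non-compact comparison in (i): there is no compact domain on which to apply a classical maximum principle, and constructing barriers at infinity is delicate without convexity. This is precisely the technical content supplied by \cite{DS}, and it is here that convexity of both $C_t^i$ and $C_t^{(\delta)}$ is essential---absent it, fattening allows distinct smooth evolutions from the same initial data. Once (i) and (ii) are established, combining them with the first step yields $C_t^i = C_t^{\rm Ilm}$ for $i=1,2$, and the theorem follows.
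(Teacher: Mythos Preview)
Your first inclusion $C_t^{\rm Ilm}\subset C_t^i$ is fine, but claim~(ii) hides a genuine circularity. The statement $\bigcap_{\delta>0} C_t^{(\delta)} = C_t^{\rm Ilm}$ is precisely the non-fattening of the level-set flow at $C_0$, which is equivalent to the uniqueness you are trying to prove. Your argument for the inclusion $\subset$ is incorrect: it is not true that every smooth compact convex $D\subset\interior C_0^\delta$ can, after a slight shrinking, be placed inside $\interior C_0$ --- take $C_0$ the closed unit ball and $D$ a ball of radius $1+\delta/2$. The approximators for $C_0^\delta$ that protrude beyond $C_0$ are exactly what could make $\bigcap_\delta C_t^{(\delta)}$ strictly larger than $C_t^{\rm Ilm}$, and you have offered no mechanism to rule this out. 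Invoking~\cite{DS} for claim~(i) does not help here: even granting~(i), you only get $C_t^i\subset \bigcap_\delta C_t^{(\delta)}$, and identifying that intersection with $C_t^{\rm Ilm}$ is the whole problem.

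The paper avoids this circularity by a different route. It uses the structure theorem for convex sets (Proposition~\ref{prop-classification}) to write $C_0=\R^k\times\hat C_0$, handles the case where $\hat C_0$ is compact directly, and in the remaining case proceeds by induction on the dimension: the shadows $D_t^i=\pi(C_t^i)$ evolve by a lower-dimensional MCF (Lemma~\ref{lemma-shadow-moves-by-mcf}) and hence coincide by the inductive hypothesis, after which both solutions are graphs $u_1,u_2$ over the common shadow. The comparison $u_1\leq u_2$ is then carried out by a direct maximum-principle argument with the explicit barrier $\zeta=\varepsilon(t+\varepsilon)u^2$, following~\cite{DS}; convexity of $\bar u$ enters crucially to control the sign of the cross term at an interior maximum, and the growth $u\to\infty$ at $\partial D_t$ rules out boundary and spatial-infinity maxima. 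This is where the actual content of~\cite{DS} lives --- a graph-level comparison, not an outer-approximation scheme.
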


\begin{proof}
According to Proposition \ref{prop-classification} our initial convex set $C_0$ can
be expressed as $C_0 = \R^k \times \hat C_0$, $0 \leq k \leq n$, where
$\hat C_0 \subset \R^{n+1-k}$ is either compact with non-empty interior, or closed
non-compact with non-empty interior and contains no infinite line.

In the first case, the initial set contains the affine subspace $\R^k\times\{p\}$ for
any $p\in \hat C_0$.  If we choose $p\in\interior\hat C_0$ then
$B^{n+1-k}_\epsilon(p)\subset \hat C_0$, and thus $B^{n+1}_\epsilon(q) \subset C_0$
for any $q\in \R^k\times \{p\}$.  By the maximum principle it follows that if
$\partial C_t$ evolves smoothly by MCF, then
$B^{n+1}_{\sqrt{\epsilon^2-2nt}}(q) \subset C_t$, for $t<\epsilon^2/(2n)$.  Thus the
convex set $C_t$ contains the affine subspace $\R^k\times\{p\}$ for
$t<\epsilon^2/(2n)$ and thus $C_t$ must be a product $C_t = \R^k\times\hat C_t$ for
some smooth compact convex $\hat C_t\subset\R^{n+1-k}$ whose boundary evolves by MCF.
Since the smooth evolution of MCF starting from $\hat C_0$ is unique it follows that
in this case $C_t^1=C_t^2$ for $0\leq t<\epsilon^2/(2n)$.  Repeated application of
this argument then shows that $C_t^1=C_t^2$ for all $t\geq 0$ at which either
solution is defined.

In the second case where $\hat M_0 =\partial \hat C_0$ for some
$\hat C_0 \subset \R^{n+1-k}$ which is closed non-compact with non-empty interior and
contains no infinity line, one can easily deduce that for both solutions can be
expressed as $M^1_t=\partial C^1_t$ and $M_t^2= \partial C_t^2$ where
$C_t^1, C_t^2 \subset \R^{n+1}$ are closed convex non-compact and contain no infinite
line.  If $M_0$ can be written as an entire graph over some hyperplane, one can
choose this hyperplane so that $M_0$ is a proper entire graph.  Then one can easily
deduce that the same holds for any solution, that is both $M^1_t$, $M_t^2$, are
proper entire graphs and the proof follows from the results in \cite{DS}.

Hence, we will concentrate here in the case where $M_0=\partial C_0$ is not an entire
graph.  We will use induction in the dimension $n$ of our solutions and we will see
that in this case as well the proof follows from the ideas in \cite{DS}.  For the
reader's convenience we will provide here a detailed proof despite the fact that some
of the arguments are similar to those in \cite{DS}.

\smallskip When $n=1$, that is, we are in the case of convex complete non-compact
solutions of curve shortening flow, uniqueness was shown by K.S.  Chou and X.P.  Zhu
in \cite{CZ}.  Assuming that the theorem holds for complete, non-compact, convex MCF
solutions of dimension $d \leq n-1$, we will now show that the same also holds in
dimension $d=n$.

\smallskip

To this end, we recall that from our discussion in section \ref{ss-shadow} the
solutions $M_t^i$ can be written as graphs $u_i: D^i_t \to \R$ over their shadows
$D_t^i$.  Furthermore, by Lemma \ref{lemma-shadow-moves-by-mcf} we have that both
$\partial D_t^i$ are MCF evolutions starting at $\partial D_0$ and hence by our
induction assumption, we have that $\partial D_t^1=\partial D_t^2= \partial D_t$.  By
convexity, we also have $D_t^1=D_t^2=D_t$.

\smallskip

We will now apply the arguments in \cite{DS} (Theorem 1.3 and Theorem 1.4) to
conclude that
\[u_1(x,t) = u_2(x,t) \qquad \mbox{for all } \,\, (x,t) \in \cup_{t \in (0,T)} D_t
\times \{t\}.\] This readily implies that $M^1_t = M^2_t$, $t \in (0, T)$, finishing
the proof of the Theorem.  To show that $u_1 = u_2$, it is sufficient to prove that
$u_1 \leq u_2$, since the same argument will also imply that $u_2 \leq u_1$, thus
showing that $u_1=u_2$.

\smallskip {\em We may assume without loss of generality that $u_0 \geq 0$ which also
  implies that $u_1, u_2 \geq 0$.  }  For a small number $\e >0$, define
\[u(x,t) =u_1(x,t)+1 \qquad \mbox{and} \qquad \bu(x,t) =u_2(x,t+\e)+1\] Then $u, \bu$
satisfy equations
\begin{equation}
\label{eqn-ubu}
u_t = \left(\delta^{ij}-\frac{D_i u D_j u}{1+|Du|^2}\right) D_{ij}u, \qquad \bu_t = \left(\delta^{ij}-\frac{D_i \bu D_j \bu}{1+|D\bu|^2}\right) D_{ij}\bu \end{equation}
and furthermore $u, \bu \geq 1$.  To simplify the notation below we set
\[
a_{ij} = \delta^{ij}-\frac{D_i u D_j u}{1+|Du|^2}, \qquad \ba_{ij} =
\delta^{ij}-\frac{D_i \bu D_j \bu}{1+|D\bu|^2}.
\]

\smallskip

Define
\[
w:= u - \bar u.
\]
For any fixed $t \in (0, T-\e]$, $u(\cdot, t)$ is defined for $x \in D_t$ and
$\bar u(\cdot, t) $ is defined for $x \in D_{t+\e}$.  By convexity we have
$D_{t+\e} \subset D_t.$ This means that $w(x,t)$ is defined for
$(x, t) \in \cup_{ t \in (0, T-\e]} D_{t+\e} \times \{t \}$.  Subtracting equations
\eqref{eqn-ubu}, we find that the function $w$ satisfies the equation
\begin{equation}
\label{eqn-w}
w_t - a_{ij} D_{ij} w = ( a_{ij} - \bar a_{ij} )\, D_{ij} \bu 
\end{equation}
on $\cup_{ t \in (0, T-\e]} D_{t+\e} \times \{t \}$.

\smallskip

Following \cite{DS} we compare $w$ from above with
\[
\zeta(x,t) := \e \, (t+\e)\, u^2(x,t).
\]
First, we use $u_t - a_{ij} D_{ij} u =0$ and find that $\zeta$ satisfies
\[
\zeta_t - a_{ij} D_{ij} \zeta = - 2 \e \, (t+\e) \, a_{ij} D_j u D_i u + \e u^2,
\]
where by direct calculation
\[
\begin{split}
a_{ij} D_j u D_i u = \frac{|Du|^2}{1+|Du|^2}.
\end{split}
\]
Combining the above gives
\[
\zeta_t - a_{ij} D_{ij} \zeta = - 2 \e \, (t+\e) \, \frac{|Du|^2}{1+|Du|^2} + \e u^2
\geq \e \, ( u^2 - 2\, (t + \e)).
\]
Since $u \geq 1$, we conclude that for $t \leq 1/8$ and $\e < 1/10$, we have
\begin{equation}
\label{eqn-zeta}
\zeta_t - a_{ij} D_{ij} \zeta > \frac \e2 u^2. 
\end{equation}

\smallskip

Set next
\[
W := w - \zeta=u-\bu - \e \, (t +\e) \, u^2
\]
and
\[
T^*= \min \Big ( T, \frac 18\Big )-\e.
\]
By \eqref{eqn-w} and \eqref{eqn-zeta} we find that $W$ satisfies
\begin{equation}
\label{eqn-W}
W_t - a_{ij} D_{ij} W < ( a_{ij} - \ba_{ij} )\, D_{ij} \bu - \frac \e 2 u^2 
\end{equation}
on the set $\cD_{T^*}:= \cup_{t \in (0,T^*]} D_{t+\e} \times \{t\}$.  Furthermore,
for any $K \subset \subset D_\e$ compact we have $\lim_{t \to 0} u(x,t) = u(x,0)$,
$\lim_{t \to 0} \bu(x,t) = \bu(x,0)$ uniformly on $K$, which implies
\begin{equation}
\label{eqn-W10} \lim_{t \to 0} W(x,t) = u_0(x) - u_2(x,\epsilon) - \e^2 u^2(x,0) \leq - \e^2<0 
\end{equation}
uniformly on $K$.  Here we used that $u(x,0)=u_0(x)+1$,
$\bu(x,0) = u_2(x,\e) + 1 \geq u_0(x) + 1$ and $u(x,0) \geq 1$ since $u_0 \geq 0$.

\smallskip

We will use \eqref{eqn-W} -\eqref{eqn-W10} and the maximum principle to conclude that
\[
W \leq 0 \qquad \mbox{on}\,\, \cD_{T^*}:= \cup_{t \in (0,T^*]} D_{t+\e} \times \{t\}.
\]
To this end, observe first that $u, \bu \geq 1$ implies that for every fixed $\e >0$
and for all $t \in (0,T)$,
\begin{equation}
\label{eqn-WWW}
m^*:=\sup_{(x,t) \in \cD_{T^*} } W(x,t) \leq \e^{-2}.  \end{equation}
Indeed, notice that if there is a point $(x,t) \in \R^n \times (0,T^*]$ where $W(x,t) \geq 0$, then since $\bu \geq 1$, at such point we have $u \geq \bu + \e (t + \e) \, u^2 \geq \e^2 \, u^2$, that is $u(x,t) \leq \e^{-2}$.  Hence, we have 
\begin{equation}
\label{eqn-Wpos}
W(x,t) > 0 \implies W(x,t) \leq u(x,t) \leq \e^{-2} \end{equation}
and therefore the same holds for the supremum $m^*$.

\smallskip

\begin{claim}
\label{claim-1}
If $\e$ is sufficiently small then
\[
m^*:=\sup_{(x,t)\in \cD_{T^*}} W(x,t) \leq 0.
\]

\end{claim}

Once this claim is shown, the theorem will follow by simply letting $\e \to 0$ to
show that $u \leq \bu$ and then switching the roles of $u$ and $\bu$.

\begin{proof}[Proof of Claim \ref{claim-1}]

To prove the claim, we assume by contradiction, that
\[
m^* >0.
\]
We treat separately the cases where: (i) the supremum is attained at an interior
point $(x_0, t_0) \in \cD_{T^*}$, (ii) the supremum is attained as $|x| \to +\infty$,
(iii) the supremum is attained as the limit $m^*=\lim_{k \to \infty} W(x_k, t_k)$,
for a sequence of points $(x_k, t_k) $ such that $t_k \to t_0 \in (0, T^*]$ and
$x_k \to x_0 \in \partial D_{t_0+\e}$, and (iv) the supremum is attained as the limit
$m^*=\lim_{k \to \infty} W(x_k, t_k)$, for a sequence of points $(x_k, t_k) $ such
that $t_k \searrow 0$ and $x_k \to x_0 \in \bar D_{\e}$.

\smallskip

\noindent{\em Case 1.} We have $m^* = W(x_0, t_0)$ for some point $t_0 \in (0, T^*]$
and $x_0 \in D_{t_0+\e}$.  Then at such point
\begin{equation}
\label{eqn-uuu}
\big (1 - \e (t_0+\e) u \big ) \, u = \bu + m^* \qquad \mbox{and} \qquad \big (1-2\e (t_0+\e) u \big ) D_i u = D_i \bu \end{equation}
Note that the first equality, $m^* >0$ and $ \bu \geq 1$ imply that 
\begin{equation}
\label{eqn-u12}
1- \e (t_0+\e) u >0 \end{equation}
at the maximum point, which will be used below.  We will now use the second equality in \eqref{eqn-uuu} to evaluate the right hand side of \eqref{eqn-W} at the maximum point.  First, we have
\begin{equation*}
\begin{split}
a_{ij} - \ba_{ij} &= \frac{D_i \bu D_j \bu}{1+|D\bu|^2} - \frac{D_i u D_j
  u}{1+|Du|^2} = \frac{D_i \bu D_j \bu}{1+|D\bu|^2} - (1- 2\e (t_0+\e) u)^{-2}
\, \frac{D_i \bu D_j \bu}{1+|Du|^2}  \\
&= \frac {(1- 2\e \, (t_0+\e)\,u )^2\, (1+|Du|^2)-  (1+ |D\bu|^2) } {(1- 2\e (t_0+\e)\,u )^2} \cdot   \frac{D_i \bu D_j \bu}{(1+|Du|^2)(1+|D\bu|^2)} \\
&=- \frac{ 4\e (t_0+\e) \,u(1- \e (t_0+\e)\, u)}{(1- 2\e (t_0+\e) \, u)^2} \cdot
\frac{D_i \bu D_j \bu}{(1+|Du|^2)(1+|D\bu|^2)}.
\end{split}
\end{equation*}
To derive the last equality we used $(1- 2\e (t_0+\e)\,u )^2\,|Du|^2 = |D\bu|^2$
which gave us
\[(1- 2\e (t_0+\e) u )^2\, (1+|Du|^2)- (1+ |D\bu|^2) = (1- 2\e (t_0+\e) u )^2 -1 = -
4\e (t_0+\e) u \, (1- \e (t_0+\e) u).\]

Combining the above with \eqref{eqn-W} we find that at the point
$(x_{\max}(t_0),t_0)$ we have
\[
0 \leq W_t - a_{ij} D_{ij} W < - \frac{ 4\e (t_0+\e) u\, (1- \e (t_0+\e)\, u)}{(1-2\e
  (t_0+\e)\, u)^2} \, \frac{D_{ij} \bu D_i \bu D_j \bu}{(1+|Du|^2)(1+|D\bu|^2)} -
\frac \e 2 u^2.
\]

\smallskip

The convexity of $\bu$ implies that $D_{ij} \bu D_i \bu D_j \bu \geq 0$.  Therefore,
the last differential inequality and \eqref{eqn-u12} yield
\begin{equation}
\label{eqn-W5}
\begin{split}
0 \leq W_t - a_{ij} D_{ij} W &< - \frac \e 2 \, u^2 <0
\end{split}
\end{equation}
leading to a contradiction.  This proves our assertion in this case.

\smallskip

\noindent{\em Case 2.} The supremum $m^*$ is attained at infinity, that is there
exists a sequence of times $t_k \in (0, T^*]$, $t_k \to t_0 \in [0, T^*]$, and a
sequence of points $x_k \in D_{t_k+\e}$, $|x_k| \to \infty$, such that
$W(x_k, t_k) > \frac{m^*}2 >0.$

\smallskip To derive a contradiction, we simply observe that on the one hand
$|x_k| \to +\infty$ and the convexity of our initial data imply that
$u_0(x_k, 0) \to +\infty$.  In addition by the convexity of $u(\cdot, t)$ we have
$u(x_k, t_k) \geq u_0(x_k, 0)$, hence $u(x_k, t_k) \to +\infty$.  On the other hand,
\eqref{eqn-Wpos} shows that $W(x_k, t_k)>0$ implies $u (x_k, t_k) < \e^{-2}$,
contradicting $u(x_k, t_k) \to +\infty$.  This proves our assertion in this case as
well.

\smallskip
\noindent{\em Case 3.} The supremum $m^*$ is attained at a point $(x_0, t_0) $,
$x_0 \in \partial D_{t_0+\e}$, $t_0 \in (0, T^*]$, that is, there is a sequence of
times $t_k \in (0, T^*]$, $t_k \to t_0 >0$ and a sequence of points
$x_k \in D_{t_k+\e}$, $x_k \to x_0 \in \bar D_{\e+t_k}$, such that
$W(x_k, t_k) \to m^* >0$.  We may choose the sequence $(x_k, t_k)$ such that
$W(x_k, t_k) := \max_{(x,t) \in \cD_{t_k}} W(x,t)$, where
$\cD_{t_k} := \cup_{t \in (0, t_k]} D_{t+\e} \times \{ t\}$.  Then, the same argument
as in case 1, leads to a contradiction.

\smallskip

\noindent{\em Case 4.}  The supremum $m^*$ is attained near $t_0=0$, namely there is
a sequence of times $t_k \in (0, T^*]$, $t_k \searrow 0$, and a sequence of points
$x_k \in D_{t_k+\e}$, $x_k \to x_0 \in \bar D_{\e}$, such that
$W(x_k, t_k) > \frac{m^*}2 >0.$ First note that $x_0$ cannot belong in $D_\e$ since
$\lim_{t\to 0} W(\cdot, t) <0$, on any compact set $K \subset \subset D_\e$.  Hence
$x_0 \in \partial D_{\e}$.  Next note again that by \eqref{eqn-Wpos},
$W(x_k, t_k) >0$ implies that $u (x_k, t_k) < \e^{-2}$.  On the other hand,
$W(x_k, t_k) >0$ implies that
$u(x_k, t_k) > \bu (x_k, t_k) \geq \bu (x_k, 0)=u_2 (x_k, \e) $ and
$u_2 (x_k, \e) \to +\infty$, since $x_k \to x_0 \in \partial D_\e$.  This
contradicts, $u (x_k, t_k) < \e^{-2}$, showing that this case is not possible.

\smallskip

Combining the four cases above shows that $m^* \leq 0$, finishing the proof of the
claim.
\end{proof}

We have just seen that $W:= u - \bu - \e ( t+\e) u^2 \leq 0$ on
$\R^n \times (0, T^*]$, where $u=u_1$ and $\bu(\cdot, t) = u_2(\cdot, t+\e)$.  Let
$\e \to 0$ to obtain that $u_1 \leq u_2$ on $\cD_{T^*}$.  Similarly, $\bu \leq u$ on
$\cD_{T^*}$ which means that $u=\bu$ on $\cD_{T^*}$.  By repeating the same proof
starting at $t=T^*$ we conclude after finite many steps that $u \equiv \bu$ on
$\cup_{t \in (0, T]} D_t \times \{ t \}$, finishing the proof of Proposition
\ref{thm-uniqueness}.
\end{proof}

\subsection{Proof of Theorem \ref{thm-mcf-exist-unique}} 
The existence of a smooth solution $C_t$, $t \in (0,T)$, starting at $C_0$ follows
from Theorem \ref{prop-existence}.  The uniqueness of any smooth convex solution
starting at $C_0$ follows from Theorem \ref{thm-uniqueness}.  The proof of Theorem
\ref{thm-mcf-exist-unique} is a direct consequence of these two results.

\begin{remark} While the convexity of smooth compact solutions is known to be
preserved by MCF $($see in \cite{Hu}$)$, in the non-compact case the convexity of
smooth solutions is not yet known to be preserved.

\end{remark}

\section{The space of convex sets in $\R^{n+1}$ and its topology}
\label{sec-convex-topology}

\subsection{The set of convex subsets}\label{sec-X-barX-introduced}
Let
\[
\bar X = \{C \subset \R^{n+1} \mid C\mbox{ is closed and convex}\}.
\]
The space $\bar X$ also contains convex sets $C$ whose interior is empty.  Such sets
are contained in a hyperplane.  If they do not occupy the entire hyperplane their
flow by Mean Curvature is not defined, and thus we will exclude convex sets with
empty interior.  We define
\[
X = \{C\in \bar X \mid C\text{ has nonempty interior}\}.
\]

\subsection{The Huisken measure}
\label{sec:Huisken-measure}
For each convex subset $C\in X$ we define the \emph{Huisken measure}
\[
\mu_C = \bigl(4\pi\bigr)^{-n/2} e^{-\|x\|^2/4} H^n_C,
\]
where $H^n_C$ is $n$-dimensional Hausdorff-measure on $\partial C$.  This measure is
just the integrand in the definition of Huisken's energy:
\begin{equation}\label{eqn-HE}
\mc H(C) = \frac{1}{(4\pi)^{n/2}}\int_{\partial C} e^{-\|x\|^2/4} dH^n_C = \mu_C(\R^{n+1}).
\end{equation}
The normalizing factor $(4\pi)^{n/2}$ is chosen so that
\[
(4\pi)^{-n/2} \int_Pe^{-\|x\|^2/4} dH^n = 1
\]
for any hyperplane $P\subset\R^{n+1}$ containing the origin.  This implies
\begin{equation}
\label{eq-Huisken-of-cylinders}
\mc H(C\times\R^k) = \mc H(C)
\end{equation}
for any convex set $C\subset\R^{n+1-k}$.

If $C$ has empty interior, so that it is a convex subset of some hyperplane of
$\R^{n+1}$, then we change the above definition by doubling the measure:
\[
\mu_C = 2\,\bigl(4\pi\bigr)^{-n/2} e^{-\|x\|^2/4} \;H^n_C.
\]
In either case (empty or nonempty interior) the measure $\mu_C$ is given by
\[
\forall f\in C^0_c(\R^{n+1}):\quad \langle\mu_C , f\rangle =
\lim_{\varepsilon\searrow 0}\frac1\varepsilon \int_{B_\varepsilon(C)\setminus
  C}e^{-\|x\|^2/4}f(x) \, dx,
\]
where $dx$ is Lebesgue measure on $\R^{n+1}$ and $B_\varepsilon(C)$ is the
$\varepsilon$~neighborhood of $C$.

\medskip

The following lemma shows that most of the Huisken measure of any convex set is
located in a large enough ball $B_R\subset\R^{n+1}$.

\begin{lemma}\label{lemma-Huisken-measures-tight}
For every $n$ there is a $c_n\in\R$ such that for any closed convex set
$C\subset\R^{n+1}$ and $R\geq 0$ one has
\[
\int_{\partial C\setminus B_R(0)} e^{-\|x\|^2/4}dH^n \leq c_n R^n e^{-R^2/4}.
\]
\end{lemma}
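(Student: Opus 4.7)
The plan is to reduce the hypersurface integral to a one-dimensional radial integral via the distribution function $A(r):=H^n(\partial C\cap B_r(0))$, bound $A$ using convex geometry, and then apply a standard Gaussian tail estimate. The key geometric input is the dimensional area bound $A(r)\leq \kappa_n r^n$; everything else is classical one-variable analysis.

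To establish the area bound, I would note that $C\cap \bar B_r(0)$ is itself a closed convex set contained in $\bar B_r(0)$, and invoke the well-known monotonicity of perimeter under inclusion of convex bodies: the $n$-dimensional Hausdorff measure of $\partial (C\cap\bar B_r(0))$ is at most that of $\partial B_r(0) = (n+1)\omega_{n+1}r^n$. (A clean proof uses the fact that the nearest-point retraction onto a convex set is $1$-Lipschitz; mapping $\partial B_r$ onto $\partial (C\cap \bar B_r)$ via the retraction onto $C\cap \bar B_r$ shows the perimeter bound directly.) Since $\partial C\cap B_r(0)\subset \partial(C\cap \bar B_r(0))$ up to an $H^n$-null set on $\partial B_r$, we obtain $A(r)\leq \kappa_n r^n$ with $\kappa_n=(n+1)\omega_{n+1}$. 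If $C$ has empty interior the analogous bound $H^n(\partial C\cap B_r)\leq \omega_n r^n$ holds, because $C$ lies in a hyperplane.

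Next I would rewrite the target integral as the Stieltjes integral $\int_R^\infty e^{-r^2/4}\,dA(r)$ and integrate by parts. The boundary term at infinity vanishes thanks to $A(r)=O(r^n)$ and the super-polynomial decay of $e^{-r^2/4}$, and the boundary term at $r=R$ is $-e^{-R^2/4}A(R)\leq 0$, which I discard. Using the area bound yields
\[
\int_{\partial C\setminus B_R(0)} e^{-\|x\|^2/4}\,dH^n \;\leq\; \frac{\kappa_n}{2}\int_R^\infty r^{n+1}\, e^{-r^2/4}\,dr.
\]
The remaining one-dimensional tail integral is classical: repeated integration by parts using $\tfrac{d}{dr}(-2r^{n}e^{-r^2/4}) = r^{n+1}e^{-r^2/4}-2nr^{n-1}e^{-r^2/4}$ reduces it to $2R^n e^{-R^2/4}$ plus a tail of the same form but with $n$ replaced by $n-2$, which is lower order. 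Iterating (or simply bounding the iterates by a geometric series once $R$ is large enough so that $2n/R^2\leq \tfrac{1}{2}$) gives $\int_R^\infty r^{n+1}e^{-r^2/4}\,dr\leq c'_n R^n e^{-R^2/4}$.

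The main obstacle is really just Step~1, the dimensional area bound; this is a clean fact but needs a careful proof via the Lipschitz nearest-point retraction. The Stieltjes manipulation and the tail estimate are routine. A minor issue is that for $R$ close to $0$ the stated bound $c_n R^n e^{-R^2/4}$ degenerates, so in that regime one invokes the fact that both sides are controlled by a dimensional constant and adjusts $c_n$ accordingly (or applies the bound only for $R$ bounded below, which is how the lemma is used in the sequel).
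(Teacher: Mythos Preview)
Your argument is correct and takes a genuinely different route from the paper's proof. The paper does not use the layer-cake/Stieltjes reduction; instead it decomposes $\partial C\setminus B_R$ into $2(n+1)$ pieces $E_i^\pm$ according to which coordinate of the outward normal satisfies $|\langle N,e_i\rangle|\geq \tfrac12$, writes each piece as a Lipschitz graph over a domain $\Omega\subset\R^n$, and splits that domain into $\Omega\cap B_R^n$ and $\Omega\setminus B_R^n$ to estimate the Gaussian integral directly. Your approach replaces this hands-on graph computation by the single convex-geometry fact $H^n(\partial C\cap B_r)\leq \kappa_n r^n$ (perimeter monotonicity via the $1$-Lipschitz nearest-point retraction), after which the problem becomes one-dimensional calculus. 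This is arguably cleaner and more conceptual, and the area bound is a reusable lemma; the paper's approach, on the other hand, avoids importing that fact and stays entirely within the local-graph techniques already in use elsewhere in the paper. Your observation about the degeneracy as $R\to 0$ applies equally to the paper's argument and reflects the literal statement rather than either proof; in practice the lemma is only invoked for $R$ bounded below.
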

\begin{proof}
For $i=1, \dots, n+1$ consider the sets
$E_i^\pm \subset\partial C \setminus B_R^{n+1}$ on which the unit normal $N$ to $C$
satisfies $\pm \langle N, e_i\rangle \geq \tfrac 12$.  The sets $E_i^\pm$ cover
$\partial C \setminus B_R^{n+1}$.

We estimate the contribution of $E_{n+1}^+$ to the Huisken integral.  Let
$\pi:\R^{n+1}\to\R^n$ be the orthogonal projection along the $x_{n+1}$ axis.  Then
the set $E_{n+1}^+$ is the graph of a function $x_{n+1}= h(x_1, \dots, x_n)$ which is
defined on $\Omega = \pi(E_{n+1}^+)$.  The unit normal on $E_{n+1}^+$ is
$N=(-\nabla h(x), 1)/\sqrt{1+|\nabla h|^2}$, so
$\langle N, e_{n+1}\rangle \geq \frac 12$ implies that $|\nabla h|\leq 1$ everywhere
on $\Omega$.  Using $\|x\|^2+h(x)^2\geq R^2$ everywhere on $E_{n+1}^+$ we find
\begin{align*}
  \int_{E_{n+1}^+}e^{-\|x\|^2/4}dH^n
  &= \int_\Omega e^{-(\|x\|^2+h(x)^2)/4} \sqrt{1+|\nabla h|^2}\,dx\\
  &\lesssim  \int_{\Omega\cap B_R^n} e^{-(\|x\|^2+h(x)^2)/4}\,dx
    +  \int_{\Omega\setminus B_R^n} e^{-(\|x\|^2+h(x)^2)/4}\,dx\\
  &\lesssim  e^{-R^2/4}|B_R^n| +  \int_{\R^n\setminus B_R^n}e^{-\|x\|^2/4}dx\\
  &\lesssim R^ne^{-R^2/4}.
\end{align*}
The same argument applies to all regions $E_i^\pm$ so the lemma follows.
\end{proof}

\subsection{The Gaussian mass}

For any $C\in\bar X$ we set
\begin{equation}
\label{eq:gaussian-vol}
\gvol(C) = \int_{C} e^{-\|x\|^2/4} \frac{dx}{(4\pi)^{n/2}}
\end{equation}
where $dx$ is again Lebesgue measure on $\R^{n+1}$.

A closed convex set $C$ has nonempty interior if and only if $\gvol(C)>0$.

\subsection{A topology on $\bar X$}
For every closed convex set $C\subset \R^{n+1}$ we consider the distance function
from a convex set $C$, $d_C(x)$, defined as in \eqref{eq-dist-fun}.

\begin{prop}\label{prop-dC-convex-and-Lipschitz}
The distance function $d_C$ is convex and Lipschitz continuous with Lipschitz
constant~$1$.
\end{prop}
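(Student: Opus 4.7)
The plan is to prove both properties directly from the definition of the distance function $d_C(x)=\inf_{y\in C}\|x-y\|$, using only the triangle inequality in $\R^{n+1}$ and the convexity of $C$. Neither step requires the minimizing point to exist; everything can be done with infima and $\varepsilon$-approximants, though since $C$ is closed a nearest point does in fact exist.

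For the Lipschitz estimate I would start from the triangle inequality $\|x_1-y\|\leq\|x_1-x_2\|+\|x_2-y\|$, valid for every $y\in\R^{n+1}$, and in particular for every $y\in C$. Taking the infimum over $y\in C$ on the right-hand side (the left-hand side is independent of $y$ once we replace $y$ by a near-minimizer for $d_C(x_2)$) gives $d_C(x_1)\leq\|x_1-x_2\|+d_C(x_2)$. Swapping the roles of $x_1$ and $x_2$ yields $|d_C(x_1)-d_C(x_2)|\leq\|x_1-x_2\|$, which is the desired $1$-Lipschitz bound; no property of $C$ beyond nonemptiness is used here.

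For convexity, fix $x_0,x_1\in\R^{n+1}$ and $t\in[0,1]$, write $x_t=(1-t)x_0+tx_1$, and fix $\varepsilon>0$. Choose $y_0,y_1\in C$ with $\|x_i-y_i\|\leq d_C(x_i)+\varepsilon$ for $i=0,1$. Because $C$ is convex, $y_t:=(1-t)y_0+ty_1$ lies in $C$. The norm $\|\cdot\|$ on $\R^{n+1}$ is itself convex, so
\begin{equation*}
d_C(x_t)\leq\|x_t-y_t\|=\bigl\|(1-t)(x_0-y_0)+t(x_1-y_1)\bigr\|\leq(1-t)\|x_0-y_0\|+t\|x_1-y_1\|.
\end{equation*}
Substituting the bounds for $\|x_i-y_i\|$ gives $d_C(x_t)\leq(1-t)d_C(x_0)+td_C(x_1)+\varepsilon$, and letting $\varepsilon\downarrow 0$ produces the convexity inequality.

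There is no real obstacle here: both properties follow from one-line applications of the triangle inequality, the second combined with the convexity of $C$ to ensure that the convex combination $y_t$ remains an admissible competitor. The only mild subtlety is that, strictly speaking, $d_C$ is defined as an infimum rather than a minimum, which is why I insert the $\varepsilon$-approximation step; if one prefers, one can invoke closedness of $C$ and boundedness of $d_C$ to replace $y_i$ by actual nearest points in $C$, avoiding the $\varepsilon$ altogether.
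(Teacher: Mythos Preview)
Your argument is correct and is essentially the paper's proof written out in full: the paper simply notes that each $x\mapsto\|x-y\|$ is convex and $1$-Lipschitz and then declares that the proposition follows from $d_C(x)=\min_{y\in C}\|x-y\|$. Your explicit treatment of the convexity step---choosing near-minimizers $y_0,y_1\in C$ and using that their convex combination $y_t$ stays in $C$---makes transparent exactly where the convexity of $C$ enters, which the paper's one-liner leaves implicit.
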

\begin{proof}
For each $y\in C$ the function $x\mapsto \|x-y\|$ has these properties.  The
proposition then follows from the definition $d_C(x) = \min_{y\in C}\|x-y\|$.
\end{proof}

The Hausdorff distance between closed subsets of a metric space is given by
\[
d_H(C_1, C_2) = \sup_{x}|d_{C_1}(x)-d_{C_2}(x)|,
\]
and a sequence of sets $C_k$ converges in the Hausdorff distance if the distance
functions $d_{C_k}$ converge uniformly.  This notion of convergence is too strong for
our purposes since the Hausdorff distance between a bounded and an unbounded set is
infinite.  Instead we consider the metric
\[
\rho(C,C') := \sup_{x\in\R^{n+1}} \frac{|d_C(x)-d_{C'}(x)|}{1+\|x\|^2}
\]
and the topology it defines on $\bar X$.  We will use the following notation:
\[
C_k\rto C \stackrel{\rm def}\iff \rho(C_k, C)\to 0.
\]

The following observation will be used in the next section.

\begin{prop}\label{prop-limit-of-dist-funcs}
If $C_k$ is a sequence of closed convex sets for which
$f(x)=\lim_{k\to\infty}d_{C_k}(x)$ exists pointwise, then $d_{C_k}$ converges
uniformly on all bounded sets and there is a closed convex set $C\subset\R^{n+1}$
such that $f=d_C$, and such that $C_k\rto C$.
\end{prop}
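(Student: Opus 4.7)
The plan is to exploit the fact that each distance function $d_{C_k}$ is convex and 1-Lipschitz (Proposition~\ref{prop-dC-convex-and-Lipschitz}) to promote the pointwise limit $f$ into the distance function of a concrete closed convex set. Since $f$ is a pointwise limit of 1-Lipschitz functions it is itself 1-Lipschitz, and by the same token convex and nonnegative. A standard equicontinuity argument then upgrades pointwise convergence of an equi-Lipschitz family to uniform convergence on every bounded subset of $\R^{n+1}$.

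Next I would define $C := f^{-1}(\{0\})$. This set is closed (since $f$ is continuous) and convex (as the zero sublevel set of a convex nonnegative function). The identity $f = d_C$ will be established by two inequalities. For $f \leq d_C$: given any $y \in C$, the 1-Lipschitz property of $f$ together with $f(y) = 0$ yields $f(x) \leq \|x - y\|$, and taking the infimum over $y \in C$ gives $f(x) \leq d_C(x)$. For the reverse, fix $x$ and pick $y_k \in C_k$ realizing $\|x - y_k\| = d_{C_k}(x)$. Since $d_{C_k}(x) \to f(x) < \infty$ the sequence $\{y_k\}$ is bounded, so a subsequence converges to some $y_\infty$. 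The 1-Lipschitz bound gives $d_{C_k}(y_\infty) \leq \|y_\infty - y_k\| \to 0$, while pointwise convergence at $y_\infty$ gives $d_{C_k}(y_\infty) \to f(y_\infty)$; combining these yields $y_\infty \in C$ and therefore $d_C(x) \leq \|x - y_\infty\| = f(x)$.

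Finally, to promote locally uniform convergence of $d_{C_k} \to d_C$ into convergence in the metric $\rho$, I would split the supremum defining $\rho(C_k, C)$ over a ball $B_R$ and its complement. On $B_R$ the locally uniform convergence already established controls the supremum. On $\R^{n+1} \setminus B_R$, the 1-Lipschitz property combined with the boundedness of $d_{C_k}(0)$ gives $|d_{C_k}(x) - d_C(x)| \leq 2\|x\| + O(1)$; dividing by $1 + \|x\|^2$ produces a quantity that tends to $0$ as $R \to \infty$ uniformly in $k$. Combining these two estimates yields $C_k \rto C$.

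The main subtlety I anticipate lies in ensuring $C \ne \emptyset$ and in dealing with convex sets $C_k$ that may themselves be empty. The hypothesis that $f$ exists as a real-valued function forces $C_k$ to be nonempty for large $k$, and the subsequential limit $y_\infty$ constructed above automatically witnesses a point of $C$, so nonemptiness of $C$ requires no separate argument. A mild secondary point is the justification that $d_{C_k}(0)$ is bounded, which falls out of the pointwise convergence at the single point $x = 0$.
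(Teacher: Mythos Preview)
Your proposal is correct and follows essentially the same approach as the paper: both define $C = f^{-1}(0)$, obtain $f \leq d_C$ from the $1$-Lipschitz property, and prove the reverse inequality via nearest-point projections $y_k \in C_k$ together with a subsequential limit. You are more explicit than the paper in two places---the equicontinuity step upgrading pointwise to locally uniform convergence, and the argument that locally uniform convergence implies $\rho$-convergence---both of which the paper leaves implicit.
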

\begin{proof}
By Proposition~\ref{prop-dC-convex-and-Lipschitz} each $d_{C_k}$ is convex and
Lipschitz with constant~$1$.  Therefore the limit function $f$ is also convex and
Lipschitz continuous with constant~$1$.  Its zero set $C:=f^{-1}(0)$ is a closed
convex set.  Lipschitz continuity of $f$ with constant~$1$ implies that
$f(x)\leq d_C(x)$ for all $x\in \R^{n+1}$.

To prove the reverse inequality, we fix $x_0 \in\R^{n+1}$ and, for each $k\in\N$, let
$y_k\in C_k$ be the nearest point to $x_0$.  After passing to a subsequence we may
assume that $y_k\to y_0$ for some $y_0\in\R^{n+1}$.  Since $d_{C_k}\to f$ locally
uniformly, we have $f(y_0) = \lim_{k \to \infty} d_{C_k}(y_k)$=0.  Hence
\[
d_C(x_0) \leq \|x_0 -y_0\|\leq \liminf_{k\to\infty} \|x_0-y_k\|\leq
\liminf_{k\to\infty} d_{C_k}(x_0)=f(x_0).
\]
It follows that $f=d_C$.  Since we have shown that $d_{C_k}$ converges locally
uniformly to $d_C$ it follows that $C_k\rto C$.
\end{proof}

\subsection{Properties of the topology on $\bar X$ and $X$}

We will now gather various compactness properties of the topology on $\bar X$ and the
subset $X$ we have just defined in the previous section.  These properties will be
used in the following sections where we will study ancient convex solutions.
\begin{prop}\label{prop-convergence-of-boundary}
Let $C_k\in X$ be a sequence with $C_k\rto C$.

If $K\subset \interior(C)$ is compact, then $K\subset \interior(C_k)$ for
sufficiently large $k$.

If $\bar K$ is a compact set with $\bar K\cap C=\varnothing$, then
$\bar K\cap C_k=\varnothing$ for large enough $k$.
\end{prop}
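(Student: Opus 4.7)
The plan rests on two preliminary observations about the metric $\rho$. First, since the weight $1/(1+\|x\|^2)$ is bounded away from zero on every bounded subset of $\R^{n+1}$, the convergence $C_k\rto C$ is equivalent to uniform convergence $d_{C_k}\to d_C$ on bounded sets. Second, because the sets are closed, $d_C(x)=0\iff x\in C$.

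The second assertion is then straightforward. By Proposition~\ref{prop-dC-convex-and-Lipschitz}, $d_C$ is continuous; since $\bar K$ is compact and disjoint from $C$, we have $\delta:=\inf_{x\in\bar K}d_C(x)>0$. The uniform convergence $d_{C_k}\to d_C$ on the bounded set $\bar K$ yields $d_{C_k}\geq\delta/2>0$ on $\bar K$ for all large $k$, which is equivalent to $\bar K\cap C_k=\varnothing$.

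The first assertion requires convexity in an essential way. First I would choose $\delta>0$ so small that the compact set $K':=K+\bar B_{2\delta}(0)$ is contained in $C$; this is possible because $K$ is compact and $K\subset\interior(C)$. Then $d_C\equiv 0$ on $K'$, so uniform convergence on this bounded set provides a sequence $\epsilon_k\to 0$ with $d_{C_k}(y)\leq\epsilon_k$ for every $y\in K'$. I would then fix $x\in K$ and any $z\in B_\delta(x)$ and claim that as soon as $\epsilon_k<\delta$ one must have $z\in C_k$; this gives $B_\delta(x)\subset C_k$ and hence $x\in\interior(C_k)$. To prove the claim I argue by contradiction: if $z\notin C_k$, let $q$ be the (unique) nearest point of $C_k$ to $z$ and set $n:=(z-q)/\|z-q\|$. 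Convexity of $C_k$ gives $\langle p-q,n\rangle\leq 0$ for every $p\in C_k$, while the point $z':=z+\delta n\in B_{2\delta}(x)\subset K'$ satisfies $d_{C_k}(z')\leq\epsilon_k<\delta$. On the other hand, for any $p\in C_k$,
\[
\|z'-p\|\geq\langle z'-p,n\rangle = \langle z-q,n\rangle + \delta - \langle p-q,n\rangle \geq \delta,
\]
giving $d_{C_k}(z')\geq\delta$, a contradiction.

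I expect the only nontrivial step to be this supporting-hyperplane argument, which is the place where convexity of $C_k$ is used essentially: pointwise closeness of the distance functions alone places $x$ only \emph{near} $C_k$, not necessarily inside it. The weight $1/(1+\|x\|^2)$ in the definition of $\rho$ is chosen precisely so that the uniform closeness of $d_{C_k}$ to $d_C$ needed for this convexity argument is available on every bounded region.
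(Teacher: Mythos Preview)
Your proof is correct and follows essentially the same approach as the paper: for the second assertion both use uniform convergence of $d_{C_k}$ on the compact set $\bar K$, and for the first both use a separating-hyperplane/nearest-point argument to show that a point outside $C_k$ forces a nearby point of $C$ to have $d_{C_k}\geq\delta$, contradicting the local uniform convergence. The only cosmetic difference is that you enlarge $K$ to $K'=K+\bar B_{2\delta}$ at the outset and argue directly that $B_\delta(x)\subset C_k$, whereas the paper first proves $K\subset C_k$ and then reruns the argument on an $\epsilon$-thickening to upgrade to $K\subset\interior(C_k)$.
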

\begin{proof}
If $K\not\subset C_k$ for infinitely many $k\in\N$, then there is a sequence
$k_i\in\N$ and corresponding points $p_{k_i}\in K\setminus C_{k_i}$.  Since $C_{k_i}$
is closed and convex, we can separate $p_{k_i}$ and $C_{k_i}$ with a hyperplane,
i.e.~there exist unit vectors $a_{k_i}\in\R^{n+1}$ with
$\langle a_{k_i}, p_{k_i}-q \rangle >0$ for all $q\in C_{k_i}$.

It follows from $K\subset\mathrm{int}(C)$ that there is a $\delta>0$ so that
$B_\delta(p)\subset C$ for all $p\in K$.  In particular, $B_\delta(p_{k_i})\subset C$
and $r_{k_i}:=p_{k_i}+\delta a_{k_i}\in C$ for all $i$.  On the other hand
$\langle a_{k_i}, p_{k_i}-q\rangle>0$ implies
\[
\|r_{k_i}-q\|\geq \langle a_{k_i}, r_{k_i}-q\rangle = \langle a_{k_i}, p_{k_i}+\delta
a_{k_i}-q\rangle =\langle a_{k_i}, p_{k_i}-q\rangle + \delta > \delta.
\]
This holds for all $q\in C_{k_i}$ so $d_{C_{k_i}}(r_{k_i})\geq \delta$ for all $i$.
In view of $r_{k_i}\in C$ we have $d_C(r_{k_i})=0$ and
$d_{C_{k_i}}(r_{k_i}) -d_{C}(r_{k_i})>\delta$ for all $i$, contradicting the
assumption that $d_{C_{k_i}}\to d_C$ locally uniformly.  This shows that
$K\subset C_k$ for large enough $k$.  To prove that $K\subset\interior(C_k)$ for
large $k$ we note that for some small $\epsilon>0$ the closed $\epsilon$-neighborhood
$K_\epsilon=\{x\mid d_K(x)\leq \epsilon\}$ is contained in $C$, and then apply the
previous arguments to $K_\epsilon$ instead of $K$.

To prove the statement about $\bar K$ we note that since $\bar K$ is compact, it
follows from $C_k\rto C$ that $d_{C_k}$ converges uniformly to $d_C$ on $\bar K$.
Since $\min_{\bar K}d_C >0$, this implies that $\min_{\bar K}d_{C_k}>0$ for large
enough $k$.  Hence $\bar K\cap C_k=\varnothing$ for large enough $k$.
\end{proof}

\begin{lemma}\label{lemma-rho-implies-weak-convergence}
Let $C_k\in X$ be a sequence with $C_k\rto C$ and $\gvol(C)>0$.  Then the surface
measures $H^n|{\partial C_k}$ converge weakly, locally, to $H^n|_{\partial C}$, i.e.
\[
\lim_{k\to\infty} \int \varphi(x) dH^n|_{\partial C_k} = \int \varphi(x)
dH^n|_{\partial C}
\]
for all compactly supported continuous $\varphi:\R^{n+1}\to\R$.
\end{lemma}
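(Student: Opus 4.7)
The plan is to reduce the statement to a local graph representation of the boundaries and then invoke the area formula together with the classical convergence theory for convex functions.

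First, since $\gvol(C)>0$, the set $C$ has nonempty interior, so we may fix a ball $B_r(p)\subset\interior(C)$. By Proposition~\ref{prop-convergence-of-boundary}, $B_{r/2}(p)\subset \interior(C_k)$ for all large $k$. This gives a uniform \emph{cone condition}: for every $q\in\partial C\cup\bigcup_k\partial C_k$ at bounded distance from $p$, the containing convex set includes a solid cone with apex $q$, axis $(p-q)/\|p-q\|$, and half-angle bounded away from $0$ by a constant depending only on $r$ and the diameter considered. Consequently, after a rigid motion, any such boundary point admits a neighborhood in which $\partial C$ (resp.\ $\partial C_k$) is the graph of a convex Lipschitz function over a hyperplane, with Lipschitz constant bounded independently of $k$.

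Given a test function $\varphi\in C^0_c(\R^{n+1})$, I would use a finite partition of unity subordinate to a cover of $\partial C\cap\supp\varphi$ by small balls $B_\rho(q_\alpha)$ chosen so that $\partial C\cap 2B_\rho(q_\alpha)$ is the graph $x_{n+1}=h^\alpha(x')$ of a convex Lipschitz function. By Proposition~\ref{prop-limit-of-dist-funcs} the convergence $C_k\rto C$ is locally uniform in the distance functions; combined with the uniform cone condition, this forces, for large $k$, $\partial C_k\cap B_\rho(q_\alpha)$ to also be the graph of a convex Lipschitz function $h^\alpha_k$ over the same hyperplane, with uniform Lipschitz bound. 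Moreover, for each fixed $x'$ the point $(x',h^\alpha(x'))$ lies on $\partial C$, so $d_{C_k}(x',h^\alpha(x'))\to 0$, and the uniform Lipschitz bound then forces $h^\alpha_k(x')\to h^\alpha(x')$ pointwise.

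Since the $h^\alpha_k$ are convex with uniformly bounded Lipschitz constants, pointwise convergence upgrades to locally uniform convergence, and standard convex analysis gives $\nabla h^\alpha_k\to\nabla h^\alpha$ almost everywhere (at every point of differentiability of $h^\alpha$, which is a full-measure set). The area formula then yields
\[
\int \varphi\, dH^n|_{\partial C_k\cap B_\rho(q_\alpha)} = \int \varphi(x',h^\alpha_k(x'))\sqrt{1+|\nabla h^\alpha_k(x')|^2}\,dx',
\]
and similarly for $C$. The integrands converge almost everywhere and are dominated by a constant times the indicator of a fixed compact set thanks to the uniform Lipschitz bound, so dominated convergence gives the limit for each patch. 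Summing over the partition of unity finishes the proof.

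The main obstacle is the second paragraph: to deduce from $C_k\rto C$ and the nonempty interior of $C$ that $\partial C_k$ is, uniformly in $k$, representable as a Lipschitz graph over the same hyperplane as $\partial C$ in a fixed neighborhood. This is precisely where the hypothesis $\gvol(C)>0$ is essential (without it one cannot prevent the boundaries $\partial C_k$ from degenerating along the chosen direction), and it is the step that converts the soft convergence in the metric $\rho$ into the quantitative graph control needed for the area formula.
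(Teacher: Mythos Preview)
Your proposal is correct and follows essentially the same route as the paper: represent the boundaries locally as graphs of concave Lipschitz functions, show the graph functions converge, and pass to the limit in the area formula by dominated convergence.

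The only notable difference is in how you establish that $\partial C_k$ is, in the same chart, the graph of a function $h_k$ with $h_k\to h$ uniformly. The paper does this more directly by a sandwich argument: for each $\varepsilon>0$ the compact set $\{z\le h(y)-\varepsilon\}$ is contained in $\interior(C)$ and hence in $\interior(C_k)$ for large $k$ (first clause of Proposition~\ref{prop-convergence-of-boundary}), while the compact set $\{z\ge h(y)+\varepsilon\}$ is disjoint from $C$ and hence from $C_k$ for large $k$ (second clause). This immediately forces $|h_k-h|\le\varepsilon$. Your argument via a uniform cone condition from a fixed interior ball, followed by pointwise convergence extracted from $d_{C_k}\to d_C$, reaches the same conclusion but is a step longer; in particular the sentence ``the uniform Lipschitz bound then forces $h_k^\alpha(x')\to h^\alpha(x')$'' really needs both directions of the convergence $d_{C_k}\to d_C$ (to rule out $h_k^\alpha$ being too large as well as too small), which is exactly what the paper's sandwich encodes. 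Either way, once uniform convergence of the concave functions is in hand, the convergence of gradients (a.e.\ or in measure) and the dominated-convergence step are identical.
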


\begin{proof}
If $\gvol(C)>0$ then $C$ has nonempty interior.  Let $p\in \partial C$ be given.  By
assumption $\interior(C)\neq\varnothing$, so we can rotate and translate our
coordinates to place $p$ at the origin and so that in some small neighborhood
$\mc N= B_r^n(0)\times[-2a, 2a]$ of $p$ the convex set $C$ is given by
\[
C\cap \mc N = \{(y, z)\in\R^n\times\R \mid \|y\|\leq r, -2a\leq z \leq h(y)\}
\]
for some Lipschitz continuous function $h:B_r^n(0)\to[-a, a]$.  Since $|h(x)|\leq a$
the boundary $\partial C\cap\mc N$ never intersects the top and bottom
$B_r^n(0)\times\{\pm 2a\}$ of $\mc N$.

For any $\varepsilon>0$ the set
$K=\{(y, z)\mid \|y\|\leq r, -2a\leq z\leq h(y)-\varepsilon\}$ is a compact subset of
$\interior(C)$.  Therefore $K\subset \interior(C_k)$ for large enough $k$.

Similarly, the set $\bar K=\{(y,z)\mid \|y\|\leq r, h(y)+\varepsilon \leq z\leq 2a\}$
is disjoint from $C$, so for large enough $k$ it also does not intersect $C_k$.

For sufficiently large $k$ it follows that $C_k\cap \mc N$ is also given by
\[
C_k\cap \mc N = \{(y, z)\in\R^n\times\R \mid \|y\|\leq r, -2a\leq z \leq h_k(y)\}
\]
for some function $h_k:B_r^n(0)\to [-2a, 2a]$ that satisfies
$|h_k(y)-h(y)|\leq \varepsilon$.

We have shown that $\partial C_k\cap \mc N$ and $\partial C\cap \mc N$ are graphs of
concave functions $h_k, h:B_r^n(0)\to[-2a,2a]$, and that $h_k$ converges uniformly to
$h$ as $k\to\infty$.  Using concavity of $h_k$ this implies that $\nabla h_k$
converges in measure to $\nabla h$, and therefore that the surface area measures
$H^n|_{\partial C_k\cap\mc N}$ converge weakly to $H^n|_{\partial C\cap \mc N}$.
\end{proof}

\begin{lemma}\label{lemma-huisken-cont}
The Huisken functional $\hu:X\to\R$ is continuous.  More precisely, if $C_k\in X$ is
a sequence with $C_k\rto C$ for $C\in X$, then $\hu(C_k)\to\hu(C)$.  Moreover, the
Huisken measures $\mu_{C_k}$ converge in the weak$^*$ topology to $\mu_C$, in the
sense that $\langle\mu_{C_k}, \varphi\rangle \to \langle\mu_C, \varphi\rangle$ for
every bounded and continuous $\varphi:\R^{n+1}\to\R$.
\end{lemma}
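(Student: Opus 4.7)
The plan is to deduce the weak$^*$ convergence $\mu_{C_k}\to\mu_C$ directly from the local weak convergence of the surface measures established in Lemma \ref{lemma-rho-implies-weak-convergence}, combined with the uniform tail estimate in Lemma \ref{lemma-Huisken-measures-tight}. Continuity of $\hu$ will then drop out by testing against the constant function $\varphi\equiv 1$.

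First, I would handle test functions with compact support. If $\varphi\in C_c(\R^{n+1})$, then $\psi(x):=(4\pi)^{-n/2}\varphi(x)e^{-\|x\|^2/4}$ is also continuous and compactly supported. Since $C\in X$ has nonempty interior, $\gvol(C)>0$, so Lemma \ref{lemma-rho-implies-weak-convergence} applies and gives
\[
\langle\mu_{C_k},\varphi\rangle=\int_{\partial C_k}\psi\,dH^n\longrightarrow\int_{\partial C}\psi\,dH^n=\langle\mu_C,\varphi\rangle.
\]

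Next, I would upgrade this to bounded continuous test functions by a standard $\varepsilon/3$-argument using tightness. Given $\varphi$ bounded continuous with $\|\varphi\|_\infty\le M$ and given $\varepsilon>0$, Lemma \ref{lemma-Huisken-measures-tight} produces $R>0$ so large that
\[
\mu_D\bigl(\R^{n+1}\setminus B_R(0)\bigr)\le (4\pi)^{-n/2}c_n R^n e^{-R^2/4}<\frac{\varepsilon}{3M}
\]
for \emph{every} $D\in X$; this uniform estimate is the key point. Choose a cutoff $\eta\in C_c(\R^{n+1})$ with $0\le\eta\le 1$, $\eta\equiv 1$ on $B_R(0)$, and $\supp\eta\subset B_{R+1}(0)$. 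Since $\varphi\eta\in C_c(\R^{n+1})$, the previous step yields $\langle\mu_{C_k},\varphi\eta\rangle\to\langle\mu_C,\varphi\eta\rangle$, while $|\langle\mu_D,\varphi(1-\eta)\rangle|\le M\mu_D(\R^{n+1}\setminus B_R)<\varepsilon/3$ uniformly in $D\in\{C\}\cup\{C_k\}$. Combining the three estimates gives $\limsup_{k}|\langle\mu_{C_k},\varphi\rangle-\langle\mu_C,\varphi\rangle|\le 2\varepsilon/3$, and letting $\varepsilon\downarrow 0$ finishes the weak$^*$ convergence.

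Finally, applying this with $\varphi\equiv 1$ (which is bounded and continuous) yields $\hu(C_k)=\mu_{C_k}(\R^{n+1})\to\mu_C(\R^{n+1})=\hu(C)$, proving continuity of $\hu$.

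The only real obstacle is the tail: Lemma \ref{lemma-rho-implies-weak-convergence} gives convergence only against compactly supported test functions, which by itself would not suffice for the continuity of the \emph{total} mass $\hu$. What rescues the argument is that the Huisken measure of a convex set carries an $e^{-\|x\|^2/4}$ factor, and the surface area of the portion of $\partial D$ outside $B_R(0)$ grows only polynomially along each normal direction; this is precisely the content of Lemma \ref{lemma-Huisken-measures-tight} and provides tightness uniformly across the entire space $X$. Once tightness is in hand, extending vague convergence to weak$^*$ convergence is routine.
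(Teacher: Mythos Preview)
Your proposal is correct and follows essentially the same route as the paper: first use Lemma~\ref{lemma-rho-implies-weak-convergence} to get convergence against compactly supported test functions, then invoke the uniform tail bound of Lemma~\ref{lemma-Huisken-measures-tight} together with a cutoff to upgrade to bounded continuous $\varphi$, and finally test against $\varphi\equiv 1$ to obtain $\hu(C_k)\to\hu(C)$. The only cosmetic difference is that the paper phrases the extension as a $\limsup$/$\liminf$ argument rather than an $\varepsilon/3$ argument, and (apparently by a slip) names the final test function as $(4\pi)^{-n/2}e^{-\|x\|^2/4}$ instead of the constant~$1$; your choice $\varphi\equiv1$ is the correct one since $\hu(C)=\mu_C(\R^{n+1})$.
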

\begin{proof}
The previous Lemma implies that for any compactly supported continuous
$\varphi:\R^{n+1}\to\R$ one has
$\langle \mu_{C_k}, \varphi\rangle \to \langle\mu_C, \varphi\rangle$.  We will obtain
the same conclusion for any bounded continuous $\varphi:\R^{n+1}\to\R$.  The Lemma
then follows by choosing $\varphi(x) = (4\pi)^{-n/2}e^{-\|x\|^2/4}$.

Choose a nondecreasing continuous cut-off function $\sigma:\R\to\R$ with
$\sigma(r)=0$ for $r\leq \frac{1}{2}$, $\sigma(r)=1$ for $r\geq 1$ and consider
$\psi_R(x) = \sigma(\|x\|/R)\varphi(x)$ and $\varphi_R(x) = \varphi(x) - \psi_R(x)$.

Then $\varphi_R$ is compactly supported, so
$\langle\mu_{C_k}, \varphi_R\rangle \to \langle\mu_C, \varphi_R\rangle$.
Furthermore, $\psi_R$ is supported outside $B_R$, so
Lemma~\ref{lemma-Huisken-measures-tight} implies that
$|\langle\mu_{C_k}, \psi_R \rangle| \leq \|\varphi\|_\infty c_nR^ne^{-R^2/4}$.

For any given $\epsilon>0$ we choose $R$ so large that
$\|\varphi\|_\infty c_nR^ne^{-R^2/4} < \epsilon$.  Then we have shown
\begin{align*}
  \limsup_{k\to\infty}\langle\mu_{C_k}, \varphi\rangle
  &=\limsup_{k\to\infty}\Big(\langle\mu_{C_k}, \varphi_R\rangle + \langle\mu_{C_k}, \psi_R\rangle\Big) \\
  &\leq \langle\mu_C, \varphi_R\rangle + \epsilon\\
  &=\langle\mu_C, \varphi\rangle - \langle\mu_C, \psi_R\rangle + \epsilon\\
  &\leq \langle\mu_C, \varphi\rangle + 2\epsilon.
\end{align*}
This holds for every $\epsilon>0$, so
$\limsup_{k\to\infty}\langle\mu_{C_k}, \varphi\rangle \leq \langle\mu_C,
\varphi\rangle$.

By applying the same argument to $-\varphi$ instead of $\varphi$ we also find
$\liminf_{k\to\infty}\langle \mu_{C_k}, \varphi\rangle \geq \langle\mu_C,
\varphi\rangle$, and hence
$\langle\mu_{C_k}, \varphi\rangle \to \langle\mu_C, \varphi\rangle$, as claimed.
\end{proof}

\begin{lemma}[First Compactness Lemma]
\label{lemma-compactness-1}

For each $R>0$ the set
\[
\bar X_R := \{C\in\bar X \mid C\cap B_R(0)\neq\varnothing\}
\]
is compact.
\end{lemma}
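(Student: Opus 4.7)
The plan is to use the fact that the distance functions $d_{C_k}$ are equi-Lipschitz plus the hypothesis $C_k\cap B_R(0)\neq\varnothing$ to obtain a uniform bound on $d_{C_k}$ at the origin, then apply Arzelà--Ascoli and Proposition~\ref{prop-limit-of-dist-funcs}.

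Let $(C_k)\subset \bar X_R$ be a sequence. Pick $y_k\in C_k\cap \bar B_R(0)$, so that $d_{C_k}(0)\leq \|y_k\|\leq R$. Combining this with the $1$-Lipschitz bound from Proposition~\ref{prop-dC-convex-and-Lipschitz} yields
\begin{equation*}
0\leq d_{C_k}(x)\leq R+\|x\|\qquad\text{for all }x\in\R^{n+1},\ k\in\N.
\end{equation*}
Thus $(d_{C_k})$ is a family of uniformly Lipschitz, locally uniformly bounded functions on $\R^{n+1}$. By Arzelà--Ascoli and a diagonal argument we may extract a subsequence (still denoted $d_{C_k}$) that converges pointwise (and locally uniformly) to some $1$-Lipschitz $f:\R^{n+1}\to[0,\infty)$.

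Proposition~\ref{prop-limit-of-dist-funcs} now supplies a closed convex set $C\subset\R^{n+1}$ with $f=d_C$ and $C_k\rto C$; in particular the locally uniform convergence $d_{C_k}\to d_C$ has been upgraded to convergence in the metric $\rho$. We must still check $C\in\bar X_R$. After passing to a further subsequence we may assume $y_k\to y$ for some $y\in\bar B_R(0)$; then
\begin{equation*}
d_C(y)=\lim_{k\to\infty}d_{C_k}(y_k)=0,
\end{equation*}
so $y\in C\cap\bar B_R(0)$, giving $C\in\bar X_R$ (with the convention that $B_R(0)$ is taken closed, or, if open, by working with any slightly larger radius and noting that the defining condition is preserved in the limit up to the boundary).

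The only genuine subtlety is verifying that the locally uniform convergence $d_{C_k}\to d_C$ actually implies $\rho(C_k,C)\to 0$, since $\rho$ is a supremum over all of $\R^{n+1}$. This is where the weight $\tfrac{1}{1+\|x\|^2}$ is crucial: the uniform estimate $d_{C_k}(x),d_C(x)\leq R+\|x\|$ gives $|d_{C_k}(x)-d_C(x)|\leq 2R+2\|x\|$, so
\begin{equation*}
\sup_{\|x\|\geq \Lambda}\frac{|d_{C_k}(x)-d_C(x)|}{1+\|x\|^2}\leq \sup_{\|x\|\geq\Lambda}\frac{2R+2\|x\|}{1+\|x\|^2}\xrightarrow{\Lambda\to\infty}0,
\end{equation*}
uniformly in $k$. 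Combined with uniform convergence on $\bar B_\Lambda(0)$, this yields $\rho(C_k,C)\to 0$. This tail estimate, together with showing $y\in C$, is the main point to verify carefully; everything else is a standard Arzelà--Ascoli extraction.
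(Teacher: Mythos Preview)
Your proof is correct and follows essentially the same approach as the paper: bound the distance functions via the Lipschitz estimate and the point in $B_R(0)$, extract a subsequence by Arzel\`a--Ascoli, and invoke Proposition~\ref{prop-limit-of-dist-funcs}. The only minor differences are that the paper verifies $C\cap\bar B_R(0)\neq\varnothing$ by contradiction via Proposition~\ref{prop-convergence-of-boundary} rather than your direct limit argument, and that your tail-estimate paragraph is redundant since Proposition~\ref{prop-limit-of-dist-funcs} already delivers $C_k\rto C$.
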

\begin{proof}
Let $C_k$ be a sequence of closed convex sets for which there exist
$p_k\in C_k\cap B_R(0)$.  Then
\[
d_{C_k}(x)\leq d_{C_k}(p_k) + \|x-p_k\|\leq \|x\|+R
\]
for all $x\in\R^{n+1}$ and $k\in\N$.

The distance functions $d_{C_k}$ are therefore uniformly bounded on compact sets.
They are also uniformly Lipschitz continuous, so by Ascoli's theorem we may assume
after passing to a subsequence that $d_{C_k}$ converges uniformly on compact sets.
By Proposition~\ref{prop-limit-of-dist-funcs} there is a $C\in \bar X$ with
$C_k\rto C$.

If $C\cap \bar B_R(0)$ were empty, then
Proposition~\ref{prop-convergence-of-boundary} implies
$C_k\cap \bar B_R(0)=\varnothing$ for large $k$, a contradiction with
$C_k\in \bar X_R$ for all $k$.
\end{proof}

Recall the definition \eqref{eq:gaussian-vol} of the enclosed Gaussian volume
$\gvol(C)$ of any convex set $C \in \bar X$.
\begin{lemma}\label{lemma-gvol-continuous}
The enclosed Gaussian volume $\gvol:\bar X\to\R$ is continuous, i.e.~if $C_k\to C$
then $\gvol(C_k)\to \gvol(C)$.
\end{lemma}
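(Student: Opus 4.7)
The plan is to deduce $\gvol(C_k)\to\gvol(C)$ from the dominated convergence theorem applied to the characteristic functions $\mathbf{1}_{C_k}$, with integrable majorant $(4\pi)^{-n/2}e^{-\|x\|^2/4}$. The only thing that really needs verification is that $\mathbf{1}_{C_k}(x)\to \mathbf{1}_C(x)$ for Lebesgue almost every $x\in\R^{n+1}$, and this is where Proposition~\ref{prop-convergence-of-boundary} does the work.

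First I would split $\R^{n+1}$ into three parts: the interior of $C$, the complement of $C$, and the boundary $\partial C$. For the interior, I would cover $\interior(C)$ by a countable increasing family of compact sets $K_m\subset\interior(C)$; Proposition~\ref{prop-convergence-of-boundary} guarantees $K_m\subset\interior(C_k)$ for $k$ large (depending on $m$), so $\mathbf{1}_{C_k}(x)\to 1=\mathbf{1}_C(x)$ pointwise on $\interior(C)$. For the complement, I would similarly exhaust $\R^{n+1}\setminus C$ by the compact sets $\bar K_m=\{x\mid d_C(x)\geq 1/m,\ \|x\|\leq m\}$; the second assertion of Proposition~\ref{prop-convergence-of-boundary} then gives $\bar K_m\cap C_k=\varnothing$ eventually, so $\mathbf{1}_{C_k}(x)\to 0=\mathbf{1}_C(x)$ on $\R^{n+1}\setminus C$.

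The remaining set $\partial C$ has Lebesgue measure zero: if $C$ has nonempty interior, then locally $\partial C$ is the graph of a convex (hence Lipschitz) function and has measure zero in $\R^{n+1}$; if $C$ has empty interior then $C$ itself is contained in an affine hyperplane and has measure zero, so a fortiori $\partial C$ does. Either way the set of possibly bad $x$ is Lebesgue negligible, and we get $\mathbf{1}_{C_k}\to\mathbf{1}_C$ a.e. Since $0\leq \mathbf{1}_{C_k}(x)\,(4\pi)^{-n/2}e^{-\|x\|^2/4}\leq (4\pi)^{-n/2}e^{-\|x\|^2/4}\in L^1(\R^{n+1})$, the dominated convergence theorem yields $\gvol(C_k)\to\gvol(C)$.

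There is no real obstacle; the only subtlety is remembering to treat the case where $C$ has empty interior separately (the statement is for $\bar X$, not $X$), but there $\gvol(C)=0$ and the argument above still shows $\mathbf{1}_{C_k}\to 0$ a.e., so $\gvol(C_k)\to 0$ as required.
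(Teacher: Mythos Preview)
Your proof is correct. The argument via the dominated convergence theorem is clean: once you establish $\mathbf{1}_{C_k}\to\mathbf{1}_C$ a.e.\ using both halves of Proposition~\ref{prop-convergence-of-boundary}, the integrable Gaussian majorant does the rest, and your treatment of the empty-interior case is fine.

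The paper takes a different route. Rather than invoking dominated convergence, it proves $\limsup_k\gvol(C_k)\leq\gvol(C)$ and $\liminf_k\gvol(C_k)\geq\gvol(C)$ separately. For the $\limsup$ direction it works directly from the locally uniform convergence of the distance functions to show $C_k\cap B_R\subset C^\epsilon$ (the $\epsilon$-neighborhood of $C$) for large $k$, then lets $R\to\infty$ and uses monotone convergence $\gvol(C^\epsilon)\downarrow\gvol(C)$; for the $\liminf$ direction it uses the first half of Proposition~\ref{prop-convergence-of-boundary} exactly as you do. Your packaging is arguably more economical---the Gaussian weight handles the non-compactness implicitly via DCT, whereas the paper truncates to $B_R$ by hand---but the paper's approach makes the two inequalities visible and avoids appealing to the second half of Proposition~\ref{prop-convergence-of-boundary} at all. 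Neither approach has any real advantage over the other.
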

\begin{proof}
Let $\epsilon>0$ be given and consider the open neighborhood
$C^\epsilon=\bigcup_{x\in C}B_\epsilon(x)$ of $C$.  There is a $k_\epsilon\in\N$ such
that $|d_{C_k}(x)-d_C(x)|\leq \epsilon/2$ for all $x\in B_R(0)$ and
$k\geq k_\epsilon$.  This implies that $d_C(x)\leq \epsilon/2$ for all
$x\in C_k\cap B_R(0)$, i.e.~$C_k\cap B_R(0)\subset C^\epsilon$.  Therefore, if
$k\geq k_\epsilon$, then
\[
\gvol(C_k)\leq \gvol(C_k\cap B_R)+\gvol(\R^{n+1}\setminus B_R) \leq \gvol(C^\epsilon)
+ \gvol(\R^{n+1}\setminus B_R)
\]
holds for all $R>0$.  Letting $R\to\infty$ we see that
$\gvol(C_k)\leq \gvol(C^\epsilon)$ for all $k\geq k_\epsilon$.  Hence
$\limsup_{k\to\infty} \gvol(C_k)\leq \gvol(C^\epsilon)$ for all $\epsilon>0$.
Monotone convergence implies $\gvol(C^\epsilon)\to\gvol(C)$ as $\epsilon\to0$, so we
have shown $\limsup_{k\to\infty}\gvol(C_k) \leq \gvol(C)$.

We still have to show that $\liminf_{k\to\infty}\gvol(C_k)\geq \gvol(C)$.  If
$\gvol(C)=0$ then there is nothing to prove, so assume $\gvol(C)>0$.

Let $\epsilon>0$ be given again.  Then, since $\gvol(\partial C)=0$, there is a
compact $K\subset \interior (C)$ with $\gvol(K)>\gvol(C)-\epsilon$.  By
Proposition~\ref{prop-convergence-of-boundary} there is a $k_\epsilon\in\N$ such that
$K\subset C_k$ for all $k\geq k_\epsilon$.  Hence
$\gvol(C_k)\geq \gvol(K) \geq \gvol(C)-\epsilon$ for all $k\geq k_\epsilon$.  This
implies $\liminf_{k\to\infty}\gvol(C_k)\geq \gvol(C)-\epsilon$.  Since we have shown
this for any $\epsilon>0$ we get $\liminf_{k\to\infty}\gvol(C_k)\geq \gvol(C)$.
\end{proof}

\begin{lemma}[Second Compactness Lemma]
\label{lemma-compactness-2}
For any $\gvol_0>0$ the set
$X_{\gvol\geq \gvol_0} = \{C\in X \mid \gvol(C)\geq \gvol_0\}$ is compact.
\end{lemma}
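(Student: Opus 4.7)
The plan is to show sequential compactness, which suffices because the topology is metrizable via $\rho$. Let $C_k \in X_{\gvol\geq \gvol_0}$ be an arbitrary sequence; I want to extract a $\rho$-convergent subsequence whose limit also lies in $X_{\gvol\geq \gvol_0}$.

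First I would show that the whole sequence lies in $\bar X_R$ for some fixed $R$, so that the First Compactness Lemma applies. The key observation is that if $C\cap B_R(0)=\varnothing$ then $C\subset\R^{n+1}\setminus B_R(0)$, hence
\[
\gvol(C)\leq (4\pi)^{-n/2}\int_{\|x\|\geq R}e^{-\|x\|^2/4}\,dx,
\]
and the right-hand side tends to $0$ as $R\to\infty$. Choosing $R_0$ so large that this Gaussian tail is strictly less than $\gvol_0$, I conclude $X_{\gvol\geq\gvol_0}\subset\bar X_{R_0}$.

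By Lemma~\ref{lemma-compactness-1} the set $\bar X_{R_0}$ is compact, so after passing to a subsequence I may assume $C_{k_j}\rto C$ for some $C\in\bar X$. Continuity of the enclosed Gaussian volume (Lemma~\ref{lemma-gvol-continuous}) then gives $\gvol(C)=\lim_j\gvol(C_{k_j})\geq\gvol_0>0$. In particular $C$ has nonempty interior (this is exactly the characterization noted just after the definition of $\gvol$), so $C\in X$ and $C\in X_{\gvol\geq\gvol_0}$. This proves sequential compactness, hence compactness.

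The argument is essentially a combination of two facts already established, so the only nontrivial step is the tightness bound forcing $C_k\cap B_{R_0}(0)\neq\varnothing$ uniformly in $k$. That step is straightforward because the Gaussian weight $e^{-\|x\|^2/4}$ is integrable on $\R^{n+1}$, so there is no genuine obstacle; everything else is a direct appeal to Lemma~\ref{lemma-compactness-1} and Lemma~\ref{lemma-gvol-continuous}.
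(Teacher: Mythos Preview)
Your proof is correct and follows essentially the same approach as the paper: both use the Gaussian tail bound to show $X_{\gvol\geq\gvol_0}\subset\bar X_R$ for some $R$, then invoke the First Compactness Lemma and the continuity of $\gvol$. The only cosmetic difference is that the paper phrases the conclusion as ``$X_{\gvol\geq\gvol_0}$ is a closed subset of the compact set $\bar X_R$'', whereas you verify sequential compactness directly.
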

\begin{proof}
Choose $R>0$ so large that $\gvol(\R^{n+1}\setminus B_R) < \gvol_0$.  Then every
$C\in X_{\gvol\geq \gvol_0}$ must intersect $B_R$,
i.e.~$X_{\gvol\geq \gvol_0}\subset \bar X_R := \{C\in \bar X \mid C\cap B_R\neq
\varnothing\}$.  We have shown that $\bar X_R$ is compact, and continuity of $\gvol$
implies that $X_{\gvol\geq \gvol_0}$ is a closed subset of $\bar X_R$.  Therefore
$X_{\gvol\geq \gvol_0}$ is compact.
\end{proof}

\section{The semiflow defined by RMCF}
\label{sec-semiflow}

We will now study some basic properties of the semiflow defined by RMCF.

\subsection{The semiflow}
If $C\in X$, then $C$ is closed, convex and has nonempty interior.  Hence, the
existence and uniqueness Theorem \ref{thm-mcf-exist-unique} provides a unique maximal
solution $\hat C_t$ to MCF with initial condition $\hat C_0=C$ that is defined on a
time interval $0\leq t< \Tx (C)$.

The rescaling
\begin{equation}\label{eq-rescaling}
C_\tau \stackrel{\rm def}= e^{\tau/2} \hat C_{1-e^{-\tau}},  \qquad \tau:=  -\ln(1-t) \geq 0
\end{equation}
turns the solution $\hat C_t$ of MCF into a solution $C_\tau$ of RMCF, whose lifespan
is
\[
\taum(C) =
\begin{cases} +\infty \quad & \text { if } \,\, \Tx(C) \geq 1, \\ -\ln(1-\Tx(C))
\quad & \text{ if } \,\, \Tx(C)<1.
\end{cases}
\]
If we denote this solution by
\[
\phi(C, \tau) = \phi^\tau(C) = C_\tau
\]
then we have defined a map
\[
\phi:\mf D\to X
\]
whose domain is
\begin{equation}
\label{eq-D}
\mf D := \{(C, \tau)\in X\times[0,\infty) \mid C\in X, 0\leq \tau<\taum(C)\}.
\end{equation}
It is easy to see that the semigroup properties hold:
\begin{enumerate}[(1)~]
\item $\phi^0(C)=C$ for all $C\in X$
\item if $0\leq \tau<\taum(C)$ and $0\leq \tau'<\taum\bigl(\phi^\tau(C)\bigr)$ then
$\tau+\tau'<\taum(C)$ and
\[
\phi^{\tau+\tau'}(C) = \phi^{\tau'}\bigl(\phi^\tau(C)\bigr).
\]
\end{enumerate}

\subsection{Continuity of the semiflow}
\label{ss-semiflow-continuous}
Here we collect three lemmas which together guarantee that $\phi^\tau$ is a
continuous local semiflow on $X$.  We begin with a convenient description of what
must happen if a solution to RMCF becomes singular in finite time.

\begin{lemma}\label{lemma-huisken-to-zero}
If $\taum(C) < +\infty$, then $\hu(\phi^\tau(C)) \to 0$ as $\tau\nearrow \taum(C)$.
\end{lemma}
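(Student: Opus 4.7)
My plan is to identify $\hu(\phi^\tau(C))$ with Huisken's density at the fixed spacetime point $(0,1)$ of the underlying MCF, and then reduce, via the product structure of convex sets, to the classical compact case.

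By the change of variables $x = e^{\tau/2}y$ in the surface integral,
\[
\hu(\phi^\tau(C)) = (4\pi(1-t))^{-n/2}\int_{\hat M_t} e^{-\|y\|^2/(4(1-t))}\,dH^n =: \Theta_{(0,1)}(t), \qquad t := 1-e^{-\tau}.
\]
Since $\taum < \infty$, the MCF singular time $T = 1-e^{-\taum}$ satisfies $T < 1$, so the Gaussian weight $(4\pi(1-t))^{-n/2}e^{-\|y\|^2/(4(1-t))}$ stays bounded as $t \to T^-$. By Huisken's monotonicity formula \cite{Hu1990}, $\Theta_{(0,1)}(t)$ is non-increasing in $t$, so the limit $h_\infty := \lim_{\tau \to \taum}\hu(\phi^\tau(C))$ exists. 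It remains to show $h_\infty = 0$.

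I would then invoke the structural decomposition for closed convex sets to write, up to a rotation, $C_0 = \R^k \times \hat D$ with $\hat D \subset \R^{n+1-k}$ either compact with nonempty interior or closed non-compact without infinite lines. By Theorem \ref{prop-existence}(b) the product structure is preserved by MCF, $\hat C_t = \R^k \times \hat D_t$, and by \eqref{eq-Huisken-of-cylinders}, $\hu(\phi^\tau(C)) = \hu(D_\tau)$ where $D_\tau := e^{\tau/2}\hat D_t$. If $\hat D$ is compact, Huisken's theorem \cite{Hu} gives that $\hat D_t$ is a smooth convex MCF on $[0,T)$ that contracts to a point $p_0$ as $t \to T^-$. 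Since $e^{\tau/2}$ converges to the finite constant $e^{\taum/2}$, $D_\tau$ contracts to $e^{\taum/2}p_0$ in the Hausdorff metric. In particular $\diam(D_\tau) \to 0$, $D_\tau$ stays in a fixed bounded region (so the Gaussian weight on $\partial D_\tau$ is uniformly bounded by some $\rho_0$), and $H^{n-k}(\partial D_\tau) \to 0$. Thus $\hu(D_\tau) \leq \rho_0\, H^{n-k}(\partial D_\tau) \to 0$, and $h_\infty = 0$.

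The main obstacle is ruling out the remaining case where $\hat D$ is closed non-compact without lines, which must be shown incompatible with the finite-time hypothesis $\taum < \infty$. The expected resolution is that the MCF of such a $\hat D$ exists smoothly for all $t \geq 0$, forcing $\taum = \infty$. This global-in-time existence does not follow from a single off-the-shelf result: when $\partial \hat D$ can be written as an entire graph transverse to a direction in the recession cone of $\hat D$, the Ecker-Huisken interior estimates \cite{EH2} apply directly; but for sets like an octant whose boundary is not a graph, a separate argument combining the instantaneous smoothness of the Ilmanen flow (Lemma \ref{lemma-reg}) with comparison against shrinking balls inscribed in translates of $\hat D$ along the recession cone is required to preclude finite-time singularities.
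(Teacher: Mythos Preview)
Your reduction via the product decomposition handles the compact-factor case correctly, but the proposed resolution of the non-compact-no-lines case is wrong: such sets need not have infinite MCF lifetime. Take $\hat D = \{(x,y,z)\in\R^3 : x^2+y^2\leq 1,\ z\geq 0\}$, a solid half-cylinder. It is closed, convex with nonempty interior, non-compact, and contains a ray but no line. By Lemma~\ref{lemma-shadow-moves-by-mcf} the shadow $\pi(\hat D_t)\subset\R^2$ of its evolution is itself a smooth MCF starting from the unit disk, hence is the shrinking disk $\bar B_{\sqrt{1-2t}}$ and collapses at $t=1/2$; therefore $\Tx(\hat D)\leq 1/2<\infty$. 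Your suggested barrier---balls inscribed in translates of $\hat D$ along its recession cone---cannot rescue this: every such translate sits inside the unit cylinder, so the inscribed balls have radius at most $1$ and go extinct in bounded time. The same phenomenon occurs whenever the shadow of $\hat D$ is bounded, so the ``infinite lifetime'' expectation fails for a large class of examples, not just a single pathology.

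The paper's proof bypasses the product decomposition entirely. It argues by contradiction: if $\hu(C_\tau)\geq\delta$ for all $\tau<\taum$, then (via Lemma~\ref{lemma-Huisken-measures-tight}) the sets $C_\tau$ all meet a fixed ball, so the unrescaled limit $\hat C_{\Tx}=\bigcap_{t<\Tx}\hat C_t$ is nonempty. By Proposition~\ref{prop-interior-empties} this limit has empty interior and hence lies in a hyperplane; the Huisken lower bound then forces the rescaled limit $C_{\taum}$ to contain a relatively open piece of that hyperplane. But a smooth convex MCF collapsing onto a flat $n$-dimensional piece violates the strong maximum principle. This argument is uniform in $C$ and sidesteps the case analysis that breaks down in your approach.
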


\begin{proof}
Since the set $C$ is fixed here, to simplify the notation we denote $ \taum(C)$
simply by $\taum$.  Assume by contradiction that $\hu(C_\tau)\geq \delta$ for all
$\tau\in(0,\taum)$.  Then there is an $R>0$ such that
$C_\tau\cap B_R(0)\neq \varnothing$ for all $\tau< \taum$.

Consider the unrescaled MCF $\hat C_t$ corresponding to $C_\tau$, i.e.
\[
\hat C_t := \sqrt{1-t}\,C_{-\ln(1-t)}\qquad\text{ where } 0<t <\Tx := 1-e^{-\taum} .
\]
For $t\in (0, \Tx)$, $\partial\hat C_t$ is a smooth convex MCF that intersects
$B_{R\sqrt{1-t}}$.  It follows that
\[
\hat C_{\Tx} := \bigcap_{t<\Tx}\hat C_t
\]
is nonempty.

The RMCF, $C_\tau = e^{\tau/2}\hat C_{1-e^{-\tau}}$ satisfies
\[
C_{\taum} :=\lim_{\tau\nearrow \taum}C_\tau = e^{\Tx/2}\hat C_{\Tx}
\]
If $C_{\taum}$ has nonempty interior, then $\hat C_{\Tx}$ also has nonempty interior
and then by Proposition \ref{prop-interior-empties} the Ilmanen flow $\hat C_t$ could
be extended beyond $t=\Tx$.  This contradicts the assumption that $C_\tau$ does not
extend beyond $\tau=\taum$.  Thus $C_{\taum}$ has no interior, which implies that
$C_{\Tx}$ is contained in a hyperplane $L\subset \R^{n+1}$.  Since
$\hu(C_\tau)\geq \delta$ it follows that $C_{\taum}$ must contain a relatively open
subset of the hyperplane $L$.  Going back again to the unrescaled flow we see that
$\hat C_{\Tx} = e^{-\taum/2} \, C_{\taum}$ also contains a relatively open subset of
the hyperplane $\hat L := e^{-T/2}L$.  Thus $\hat C_t$ shrinks to an open subset of a
hyperplane $\hat L$.  This contradicts the strong maximum principle, so that the
assumption that $\hu(C_\tau)\geq \delta$ for all $\tau$ cannot hold.
\end{proof}

\begin{lemma}\label{lemma-D-open}
The domain $\mf D$ of the semiflow is an open subset of $X\times[0,\infty)$.
\end{lemma}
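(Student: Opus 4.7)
The plan is to reduce openness of $\mf D$ to lower semicontinuity of the lifespan $\taum:X\to(0,\infty]$ in the $\rho$-topology. Indeed, if $\taum$ is lower semicontinuous and $(C_0,\tau_0)\in\mf D$, then any choice of $\tau_1\in(\tau_0,\taum(C_0))$ produces an open $\rho$-neighborhood $\mc U$ of $C_0$ on which $\taum(C)>\tau_1$, and then $\mc U\times[0,\tau_1)$ is an open neighborhood of $(C_0,\tau_0)$ contained in $\mf D$. I would work in unrescaled time $t=1-e^{-\tau}$ where the analogous quantity is $\Tx$; lower semicontinuity of $\Tx$ is equivalent, and the Ilmanen flow machinery from \S\ref{ss-defn} is phrased directly in $t$.

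First I would show that, given $C_0\in X$ and $t_1<\Tx(C_0)$, one can choose a smooth compact convex set $D\subset\interior C_0$ with $\Tx(D)>t_1$. To this end pick any exhausting sequence $D_m\subset\interior D_{m+1}$, $\overline{\cup_m D_m}=C_0$, as in Proposition~\ref{prop-solution-from-sequences}. Since $t_1<\Tx(C_0)$, the Ilmanen evolution $\hat C_{0,t_1}$ has nonempty interior by Proposition~\ref{prop-interior-empties}, and because $\hat C_{0,t_1}=\overline{\cup_m D_{m,t_1}}$ at least one $D_{m,t_1}$ must itself have nonempty interior; for such $m$, the classical smooth flow $D_{m,t}$ is defined past $t_1$, so $\Tx(D_m)>t_1$ and we take $D:=D_m$.

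The conclusion then follows by combining Proposition~\ref{prop-convergence-of-boundary} with the comparison built into Definition~\ref{def:emcf-solution}. Applied to the compact set $D\subset\interior C_0$, Proposition~\ref{prop-convergence-of-boundary} yields an open $\rho$-neighborhood $\mc U$ of $C_0$ on which $D\subset\interior C$, and then Definition~\ref{def:emcf-solution} gives $D_t\subset\hat C_t$ for all $t$ at which both flows exist. Suppose for contradiction that some $C\in\mc U$ has $\Tx(C)\leq t_1<\Tx(D)$. Then the smooth evolution $D_{\Tx(C)}$ is a compact convex set with nonempty interior satisfying
\[
D_{\Tx(C)}\;\subset\;\bigcap_{t<\Tx(C)}D_t\;\subset\;\bigcap_{t<\Tx(C)}\hat C_t,
\]
which contradicts the empty-interior conclusion of Proposition~\ref{prop-interior-empties}. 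Therefore $\Tx(C)>t_1$ throughout $\mc U$, which is the desired lower semicontinuity and finishes the proof.

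The only real obstacle is the first step, namely the equality $\sup_m\Tx(D_m)=\Tx(C_0)$ for an exhausting sequence, which is not automatic when $C_0$ is unbounded because the approximants $D_m$ shrink to points in finite time while $C_0$ may never become degenerate; Proposition~\ref{prop-interior-empties} is exactly what is needed to turn the nonempty interior of $\hat C_{0,t_1}$ into the existence of a single $D_m$ whose classical flow outlives $t_1$. Everything else is a direct comparison inside Ilmanen's definition.
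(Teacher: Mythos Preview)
Your argument is correct and follows essentially the same route as the paper: reduce to lower semicontinuity of $\Tx$, produce a smooth compact $D\subset\interior C_0$ whose classical flow survives past $t_1$ (the paper invokes Definition~\ref{def:emcf-solution} directly for this, while you spell it out via an exhausting sequence), then use Proposition~\ref{prop-convergence-of-boundary} together with the Ilmanen-flow comparison to transfer the bound to all nearby $C$. One small correction: the fact that $\hat C_{0,t_1}$ has nonempty interior is simply the definition of $\Tx$ (or Theorem~\ref{thm-mcf-exist-unique}), not Proposition~\ref{prop-interior-empties}; your second invocation of that proposition, in the contradiction step, is the appropriate one.
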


\begin{proof}
We show that the time of existence $\taum(C)$ of a RMCF with initial data $C\in X$ is
a lower semicontinuous function of $C\in X$, i.e.~for any $C\in X$, any $T<\taum(C)$,
and any sequence $C_m\in X$ with $C_m\rto C$ one has $\taum(C_m)>T$ for large enough
$m$.

To prove this we work with the unrescaled MCF $\{\hat C_t \mid 0\leq t <\Tx(C)\}$
starting from~$C$.  Let $\tau_0$ be given and define $t_0 = 1-e^{-\tau_0}$, so that
$t_0 < \Tx(C) $.  By definition~\ref{def:emcf-solution} there exists a smooth compact
MCF $D_t$ that is defined for all $t\in [0, t_0]$ and for which
$D_0\subset \interior C$.  Since $C_m\rto C$ it follows that
$D_0\subset\interior C_m$ for all large $m$, and by definition the MCF
$\hat C_{m, t}$ starting from $C_m$ contains $D_t$ for all $t\in [0, t_0]$.  In
particular, $\Tx(C_m) > t_0$, which then implies $\taum(C_m) > \tau_0$ for all large
$m$.
\end{proof}

\begin{prop}\label{lemma-semiflow-continuous}
The semiflow $\phi:\mf D\to X$ is continuous.
\end{prop}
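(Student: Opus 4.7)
The plan is to reduce continuity of $\phi^\tau$ to continuity of the unrescaled evolution $\hat C_t$ with respect to both time and initial data, and then to sandwich $\hat C_{m,t}$ between smooth inner and outer barrier flows, closing the argument with the uniqueness Theorem~\ref{thm-uniqueness}. Let $(C_m,\tau_m)\to(C_0,\tau_0)\in\mf D$. By Lemma~\ref{lemma-D-open} we have $(C_m,\tau_m)\in\mf D$ for large $m$, and since the dilation $(C,\tau)\mapsto e^{\tau/2}C$ is continuous in $\rho$ (pointwise convergence of distance functions is preserved under dilations $\lambda_m\to\lambda_0$), it suffices to show $\hat C_{m,t_m}\rto\hat C_{0,t_0}$ where $t_m=1-e^{-\tau_m}\to t_0$. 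By the compactness Lemma~\ref{lemma-compactness-1}, any subsequence of $\hat C_{m,t_m}$ has a further $\rho$-convergent subsubsequence with limit $C^*$, so it is enough to show that every such $C^*$ equals $\hat C_{0,t_0}$.

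For the inner inclusion $\hat C_{0,t_0}\subset C^*$, I would use interior approximation. Fix a smooth compact convex $D\subset\interior C_0$ whose classical MCF $D_t$ exists past $t_0$ (possible by Definition~\ref{def:emcf-solution} of the Ilmanen evolution together with $t_0<\Tx(C_0)$). By Proposition~\ref{prop-convergence-of-boundary}, $D\subset\interior C_m$ for large $m$; the comparison principle from Section~\ref{ss-defn} then yields $D_{t_m}\subset\hat C_{m,t_m}$. Continuity of $D_t$ in $t$ lets us pass to the limit, giving $D_{t_0}\subset C^*$. Taking an increasing exhaustion $D_k\uparrow C_0$ and applying Proposition~\ref{prop-solution-from-sequences} gives $\hat C_{0,t_0}=\overline{\bigcup_k D_{k,t_0}}\subset C^*$.

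The main obstacle is the reverse inclusion $C^*\subset\hat C_{0,t_0}$. When $C_0$ is compact I would construct a decreasing sequence of smooth compact convex outer barriers $E^j\supset C_0$ with $\bigcap_jE^j=C_0$. Since $C_0\subset\interior E^j$ and $\interior C_0\cap\interior E^j\neq\varnothing$, the two parts of Proposition~\ref{prop-convergence-of-boundary} together with convexity of $C_m$ force $C_m\subset\interior E^j$ for $m$ large (the compact $\partial E^j$ is disjoint from $C_0$, hence eventually disjoint from $C_m$, and a connected convex $C_m$ meeting $\interior E^j$ cannot cross $\partial E^j$); comparison then yields $\hat C_{m,t_m}\subset E^j_{t_m}\rto E^j_{t_0}$, so $C^*\subset E^j_{t_0}$ for every $j$. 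The decreasing family $\tilde C_t:=\bigcap_jE^j_t$ satisfies $\tilde C_0=C_0$ and $\tilde C_t\supset\hat C_{0,t}$, and by the Ecker--Huisken interior estimates already used in the proof of Lemma~\ref{lemma-reg}, applied to the uniformly Lipschitz graphs $\partial E^j_t$ in a region between a point $p\in\interior\tilde C_t$ and a point $q\notin\tilde C_t$, its boundary is smooth for $t>0$, so $\tilde C_t$ is a classical smooth convex MCF solution starting at $C_0$. The uniqueness Theorem~\ref{thm-uniqueness} forces $\tilde C_t=\hat C_{0,t}$, whence $C^*\subset\hat C_{0,t_0}$. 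The unbounded case is reduced to the bounded one by the product splitting $C_0=\R^k\times\hat C_0$ from Theorem~\ref{prop-existence}(b)--(c), combined with the shadow argument of Section~\ref{ss-shadow}.
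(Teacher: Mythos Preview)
Your approach and the paper's share the same endgame---invoking the uniqueness Theorem~\ref{thm-uniqueness}---but organize the argument differently. The paper extracts a subsequence along which the \emph{entire} flow $\tau\mapsto\phi^\tau(C_m)$ converges (using compactness and equicontinuity in $\tau$) to some $D(\tau)$, asserts that $D(\tau)$ is itself a solution to RMCF with $D(0)=C_*$, and then uniqueness forces $D\equiv\phi^\cdot(C_*)$. You instead work at a single time, sandwiching the subsequential limit $C^*$ of $\hat C_{m,t_m}$ between inner barrier flows $D_{k,t_0}$ and an outer envelope $\tilde C_{t_0}=\bigcap_j E^j_{t_0}$, and apply uniqueness to $\tilde C_t$ versus $\hat C_{0,t}$. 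In the compact case your argument is correct and in fact more explicit than the paper's, which leaves the step ``the limit $D(\tau)$ is a solution'' unjustified.

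The genuine gap is the non-compact case. Your outer barriers $E^j$ are compact convex sets containing $C_0$, and the inclusion $C_m\subset E^j$ for large $m$ relies on $\partial E^j$ being a \emph{compact} set disjoint from $C_0$; this mechanism breaks down entirely when $C_0$ is unbounded. Your proposed reduction via the product splitting $C_0=\R^k\times\hat C_0$ handles the case where $\hat C_0$ is compact, but does not cover case~(c) of Theorem~\ref{prop-existence}, where $C_0$ (or $\hat C_0$) is unbounded and contains no line---e.g.\ a paraboloid. The shadow argument of \S\ref{ss-shadow} does not obviously close this: even granting by induction on dimension that the shadows $D_{m,t}\to D_{0,t}$, you would still need uniform control of the height functions $u_m$ near $\partial D_t$, where they blow up, and you have no outer barrier to supply it. The paper's route avoids this case distinction: since the claim ``the limit is a smooth convex MCF'' rests only on \emph{local} Ecker--Huisken estimates (as in Lemma~\ref{lemma-reg}), no global enclosure is needed, and uniqueness applies directly whether or not $C_0$ is compact.
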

\begin{proof}
Let $C_m\in X$ be a sequence with $C_m\rto C_*\in X$, and let
$\tau_0 \in(0, \taum(C_*))$ be given.  By Lemma~\ref{lemma-D-open} the semiflow
$\phi^\tau(C_m)$ is defined for all $\tau \in [0,\tau_0]$ if $m$ is large enough.
Suppose that for some $\bar \tau\in [0, \tau_0]$ the sequence $C_m(\bar \tau)$ does
\emph{not} converge to~ $C_*(\bar \tau)$.  Then we may assume, after passing to a
subsequence, that $\phi^\tau(C_m)\rto D(\tau)$ uniformly for $\tau\in[0,\tau_0]$, and
that $D(\bar\tau)\neq \phi^{\bar\tau}(C_*)$.  The limit $D(\tau)$ is itself a
solution to RMCF, and its initial value is
$D(0)=\lim_{m\to\infty} \phi^0(C_m)=\lim_{m\to\infty} C_m = C_*$.  This leaves us
with two solutions $D(\tau)$ and $\phi^\tau(C_*)$ to RMCF, both of which start at
$C_*$, but for which $D(\bar\tau) \neq \phi^{\bar\tau}(C_*)$.  This contradicts the
uniqueness of solutions to MCF (Theorem \ref{thm-uniqueness})
\end{proof}

\begin{lemma}\label{lemma-compactness-3-setup}
For any $\delta>0$ there exist $\gvol_\delta>0$ and $\tau_\delta$ such that for all
$C\in X$ with $\gvol(C)\leq \gvol_\delta$ and $\delta\leq \hu(C)\leq 2-\delta$, the
solution $C_\tau$ to RMCF starting at $C$ becomes singular before $\tau=\tau_\delta$,
or else satisfies $\hu\bigl(\phi^{\tau_\delta} (C)\bigr)<\delta$.
\end{lemma}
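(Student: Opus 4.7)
The argument is by contradiction and compactness. Suppose the conclusion fails for some $\delta>0$: we can extract a sequence $C_m\in X$ with $\gvol(C_m)\to 0$ and $\hu(C_m)\in[\delta,2-\delta]$, and times $T_m\to\infty$ such that $\phi^\tau(C_m)$ is defined throughout $[0,T_m]$ and $\hu(\phi^{T_m}(C_m))\ge\delta$. Huisken's monotonicity along RMCF then traps $\hu(\phi^\tau(C_m))\in[\delta,2-\delta]$ for every $\tau\in[0,T_m]$, and the integrated shrinker deficit
\[
\int_0^{T_m}\!\int_{\partial\phi^\tau(C_m)}
\bigl(H-\tfrac12\langle x,\nu\rangle\bigr)^2 d\mu\,d\tau \;\le\; 2-2\delta.
\]

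Next, Lemma~\ref{lemma-Huisken-measures-tight} together with $\hu(C_m)\ge\delta$ forces each $C_m$ to meet a fixed ball $B_R$ with $R=R(\delta)$, and the First Compactness Lemma~\ref{lemma-compactness-1} lets us pass to a subsequence with $C_m\rto C_\infty$ in $\bar X$. By continuity of $\gvol$ (Lemma~\ref{lemma-gvol-continuous}) the hypothesis $\gvol(C_m)\to 0$ forces $\gvol(C_\infty)=0$, so $C_\infty$ has empty interior and is contained in some affine hyperplane~$L$.

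The core step is to use this degeneracy to contradict the long existence $T_m\to\infty$. By Ilmanen's construction (Definition~\ref{def:emcf-solution}), any compact smooth convex $D\subset\interior(C_m)$ satisfies $D\cap B_{2R}\subset C_m\cap B_{2R}$, which (since $C_m\cap B_{2R}\to C_\infty\cap B_{2R}$ in Hausdorff distance) is contained in an arbitrarily thin slab around $L\cap B_{2R}$ as $m\to\infty$. The upper bound $\hu(C_m)\le 2-\delta$ prevents $C_m$ from approaching a full hyperplane, ensuring that the portion of $C_m$ carrying most of the Huisken mass lies in a bounded region where the effective thickness tends to zero; comparing from both sides of the slab with shrinking spheres (respectively cylinders) via the MCF maximum principle forces $\Tx(D)\to 0$ uniformly in $D\subset\interior(C_m)$. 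Taking the supremum over $D$ gives $\Tx(C_m)\to 0$, hence $\taum(C_m)\to 0$, contradicting $T_m\to\infty$. (An alternative route, if $L$ happens not to pass through the origin, is that the static barrier $L$ drifts to infinity under RMCF rescaling, forcing $\hu(\phi^\tau(C_m))\to 0$ on any fixed time window, also contradicting $\hu\ge\delta$.)

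The principal technical obstacle is the last step: converting the qualitative statement $C_\infty$ has empty interior into a quantitative bound on the extinction time of interior Ilmanen approximations. The difficulty is that a compact smooth convex $D\subset \interior(C_m)$ may extend far outside $B_{2R}$, where the Gaussian weight is negligible and thus the thickness condition is not directly available. To handle this, one uses convexity of $C_m$ together with the tightness Lemma~\ref{lemma-Huisken-measures-tight} to show that the Huisken constraint $\hu(C_m)\ge\delta$ concentrates a definite amount of boundary mass in $B_R$, which by convexity propagates a uniform thinness of $C_m$ on any fixed bounded region. One then constructs enclosing caps using the Ecker–Huisken interior estimates \cite{EH2} applied along the evolution of the interior approximations to obtain a quantitative extinction estimate of the form $\Tx(D)\le F\bigl(\gvol(C_m),\delta\bigr)$ with $F\to 0$ as $\gvol\to 0$; this is the estimate that carries the whole proof.
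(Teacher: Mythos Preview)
Your setup is correct and matches the paper exactly through the point where a subsequence satisfies $C_m\rto C_\infty$ with $C_\infty$ contained in a hyperplane $L$. The gap is in the ``core step'': the assertion that $\Tx(D)\to 0$ uniformly over compact smooth $D\subset\interior(C_m)$, and hence $\taum(C_m)\to 0$, is simply false, and no amount of Ecker--Huisken interior estimates will rescue it.

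Here is a concrete counterexample in the regime you are trying to rule out. Take $C_m=[0,\infty)\times\R^{n-1}\times[-1/m,1/m]\subset\R^{n+1}$. Then $\gvol(C_m)\to 0$, $\hu(C_m)\to 1\in(\delta,2-\delta)$, and $C_m\rto C_\infty=[0,\infty)\times\R^{n-1}\times\{0\}$, which sits inside the hyperplane $L=\R^n\times\{0\}$ through the origin. Yet the unrescaled MCF starting from $C_m$ does not extinguish: the two flat faces are minimal and stay put, while the cap rounds off and translates in the $e_1$ direction (a grim-reaper$\times\R^{n-1}$ picture), so $\Tx(C_m)=\infty$. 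More generally, inside any such thin $C_m$ you can place compact ellipsoids of thickness $1/m$ but length $m$, whose extinction time stays bounded away from zero. Your proposed fix (``convexity propagates uniform thinness on bounded regions'') is true but irrelevant, because the approximations $D$ are not confined to a bounded region; and the Ecker--Huisken estimates control curvature of graphs, not extinction time.

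What actually works, and what the paper does, is to abandon the extinction-time route entirely and instead show that for a \emph{fixed} finite $T$ the evolved set $\phi^T(C_m)$ is disjoint from a large ball, so that Lemma~\ref{lemma-Huisken-measures-tight} forces $\hu(\phi^T(C_m))<\delta$. This is done with two explicit geometric barriers. If $L$ misses the origin, one slides in a rescaled BLT pancake (Lemma~\ref{lemma-clearing-out-with-pancakes}) of the appropriate width; since it is disjoint from $C_m$ initially, avoidance keeps $\phi^T(C_m)$ outside a ball of radius $\varrho/\delta$. If $L$ contains the origin, one first uses $\hu(C_\infty)\le 2-\delta$ to find a point $p\in B_{R_\delta}\cap L$ with $p\notin C_\infty$, and then shows that for large $m$ the set $C_m$ lies in the exterior of a cone $\{|y|\le A\|x-\bar q\|\}$ with apex $\bar q$ on the far side of $p$; comparison with the rotationally symmetric expanding soliton coming out of that cone (Lemma~\ref{lemma-clearing-out-with-expanders}) pushes $\phi^2(C_m)$ outside $B_R$. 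The missing idea in your attempt is precisely this pair of clearing-out barriers; without them the contradiction cannot be closed.
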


\begin{proof}
We argue by contradiction and assume that a sequence $C_m\in X$ exists with
$\gvol(C_m)\to 0$, and a sequence $\tau_m\to\infty$ such that
$\delta\leq \hu\bigl(\phi^\tau(C_m)\bigr)\leq 2-\delta$ holds for all $m$ and all
$\tau\in[0, \tau_m]$.

Since $\hu(C_m)\geq \delta$ it follows from Lemma~\ref{lemma-Huisken-measures-tight}
that there is an $R_\delta>0$ such that $C_m\cap B_{R_\delta}(0) \neq \varnothing$
for all $m$.  The first compactness Lemma~\ref{lemma-compactness-1} implies that we
may assume that $C_m\rto C_*$ for some $C_*\in\bar X$.  Since $\gvol(C_m)\to 0$,
Lemma~\ref{lemma-gvol-continuous} implies $\gvol(C_*) = 0$, and hence the interior of
$C_*$ must be empty.  It follows that $C_*$ is contained in some hyperplane
$L\subset \R^{n+1}$.  We may assume that $L$ is parallel to
$\R^n\times\{0\} \subset\R^{n+1}$.

We claim the hyperplane $L$ contains the origin.  If this were not true then let the
distance from $L$ to the origin be $2\delta$, so that $L=\R^n\times\{2\delta\}$.
Thus $C_*$ is disjoint from $B^n_r\times[-\delta,\delta]$, and for large enough $m$,
$C_m$ will also not intersect $B^n_r\times[-\delta,\delta]$.  By comparison with a
suitably placed BLT-pancake (see Lemma~\ref{lemma-clearing-out-with-pancakes}) there
exist $\varrho,T>0$ such that $\phi^T(C_m)$ is disjoint from
$B^{n+1}_{\varrho/\delta}$.  Lemma \ref{lemma-Huisken-measures-tight} tells us that
\[
\hu(\phi^T(C_m)) \leq c_n (\varrho/\delta)^n e^{-\varrho^2/(4\delta)}.
\]
If $\delta$ is small enough then this implies $\hu(\phi^T(C_m)) < \delta$.  For large
enough $m$ this contradicts the assumption that $\hu(\phi^\tau(C_m))\geq \delta$ for
all $\tau\in[0, \tau_m]$.

We may assume from here on that $L=\R^n\times\{0\}$.  We will use
Lemma~\ref{lemma-clearing-out-with-expanders} to obtain a contradiction in this case.

The assumption $\hu(C_m)\leq 2-\delta$ implies $\hu(C_*)\leq 2-\delta$.  Choose
$R_\delta$ so that
\[
(4\pi)^{n/2}\int_{B^n_{R_\delta}\times\{0\}} e^{-\|X\|^2/4}dH^n > \frac{2-\delta}{2}.
\]
The integral is exactly the Huisken energy of the two-sided ball
$B^n_{R_\delta}\times\{\pm0\}\subset\R^{n+1}$, so it follows from
$\hu(C_*)=\lim\hu(C_m)$ that $C_*$ cannot contain the entire ball $B^n_{R_\delta}$.
Hence $B^n_{R_\delta}$ contains a point $p$ that does not lie in $C_*$.

Let $q\in C_*$ and $q_m\in C_m$ be the nearest points to $p$ in $C_*$ and $C_m$,
respectively.  Convexity of $C_m$ and $C_*$ together with $C_m\rto C_*$ implies that
$q_m\to q$.

Choose $R>0$ be so large that any $\tilde C\in X$ with
$\tilde C\cap B^{n+1}_R=\varnothing$ satisfies $\hu(\tilde C)<\delta$ (such an $R$
exists according to Lemma~\ref{lemma-Huisken-measures-tight}).  Consider the line
segment $q\bar q$ of length $R$ starting at $q$, in the direction of $p$, and let the
line segment be so long that $\|\bar q\|= R$ (increase $R$ if necessary).  Let $A$ be
the slope from Lemma~\ref{lemma-clearing-out-with-expanders} corresponding to $R$.

\textit{Claim: } for sufficiently large $m$ all $(x, y)\in C_m$ satisfy
$|y|\leq A\|x-\bar q\|$.  If this were not true then along some subsequence we would
have points $(x_m, y_m)\in C_m$ with $|y_m| > A \|x_m-\bar q\|$.

If the sequence $|y_m|$ is bounded, then $\|x_m-\bar q\|\leq A^{-1}|y_m|$ is also
bounded, so we can extract a convergent subsequence whose limit $(x_*, y_*)$ is a
point in $C_*$ for which $|y_*|\geq A\, \|x_*-\bar q\|$.  Since $\bar q\not\in C_*$
we have $\|x_*-\bar q\|>0$ and hence $|y_*|>0$.  This contradicts
$(x_*,y_*)\in C_*\subset\R^n\times\{0\}$, i.e.~$y_*=0$.

Next consider the case where $|y_m|$ is unbounded.  Recall that $q_m\in C_m$ is the
nearest point in $C_m$ to $p$, and that $q_m\to q$.  For large $m$ the line segment
connecting $(x_m, y_m)$ and the point $q_m$ contains a point
$(\tilde x_m, \tilde y_m)$ with $|\tilde y_m|=1$.  By convexity
$(\tilde x_m, \tilde y_m)\in C_m$.  Using $A\|x_m-\bar q\|<|y_m|$ one verifies that
$\|\tilde x_m\|$ is bounded.  Indeed, let $q_m=(a_m, b_m)\in \R^n\times\R$.  Then
$a_m\to q$ and $b_m\to 0$.  By definition of $\tilde{x}_m, \tilde{y}_m$
\[
(\tilde x_m, \tilde y_m)= (1-\theta_m)q_m + \theta_m(x_m, y_m)
\]
where $\theta_m$ is determined by $|(1-\theta_m)b_m+\theta_m y_m| = 1$, i.e.
$\theta_m = \frac{\pm 1-b_m}{y_m-b_m} = \mc O(|y_m|^{-1})$.  Hence
\[
\tilde x_m - q_m = \theta_m(x_m-q_m) = \theta_m(x_m-q_m),
\]
and therefore $\|x_m-q_m\|\leq \|x_m-q\|+\|q-q_m\|\leq A|y_m|+o(1)\leq 2A|y_m|$ if
$m$ is large, because $q_m\to q$ and $|y_m|\to\infty$.  This implies
$\|\tilde x_m-q_m\| \leq 2A + o(|y_m|^{-1})$.  Since $q_m\to q$ it follows that
$\tilde x_m$ is bounded.

Thus we can extract a convergent subsequence of $(\tilde x_m, \tilde y_m)$ whose
limit $(\tilde x_*, \tilde y_*)$ both belongs to $C_*$ and satisfies
$|\tilde y_*|=1$, again a contradiction.

We can now complete our proof by invoking
Lemma~\ref{lemma-clearing-out-with-expanders}.  For large $m$ the set $C_m$ is
contained in the region outside the cone $|y| = A\|x-\bar q\|$ where $\|\bar q\|=R$.
It follows that $\phi^2(C_m)$ lies in the region $\|x\|\geq R$, and hence
$\hu\bigl(\phi^2(C_m)\bigr) < \delta$, contradicting our initial assumptions.
\end{proof}

\begin{lemma}[Third compactness lemma]\label{lemma-compactness-3}
For any $\delta>0$ let $\tau_\delta$ be as above in
Lemma~\ref{lemma-compactness-3-setup}.  Then
\[
E_\delta = \bigl\{C\in X \mid \taum(C)\geq \tau_\delta,\text{ and } \forall
\tau\in[0, \tau_\delta]: \delta\leq \hu(\phi^\tau(C))\leq 2-\delta \bigr\}
\]
is a compact subset of $X$.
\end{lemma}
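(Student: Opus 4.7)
The plan is to extract a convergent subsequence from any sequence $\{C_m\}\subset E_\delta$ and show the limit again lies in $E_\delta$. Because $\hu(C_m)\geq\delta$ for every $m$, Lemma~\ref{lemma-Huisken-measures-tight} supplies an $R_\delta>0$, independent of $m$, with $C_m\cap B_{R_\delta}(0)\neq\varnothing$. The First Compactness Lemma~\ref{lemma-compactness-1} then produces a subsequence (not relabeled) and a $C_*\in\bar X$ such that $C_m\rto C_*$.

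The main obstacle is to rule out $\gvol(C_*)=0$, i.e.~to show that the limit does not collapse into a hyperplane. By continuity of Gaussian mass (Lemma~\ref{lemma-gvol-continuous}) we have $\gvol(C_m)\to\gvol(C_*)$, so if $\gvol(C_*)=0$ then $\gvol(C_m)<\gvol_\delta$ for all large $m$. Combined with $\delta\leq\hu(C_m)\leq 2-\delta$, Lemma~\ref{lemma-compactness-3-setup} forces either $\taum(C_m)<\tau_\delta$ or $\hu(\phi^{\tau_\delta}(C_m))<\delta$, both of which contradict $C_m\in E_\delta$. Hence $\gvol(C_*)>0$ and $C_*\in X$. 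This step is exactly why $\tau_\delta$ was engineered the way it was in Lemma~\ref{lemma-compactness-3-setup}, and it is the place where the proof really uses more than soft compactness.

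Next, I would verify that $\taum(C_*)\geq\tau_\delta$. If instead $\taum(C_*)<\tau_\delta$, Lemma~\ref{lemma-huisken-to-zero} supplies some $\tau_1<\taum(C_*)$ with $\hu(\phi^{\tau_1}(C_*))<\delta/2$. Since $\mf D$ is open (Lemma~\ref{lemma-D-open}) we have $(C_m,\tau_1)\in\mf D$ for large $m$, and continuity of the semiflow (Proposition~\ref{lemma-semiflow-continuous}) together with continuity of $\hu$ (Lemma~\ref{lemma-huisken-cont}) yields $\hu(\phi^{\tau_1}(C_m))<\delta$ for large $m$, contradicting $C_m\in E_\delta$ because $\tau_1<\tau_\delta$.

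Finally, for every $\tau\in[0,\tau_\delta]$ the pair $(C_*,\tau)$ lies in $\mf D$, so the semiflow is continuous at $(C_*,\tau)$ and $\phi^\tau(C_m)\rto\phi^\tau(C_*)$; continuity of $\hu$ then passes the inequalities $\delta\leq\hu(\phi^\tau(C_m))\leq 2-\delta$ to the limit. This shows $C_*\in E_\delta$ and completes the compactness argument.
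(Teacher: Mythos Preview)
Your proof is correct and follows essentially the same route as the paper's. The only cosmetic difference is the order of two steps: the paper first invokes Lemma~\ref{lemma-compactness-3-setup} to deduce $\gvol(C_m)\geq\gvol_\delta$ for every $m$ and then applies the Second Compactness Lemma~\ref{lemma-compactness-2}, whereas you first use the First Compactness Lemma~\ref{lemma-compactness-1} to extract a limit in $\bar X$ and then rule out $\gvol(C_*)=0$ by contradiction via Lemma~\ref{lemma-compactness-3-setup}; the content is identical.
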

\begin{proof}
If $C_m\in X$ is a sequence with $\delta\leq \hu(C_m)\leq 2-\delta$ such that
$\hu(\phi^\tau(C_m))\geq \delta$ for all $\tau\in[0, \tau_\delta]$, then we have just
shown in Lemma~\ref{lemma-compactness-3-setup} that $\gvol(C_m)\geq \gvol_\delta$.
Hence, by Lemma~\ref{lemma-compactness-2}, the sequence $C_m$ has a convergent
subsequence.  Denoting this subsequence again by $C_m$, we consider its limit
$C_m\rto C_*$ and show that $C_*\in E_\delta$.

If $\taum(C_*)<\tau_\delta$ then a
$\tau_*\in [0, \taum(C_*)] \subset [0, \tau_\delta]$ with
$\hu\bigl(\phi^{\tau_*}(C_*)\bigr) \leq \delta/2$ would exist.  By continuity of the
semiflow it would then follow that $\phi^{\tau_*}(C_m)<\delta$ for large $m$, which
is not the case.  Thus we see that $\taum(C_*) > \tau_\delta$.  Continuity of the
semiflow then implies that $\phi^\tau(C_m)\rto \phi^\tau(C_*)$ for all
$\tau\in [0, \tau_\delta]$.  Continuity of the Huisken energy finally implies
$\delta\leq\hu(\phi^\tau(C_*))\leq 2-\delta$ for all $\tau\in [0, \tau_\delta]$.
Therefore $C_*\in E_\delta$
\end{proof}

\section{The Invariant set}
\label{sec-invariant-set}

\subsection{Invariant sets}
Here we analyze the invariant sets that were defined in \S\ref{dfn-invariant}.  We
use the notation
\[
\phi^{[a,b]}(C) := \{\phi^\tau(C) \mid a\leq \tau \leq b\}
\]

In addition to the invariant set \(I(h_0,h_1)\), we also consider for any \(\tau>0\)
the sets
\begin{align*}
  N^-_\tau(h_0,h_1) &= \bigl\{ C\in X \mid \phi^{[0,\tau]}(C)\subset  X(h_0, h_1)\bigr\}\\
  N^+_\tau(h_0,h_1) &= \bigl\{ \phi^\tau(C) \mid
                      \exists C\in X: \phi^{[0,\tau]}(C)\subset  X(h_0, h_1)\bigr\}\\
  I_\tau(h_0, h_1)&= N^+_\tau(h_0,h_1) \cap N^-_\tau(h_0,h_1) .
\end{align*}
By definition we have
\[
\forall 0<\tau<\tau' : \quad I_{\tau'}(h_0, h_1) \subset I_{\tau}(h_0, h_1).
\]

\begin{lemma}\label{lemma-I-compact} Assume \(0<h_0<h_1<2\).

\begin{enumerate}[\upshape(a)~]
\item The invariant set defined in \S\ref{dfn-invariant} is given by
\( I(h_0, h_1) = \bigcap_{\tau\geq 0} I_\tau(h_0,h_1)\).
 
\item \(I(h_0,h_1)\) is a compact subset of \(X\).
\end{enumerate}
\end{lemma}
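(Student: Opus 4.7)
The plan splits along the two parts of the lemma; both rely crucially on the third compactness Lemma~\ref{lemma-compactness-3}, which tells us that any convex set whose forward RMCF orbit stays in $X(\delta,2-\delta)$ for time $\tau_\delta$ lies in a fixed compact subset $E_\delta$ of $X$.

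For (a), the inclusion $I(h_0,h_1)\subset\bigcap_{\tau\geq 0}I_\tau(h_0,h_1)$ is immediate from the definitions: if $\{C_s\}_{s\in\R}\subset X(h_0,h_1)$ is an entire orbit through $C$, then $\phi^{[0,\tau]}(C)\subset X(h_0,h_1)$ gives $C\in N^-_\tau$, while $C_{-\tau}$ together with its forward orbit witnesses $C\in N^+_\tau$. For the reverse inclusion, fix $\delta\in(0,\min(h_0,2-h_1))$ and let $\tau_\delta$ and $E_\delta$ be as in Lemmas~\ref{lemma-compactness-3-setup}--\ref{lemma-compactness-3}. For each integer $k\geq\tau_\delta$ the assumption $C\in I_k(h_0,h_1)$ produces a $D_k\in X$ with $\phi^k(D_k)=C$ and $\phi^{[0,k]}(D_k)\subset X(h_0,h_1)$; the compactness lemma then places $\phi^{k-j}(D_k)\in E_\delta$ for every $0\leq j\leq k-\tau_\delta$. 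A diagonal extraction in $k\to\infty$, using continuity of the semiflow (Proposition~\ref{lemma-semiflow-continuous}), produces a consistent family $D^{(j)}\in X$ for $j\in\N$ satisfying $\phi^{j-j'}(D^{(j)})=D^{(j')}$ for $j\geq j'\geq 0$ with $D^{(0)}=C$. Setting $C_{-j+s}:=\phi^s(D^{(j)})$ for $s\in[0,j]$ and $C_\tau:=\phi^\tau(C)$ for $\tau\geq 0$ assembles an entire orbit through $C$ inside $X(h_0,h_1)$, so $C\in I(h_0,h_1)$.

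For (b), the same $\delta$ gives $I(h_0,h_1)\subset E_\delta$, since every $C\in I(h_0,h_1)$ satisfies $\phi^{[0,\tau_\delta]}(C)\subset X(\delta,2-\delta)$. It therefore suffices to check that $I(h_0,h_1)$ is closed in the compact set $E_\delta$. If $C_m\in I(h_0,h_1)$ and $C_m\rto C_*$, continuity of $\phi^\tau$ and of Huisken's functional (Lemma~\ref{lemma-huisken-cont}) places the forward orbit of $C_*$ in $X(h_0,h_1)$. For the backward orbit, each $C_m$ carries an ancient orbit $\{C_m^\sigma\}_{\sigma\leq 0}\subset X(h_0,h_1)$, and for every $k\geq\tau_\delta$ the slice $\{C_m^{-k}\}_m$ lies in $E_\delta$. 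A double diagonal extraction over $m$ and $k$ produces limits $C_*^{-k}\in X(h_0,h_1)$ with $\phi^{k-j}(C_*^{-k})=C_*^{-j}$, which assemble to an entire orbit through $C_*$ in $X(h_0,h_1)$; hence $C_*\in I(h_0,h_1)$.

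The main technical obstacle in both parts is backward-in-time compactness: the Huisken bound $\mc H\in[h_0,h_1]$ alone does not prevent a sequence of convex sets from degenerating toward a hyperplane-like limit (approaching the $\mc H=2$ boundary) or from escaping to infinity. Lemma~\ref{lemma-compactness-3} circumvents this by exploiting the \emph{dynamical} constraint that the forward orbit stays in $X(\delta,2-\delta)$ for a definite time $\tau_\delta$, which is precisely the constraint satisfied by every candidate preimage appearing in the diagonal constructions above.
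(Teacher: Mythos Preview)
Your plan is correct and rests on the same core mechanism as the paper's proof: both arguments use the third compactness Lemma~\ref{lemma-compactness-3} to trap all relevant preimages in the fixed compact set $E_\delta$, and then run a diagonal extraction together with continuity of the semiflow to assemble the entire orbit.  The organizational difference lies in part~(b).  The paper first shows that each $N_\tau^-$ is a closed subset of $E_\delta$ (hence compact), observes that $N_\tau^+=\phi^\tau(N_\tau^-)$ is compact as a continuous image, concludes that each $I_\tau=N_\tau^+\cap N_\tau^-$ is compact, and then reads off compactness of $I=\bigcap_\tau I_\tau$ directly from part~(a).  This avoids your second diagonal extraction; your direct closedness argument for $I$ in $E_\delta$ essentially repeats the construction already done in~(a), so the paper's route is a bit more economical though yours is equally valid.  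One small point you gloss over in~(b): before you can say the forward orbit of $C_*$ stays in $X(h_0,h_1)$ you need $\taum(C_*)=\infty$, which is not automatic from continuity of $\phi^\tau$ alone but follows from Lemma~\ref{lemma-huisken-to-zero} (were $\taum(C_*)<\infty$, the Huisken energy would drop below $h_0$ before $\taum(C_*)$, contradicting continuity and $\hu(\phi^\tau(C_m))\geq h_0$).
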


\begin{proof} We simplify our notation and write $N^\pm_\tau$ for
$N^\pm_\tau(h_0,h_1)$.  Choose $\delta>0$ so that $\delta<h_0<h_1<2-\delta$, and let
$\tau_\delta$ and $E_\delta$ be as in Lemma~\ref{lemma-compactness-3}.
Lemma~\ref{lemma-compactness-3} tells us that if $\tau\geq \tau_\delta$ then
$N_\tau^- \subset E_{\delta}$.

Assume for a moment that the first assertion holds and let us begin by proving the
second assertion.  We show that $N_\tau^-$ is a closed subset of $E_{\delta}$ by
verifying that its complement is open.  Let $C\in E_{\delta} \setminus N^-_\tau$ be
given, and suppose that $C_m\rto C$ for some sequence $C_m\in E_{\delta}$.  Either
$\hu(\phi^\tau(C)) < h_0$ or the life span of the RMCF starting from $C$ satisfies
$T_C<\tau$.  In the second case there is a $\sigma\in[0, \tau)$ with
$\hu\bigl(\phi^\sigma(C)\bigr) < \delta$, by Lemma~\ref{lemma-huisken-to-zero}.
Lower semi-continuity of $T_C$ and continuity of the semiflow and the Huisken energy
imply in both cases that there is a $\sigma\in[0,\tau]$ such that
$\hu\bigl(\phi^\sigma(C_m)\bigr) < \delta$ for large $m$, which in turn means that
$C_m\not\in N^-_\tau$ for large $m$.  This shows that $N^-_\tau$ is a closed subset
of $E_\delta$.  Since $E_\delta$ is compact, we have shown that $N^-_\tau$ is
compact.

By definition $N^+_\tau = \phi^\tau(N^-_\tau)$ is the continuous image of a compact
set, so $N^+_\tau$ is also compact.  This implies $I_{\tau}(h_0,h_1)$ is compact for
every $\tau \ge 0$, and hence the set $\bigcap_{\tau \ge 0} I_{\tau}(h_0,h_1)$ is
also compact, proving the second assertion of the lemma.

\smallskip To prove the first assertion, observe first that
$ I(h_0,h_1)\subset\bigcap_{\tau>0} I_\tau(h_0,h_1)$ is automatically true.  To see
that we also have $ \bigcap_{\tau>0} I_\tau(h_0,h_1) \subset I(h_0,h_1)$, take
$C \in \bigcap_{\tau>0} I_\tau(h_0,h_1)$.  This implies that for every $m$ there
exists an orbit $\{C^m_t \mid -m\leq t\leq m\}$ of RMCF, with $C_0^m = C$ and
$\delta\leq \hu (C_t^m)\leq 2-\delta$ for all $t\in [-m, m]$.

For all $s\in [-m, m-\tau_\delta]$ we have $\phi^{\tau}(C^m_s)=C^m_{s+\tau}$, so the
flow starting at $C^m_s$ is defined for all $\tau\in[0, \tau_\delta]$, and it also
satisfies $\hu(\phi^{\tau}(C^m_s)) = \hu(C^m_{s+\tau})\geq \delta$.  Therefore
$C^m_s\in E_\delta$ for all $m$ and all $s\in[-m, m-\tau_\delta]$.

The semiflow $(\tau, C)\mapsto \phi^\tau(C)$ is continuous, so, since
$[0, \tau_\delta]\times E_\delta$ is compact the semiflow restricted to
$[0, \tau_\delta]\times E_\delta$ is uniformly continuous.  Combined with
$C^m_{s+\tau} = \phi^\tau(C^m_s)$ this implies that the orbits
$\tau\in[-m, m-\tau_\delta] \mapsto C^m_\tau$ are equicontinuous.  The Ascoli-Arzela
theorem provides a subsequence $C^{m_j}_\tau$ that converges uniformly on any bounded
interval $|\tau|\leq \tau_*$.  Continuity of the semiflow guarantees that the limit
$C^*_\tau := \lim_{j\to\infty} C^{m_j}_\tau$ is again an orbit (because
$\phi^\sigma(C^*_\tau) = \phi^\sigma\bigl(\lim_j C^{m_j}_\tau\bigr) = \lim_j
\phi^\sigma(C^{m_j}_\tau) = \lim_j C^{m_j}_{\tau+\sigma} = C^*_{\tau+\sigma} $.)
Moreover, $C^{m_j}_0=C$ for all $j$, so $C^*_0=C$.

We have shown that any $C \in \bigcap_{\tau\geq 0}I_\tau(h_0,h_1)$ lies on a complete
orbit $C^*(\tau)$ with $\delta\leq \hu(C^*_\tau)\leq 2-\delta$, and thus
$C\in I(h_0, h_1)$.
\end{proof}

\subsection{Fixed points of the flow}
\label{section-fixed} The convex self-shrinkers are exactly the fixed points of the
flow.  The hypersurfaces that are stationary under RMCF are the generalized cylinders
and the plane through the origin.  Thus the closed convex sets that correspond to
fixed points of the flow $\phi^\tau$ are either of the form\footnote{~In this section
  we abuse notation and identify a convex set $C\subset\R^{n+1}$ with the convex
  hypersurface $M=\partial C$ that is its boundary; the correspondence between convex
  sets and their boundary is one-to-one, since $C$ is the convex hull of its
  boundary.}
\[
\mc R \cdot \bigl(\sphere^{k}\times\R^{n-k}\bigr)
\]
for some $k\in\{1, \dots, n\}$ and $\mc R\in \SO_{n+1}$, where
\( \sphere^k=S^k_{\sqrt{2k}}, \) or else they are a half-space, i.e.
\[
\mc R\cdot\bigl(\R^{n}\times\{0\}\bigr), \text{ where } \R^n\times\{0\} =
\partial\bigl(\R^n\times[0,\infty)\bigr)
\]
again for some $\mc R\in\SO_{n+1}$.

The Huisken functional (as defined in \S\ref{sec-Huisken}) of
$\sphere^{k}\subset\R^{k+1}$ is given by
\[
\hu(\sphere^{k}) =(4\pi)^{-k/2}\int_{\sphere^k}e^{-(2k)/4}dH^k
=\sqrt{4\pi}\left(\frac{k}{2e}\right)^{k/2} \, \Gamma\Bigl(\frac{k+1}{2}\Bigr)^{-1}
\]
where $\omega_k = 2\pi^{\frac{k+1}{2}}/\Gamma\bigl(\frac{k+1}{2}\bigr)$ is the
$k$-dimensional measure of the $k$-dimensional unit sphere.

Since the Huisken functional decreases monotonically under RMCF, we have
\[
\hu(\sphere^{k}) = \hu(\sphere^{k}\times\R) > \hu(\sphere^{k+1}) \qquad \mbox{for
  all}\,\, k \geq 1.
\]
Using Stirling's approximation for $n!=\Gamma(n+1)$, one can verify
$\hu(\sphere^k)\to \sqrt2$ as $k\to\infty$, and thus we obtain
\eqref{eq-critical-values}.

\subsection{Quotient by $\SO_{n+1}$}
The group $\SO_{n+1}$ acts continuously on $X$.  Since the group is compact the
quotient space $X/\SO_{n+1}$ is again a Hausdorff space.  RMCF is invariant under
rotations, so the flow $\phi^\tau$ also defines a flow on $X/\SO_{n+1}$.  We abuse
notation and also denote this quotient flow by
$\phi^\tau: X/\SO_{n+1}\to X/\SO_{n+1}$.

The invariant sets $I(h_0, h_1)$ are also invariant under rotations, so we can also
consider their quotients $I(h_0, h_1)/\SO_{n+1}$.

\subsection{Fixed points in $X$ and RMCF as gradient flow on $X/\SO_{n+1}$}
As we observed in \S\ref{sec-fixed-points}, the RMCF on the quotient space
$X/\SO_{n+1}$ has a discrete set of fixed points, corresponding to the sphere and
generalized cylinders $\Sigma^k$, and the hyperplane $\Pi$ through the origin (seen
as boundary of half-space).

\begin{prop}
The Huisken energy $\hu: X/\SO_{n+1}\to\R_+$ is a Lyapunov function for RMCF,
i.e.~for any $C\in X/\SO_{n+1}$ for all $\tau \geq 0$ one has
$\hu(\phi^\tau(C))\leq \hu(C)$, and if for some $\tau>0$ one has
$\hu\bigl(\phi^\tau(C)\bigr) = \hu\bigl(C\bigr)$, then $C$ is a fixed point for the
semiflow $\phi^\tau$.
\end{prop}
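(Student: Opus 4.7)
Since $\SO_{n+1}$ acts by isometries on $\R^{n+1}$, both $\hu$ and the semiflow $\phi^\tau$ are $\SO_{n+1}$-invariant, so it suffices to prove the statement in $X$: for every $C\in X$ one has $\hu(\phi^\tau(C))\leq \hu(C)$ for all $\tau\in[0,\taum(C))$, and equality at some $\tau>0$ forces $C$ to be a fixed point. The central ingredient is the rescaled form of Huisken's monotonicity formula: for a smooth RMCF evolution $M_\sigma = \partial\phi^\sigma(C)$ one has
\begin{equation*}
  \frac{d}{d\sigma}\hu(\phi^\sigma(C)) = -(4\pi)^{-n/2}\int_{M_\sigma} \Bigl|\vec H + \tfrac12 F^{\perp}\Bigr|^2 e^{-\|F\|^2/4}\,dH^n \leq 0,
\end{equation*}
with equality at a given $\sigma$ if and only if $M_\sigma$ is a self-shrinker, i.e.~a fixed point of $\phi^\tau$. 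For the non-compact convex hypersurfaces considered here the Gaussian factor decays rapidly enough (cf.\ Lemma \ref{lemma-Huisken-measures-tight}) to justify the integration by parts in the usual derivation.

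By Lemma \ref{lemma-reg}, the Ilmanen evolution is a classical smooth RMCF solution for all $\sigma>0$, so the monotonicity above applies on every subinterval $[s,\tau]\subset(0,\taum(C))$ and yields $\hu(\phi^\tau(C))\leq \hu(\phi^s(C))$. To extend this to $s=0$ I send $s\searrow 0$. Continuity of the semiflow at $\tau=0$ (a direct consequence of Definition~\ref{def:emcf-solution} together with Proposition~\ref{prop-limit-of-dist-funcs}) gives $\phi^s(C)\rto C$, and continuity of the Huisken functional (Lemma~\ref{lemma-huisken-cont}) then gives $\hu(\phi^s(C))\to\hu(C)$. Passing to the limit yields $\hu(\phi^\tau(C))\leq \hu(C)$.

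For the rigidity claim, suppose $\hu(\phi^\tau(C))=\hu(C)$ for some $\tau>0$. Monotonicity on $(0,\tau]$ combined with $\hu(\phi^s(C))\to\hu(C)$ as $s\searrow 0$ forces $\sigma\mapsto\hu(\phi^\sigma(C))$ to be constant on $[0,\tau]$. The equality case of the smooth monotonicity formula then shows that for every $\sigma\in(0,\tau)$ the hypersurface $\partial\phi^\sigma(C)$ satisfies $\vec H+\tfrac12 F^{\perp}\equiv 0$, hence is a self-shrinker and therefore a stationary point of $\phi^\tau$. Consequently $\phi^\sigma(C)$ is independent of $\sigma\in(0,\tau)$; letting $\sigma\searrow 0$ and using continuity of the semiflow identifies $C$ itself with this common self-shrinker, so $C$ is a fixed point. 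The main delicacy is the passage to $\tau=0$ for possibly non-smooth initial data, which is handled entirely by combining instantaneous smoothing (Lemma \ref{lemma-reg}) with the two continuity statements cited above.
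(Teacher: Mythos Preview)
Your proposal is correct and follows the same approach as the paper, which simply records that the statement is a restatement of Huisken's monotonicity formula. You have supplied considerably more detail than the paper—particularly the limiting argument at $\tau=0$ to accommodate non-smooth initial data—but this is an elaboration of the same underlying idea rather than a different route.
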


\begin{proof}
This is exactly what Huisken's monotonicity theorem says.
\end{proof}

The following consequence of our compactness theorems and the existence of Huisken's
Lyapunov function is common in the theory of dynamical systems.  We include a proof
for completeness.
\begin{prop} \label{prop-descr-I} Suppose we are given two regular values
$0<h_0< h_1<2$ of the Huisken energy, i.e.~suppose $h_0,h_1$ satisfy
$0 < h_0 < h_1 < 2$ and $h_j\neq \hu(\Sigma^k), \hu(\Pi)$ for all $k$.  Then the
invariant set $I(h_0, h_1)/\SO_{n+1}$ consists exactly of all fixed points
$\Sigma^k$, with $h_0<\hu(\Sigma^k)<h_1$, and, if $h_0<1<h_1$, also the hyperplane
$\Pi$ through the origin, as well as all connecting orbits between these fixed
points.
\end{prop}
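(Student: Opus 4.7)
The plan is to combine the compactness of $I(h_0,h_1)$ from Lemma \ref{lemma-I-compact} with the Lyapunov property of $\hu$ coming from Huisken's monotonicity formula, and then rule out non-trivial recurrence by exploiting that fixed points in $X/\SO_{n+1}$ form a finite, hence discrete, set.

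First I would fix $C\in I(h_0,h_1)$ and let $\{C_\tau\}_{\tau\in\R}$ be an entire orbit through $C$ with $h_0\leq \hu(C_\tau)\leq h_1$. Monotonicity of $\hu$ along RMCF gives the existence of
\[
h_+ := \lim_{\tau\to+\infty} \hu(C_\tau),\qquad h_- := \lim_{\tau\to-\infty}\hu(C_\tau),
\]
with $h_0\leq h_+\leq h_-\leq h_1$. I then analyze the $\omega$-limit set
\[
\omega(C)=\bigcap_{T\geq 0}\overline{\{C_\tau\mid \tau\geq T\}},
\]
which sits inside the compact set $I(h_0,h_1)$ and is nonempty, compact, and connected as a nested intersection of compact connected sets (each $\overline{\{C_\tau\mid \tau\geq T\}}$ is the closure of a continuous image of a ray). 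For any $C_\infty\in\omega(C)$, take $\tau_k\to\infty$ with $C_{\tau_k}\rto C_\infty$; for every $\sigma>0$, continuity of the semiflow (Proposition \ref{lemma-semiflow-continuous}) and of $\hu$ (Lemma \ref{lemma-huisken-cont}) yield $\hu(\phi^\sigma(C_\infty))=h_+=\hu(C_\infty)$, and the strict monotonicity part of Huisken's formula forces $C_\infty$ to be a fixed point. Thus $\omega(C)$ consists entirely of self-shrinkers with $\hu = h_+$.

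Next I would project $\omega(C)$ to $X/\SO_{n+1}$. The image is connected and consists only of the finitely many fixed points $\Sigma^1,\dots,\Sigma^n,\Pi$, so it collapses to a single equivalence class $\Sigma_+$ with $\hu(\Sigma_+)=h_+$; by the same argument applied in backward time, the $\alpha$-limit in $X/\SO_{n+1}$ is a single fixed point $\Sigma_-$ with $\hu(\Sigma_-)=h_-$. If $C$ is itself a fixed point we are done, and otherwise monotonicity is strict between any two different times on the orbit, so $h_-> h_+$ and the projected orbit is a genuine connecting orbit from $\Sigma_-$ to $\Sigma_+$. Since $h_0,h_1$ are regular values of $\hu$, the critical values $h_\pm$ lie strictly in $(h_0,h_1)$, giving the allowed range of fixed points in the statement. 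The reverse inclusion — that every listed fixed point and every connecting orbit between such fixed points does belong to $I(h_0,h_1)$ — is immediate from the definition of $I(h_0,h_1)$ together with monotonicity of $\hu$ along RMCF.

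The main technical subtlety I anticipate is the passage to the quotient: a priori different subsequences $C_{\tau_k}$ might converge to different rotations of the same generalized cylinder, so $\omega(C)\subset X$ need not be a single set but a whole $\SO_{n+1}$-orbit of fixed points. The connectedness of $\omega(C)$ in $X$ together with the discreteness of the fixed-point equivalence classes in the quotient is exactly what is needed to collapse this ambiguity and produce a single well-defined endpoint $\Sigma_\pm\in X/\SO_{n+1}$, which is what makes the notion of ``connecting orbit'' unambiguous in the statement.
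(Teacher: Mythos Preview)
Your proposal is correct and follows essentially the same Lyapunov/$\omega$-limit argument as the paper: compactness of $I(h_0,h_1)$, constancy of $\hu$ on the $\omega$- and $\alpha$-limit sets forcing them to consist of fixed points, and connectedness together with finiteness of the fixed-point set in the quotient pinning down a single endpoint. The only cosmetic difference is that the paper works directly in $X/\SO_{n+1}$ from the outset, whereas you work in $X$ and project at the end; you correctly identify and handle the resulting subtlety in your final paragraph.
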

\begin{proof}
By definition, $I( h_0, h_1)/\SO_{n+1}$ consists of all
$C_0 \in X(h_0, h_1)/\SO_{n+1}$ for which there exists an orbit
$C:\R\to X(h_0, h_1)/\SO_{n+1}$ of RMCF with $C(0)=C$.  We will use the standard
Lyapunov function argument to show that both limits of $C(\tau)$ as
$\tau \to\pm\infty$ exist, and that they are fixed points of the semiflow.

The $\omega$-limit set of the orbit $\{C(\tau): \tau \in\R\}$ is the set of all
possible limit points $\lim C(\tau_i)$ for any sequence $\tau_i\to\infty$.
Alternatively,
\[
\omega\bigl(\{C(\tau)\mid \tau \in\R\}\bigr) =
\bigcap_{\tau\in\R}\,\overline{\{C(\sigma)\mid \sigma\geq \tau\}}\;.
\]
If $\tau \in\R$ then $\overline{\{C(\sigma)\mid \sigma\geq \tau\}}$ is a compact,
connected, forward invariant subset of $I(h_0,h_1)/\SO_{n+1}$.  Since the family of
sets $\overline{\{C(\sigma)\mid \sigma\geq \tau\}}$ decreases as $\tau\to\infty$,
their intersection over all $\tau\in\R$ is also compact, connected, and forward
invariant.

The Huisken functional restricted to $\omega\bigl(\{C(\tau)\mid \tau \in\R\}\bigr)$
equals $\lim_{\tau\to\infty} \hu(C(\tau))$.  In particular, $\hu$ is constant on the
$\omega$-limit set.  It follows that the $\omega$-limit set consists of fixed points.
Since there are only finitely many fixed points $\Sigma^k$, and since the
$\omega$-limit set is connected, it consists of exactly one fixed point, say,
$\Sigma^k$ (or $\Pi$).  This implies that $C(\tau)\to\Sigma^k$ (or $\Pi$) as
$\tau\to\infty$.

A very similar argument replacing the $\omega$-limit with the $\alpha$-limit set
\[
\alpha\bigl(\{C(\tau)\mid \tau \in \R\}\bigr) =
\bigcap_{\tau\in\R}\overline{\{C(\sigma)\mid \sigma\leq \tau\}}
\]
shows that $\lim_{\tau\to-\infty}C(\tau)$ exists and it is a fixed point of the flow.
\end{proof}

\subsection{Point symmetric convex sets}
\label{sec-point-symmetry}
From now on consider the space $X_s$ of closed convex sets that are invariant under
point reflection, defined as in section \ref{sec-point-symm},
\[
X_s = \{C\in X \mid C = -C\}.
\]
and the corresponding quotient space $X_s /\SO_{n+1}$.

The property that describes the set $X_s$ is invariant under the mean curvature flow
and hence the arguments of Lemma \ref{lemma-I-compact} show that the set
\[
I_s(h_0,h_1) := I(h_0,h_1)\cap X_s
\]
is a compact subset of $X_s$.

The reason we restrict ourselves to this class is that it provides a natural way to
restrict ourselves to convex sets that are either compact or of the form
$\tilde C\times \R^k$ for some compact convex set $\tilde C$.  This is shown in the
next lemma where we also gather some other properties of $X_s(h_0, h_1)$.

\goodbreak
\begin{lemma}
\label{lemma-rescaling-time}~
\begin{enumerate}[\upshape(a)]
\item Every $C\in X_s$ is either compact, or of the form $\tilde C\times \R^k$ for
some compact symmetric convex set $\tilde C\subset \R^{n-k+1}$.

\item The unrescaled MCF of $C$ has a finite extinction time $\Tx(C)>0$, which is a
continuous function of $C\in X_s$.

\item The extinction time scales parabolically,
i.e.~$\Tx(\kappa \, C) = {\kappa^{2}} \, \Tx(C)$.  In particular, the extinction time
of $\Tx(C)^{-1/2}\, C$ is always exactly $1$.

\item Given any $C \in X_s $, with $\Tx(C)\geq 1$ the Rescaled Mean Curvature Flow
$\phi^\tau ( C)$ is defined for all $\tau > 0$.

\item The set
\begin{equation} \label{eqn-Zs} Z_s := \{C\in X_s \mid \Tx(C) = 1\}/ \SO_{n+1},
\end{equation}
is closed in $X_s/\SO_{n+1}$ and forward invariant under Rescaled Mean Curvature
Flow.
\end{enumerate}
\end{lemma}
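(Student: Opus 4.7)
The plan is to handle the five parts in order, using the product structure of (a) to reduce the dynamical content of (b)--(e) essentially to the compact case.

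For part (a), the key observation is that any unbounded point-symmetric closed convex $C \in X_s$ must contain a full line through the origin: from $x_k \in C$ with $\|x_k\|\to\infty$, symmetry gives $-x_k \in C$, convexity gives $[-x_k,x_k]\subset C$, and, passing to a subsequence with $x_k/\|x_k\|\to v$, closedness of $C$ produces $\R v \subset C$. I would then apply this to the lineality space $L$ of $C$ (the largest linear subspace with $L+C=C$) and use the orthogonal decomposition $\R^{n+1}=L\oplus L^\perp$ to write $C = L\oplus \tilde C$ with $\tilde C = C\cap L^\perp$ line-free and point-symmetric. By the same line-forcing argument, line-freeness together with point symmetry forces $\tilde C$ to be bounded, hence compact; after a rotation we may take $L=\R^k\times\{0\}$.

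For part (b), finiteness of $\Tx(C)$ is immediate from (a): either $C$ is compact and Huisken's theorem gives $\Tx(C)<\infty$, or $C = \R^k\times \tilde C$ with $\tilde C$ compact, in which case Theorem \ref{prop-existence}(b) and the uniqueness theorem \ref{thm-uniqueness} imply the MCF of $C$ is exactly $\R^k\times\tilde C_t$, so $\Tx(C)=\Tx(\tilde C)<\infty$. Lower semicontinuity of $\Tx$ is just Lemma \ref{lemma-D-open} under the monotone change of variables $\Tx = 1-e^{-\taum}$. For upper semicontinuity, I would argue by contradiction: if $C_m\rto C$ in $X_s$ but $\Tx(C_m)\geq \Tx(C)+\varepsilon$, pick $\tau_0<\taum(C)$ so close to $\taum(C)$ that $\hu(\phi^{\tau_0}(C))$ is as small as desired (Lemma \ref{lemma-huisken-to-zero}); continuity of the semiflow (Proposition \ref{lemma-semiflow-continuous}) together with continuity of $\hu$ (Lemma \ref{lemma-huisken-cont}) then give the same smallness for $\phi^{\tau_0}(C_m)$. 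Finally enclose $\phi^{\tau_0}(C)$ in a thin outer barrier --- a small concentric sphere in the compact case, or a narrow shrinking cylinder along the $\R^k$ axis in the non-compact case --- whose MCF extinguishes by time $\taum(C)+\varepsilon/2$; by $\rho$-convergence the same barrier encloses $\phi^{\tau_0}(C_m)$ for large $m$, yielding $\taum(C_m)<\taum(C)+\varepsilon$ and the desired contradiction. Point symmetry is used here to keep the body centered so that concentric outer barriers suffice.

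For part (c), the rescaling $F(p,t)\mapsto \kappa F(p,t/\kappa^2)$ takes solutions of MCF to solutions of MCF, so extinction times scale as $\Tx(\kappa C)=\kappa^2\,\Tx(C)$. For part (d), when $\Tx(C)\geq 1$ the definition of $\taum$ gives $\taum(C)=+\infty$, so $\phi^\tau(C)=e^{\tau/2}\hat C_{1-e^{-\tau}}$ is defined for every $\tau\geq 0$.

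For part (e), closedness of $Z_s$ in $X_s/\SO_{n+1}$ is immediate: $\Tx$ is $\SO_{n+1}$-invariant and, by (b), continuous, so $Z_s$ is the preimage of $\{1\}$ under the induced continuous map on the quotient. Forward invariance is a direct computation: for $C$ with $\Tx(C)=1$, the semigroup property of MCF gives $\Tx(\hat C_{1-e^{-\tau}}) = \Tx(C) - (1-e^{-\tau}) = e^{-\tau}$, so by (c)
\begin{equation*}
\Tx(\phi^\tau(C)) \;=\; \Tx\!\bigl(e^{\tau/2}\hat C_{1-e^{-\tau}}\bigr) \;=\; e^\tau\cdot e^{-\tau} \;=\; 1,
\end{equation*}
hence $\phi^\tau(C)\in Z_s$ for every $\tau\geq 0$. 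The main obstacle is the upper semicontinuity of $\Tx$ in part (b), especially when $C=\R^k\times\tilde C$ is non-compact: an approximating sequence $C_m\in X_s$ may itself be compact but highly elongated, or have a different product structure, and the outer barrier has to be constructed so as to enclose the whole sequence eventually, which is where the $\rho$-topology and point symmetry (forcing $0\in C_m$) play the decisive role.
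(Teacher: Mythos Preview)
Parts (a), (c), (d), (e) match the paper's proof essentially verbatim; for (a) the paper uses induction on dimension (ray plus its reflection gives a line, hence a splitting $C=\bar C\times\R$, then repeat) rather than the lineality space, but the content is identical.

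For (b) the paper is far terser than you --- it simply writes ``Continuous dependence follows from Proposition~\ref{lemma-semiflow-continuous}'' and moves on --- so your more careful treatment of upper semicontinuity is a genuine addition. Two technical points, however. First, the identity $\Tx=1-e^{-\taum}$ and Lemma~\ref{lemma-huisken-to-zero} both require $\taum(C)<\infty$, i.e.\ $\Tx(C)<1$; when $\Tx(C)\ge1$ one has $\taum(C)=\infty$ and neither applies. The fix is to prove (c) first (it is independent of (b)) and use it to rescale into the range $\Tx(C)<1$. Second, the step ``$\hu(\phi^{\tau_0}(C))$ small'' is not actually what drives the barrier argument --- a very large ball also has tiny Huisken energy. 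What you need is the purely geometric fact that $\phi^{\tau_0}(C)$ is contained in a small ball (compact case) or a thin slab $\R^k\times B^{n+1-k}_\varepsilon$ (cylinder case), and this follows directly from Huisken's theorem applied to the compact factor, with no reference to $\hu$. Once you have that, showing that $\rho$-closeness forces $\phi^{\tau_0}(C_m)$ into the barrier is clean in the compact case via your segment-through-$0$ idea: if $p\in\phi^{\tau_0}(C_m)$ with $\|p\|>2r$ then $(2r/\|p\|)\,p$ lies in $\phi^{\tau_0}(C_m)$, has norm $2r$, and sits at distance $\ge r$ from $\phi^{\tau_0}(C)\subset B_r$, contradicting $\rho$-convergence. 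The cylinder case, which you correctly flag as the main obstacle, needs the analogous argument applied to the transverse coordinate, and your sketch does not yet supply it.
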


\begin{proof}
(a) If $C$ does not contain a ray it must be compact.  If $C$ does contain a ray,
then by symmetry it also contains the reflection of this ray, and thus it contains a
line parallel to the two rays.  This implies that $C$ is a product $\bar C\times\R$,
where $\bar C\subset \R^{n}$ is a symmetric closed convex subset.  Proceeding by
induction on the dimension $n$ we find that $C$ is either compact or
$\tilde C\times\R^k$ for some compact symmetric convex $\tilde C\subset\R^{n-k+1}$.

(b) If $C$ is compact then its MCF vanishes in finite time.  If
$C=\tilde C\times\R^k$ with $\tilde C$ compact, then the MCF $\tilde C_t$ of
$\tilde C$ vanishes in finite time.  The MCF of $C$ is $C_t=\tilde C_t\times \R^k$,
and therefore vanishes at the same time.  Continuous dependence follows from
Proposition \ref{lemma-semiflow-continuous}.

(c) This follows from the scaling invariance of MCF.

(d) By definition, $\phi^\tau(C) = e^{\tau/2} C_{1-e^{-\tau}}$, where $\hat C_t$ is
the MCF starting at $C$.  Since $\Tx(C)\geq 1$, $\hat C_t$ exists for all $t \leq 1$,
and therefore $\phi^\tau(C)$ exists for all $\tau \geq 0$.

(e) Suppose $\Tx(C)=1$, and let $\hat C_t$ be the MCF starting at $C$.  Then,
$\Tx(\hat C_t) = 1-t$ for any $t\in(0, 1)$, and hence
\[
\Tx\bigl(\phi^\tau(C)\bigr) = \Tx \bigl(e^{\tau/2} \hat C_{1-e^{-\tau}}\bigr)
=e^{\tau}\, \Tx \bigl(\hat C_{1-e^{-\tau}}\bigr) =e^{\tau}\bigl(1-(1-e^{-\tau})\bigr)
=1.
\]

Continuity of the extinction time implies that $Z_s$ is a closed subset of
$X_s/\SO_{n+1}$.
\end{proof}

\smallskip

\begin{prop}~\null
\begin{enumerate}[\upshape (a)]
\item $\hu(C)\geq \hu(\Sigma^n)$ for all $C\in Z_s$.
\item For any $h\in (0, 2)$ the set $\{C\in Z_s \mid \hu(C)\leq h\}$ is compact.
\end{enumerate}
\end{prop}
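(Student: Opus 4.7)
For part (a), the plan is to exploit the product structure of point-symmetric convex sets provided by Lemma~\ref{lemma-rescaling-time}(a): up to rotation any $C\in X_s$ is either compact, or of the form $\tilde C\times\R^k$ with $\tilde C\subset\R^{n-k+1}$ compact and point-symmetric, and in either case the extinction-time hypothesis $\Tx(C)=1$ transfers to the compact factor. Since RMCF commutes with the Euclidean product, $\phi^\tau(\tilde C\times\R^k)=\phi^\tau(\tilde C)\times\R^k$, and by the Huisken--Gage--Hamilton theorem $\phi^\tau(\tilde C)$ converges to the round sphere $\sphere^{n-k}$ of radius $\sqrt{2(n-k)}$ as $\tau\to\infty$. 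Consequently $\phi^\tau(C)\rto \Sigma^{n-k}$, and Huisken's monotonicity together with continuity of $\mc H$ (Lemma~\ref{lemma-huisken-cont}) yields
\[
\mc H(C)\;\geq\;\lim_{\tau\to\infty}\mc H\bigl(\phi^\tau(C)\bigr)\;=\;\mc H(\Sigma^{n-k})\;\geq\;\mc H(\Sigma^n),
\]
the last inequality being the ordering~\eqref{eq-critical-values} of critical values.

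For part (b), the strategy is to use part~(a) together with Lemma~\ref{lemma-compactness-3-setup} to derive a positive lower bound on $\gvol$ on the set $K:=\{C\in Z_s\mid \mc H(C)\leq h\}$, and then invoke the Second Compactness Lemma~\ref{lemma-compactness-2}. Pick $\delta>0$ with $\delta<\min\bigl(\mc H(\Sigma^n),\,2-h\bigr)$; part~(a) and the standing hypothesis $\mc H(C)\leq h$ ensure $\delta<\mc H(C)<2-\delta$ for every $C\in K$. If $\gvol(C_m)\to 0$ along some sequence $C_m\in K$, Lemma~\ref{lemma-compactness-3-setup} would force, for large $m$, either $\taum(C_m)<\tau_\delta$ or $\mc H\bigl(\phi^{\tau_\delta}(C_m)\bigr)<\delta$. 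The first alternative is excluded by Lemma~\ref{lemma-rescaling-time}(d), and the second by applying part~(a) to $\phi^{\tau_\delta}(C_m)$, which belongs to $Z_s$ by forward invariance (Lemma~\ref{lemma-rescaling-time}(e)) and hence has Huisken energy $\geq\mc H(\Sigma^n)>\delta$, a contradiction. Therefore $\gvol\geq\gvol_0>0$ on $K$.

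The Second Compactness Lemma then embeds $K$ in a compact subset of $X$, so it suffices to show $K$ is closed. If $C_m\rto C_*$ with $C_m\in K$, then $C_*\in X_s$ since the involution $C\mapsto -C$ is continuous in the $\rho$-topology, $\Tx(C_*)=1$ by continuity of $\Tx$ on $X_s$ (Lemma~\ref{lemma-rescaling-time}(b)), and $\mc H(C_*)\leq h$ by Lemma~\ref{lemma-huisken-cont}. Hence $C_*\in K$, and $K$ is compact.

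The main obstacle is the Gaussian-volume lower bound in (b): without (a) one cannot rule out that a minimizing sequence $C_m$ collapses into a lower-dimensional set, in which case Lemma~\ref{lemma-compactness-3-setup} would actually drive $\mc H\bigl(\phi^{\tau_\delta}(C_m)\bigr)$ below $\mc H(\Sigma^n)$. The uniform lower bound on $\mc H$ across the forward-invariant set $Z_s$ provided by (a) is precisely what blocks this collapse, so the two statements must be proved sequentially, with (a) feeding into (b).
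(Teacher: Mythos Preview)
Your proof is correct and follows essentially the same route as the paper. For (a) both you and the paper use the product structure from Lemma~\ref{lemma-rescaling-time}(a), Huisken's convergence to a round point for the compact factor, and monotonicity plus the ordering~\eqref{eq-critical-values}. For (b) the paper simply cites the Third Compactness Lemma~\ref{lemma-compactness-3}, whereas you unpack that lemma into its ingredients (Lemma~\ref{lemma-compactness-3-setup} to get the Gaussian-volume lower bound, then Lemma~\ref{lemma-compactness-2}) and explicitly verify closedness of $K$; this is the same argument with slightly more detail spelled out.
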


\begin{proof} To prove (a) consider any $C\in Z_s$.  Since $C$ is symmetric, it is of
the form $\tilde C\times\R^{n-k}$ for some compact convex symmetric
$\tilde C\subset\R^{k+1}$.  By Huisken's theorem the unrescaled MCF
$\hat {\tilde C} _t$ starting at $\tilde C$ shrinks to a point, and by the assumption
$C\in Z_s$ we know that this happens at $t=T_{\rm max}(C) = 1$.  It follows that the
rescaled MCF $\phi^\tau(C)$ converges to the cylinder $\Sigma^k$.  Therefore
$\hu(C)\geq \hu(\Sigma^k)\geq \hu(\Sigma^n)$.

Part (b) of the Proposition now follows from Lemma~\ref{lemma-compactness-3} since we
have just shown that the Huisken functional $\hu$ is bounded from below on $Z_s$ by
$\hu(\Sigma^n)$, and given that $\tau_{\max}(C) = \infty$ by the definition of the
set $Z_s$.

\end{proof}

\section{Topology of the invariant set}
\label{sec-topology}

For given $0 < h_0<h_1< 2$ we had defined the invariant set $I_s(h_0, h_1)$ to be the
set of all $C\in Z_s$ for which there exists a complete orbit
$\{C_\tau\}_{\tau\in\R} \subset Z_s$ of RMCF with $C_0=C$ and
$h_0\leq \hu(C_\tau) \leq h_1$ for all $\tau\in\R$.  In this section we assume
further that
\begin{equation}
\label{eq-h0h1-good}
0<h_0<\hu(\Sigma^n) \text{ and } \hu(\Sigma^1)< h_1 <2.
\end{equation}
As we observed in~\eqref{eq-Isn}, the isolated invariant set $I_s(h_0, h_1)$ does not
depend on $h_0,h_1$ as long they satisfy \eqref{eq-h0h1-good}, and thus we may write
\[
I_s^n = I_s(h_0, h_1)
\]
where $0<h_0<\hu(\Sigma^n)$, and $\hu(\Sigma^1)<h_1<2$.

\subsection{Path connectedness of $I_s(h_0,h_1)$}\label{ss-pc}

\begin{prop}\label{prop-path}
If $0<h_0<\hu(\Sigma^n)$ and $\hu(\Sigma^1)<h_1<2$ then the space $I_s(h_0,h_1)$ is
path connected.
\end{prop}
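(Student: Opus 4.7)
The strategy is to reduce path connectedness to two assertions: (i) every $C \in I_s(h_0,h_1)$ is joined by a path in $I_s(h_0,h_1)$ to one of the fixed points $\Sigma^k$, and (ii) any two of the fixed points $\Sigma^1,\dots,\Sigma^n$ lie in the same path component. Combining (i) and (ii) yields path connectedness.

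For (i), by definition $C$ lies on a complete orbit $\tau\mapsto C(\tau)$ contained in $I_s(h_0,h_1)$, and this orbit is continuous by Proposition \ref{lemma-semiflow-continuous}. By Proposition \ref{prop-descr-I} applied in $X_s/\SO_{n+1}$, the orbit has $\alpha$- and $\omega$-limits that are fixed points $\Sigma^k$ and $\Sigma^\ell$, and monotonicity of $\hu$ places them in the prescribed Huisken range. To promote this to convergence in $X_s$ itself, the forward limit is handled by Lemma \ref{lemma-rescaling-time}: $C$ is either compact, in which case Huisken's theorem forces the forward rescaled flow to converge to the specific sphere $\sphere^n_{\sqrt{2n}}$ centered at the (symmetric) extinction point, or else $C=\tilde C\times\R^{n-\ell}$ and the flow preserves this product structure, the compact factor shrinking to a point so that the forward limit is the specific cylinder $\sphere^\ell\times\R^{n-\ell}$ with the inherited axis. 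The backward limit exists as a specific element of $X_s$ by the uniqueness of the tangent flow at $-\infty$ for noncollapsed convex ancient solutions, using the non-collapsing result of \cite{BLL}. Reparametrizing $\tau\in[-\infty,\infty]$ onto $[0,1]$ then produces a continuous path from a specific representative of $\Sigma^k$ through $C$ to a specific representative of $\Sigma^\ell$, lying in $I_s(h_0,h_1)$ throughout since all points on it satisfy $h_0<\hu(\Sigma^n)\leq \hu \leq \hu(\Sigma^1)<h_1$.

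For (ii), I use two ingredients. First, each $\SO_{n+1}$-orbit of a cylinder $\Sigma^k$ is path connected in $X_s$: the compact connected group $\SO_{n+1}$ acts continuously on $X_s$, and any two representatives of $\Sigma^k$ are joined by the image of a path in $\SO_{n+1}$. Second, consecutive fixed points are joined by a connecting orbit that lies in $I_s^n$: for $k\in\{1,\dots,n-2\}$ by the unique connecting orbit from Theorem \ref{thm-uniqueness-of-simple-connections}(a), and for $k=n-1$ by the compact point-symmetric ancient oval of White and Haslhofer--Hershkovits, which is noncollapsed by \cite{BLL} and asymptotes backwards to $\sphere^{n-1}\times\R$ while shrinking forwards to the round $\sphere^n$. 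Applying (i) to each of these connecting orbits produces a path from a representative of $\Sigma^k$ to one of $\Sigma^{k+1}$; concatenating these paths and the paths within each $\SO_{n+1}$-orbit joins any $\Sigma^k$ to any $\Sigma^\ell$.

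The main obstacle is the convergence statement in (i): lifting convergence in $X_s/\SO_{n+1}$ to convergence in $X_s$ itself. For the forward limit this is automatic from the point symmetry and Huisken's theorem, but for the backward limit it requires uniqueness of the cylindrical tangent flow at $-\infty$. If one wishes to bypass this input, an equivalent route is to argue path connectedness of $I_s(h_0,h_1)/\SO_{n+1}$ directly from the orbit-plus-limit construction (which only uses Proposition \ref{prop-descr-I}), and then lift to $I_s(h_0,h_1)$ using that the fibers of the quotient map over the fixed-point set are $\SO_{n+1}$-orbits of the connected group $\SO_{n+1}$, hence path connected.
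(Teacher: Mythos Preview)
Your approach is correct and follows the same two-step outline as the paper: connect every $C\in I_s(h_0,h_1)$ to a fixed point along its own orbit, then connect the fixed points to one another. The differences are in execution.

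For step (i), you invoke both the forward and the backward limits of the orbit through $C$, and for the backward limit you appeal to uniqueness of the tangent flow at $-\infty$ (via noncollapsing from \cite{BLL}). The paper avoids the backward limit entirely: since $C\in X_s$, Lemma~\ref{lemma-rescaling-time} writes $C=N^k\times\R^{n-k}$ with $N^k\subset\R^{k+1}$ compact, convex, and symmetric; Huisken's theorem then gives $N^k_\tau\to\sphere^k$ and hence $C_\tau\to\sphere^k\times\R^{n-k}$ in $X_s$ itself, the $\R^{n-k}$ factor being fixed for all $\tau$. This already produces a path from $C$ to a specific representative of $\Sigma^k$, so the issue of lifting backward convergence from $X_s/\SO_{n+1}$ to $X_s$ never arises. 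Your route works, but it imports more than is needed.

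For step (ii), you chain the consecutive simple connecting orbits $\Sigma^k\to\Sigma^{k+1}$ from Theorem~\ref{thm-uniqueness-of-simple-connections}, handling $k=n-1$ separately. The paper instead uses, for each $k$, a single White--Haslhofer--Hershkovits orbit running directly from $\Sigma^k$ to $\Sigma^n$; this requires only the \emph{existence} of such an orbit, not its uniqueness, and avoids any case split. Your remark that different paths may land on different representatives of the same $\Sigma^k$, and that this is fixed by the path connectedness of the $\SO_{n+1}$-orbit, is a genuine point that the paper's proof leaves implicit; it is needed there as well.
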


\begin{proof}
  We say that ``$C_0, C_1\in I_s(h_0,h_1)$'' are connected if there is a path, i.e.~a
  continuous function $f:[0,1]\to I_s(h_0,h_1)$ with $f(0)=C_0$, $f(1)=C_1$.  The
  lemma claims that all $C_0, C_1\in I_s(h_0, h_1)$ are connected.

  The relation ``$C_0, C_1$ are connected'' is an equivalence relation.  E.g.~the
  relation is transitive because if $f_1, f_2:[0,1]\to I_s(h_0,h_1)$ are paths with
  $f_2(0)=f_1(1)$ then the concatenation
  \[
    f_3(s) = \left\{
      \begin{aligned}
        f_1(2s), \,\,\,\, &s\in [0,1/2] \\
        f_2(2s-1), \,\,\,\, &s \in [1/2,1].
      \end{aligned}
    \right.
  \]
  is a path from $f_1(0)$ to $f_2(1)$.

  We will first show that each fixed point $\Sigma^j$ is connected to $\Sigma^n$.
  Then we show that any $C\in I_s(h_0,h_1)$ that is not a fixed point is connected to
  some $\Sigma^j$.

  For any $k\in\{1, \dots,n\}$ Brian White \cite{Wh} and Haslhofer-Hershkovits
  \cite{HH} constructed an orbit $\tilde C^k_\tau$ of RMCF connecting $\Sigma^k$ to
  $\Sigma^n$.  By reparametrizing this orbit we can construct a path
  $f_k:[0,1]\to I_s(h_0,h_1)$ by setting
  \[
    f_k(s) =
    \begin{cases}
      \Sigma^k \quad &s = 0 \\
      \tilde{C}^k_{\tan(\pi(s-1/2))} \quad &s\in (0,1) \\
      \Sigma^n \quad &s = 1
    \end{cases}
  \]
  The path $s\mapsto f_k(s)$ is continuous at $0<s<1$ because the semiflow is
  continuous (Proposition \ref{lemma-semiflow-continuous}), and it is also continuous
  at the endpoints $s=0, 1$ because $\tilde C^k_\tau \to \Sigma^k$ as
  $\tau\to-\infty$ and $\tilde C^k_\tau\to\Sigma^n$ as $\tau\to+\infty$.  Hence,
  $f_k(s)$ defines a path in $I_s(h_0,h_1)$ connecting $\Sigma^k$ and $\Sigma^n$.

  Next, suppose $C\in I_s(h_0, h_1)$ is not a fixed point.  This means that there is
  an entire solution $C_\tau$ of RMCF with $C=C_0$ and for which
  $C_\tau\in I_s(h_0,h_1)$ for all $\tau\in\R$.  Since the $C_\tau$ are point
  symmetric they are all of the form $N^k_\tau \times \R^{n-k}$ for some
  $k\in\{1, \dots, n\}$, where $N^k_\tau\subset\R^{k+1}$ is a compact convex set and
  evolves by RMCF.  In view of $\hu(N^k_\tau)=\hu(N^k_\tau\times\R^{n-k})$ we know
  that $h_0 < \hu(N^k_\tau) < h_1$ for all $\tau\in\R$, and thus $N^k_\tau$ converges
  to $\sphere^k\subset\R^{k+1}$ as $\tau\to\infty$.  Therefore
  $C_\tau= N^k_\tau\times\R^{n-k}$ converges to $\Sigma^k$ as $\tau\to\infty $, and
  \[
    \bar{f}(s) =
    \begin{cases}
      C_{-\log(1-s)} \quad &s\in [0,1) \\
      \Sigma^k \quad & s = 1
    \end{cases}
  \]
  defines a path from $C$ to $\Sigma^k$.
\end{proof}

\smallskip

\subsection{Contractibility of $I_s(h_0,h_1)$.}

\smallskip

Recall that we have \emph{conjectured} (see Conjecture \ref{conj-main}) that the
invariant set $I_s^n$ is homeomorphic to an $n-1$-simplex.  In section \ref{sec-conj}
we will present some arguments supporting the conjecture in the case $n = 3$.  For
general $n$, at this point we only know that $I_s^n$ is a connected and compact
metrizable space.  The subset
\[
I_{sc}^n = \bigl\{ C\in I_s^n \mid C\text{ is compact}\bigr\}
\]
is open in dense in $I_s^n$, and because Rescaled Mean Curvature Flow starting at any
compact surface converges to a round sphere, it follows that RMCF defines a
deformation retraction of $I_{sc}^n$ onto $\{\Sigma^n\}$.  While this does not imply
that $I_s^n$ is contractible\footnote{For example a sphere $S^n$ is the union of a
  contractible open dense set $S^n\setminus\{p\}\equiv B^n$, and one point $\{p\}$.}
we will show in this section $I_s^n$ is homologically trivial in the following weaker
sense.

\begin{prop} \label{thm-I-no-homotopy} There exists a nested sequence of contractible
compact neighborhoods $N_1\supset N_2\supset\cdots$ of $I_s^n$ with
$I_s^n = {\bigcap}_{k\geq 1}N_k$.
\end{prop}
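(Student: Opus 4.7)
The plan is to construct each $N_k$ as a compact neighborhood of $I_s^n$ whose contractibility is witnessed by an explicit homotopy built from two continuous operations on $X_s$: Minkowski interpolation against the round ball $\Sigma^n = \bar B^{n+1}_{\sqrt{2n}}$ and the RMCF semiflow. The basic tool is the map
\[
G(C, s) := (1-s)\, C + s\, \Sigma^n, \qquad (C, s) \in X_s \times [0, 1],
\]
which, I claim, defines a continuous map $G : X_s \times [0, 1] \to X_s$ in the $\rho$-topology of Section~\ref{sec-convex-topology}. Indeed, Minkowski addition and positive scalar multiplication are continuous with respect to local Hausdorff convergence on closed convex sets, both operations preserve point symmetry, and non-emptiness of interior is preserved since $\Sigma^n$ contains the origin in its interior. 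Since $G(\cdot, 0) = \mathrm{id}$, $G(\cdot, 1) \equiv \Sigma^n$, and $G(\Sigma^n, s) = \Sigma^n$, the whole space $X_s$ is itself contractible via $G$.

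Next, by Lemma~\ref{lemma-I-compact} and the compactness results of Section~\ref{sec-convex-topology}, one constructs a nested sequence of compact neighborhoods $V_1 \supset V_2 \supset \cdots$ of $I_s^n$ with $\bigcap_k V_k = I_s^n$. For $\tau_k \to \infty$, one can take
\[
V_k := \Bigl\{ C \in X_s \;\Big|\; \exists\ \text{orbit } (C_\tau)_{\tau \in [-\tau_k, \tau_k]} \text{ of RMCF with } C_0 = C,\, h_0 \le \mc H(C_\tau) \le h_1 \Bigr\};
\]
compactness and the intersection property follow from the arguments in the proof of Lemma~\ref{lemma-I-compact}. As a first attempt, I would then define
\[
N_k := \bigl\{ G(C, s) : C \in V_k,\, s \in [0, 1] \bigr\},
\]
the ``Minkowski cone'' over $V_k$ with apex $\Sigma^n$. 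By continuity of $G$ and compactness of $V_k \times [0, 1]$, the set $N_k$ is compact; it contains $V_k$ at $s = 0$ and so is a neighborhood of $I_s^n$; and it is closed under $G$ by construction, so the restriction of $G$ to $N_k \times [0, 1]$ contracts $N_k$ to $\{\Sigma^n\}$.

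The main obstacle is to verify $\bigcap_k N_k = I_s^n$. A priori the intersection contains the Minkowski cone over $I_s^n$ with apex $\Sigma^n$, which is strictly larger than $I_s^n$: the set $I_s^n$ is not star-shaped with respect to $\Sigma^n$ (for example, $\tfrac12 \Sigma^1 + \tfrac12 \Sigma^n$ is a rounded solid cylinder that is neither a self-shrinker nor on any connecting orbit). The remedy is to truncate the Minkowski parameter to a shrinking interval $[0, s_k]$ with $s_k \to 0$, so that the excess of $N_k$ over $V_k$ collapses in the limit. This truncation destroys the direct contractibility via $G$; one restores it by concatenating the short Minkowski step with a continuation path from $G(C, s_k)$ to $\Sigma^n$ inside the truncated $N_k$, built from the forward flow $\phi^\tau$ (which, by Huisken--Gage--Hamilton, drives compact point-symmetric convex sets to $\Sigma^n$) together with an auxiliary truncation in the non-compact directions for elements of $V_k$ of the form $\tilde C \times \R^{n-j}$. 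Verifying that this compound homotopy is continuous in $(C, s)$ and stays inside $N_k$ is the technical heart of the argument.
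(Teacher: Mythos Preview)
Your approach has real gaps, and the paper's route is both simpler and avoids them.

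First, your $V_k$ are not neighborhoods of $I_s^n$ in $X_s$. Take $C' = \lambda\Sigma^n$ with $\lambda<1$ close to $1$: its unrescaled MCF vanishes at time $\lambda^2<1$, so the RMCF becomes singular at $\tau=-\ln(1-\lambda^2)<\infty$, and for $\tau_k$ large enough $C'\notin V_k$. Thus $\Sigma^n$ is not an interior point of $V_k$. The paper fixes this at the outset by working in $Z_s=\{C\in X_s:\Tx(C)=1\}/\SO_{n+1}$, where $\taum\equiv\infty$.

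Second, the ``remedy'' does not close. For a cylinder $C=\tilde C\times\R^{n-j}\in V_k$, the Minkowski interpolant $G(C,s_k)=(1-s_k)C+s_k\Sigma^n$ is still unbounded (adding a ball does not compactify a cylinder), so its forward RMCF converges to $\Sigma^j$, not $\Sigma^n$. Your ``auxiliary truncation in the non-compact directions'' would have to vary continuously with $C$, but the splitting dimension $j$ jumps on $V_k$, so a continuous truncation is not available without further work. And even if you produced a path from each $G(C,s_k)$ to $\Sigma^n$, there is no reason for it to remain inside your truncated cone $N_k$: that set is not forward invariant under $\phi^\tau$, so the flow leg of your homotopy can leave it.

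The paper sidesteps all of this by exploiting the Lyapunov structure. It shows $Z_s$ is contractible (via the support-function deformation composed with the renormalization $\Lambda(C)=\Tx(C)^{-1/2}C$), then takes $N(h_1)=\{C\in Z_s:\hu(C)\le h_1\}$, which is forward invariant and is a retract of $Z_s$ via $F(C)=\phi^{T_{h_1}(C)}(C)$, where $T_{h_1}(C)$ is the first time the energy drops to $h_1$. Hence $N(h_1)$ is contractible. Finally $N_k:=\phi^k(N(h_1))$ are nested, intersect to $I_s^n$, and each is contractible: since $N_k\subset N(h_1)$ and $\phi^k|_{N_k}\simeq\mathrm{id}_{N_k}$ via $(\phi^t)_{t\in[0,k]}$ inside the forward-invariant $N_k$, null-homotopy of the inclusion $N_k\hookrightarrow N(h_1)$ pushes forward under $\phi^k$ to a null-homotopy of $\mathrm{id}_{N_k}$. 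The point is that the flow itself manufactures the nested contractible sets; no extraneous Minkowski geometry is needed.
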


We begin our proof of Proposition \ref{thm-I-no-homotopy} by showing that the space
$Z_s$ itself is contractible.

\begin{lemma}
The sets $X_s$, $X_s/\SO_{n+1}$ and $Z_s$ are contractible.
\end{lemma}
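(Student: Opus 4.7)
The plan is to construct an explicit deformation retraction of $X_s$ onto the closed unit ball, verify it is $\SO_{n+1}$-equivariant so that it descends to $X_s/\SO_{n+1}$, and then post-compose with the extinction-time rescaling $\pi(C):=\Tx(C)^{-1/2}C$ to contract $Z_s$ onto the fixed point $\Sigma^n$. For $X_s$ I would set
\[
F(C,t)\;:=\;\bigl(C\cap \bar B_{R(t)}\bigr)\;+\;r(t)\,\bar B_1,\qquad (C,t)\in X_s\times[0,1],
\]
where $\bar B_R\subset\R^{n+1}$ is the closed Euclidean ball of radius $R$ about the origin, ``$+$'' denotes Minkowski sum, and $R,r:[0,1]\to[0,\infty]$ are continuous with $R(0)=\infty$, $R(1)=0$, $r(0)=0$, $r(1)=1$ (for instance $R(t)=(1-t)/t$ and $r(t)=t$). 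Because $C=-C$ together with $\interior C\neq\varnothing$ force $0\in\interior C$, the set $F(C,t)$ is always closed, convex, point-symmetric and has nonempty interior, with $F(C,0)=C$ and $F(C,1)=\{0\}+\bar B_1=\bar B_1$.

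The continuity of $F$ in the $\rho$-topology reduces to three ingredients. (i)~For fixed $R_0>0$ the map $(C,R)\mapsto C\cap \bar B_R$ is continuous at $(C_0,R_0)$: uniform convergence of the distance functions $d_{C_n}$ on the compact ball $\bar B_{R_0+1}$ combined with a uniform inclusion $\bar B_\varepsilon\subset C_n$ from Proposition~\ref{prop-convergence-of-boundary} yields Hausdorff convergence of the intersections. (ii)~Minkowski thickening is transparent on distance functions, $d_{A+r\bar B_1}=\max(0,d_A-r)$, so it is jointly continuous in $A$ and $r$. (iii)~At $t=0$ one has $d_{C\cap \bar B_R}(x)=d_C(x)$ whenever $R>\|x\|+d_C(x)$, forcing $F(C_n,t_n)\rto C$ when $t_n\searrow 0$; at $t=1$ the inclusion $\bar B_\varepsilon\subset C_n$ yields $C_n\cap \bar B_{R(t_n)}=\bar B_{R(t_n)}$ for large $n$, so $F(C_n,t_n)=\bar B_{R(t_n)+r(t_n)}\rto \bar B_1$. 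The delicate point is ingredient~(i), which is standard convex analysis but deserves care because tangential contact between $\partial C_n$ and $\partial \bar B_{R_n}$ could in principle destroy Hausdorff convergence of the intersections; the uniform lower bound $\bar B_\varepsilon\subset C_n$ coming from point symmetry rules out this pathology.

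Passing to the quotient $X_s/\SO_{n+1}$ is immediate: each $\bar B_R$ is $\SO_{n+1}$-invariant and Minkowski sum commutes with rotations, so $F(\mc R\cdot C,t)=\mc R\cdot F(C,t)$, and $F$ descends to a contraction of $X_s/\SO_{n+1}$ onto the class $[\bar B_1]$. For $Z_s$, Lemma~\ref{lemma-rescaling-time} provides continuity of $\Tx:X_s\to(0,\infty)$ and the scaling rule $\Tx(\kappa C)=\kappa^2\Tx(C)$, whence $\pi(C):=\Tx(C)^{-1/2}C$ is a continuous, $\SO_{n+1}$-equivariant retraction of $X_s$ onto $\{C\in X_s:\Tx(C)=1\}$. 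It descends to $\bar\pi:X_s/\SO_{n+1}\to Z_s$ with $\bar\pi|_{Z_s}=\mathrm{id}$, and the composition $H(c,t):=\bar\pi\bigl(\bar F(c,t)\bigr)$ is a continuous homotopy on $Z_s\times[0,1]$ with $H(c,0)=c$ and $H(c,1)=\bar\pi([\bar B_1])=[\sqrt{2n}\,\bar B_1]=\Sigma^n$, exhibiting $Z_s$ as contractible to the fixed point $\Sigma^n$.
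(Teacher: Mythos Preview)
Your argument is correct and follows the same overall architecture as the paper's proof: an explicit contraction of $X_s$ onto a ball, equivariance to pass to $X_s/\SO_{n+1}$, and post-composition with the extinction-time rescaling $C\mapsto \Tx(C)^{-1/2}C$ to handle $Z_s$. The last two steps are essentially identical to the paper's.

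The genuine difference is in the contraction of $X_s$. The paper works with support functions $h_C:S^n\to(0,\infty]$ and deforms them through the M\"obius-type map $\psi(\theta,s)=(\theta+s)/(1+\theta s)$, defining $\Psi_\theta(C)$ as the intersection of the half-spaces $\{\langle y,x\rangle\le\psi(\theta,h_C(x))\}$; at $\theta=1$ this collapses every support function to the constant $1$, giving the unit ball. You instead intersect with shrinking balls and Minkowski-thicken. The paper's route is a bit slicker because the half-space description makes convexity and point symmetry automatic and sidesteps the intersection-continuity issue you flag as ingredient~(i); on the other hand, the paper does not actually verify continuity of $\Psi_\theta$ in the $\rho$-topology, whereas you give a genuine sketch (pointwise convergence of distance functions at the endpoints, and for $0<t<1$ the uniform interior ball $\bar B_\varepsilon\subset C_n$ yielding equi-Lipschitz radial functions and hence Hausdorff convergence of $C_n\cap\bar B_{R_n}$). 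Both constructions are equally elementary; yours is more hands-on, the paper's more algebraic.
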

\begin{proof}
\textit{$X_s$ is contractible: } For any closed convex set $C\subset\R^{n+1}$ one
defines its support function $h_C:S^n\to (0, \infty]$ by
\[
h_C(x) = \sup_{y\in C} \langle x, y\rangle.
\]
The support function determines the convex set by
\[
C = {\bigcap}_{x\in S^n}\bigl\{ y\in \R^{n+1} \mid \langle x, y\rangle \leq
h_C(x)\bigr\}
\]
A convex set $C$ belongs to $X_s$ if and only if its support function $h_C$ is an
even function.

Let $\psi:[0,1]\times(0, \infty] \to (0,\infty]$ be a deformation retraction of the
interval $(0, \infty]$ onto the point $1\in(0,\infty]$.  For example, one could
choose
\[
\psi(\theta, x) = \frac{\theta +x}{1+\theta x}.
\]
For any convex set $C$ we define $\Psi_\theta(C)$ to be the set
\[
\Psi_\theta(C) = {\bigcap}_{x\in S^n} \left\{ y\in \R^{n+1}\,\big|\, \langle y,
x\rangle \leq \psi(\theta, h_C(x)) \right\}
\]
If $C\in X_s$ then $\Psi_0(C)=C$ and $\Psi_1(C)$ is the unit sphere.  Thus
$\Psi_\theta$ defines a deformation retraction of $X_s$ onto $\Sigma^n$.

\smallskip

\textit{$X_s/\mathrm{\SO}(n+1, \R)$ is also contractible.  }  The explicit retraction
$\Psi_\theta$ of $X_s$ is invariant under the action of $\mathrm{\SO}(n+1, \R)$ and
therefore provides a deformation retraction of
\[
X_s/\mathrm{\SO}(n+1, \R)
\]
onto $\Sigma^n$.

\textit{$Z_s$ is contractible.  }  If $C\in Z_s$ then the deformation
$\Psi_\theta(C)$ will in general not stay within $Z_s$.  We must therefore modify the
maps $\Psi_\theta$ to keep their images within~$Z_s$.  Consider the map
$\Lambda: X_s/\mathrm{\SO}(n+1, \R) \to Z_s$ defined in terms of the extinction time
from Lemma~\ref{lemma-rescaling-time} by
\[
\Lambda(C) = T_{\mathrm{ex}}(C)^{-1/2}C.
\]
This map is a retraction of $X_s/\mathrm{\SO}(n+1, \R)$ onto $Z_s$ ($\Lambda$ is
continuous and $\Lambda|_{Z_s}$ is the identity).  Therefore
$\Lambda\circ \Psi_\theta$ is a deformation retraction of $Z_s$ onto the point
$\Sigma^n\in Z_s$.
\end{proof}

For any $h_1\in(\hu(\Sigma^1), 2)$ consider
\[
N(h_1) = \{C\in Z_s \mid \hu(C)\leq h_1 \}
\]
For any $C\in Z_s$ we define
\[
T_{h_1}(C) = \inf \{ \tau \geq 0 \mid \phi^\tau(C)\in N(h_1)\}.
\]
\begin{lemma}
For all $C\in Z_s$ one has $T_{h_1}(C)<\infty$.  The function $T_{h_1}:Z_s\to\R$ is
continuous.  The map $F:Z_s\to Z_s$ defined by
\[
F(C) = \phi^{T_{h_1}(C)}(C)
\]
is a retraction from $Z_s$ to $N(h_1)$.  In particular, $N(h_1)$ is contractible.
\end{lemma}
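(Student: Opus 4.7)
The plan is to establish the four assertions in order---finiteness of $T_{h_1}(C)$, continuity of $T_{h_1}$, the retraction property of $F$, and contractibility of $N(h_1)$. The bulk of the work lies in the continuity; the rest is essentially formal once continuity is in hand.

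First, for finiteness of $T_{h_1}(C)$: by Lemma~\ref{lemma-rescaling-time}(d) and (e), the rescaled flow $\phi^\tau(C)$ is defined for all $\tau\geq 0$ and remains in $Z_s$. Suppose for contradiction that $T_{h_1}(C) = +\infty$. Then by Huisken monotonicity the entire forward orbit lies in $\{C'\in Z_s : h_1 < \hu(C')\leq \hu(C)\}$, which by the Proposition preceding this lemma is relatively compact in $Z_s$. The $\omega$-limit argument from the proof of Proposition~\ref{prop-descr-I} then forces the $\omega$-limit to consist of fixed points of RMCF in $Z_s$, i.e.~some collection of $\Sigma^k$'s. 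But $\hu(\Sigma^k)\leq \hu(\Sigma^1) < h_1$ for all $k$, a contradiction.

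The continuity of $T_{h_1}$, which I expect to be the main obstacle, I would handle as follows. Fix $C\in Z_s$ and set $T = T_{h_1}(C)$. Since no $\Sigma^k$ has Huisken energy as large as $h_1$ and since $\Pi\notin Z_s$, the orbit of $C$ never passes through a fixed point at which $\hu = h_1$. Huisken's monotonicity formula in its strict form therefore gives $\hu(\phi^\tau(C)) > h_1$ for $0\leq \tau < T$ and $\hu(\phi^\tau(C)) < h_1$ for $\tau > T$. Given $\varepsilon > 0$, choose $\tau_1\in(T, T+\varepsilon)$ and, if $T>0$, also $\tau_0\in(\max(0, T-\varepsilon), T)$. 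By continuity of the semiflow (Proposition~\ref{lemma-semiflow-continuous}) and of $\hu$ (Lemma~\ref{lemma-huisken-cont}), the open conditions $\hu(\phi^{\tau_1}(\,\cdot\,))<h_1$ and (when $T>0$) $\hu(\phi^{\tau_0}(\,\cdot\,))>h_1$ persist in a neighborhood of $C$ in $Z_s$, forcing $T_{h_1}(C')\in(T-\varepsilon, T+\varepsilon)$ there; when $T = 0$ only the upper bound is needed and it yields the correct one-sided estimate.

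Once $T_{h_1}$ is continuous, the map $F(C) = \phi^{T_{h_1}(C)}(C)$ is continuous as a composition of continuous maps. The set $\{\tau\geq 0 : \hu(\phi^\tau(C))\leq h_1\}$ is closed in $[0,\infty)$ by continuity of $\hu\circ\phi^{(\cdot)}(C)$, so its infimum $T_{h_1}(C)$ is attained and $\hu(F(C))\leq h_1$; combined with forward invariance of $Z_s$ under RMCF this yields $F(Z_s)\subset N(h_1)$. Moreover, $C\in N(h_1)$ implies $T_{h_1}(C) = 0$ and $F(C) = C$, so $F$ is a retraction of $Z_s$ onto $N(h_1)$. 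Finally, the previous lemma furnishes a contracting homotopy $H_\theta:Z_s\to Z_s$ with $H_0 = \mathrm{id}_{Z_s}$ and $H_1\equiv \Sigma^n$. Since $\hu(\Sigma^n) < \hu(\Sigma^1) < h_1$ we have $\Sigma^n\in N(h_1)$, so the homotopy $K_\theta := F\circ H_\theta|_{N(h_1)}$ of $N(h_1)$ into itself satisfies $K_0 = \mathrm{id}_{N(h_1)}$ and $K_1 \equiv \Sigma^n$, contracting $N(h_1)$ to $\Sigma^n$ within $N(h_1)$.
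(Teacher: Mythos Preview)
Your argument tracks the paper's almost exactly for continuity, the retraction property, and contractibility; the paper organizes the continuity proof as separate upper/lower semicontinuity checks while you do both sides at once with the $\tau_0,\tau_1$ trick, but the substance is identical.

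One small gap: your finiteness argument invokes compactness of the sublevel set $\{C'\in Z_s:\hu(C')\leq\hu(C)\}$ via the preceding Proposition, but part (b) of that Proposition only covers levels $h<2$, and $\hu(C)<2$ for arbitrary $C\in Z_s$ is precisely Conjecture~I---still open. The fix is immediate: the \emph{proof} of part (a) of that same Proposition already shows $\phi^\tau(C)\to\Sigma^k$ for some $k$ (via Huisken's theorem applied to the compact factor), whence $\hu(\phi^\tau(C))\to\hu(\Sigma^k)\leq\hu(\Sigma^1)<h_1$ and finiteness follows directly. The paper's own proof, incidentally, does not address finiteness at all, so you are being more careful here than the authors; just replace the $\omega$-limit/compactness route with this direct convergence.
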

\begin{proof}
We begin by proving continuity of $T_{h_1}$.

We abbreviate $T_1=T_{h_1}(C)$.  Let $\epsilon>0$ be given.  Since
$\tau\mapsto\hu(\phi^\tau(C))$ is strictly decreasing, and since
$\hu(\phi^{T_1}(C)) = h_1$ it follows that
$h_\epsilon := \hu(\phi^{T_1+\epsilon}(C)) < h_1$.  Both the flow and the Huisken
functional are continuous, so there is an open neighborhood $U\subset Z_s$ of $C$
such that $\hu(\phi^{T_1+\epsilon}(C')) < h_1$ for all $C'\in U$.  This implies that
$T_{h_1}(C') < T_1+\epsilon$ for all $C'\in U$, which shows that $T_{h_1}$ is upper
semicontinuous.

Suppose on the other hand that there is a sequence $C_n\to C$ for which
$T_{h_1}(C_n) \leq T_1-\epsilon$ holds.  We may then extract a subsequence such that
$T_{h_1}(C_n) \to T_0$ for some $T_0\leq T_1-\epsilon$.  By continuity of the flow
and the Huisken functional we have
$\hu(\phi^{T_0}(C)) = \lim_{n\to\infty} \hu(\phi^{T_{h_1}(C_n)}(C_n))
=\lim_{n\to\infty} h_1 = h_1$.  Since the Huisken functional is strictly decreasing
along the flow, we conclude $h_1 = \hu(\phi^{T_1}(C)) < \hu(\phi^{T_0}(C)) = h_1$, a
clear contradiction.

We have therefore also shown that $T_{h_1}$ is lower semicontinuous, and thus
$T_{h_1}$ is continuous.  Continuity of the flow implies that the map
$F(C) = \phi^{T_{h_1}(C)}(C)$ is continuous.

For any $C\in N(h_1)$ one has $T_{h_1}(C)=0$ and thus $F(C)=C$.  Therefore $F$ is a
retraction.
\end{proof}

As an immediate consequence of the previous lemma we obtain.

\begin{lemma}
The sets $N(h_1)$ are forward invariant neighborhoods of $I_s^n$, and
\[
I_s^n = {\bigcap}_{\tau\geq 0}\phi^\tau\bigl(N(h_1)\bigr).
\]
\end{lemma}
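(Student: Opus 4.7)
\medskip

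The plan is to prove the lemma in three pieces: forward invariance, neighborhood property, and the two inclusions of the equality.

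For forward invariance, if $C\in N(h_1)$ then $\phi^\tau(C)\in Z_s$ for all $\tau\geq 0$ by the forward invariance of $Z_s$ established in Lemma~\ref{lemma-rescaling-time}(e), and $\hu(\phi^\tau(C))\leq\hu(C)\leq h_1$ by Huisken monotonicity; hence $\phi^\tau(C)\in N(h_1)$. For the neighborhood property, note that every $C\in I_s^n$ satisfies $\hu(C)\leq \hu(\Sigma^1)<h_1$ by Proposition~\ref{prop-descr-I} (monotonicity forces orbits in $I_s^n$ to have Huisken energy between $\hu(\Sigma^n)$ and $\hu(\Sigma^1)$), so by continuity of $\hu$ the set $\{C\in Z_s\mid \hu(C)<h_1\}$ is an open neighborhood of $I_s^n$ contained in $N(h_1)$.

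For the inclusion $I_s^n\subseteq\bigcap_{\tau\geq 0}\phi^\tau(N(h_1))$, take $C\in I_s^n$ lying on a complete orbit $\{C_\sigma\}_{\sigma\in\R}\subset I_s^n\subset N(h_1)$ with $C_0=C$. For any $\tau\geq 0$ the point $C_{-\tau}\in N(h_1)$ satisfies $\phi^\tau(C_{-\tau})=C_0=C$, so $C\in\phi^\tau(N(h_1))$.

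The heart of the proof is the reverse inclusion $\bigcap_{\tau\geq 0}\phi^\tau(N(h_1))\subseteq I_s^n$. Given such a $C$, for each integer $n\geq 1$ pick $\tilde C_n\in N(h_1)$ with $\phi^n(\tilde C_n)=C$, and define $\eta_n:[-n,0]\to N(h_1)$ by $\eta_n(s)=\phi^{s+n}(\tilde C_n)$; forward invariance of $N(h_1)$ keeps each trajectory inside $N(h_1)$, and $\eta_n(0)=C$. The key point is that $N(h_1)$ is compact: this follows from the proposition at the end of Section~\ref{sec-invariant-set}, which shows that $\{C\in Z_s\mid \hu(C)\leq h\}$ is compact for any $h\in(0,2)$. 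The semiflow $\phi$ is uniformly continuous on compact subsets of $\mf D$ (Proposition~\ref{lemma-semiflow-continuous} together with Lemma~\ref{lemma-D-open}), so the family $\{\eta_n\}$ is equicontinuous on every compact interval. By a diagonal Arzelà--Ascoli argument I extract a subsequence converging uniformly on every compact subinterval of $(-\infty,0]$ to a continuous map $\eta:(-\infty,0]\to N(h_1)$ with $\eta(0)=C$; passing to the limit in $\phi^t(\eta_{n_k}(s))=\eta_{n_k}(s+t)$ shows $\eta$ is an orbit. Splicing $\eta$ with the forward orbit $s\mapsto\phi^s(C)$ for $s\geq 0$ (which stays in $N(h_1)\cap Z_s$ by forward invariance) yields a complete orbit through $C$ inside $N(h_1)$. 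Since every point $\tilde\eta(s)$ of this complete orbit lies in $Z_s$, the lower bound $\hu(\tilde\eta(s))\geq \hu(\Sigma^n)>h_0$ automatically holds, so $\tilde\eta(s)\in X(h_0,h_1)$ for all $s\in\R$, and therefore $C\in I_s(h_0,h_1)=I_s^n$.

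The only step that requires any care is the extraction of the limit orbit, and the subtlety there is simply to make sure that the equicontinuity of the $\eta_n$ on each compact interval follows from uniform continuity of $\phi$ on $[0,T]\times N(h_1)$; this is standard once the compactness of $N(h_1)$ is in hand, and the rest of the argument is bookkeeping using the continuity of the semiflow and of $\hu$.
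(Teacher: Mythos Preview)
Your proof is correct and is essentially the argument the paper has in mind.  The paper states this lemma as ``an immediate consequence'' and gives no proof; the only nontrivial part, the inclusion $\bigcap_{\tau\geq 0}\phi^\tau(N(h_1))\subseteq I_s^n$, is the same compactness--equicontinuity--Arzel\`a--Ascoli extraction that was already carried out in the proof of Lemma~\ref{lemma-I-compact}(a), and your write-up reproduces that argument here (using the compactness of $N(h_1)$ from the Proposition at the end of \S\ref{sec-invariant-set} and the automatic lower bound $\hu\geq\hu(\Sigma^n)>h_0$ on $Z_s$).

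One cosmetic remark: you use the symbol $n$ both for the ambient dimension (as in $I_s^n$, $\Sigma^n$) and for your running integer index in the diagonal argument; renaming the index to $m$ or $j$ would avoid confusion.
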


\begin{proof}[Proof of Proposition \ref{thm-I-no-homotopy}] Choose
$N_k = \phi^k\bigl(N(h_1)\bigr)$.  The previous lemmas imply that each $N^k$ is
contractible and $ I_s^n = {\bigcap}_{k \geq 1} N_k.$ This concludes the proof of the
proposition.
\end{proof}

While it is generally not true that the intersection of a nested family of compact
contractible spaces is itself contractible\footnote{~Consider as an example the
  closure $\Gamma$ in $\R^2$ of the graph of $y=\sin(\pi/x)$, $x\in\R\setminus\{0\}$.
  Any $\epsilon$-neighborhood in $\R^2$ of $\Gamma$ is contractible, but $\Gamma$
  itself is not path-wise connected, having three path components.  Its simplicial
  cohomology group in degree $0$ is $H^0(\Gamma)=\Z^3$.  On the other hand $\Gamma$
  is connected, and its Čech cohomology group in degree $0$ is
  $\check H^0(\Gamma) = \Z$.  }, we can now claim, using Proposition
\ref{thm-I-no-homotopy} that $I_s^n$ is homologically trivial in the sense of the
following two propositions.

\begin{prop}
The Čech cohomology groups\footnote{~For a reference on Čech cohomology we refer the
  reader to either of the books by Bott and Tu \cite{BTu} or Spanier \cite{Spa}.} of
$I_s^n$ are those of one point space, i.e.~$\check H^k(I_s^n)=0$ for $k\geq 1$ and
$\check H^0(I_s^n)=\Z$.
\end{prop}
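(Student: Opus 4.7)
The plan is to deduce this from Proposition \ref{thm-I-no-homotopy} together with the continuity property of Čech cohomology. Recall the classical fact (see, e.g., Spanier \cite{Spa} or the Eilenberg--Steenrod continuity axiom, which Čech cohomology satisfies on compact Hausdorff spaces) that for a nested sequence $N_1\supset N_2\supset\cdots$ of compact Hausdorff spaces with intersection $X=\bigcap_{k\geq 1} N_k$, the natural map
\[
\varinjlim_{k}\check H^p(N_k)\longrightarrow \check H^p(X)
\]
induced by the inclusions $X\hookrightarrow N_k$ is an isomorphism for every $p\geq 0$.

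First I would verify that the hypotheses of this continuity principle apply: Proposition \ref{thm-I-no-homotopy} supplies a nested sequence of compact contractible neighborhoods $N_k$ with $I_s^n=\bigcap_{k\geq 1} N_k$, and each $N_k$ sits inside the metrizable, hence Hausdorff, space $X_s$, so the setting is exactly the one the continuity theorem needs.

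Next I would compute the groups on the right. Since each $N_k$ is contractible and Čech cohomology is a homotopy invariant that agrees with singular cohomology on spaces homotopy equivalent to a point, we have $\check H^0(N_k)=\Z$ and $\check H^p(N_k)=0$ for $p\geq 1$. The bonding maps in the direct system are the restriction maps $\check H^p(N_k)\to\check H^p(N_{k+1})$ induced by inclusion; in degree zero these are identity maps on $\Z$ (both groups detect path/connected components, and the inclusions preserve the unique component containing $\Sigma^n$), while in higher degrees they are the zero map between zero groups. Therefore
\[
\varinjlim_k \check H^0(N_k)=\Z,\qquad \varinjlim_k \check H^p(N_k)=0\ (p\geq 1),
\]
and the continuity isomorphism gives $\check H^0(I_s^n)=\Z$ and $\check H^p(I_s^n)=0$ for $p\geq 1$.

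There is essentially no obstacle here beyond citing the right general theorem; the genuine work has already been done in Proposition \ref{thm-I-no-homotopy}. The only point requiring a brief sanity check is the compact Hausdorff hypothesis and the identification of the bonding maps in degree zero, both of which are immediate from the setup. If one preferred to avoid the continuity axiom, one could alternatively argue directly with good covers: every open cover of $I_s^n$ extends to an open cover of some $N_k$ (by compactness and the nested neighborhood property), and the resulting nerve is a nerve of a cover of a contractible compact space, giving the same conclusion.
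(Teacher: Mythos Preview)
Your proposal is correct and follows essentially the same route as the paper: invoke Proposition~\ref{thm-I-no-homotopy} to obtain the nested sequence of contractible compact neighborhoods $N_k$ with $I_s^n=\bigcap_k N_k$, note that each $N_k$ has the Čech cohomology of a point, and then appeal to the continuity of Čech cohomology for inverse limits of compact Hausdorff spaces to conclude. You have simply supplied more detail than the paper does (which dispatches the argument in two lines), but there is no substantive difference.
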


\begin{proof}
Each $N_k$ is contractible so its Čech cohomology is that of a point.  By continuity
of Čech cohomology it follows that the Čech cohomology groups of
$I_s^n = {\bigcap}_{k\geq 1} N_k $ also is that of a point.
\end{proof}

\begin{prop}
If $I_s^n$ is a neighborhood retract, then $I_s^n$ is contractible.
\end{prop}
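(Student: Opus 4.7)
The plan is to exploit the nested neighborhood basis constructed in Proposition~\ref{thm-I-no-homotopy} together with the retraction supplied by the hypothesis. By assumption there exist an open neighborhood $U$ of $I_s^n$ in $X_s$ and a continuous retraction $r\colon U\to I_s^n$, i.e.~$r|_{I_s^n}=\mathrm{id}_{I_s^n}$. On the other hand Proposition~\ref{thm-I-no-homotopy} provides a nested sequence of compact contractible neighborhoods $N_1\supset N_2\supset\cdots$ with $\bigcap_{k\geq 1}N_k=I_s^n$.

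The first step is to show that $N_{k_0}\subset U$ for some $k_0$. Since each $N_k$ is compact and the family is nested, the sets $N_k\setminus U$ form a decreasing sequence of compact subsets of $N_1$ whose total intersection is $\bigl(\bigcap_k N_k\bigr)\setminus U=I_s^n\setminus U=\varnothing$. By the finite intersection property some $N_{k_0}\setminus U$ must already be empty, which gives $N_{k_0}\subset U$.

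Fix such a $k_0$ and let $i\colon I_s^n\hookrightarrow N_{k_0}$ denote the inclusion. Since $N_{k_0}$ is contractible there is a homotopy $H\colon I_s^n\times[0,1]\to N_{k_0}$ with $H(\cdot,0)=i$ and $H(\cdot,1)\equiv C_*$ for some point $C_*\in N_{k_0}$. Composing with the retraction yields a continuous map
\[
\tilde H := r\circ H\colon I_s^n\times[0,1]\longrightarrow I_s^n,
\]
which is well defined because $H$ takes values in $N_{k_0}\subset U$. At $t=0$ we have $\tilde H(\cdot,0)=r\circ i=\mathrm{id}_{I_s^n}$, and at $t=1$ we have $\tilde H(\cdot,1)\equiv r(C_*)$, a constant map. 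Hence $\mathrm{id}_{I_s^n}$ is homotopic to a constant, and $I_s^n$ is contractible.

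I do not anticipate a serious obstacle: the only point that requires any care is the compactness argument that forces some $N_{k_0}$ into $U$, and the rest is a straightforward application of the standard fact that a retract of a contractible space is contractible. Note that the neighborhood retract hypothesis is genuinely needed, because the example of the closure of $\{(x,\sin(\pi/x))\mid x\neq 0\}$ mentioned in the preceding footnote shows that a compact intersection of contractible neighborhoods need not be contractible when no such retraction is available.
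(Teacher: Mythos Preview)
Your proof is correct and follows essentially the same approach as the paper: both use Proposition~\ref{thm-I-no-homotopy} to find a contractible $N_{k_0}\subset U$, then compose the contraction of $N_{k_0}$ (restricted to $I_s^n$) with the retraction to obtain a null-homotopy of $\mathrm{id}_{I_s^n}$. You supply more detail than the paper does on why some $N_{k_0}$ lies in $U$, via the finite intersection property; the paper simply asserts this holds for sufficiently large $k$.
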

\begin{proof}
Recall that by definition $I_s^n$ is a neighborhood retract if there is a
neighborhood $U\supset I_s^n$ and a continuous map $\psi:U\to I_s^n$ with
$\psi|_{I_s^n}=\mathrm{id}_{I_s^n}$.  For sufficiently large $k$ we have
$N_k\subset U$.  The space $N_k$ is contractible, so there exists a homotopy
$F_\theta:N_k\to N_k$ with $F_0=\mathrm{id}_{N_k}$ and with $F_1$ constant.  Then
$\psi\circ F_\theta|_{I_s^n}$ is a homotopy between the identity on $I_s^n$ and a
constant map, which shows that $I_s^n$ is contractible.
\end{proof}

\subsection{Proof of Theorem \ref{thm-main}}
Let us now collect the results of previous sections which give the proof of Theorem
\ref{thm-main} as stated in the introduction.  Part (a) was shown in Proposition
\ref{prop-good}.  Part (b) was shown in Proposition \ref{prop-path}.  Part (c) was
shown in Lemma \ref{lemma-I-compact}.  Finally, part (d) was shown in Proposition
\ref{thm-I-no-homotopy}.

\subsection{Proof of Theorem~\ref{thm-no-isolated-orbits}}\label{sec-proof-no-isolation}
Let $\Gamma=\{C_\tau \mid \tau\in\R\}\subset I_s^n$ be a complete orbit, and let
$\mc U\subset I_s^n$ be an open subset with $\mc U\cap \Gamma\neq\varnothing$.

If $\Gamma$ is only a one-point set, say $\Gamma=\{C_*\}$, then $C_*$ must be a fixed
point of RMCF.  In this case there exists another orbit
$\tilde\Gamma= \{\tilde C_\tau \mid \tau\in\R\}\subset I_s^n$ that converges to $C_*$
either as $\tau\to\infty$ or $-\infty$.

We may therefore assume that $\Gamma$ is not a fixed point.  Let
$C_\pm = \lim_{\tau\to\pm\infty} C_\tau$.  By shrinking the neighborhood $\mc U$, if
necessary, we can arrange that there are open neighborhoods $\mc N_\pm \ni C_\pm$
with $\mc N_\pm\cap\mc U=\varnothing$.  The theorem follows if we can show that
$\mc U$ contains a $\tilde C\in I_s^n\setminus\Gamma$.  Aiming for a contradiction we
now assume that $\mc U\cap\Gamma = \mc U$.

Since $\tau\in\R\mapsto C_\tau\in\Gamma$ is a homeomorphism, we may assume that
$\mc U\subset\Gamma$ is an open interval.  Let $\mc K = I_s^n\setminus\mc U$.
Corresponding to the pair $(I_s^N, \mc K)$ we have a long exact sequence on
cohomology
\begin{equation}
\label{eq-long-exact}
\check H^0(I_s^n)\longrightarrow \check H^0(\mc K) \stackrel{d^*}
\longrightarrow \check H^1(I_s^n, \mc K)\longrightarrow \check H^1(I_s^n)\longrightarrow \cdots
\end{equation}
We know that $I_s^n$ is connected.  For $n\geq 3$ it is also true that $\mc K$ is
connected.  Indeed, every $C'\in\mc K$ lies on a complete orbit
$\{C'_\tau\}\subset I_s^n$ with $C'=C'_0$, and for which $C'_\tau\in\mc K$ holds for
all $\tau\leq 0$ or for all $\tau\geq 0$.  Then
$C'_-=\lim_{\tau\to \pm\infty}C'_\tau$ is one of the fixed points, and we have found
a path from any $C'\in\mc K$ to one of the fixed points $\Sigma^k$ that stays in
$\mc K$.  By considering the simple connecting orbits (see
\S\ref{sec-simple-connections-unique} and also Figure~\ref{fig:standardflow}) one
sees that any two fixed points $\Sigma^k$ and $\Sigma^\ell$ can be connected by a
concatenation of simple connecting orbits.  Moreover, when $n\geq3$ there is more
than one such path between two fixed points.  Since, by assumption, the open set
$\mc U$ only intersects one orbit (namely $\Gamma$), we see that all fixed points are
connected by a sequence of simple connecting orbits all of which lie in $\mc K$.
Hence $\mc K$ is path connected.

Since $I_s^n$ and $\mc K$ both are connected, the map
$\check H^0(I_s^n)\longrightarrow \check H^0(\mc K)$ is an isomorphism.  Exactness of
the sequence \eqref{eq-long-exact} implies that
$d^*: \check H^0(\mc K) \longrightarrow \check H^1(I_s^n, \mc K)$ is the zero map.
Therefore $\check H^1(I_s^n, \mc K)\longrightarrow \check H^1(I^n_s)$ is injective.
Since $\check H^1(I_s^n)=(0)$ we conclude that $\check H^1(I_s^n, \mc K) = (0)$.

On the other hand, by excision
$\check H^1(I_s^n, \mc K) \cong \check H^1(\mc U, \partial\mc U)\cong \Z$ because
$\mc U$ is homeomorphic to an interval.  The contradiction shows that the open set
$\mc U$ must intersect other orbits than just $\Gamma$, as claimed in
Theorem~\ref{thm-no-isolated-orbits}.

\section{Shadowing heteroclinic orbits}
\label{sec-hetero}

In this section we show a few properties of the set
$I_s(h_0,h_1) := I(h_0, h_1) \cap X_s(h_0, h_1) $ defined in Definition
\ref{dfn-invariant}, having in mind that one of our long-term goals is to prove
Conjecture \ref{conj-main} below.  The main focus of this section is proving Theorem
\ref{thm-shadowing} whose proof is expected to play a role in tackling
Conjecture \ref{conj-main}, as we hall see in section \ref{sec-conj}.  We begin by
observing some further properties of $I_s(h_0, h_1)$.  Recall that in section
\ref{sec-invariant-set} we have showed Proposition \ref{prop-descr-I}, whose proof
easily yields the following.

\begin{prop} \label{prop-good} Assume that $0 < h_0 < h_1 <2$.  Then the invariant
set $I_s(h_0,h_1)$ consists exactly of all fixed points $\Sigma^k$, with
$h_0<\hu(\Sigma^k)<h_1$ and all connecting orbits between these fixed points.
\end{prop}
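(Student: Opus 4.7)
The plan is to deduce this from the general structure result Proposition \ref{prop-descr-I} together with two facts about $X_s$: that it is closed in $X$ under the $\rho$-topology and that it is invariant under RMCF. Closedness follows from the identity $d_{-C}(x) = d_C(-x)$, which makes the reflection $C \mapsto -C$ a continuous involution on $X$, so the fixed-point set $X_s = \{C \in X : C = -C\}$ is closed. Invariance follows from the uniqueness part of Theorem \ref{thm-mcf-exist-unique}: the MCF equation is invariant under $x \mapsto -x$, so if $\{\hat C_t\}$ is the MCF starting at $C_0$, then $\{-\hat C_t\}$ is the MCF starting at $-C_0$; whenever $C_0 = -C_0$ uniqueness forces $\hat C_t = -\hat C_t$ for all $t \in [0, \Tx(C_0))$, and the parabolic rescaling to RMCF preserves this symmetry.

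With these two facts I would argue as follows. Given $C \in I_s(h_0, h_1) = I(h_0, h_1) \cap X_s$, the defining entire orbit $\{C_\tau\}_{\tau \in \R} \subset X(h_0, h_1)$ through $C$ lies entirely in $X_s$ by invariance. Repeating verbatim the $\omega$- and $\alpha$-limit argument from the proof of Proposition \ref{prop-descr-I}, the compactness of $I(h_0, h_1)$ from Lemma \ref{lemma-I-compact} together with Huisken monotonicity implies that the limits $\lim_{\tau \to \pm\infty} C_\tau$ both exist and are fixed points of the semiflow. By closedness of $X_s$ these limits also lie in $X_s$. From the classification in \S\ref{sec-fixed-points} the only convex self-shrinkers are the cylinders $\Sigma^k$ ($k = 1, \ldots, n$) and the half-space representing $\Pi$; since the half-space $[0, \infty) \times \R^n$ and all its $\SO_{n+1}$-rotates are manifestly not invariant under $x \mapsto -x$, only the cylinders are available as limits in $X_s$. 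Passing the bounds $h_0 \leq \hu(C_\tau) \leq h_1$ to the two limits, combined with strict monotonicity of $\hu$ along any non-stationary orbit, pins down which $\Sigma^k$ can occur.

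The only mildly delicate point — and hence the main thing to check — is the invariance of $X_s$ under RMCF when $C_0$ is possibly unbounded and only as regular as convexity allows; this uses the full uniqueness statement of Theorem \ref{thm-mcf-exist-unique} rather than classical short-time parabolic theory. Everything else is a direct transcription of the Lyapunov/$\omega$-limit argument already carried out in the proof of Proposition \ref{prop-descr-I}, specialized to the closed invariant subspace $X_s \subset X$.
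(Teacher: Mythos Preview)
Your proposal is correct and follows essentially the same route as the paper: reduce to the Lyapunov/$\omega$-limit argument of Proposition~\ref{prop-descr-I} and observe that the half-space representing $\Pi$ is not point-symmetric, so only the cylinders $\Sigma^k$ survive as fixed points in $X_s$. The paper's proof is terser, simply citing Proposition~\ref{prop-descr-I} and making the observation about $\Pi$, while you spell out why $X_s$ is closed and forward-invariant; these facts are taken for granted in the paper (the invariance is noted in \S\ref{sec-point-symmetry}).
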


\begin{proof} The proof is similar to the proof of Proposition \ref{prop-descr-I}.
Note that hyperplanes through the origin do not belong to the class $I_s(h_0,h_1)$
due to point symmetry assumed.  Indeed, if $P\subset\R^{n+1}$ is a hyperplane
containing the origin, then it corresponds to a half-space
$H = \{x\in\R^{n+1} \mid \langle a, x\rangle\geq 0\}\in X$.  Such a half-space does
not have the required point symmetry and thus it does not belong to~$X_s$.
\end{proof}

Recall that if $C\in I_s(h_0,h_1)$, by definition there exists an eternal solution
$\{C_\tau\}_{\tau\in \mathbb{R}}$ to RMCF, with $C_0 = C$ and
$C_\tau\in X(h_0,h_1)\cap X_s$, for all $\tau\in \mathbb{R}$.

\begin{remark}\label{rem-Tm} Any such solution satisfies $T_{\max} =1$, that is the
solution $\hat C_t$ of the unrescaled MCF has extinction time $T_{\max}=1$.
Otherwise, if $T_{\max} > 1$, then $C_{\tau}$ would converge to infinity as
$\tau\to \infty$, contradicting that $C \in I_s(h_0,h_1)$.  On the other hand, if
$T_{\max} < 1$, then the solution $C_{\tau}$ would not be eternal and again
$C\notin I_s(h_0,h_1)$.  Hence, $T_{\max} = 1$.
\end{remark}

The following proposition is a direct consequence of Theorem 1.2 in \cite{BLL} and
Proposition \ref{prop-good}.

\begin{prop}[c.f.  Theorem 1.2 in  \cite{BLL}]\label{prop-ncl}
Suppose that $0 < h_0 < h_1 < 2$.  Every solution $\{C_\tau\}_{\tau\in \mathbb{R}}$
to RMCF, with $C_0 \in I_s(h_0,h_1)$, is $\alpha$-noncollapsed.
\end{prop}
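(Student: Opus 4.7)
The plan is to reduce the claim to Theorem 1.2 of \cite{BLL}, which, broadly speaking, asserts that any convex ancient MCF solution other than a static hyperplane is $\alpha$-noncollapsed for some $\alpha > 0$. Two inputs from earlier in the paper make this reduction possible: first, Remark \ref{rem-Tm}, which identifies the unrescaled extinction time of any $C \in I_s(h_0, h_1)$ as $T_{\max} = 1$; and second, Proposition \ref{prop-good}, which describes $I_s(h_0, h_1)$ as consisting of the cylindrical fixed points $\Sigma^k$ together with connecting orbits between them.

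First I would undo the rescaling: given an eternal RMCF solution $\{C_\tau\}_{\tau\in\R}$ with $C_0 \in I_s(h_0,h_1)$, set
\[
\hat C_t := \sqrt{1-t}\, C_{-\ln(1-t)}, \qquad t \in (-\infty, 1).
\]
By Remark \ref{rem-Tm} the extinction time of the corresponding unrescaled flow is exactly $1$, so $\hat C_t$ is well defined on $(-\infty, 1)$ and $\hat M_t = \partial \hat C_t$ is a smooth convex ancient MCF solution. Since $\alpha$-noncollapsedness is scale invariant, it suffices to establish it for $\hat M_t$.

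Next I would split into cases using Proposition \ref{prop-good}. If $C_0 = \Sigma^k$ for some $1 \leq k \leq n$, then $\hat M_t$ is the shrinking generalized cylinder $\sqrt{2k(1-t)}\,\sphere^k \times \R^{n-k}$, and direct inspection gives a dimensional $\alpha = \alpha(k,n) > 0$ for which it is $\alpha$-noncollapsed. Otherwise, $C_0$ lies on a connecting orbit between two generalized cylinders, so $\hat M_t$ is a non-stationary convex ancient MCF in $\R^{n+1}$. Moreover, the point-symmetry condition encoded in $X_s$ rules out the possibility that $\hat M_t$ is a static hyperplane: hyperplanes through the origin bound half-spaces, which are not point-symmetric and therefore do not belong to $X_s$, while hyperplanes not through the origin bound convex sets with empty interior, violating the $X_s \subset X$ assumption. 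Hence the hypotheses of \cite[Thm.~1.2]{BLL} are met, and we conclude that $\hat M_t$ is $\alpha$-noncollapsed for some $\alpha > 0$.

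I do not expect any serious obstacle beyond correctly matching the hypotheses of \cite{BLL}. The one technical point to be careful about is that in the non-compact case $C_0 = \tilde C_0 \times \R^k$ from Lemma \ref{lemma-rescaling-time}(a), $\hat M_t = \partial \tilde C_t \times \R^k$ is noncompact; if the form of \cite[Thm.~1.2]{BLL} one cites is stated only for compact ancient solutions, one should apply the theorem to the compact factor $\tilde C_t \subset \R^{n+1-k}$ and then observe that $\alpha$-noncollapsedness is preserved under Cartesian product with a Euclidean factor. Note also that the statement only claims the existence of some $\alpha > 0$ for each individual orbit, so no uniformity of $\alpha$ across $I_s(h_0, h_1)$ is required.
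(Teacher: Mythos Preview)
Your overall strategy---undo the rescaling, invoke Proposition~\ref{prop-good}, and feed the unrescaled ancient flow into \cite[Thm.~1.2]{BLL}---matches the paper's. The gap is in your description of what \cite[Thm.~1.2]{BLL} actually says. It does \emph{not} assert that every convex ancient MCF other than a static hyperplane is noncollapsed: the BLT pancakes of \cite{BLT21} are compact convex ancient solutions that are collapsed (they remain in a slab for all time). The hypothesis one must verify is that the ancient flow sweeps out all of space, i.e.\ $\bigcup_{t<1}\hat C_t=\R^{n+1}$; this is exactly what the paper checks.

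Fortunately you already have what is needed to verify this. From Proposition~\ref{prop-good} the backward limit of $C_\tau$ is some $\Sigma^k=\sphere^k\times\R^{n-k}$, so in the unrescaled picture $\hat C_t$ contains, for $t\to-\infty$, sets asymptotic to $\sqrt{1-t}\,(\mathbb{B}^{k+1}\times\R^{n-k})$; these exhaust $\R^{n+1}$. Replace your hyperplane discussion (which is misdirected, and incidentally contains a slip: a hyperplane off the origin bounds a half-space, which does have nonempty interior) with this sweeping-out verification, and the argument goes through. Your remark about handling the noncompact product case by applying \cite{BLL} to the compact factor is fine.
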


\begin{proof} By Proposition \ref{prop-good} one can see that the invariant set
$I_s(h_0, h_1)$ consists exactly of all fixed points $\Sigma^k$ with
$h_0<\hu(\Sigma^k)<h_1$ and the connecting orbits between these fixed points.  In
view of rescaling \eqref{eq-rescaling} this implies that
$\cup_{t \in (-\infty, 1)} \hat C_t = \R^{n+1}$.  Theorem 1.2 in \cite{BLL} then
directly shows that the solution $\{\hat{C}_t\}_{t\in (-\infty,1)}$, and hence its
rescaling $\{C_\tau\}_{\tau\in \mathbb{R}}$ are $\alpha$-noncollapsed.
\end{proof}

Recall that our long term goal is to prove Conjecture~\ref{conj-main}.  In section
\ref{sec-conj} we will prove this conjecture for $n=3$ under an assumption that we
think can be removed in the near future.  The goal of this section is to prove
Theorem \ref{thm-shadowing} using Propositions \ref{prop-nbhd-sigma2} and
\ref{prop-nbhd-sigma2-backwards}.  As it will turn out in section \ref{sec-conj}, we
will need these two propositions to prove the Conjecture when $n =3$, under the
additional assumption.

\smallskip

For any dimension $n \geq 2$, we define $\Gamma(i,j)$ as the set of points (convex
sets) that lie on a connecting orbit from $\Sigma^i$ to $\Sigma^j$, i.e.
\begin{equation}\label{eq-edges}
\Gamma(i,j) = \left\{C\in X_s/\SO_{n+1} \;\Bigg|\;
\parbox{15em}{\centering\text{$\exists$ RMCF $\{C_\tau\}_{\tau\in\R}$ with $C_0=C$, }\\
  $C_\tau \to \Sigma^i \; (\tau\to-\infty)$, \\
  $C_\tau \to\Sigma^j \; (\tau\to\infty)$.} \, \right\}
\end{equation}

\smallskip

We have just seen in Proposition \ref{prop-ncl} that all sets $\Gamma(i,j)$, if
non-empty, consist of $\alpha$-noncollapsed solutions.  Furthermore, the monotonicity
of Huisken's energy implies that when $i > j$, the above sets are empty and when
$i=j$ they consist of just one element $\Sigma^i$.  Solutions in the sets
$\Gamma(i,j)$, $i <j$, in any dimension, have been constructed by White \cite{Wh} and
Haslhofer-Hershkovitz \cite{HH}, implying that the sets $\Gamma(i,j)$, $i <j$ are
{\em non-empty.  }  The uniqueness, up to rigid motions, of {\em two-convex} orbits
in $\Gamma(i, j)$ follows from the results in \cite{ADS1, ADS2}.  This in particular
implies that uniqueness holds when $n=2$.  In addition it was shown in \cite{CHHW}
that orbits in $\Gamma (n-1, n)$ are unique for any $n \geq 3$ without the assumption
of two-convexity.

\smallskip

From now on we will restrict ourselves to dimension $n=3$, i.e.~the case of
$3$-dimensional convex hypersurfaces in $\R^4$.  In dimension $n=3$, more can be said
regarding uniqueness: first, the uniqueness of orbits connecting $\Sigma^1$ to
$\Sigma^2$ and orbits connecting $\Sigma^2$ to $\Sigma^3$ follows by the results in
\cite{ADS2, CHHW}.  However, orbits from $\Sigma^1$ to $\Sigma^3$ are not in general
unique.  They are unique, if one assumes $SO_2\times SO_2$ symmetry as shown in
\cite{DH}.  In the same paper it was shown that there is a one parameter family of
orbits connecting $\Sigma^1$ to $\Sigma^3$ that are only $\SO_2$-symmetric.  The
uniqueness of $\SO_2$-symmetric orbits connecting $\Sigma^1$ to $\Sigma^3$, up to
rigid motions and rescaling, has been recently discussed in \cite{CDDHS}.

\smallskip

The above existence and the uniqueness results suggest that Conjecture
\ref{conj-main} should hold (at least in the case $n = 3$), as we expect the
one-parameter family of $\SO(2)$-orbits constructed in \cite{DH} to fill out the two
dimensional simplex, which we can visualize as a triangle whose vertices are fixed
points and whose edges are $\Gamma(i,j)$, where $i < j$ and $i,j \in \{1,2,3\}$.

\smallskip

Recall the definition of $Z_s$ in \eqref{eqn-Zs}.  Our main goal in this section is
to show the following.

\smallskip

\begin{remark}
By the results \cite{ADS2, CHHW} discussed above, the orbits in $\Gamma(1, 2)$ and
$\Gamma(2, 3)$ in Theorem~\ref{thm-shadowing} consist only of one orbit.
\end{remark}

\begin{figure}[th]
\centering \includegraphics[width=0.6\textwidth]{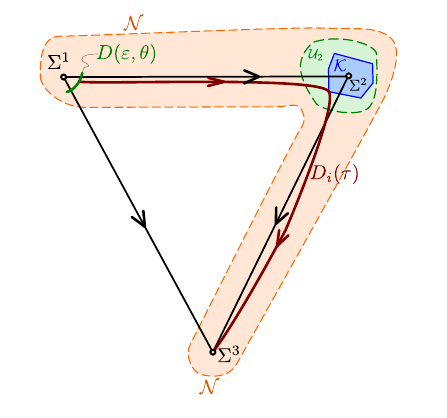}
\caption{Proof of Theorem~\ref{thm-shadowing}.}
\label{fig:proof-thm}
\end{figure}

Since $\Sigma^3$ is stable, there is a small neighborhood $\mc U_3\ni \Sigma^3$ such
that $\phi^\tau(\mc U_3) \subset \mc N$ for all $\tau\geq 0$.  The proof of Theorem
\ref{thm-shadowing} will use this fact as well as the following two
Lemmas~(\ref{prop-nbhd-sigma2} and~\ref{prop-nbhd-sigma2-backwards}).

\begin{lemma}
\label{prop-nbhd-sigma2} There is a neighborhood $\mc U_2\subset Z_s$ of $\Sigma^2$
such that for every $C\in \mc U_2$ one either has $\phi^\tau(C)\to \Sigma^2$ or
$\phi^\tau(C)\to\Sigma^3$ as $\tau\to\infty$, and in both cases one has
$\phi^{[0,\infty)}(C)\subset \mc N$.
\end{lemma}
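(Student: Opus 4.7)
The plan is to prove the lemma by combining Huisken's monotonicity formula, the compactness results from Section~\ref{sec-semiflow}, and the uniqueness of the connecting orbit from $\Sigma^2$ to $\Sigma^3$. The first conclusion---that every forward orbit from $\mc U_2$ converges either to $\Sigma^2$ or to $\Sigma^3$---is the easier half. Since $\hu$ is continuous (Lemma~\ref{lemma-huisken-cont}) and $\hu(\Sigma^2)<\hu(\Sigma^1)$, I would first choose a preliminary neighborhood $\mc V\subset Z_s$ of $\Sigma^2$ on which $\hu<\hu(\Sigma^1)$. Any $C\in\mc V$ has forward orbit in the forward-invariant set $Z_s$ (Lemma~\ref{lemma-rescaling-time}) with energy bounded above by $\hu(C)<\hu(\Sigma^1)$, and the Lyapunov argument from the proof of Proposition~\ref{prop-descr-I} identifies the $\omega$-limit with a fixed point of the quotient flow in $Z_s/\SO_{n+1}$ of strictly lower energy than $\Sigma^1$, hence with $\Sigma^2$ or $\Sigma^3$.

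To prove the containment $\phi^{[0,\infty)}(C)\subset\mc N$, I would argue by contradiction. If no sub-neighborhood $\mc U_2\subset\mc V$ works, there is a sequence $C_m\in Z_s$ with $C_m\rto\Sigma^2$ and finite first exit times $\tau_m:=\inf\{\tau\ge 0 \mid \phi^\tau(C_m)\notin\mc N\}$, so that $\phi^{\tau_m}(C_m)\in\partial\mc N$. Continuity of the semiflow on bounded time intervals (Proposition~\ref{lemma-semiflow-continuous}) forces $\tau_m\to\infty$: if a subsequence satisfied $\tau_m\to\tau_*<\infty$, then $\phi^{\tau_m}(C_m)$ would converge to $\phi^{\tau_*}(\Sigma^2)=\Sigma^2\in\interior\mc N$, contradicting $\phi^{\tau_m}(C_m)\in\partial\mc N$. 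I shift time to place the exit points at $\tau=0$ by setting $\tilde C_m(\tau):=\phi^{\tau+\tau_m}(C_m)$ for $\tau\in[-\tau_m,\infty)$. By monotonicity of $\hu$ and $\hu(C_m)\to\hu(\Sigma^2)$, for every fixed $\tau\in\R$ and $m$ sufficiently large one has $\hu(\Sigma^3)\le\hu(\tilde C_m(\tau))\le\hu(C_m)$, which allows me to invoke the compactness of the sets $E_\delta$ in Lemma~\ref{lemma-compactness-3} together with a standard Arzel\`a-Ascoli argument to extract a subsequence converging locally uniformly in $\tau$ to an eternal orbit $\tilde C_\infty:\R\to X_s$ satisfying $\tilde C_\infty(0)\in\partial\mc N$ and $\hu(\tilde C_\infty(\tau))\le\hu(\Sigma^2)$ for every $\tau$.

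Finally, I would classify $\tilde C_\infty$. Its $\alpha$- and $\omega$-limits are fixed points of the RMCF semiflow in $Z_s/\SO_{n+1}$, and the energy bound $\hu\le\hu(\Sigma^2)$ leaves only $\Sigma^2$ or $\Sigma^3$ as possibilities. The constant possibilities $\tilde C_\infty\equiv\Sigma^2$ and $\tilde C_\infty\equiv\Sigma^3$ are excluded because both fixed points lie in the interior of $\mc N$, contradicting $\tilde C_\infty(0)\in\partial\mc N$; hence $\tilde C_\infty$ is a non-trivial orbit, and strict monotonicity of $\hu$ along non-constant orbits forces the $\alpha$-limit to be $\Sigma^2$ and the $\omega$-limit to be $\Sigma^3$. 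By the uniqueness, up to rotation, of the connecting orbit $\Gamma(2,3)$ in dimension $n=3$ (which follows from \cite{CHHW} as noted in the discussion surrounding Theorem~\ref{thm-uniqueness-of-simple-connections}), $\tilde C_\infty$ is an $\SO_{n+1}$-rotate of the canonical element of $\Gamma(2,3)$, so $\tilde C_\infty(\tau)\in\mc N$ for every $\tau$, contradicting $\tilde C_\infty(0)\in\partial\mc N$. The main technical obstacle, in my view, is the extraction of the eternal limit orbit with the correct backward behavior: the key is verifying that the Huisken band $[\hu(\Sigma^3),\hu(\Sigma^2)]$ is not left by any of the shifted orbits, so that Lemma~\ref{lemma-compactness-3} applies uniformly in $m$, and then ruling out degenerate constant limits via strict monotonicity of $\hu$.
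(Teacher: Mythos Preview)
Your proposal is correct and follows essentially the same approach as the paper: argue by contradiction, shift time so that the exit from $\mc N$ occurs at $\tau=0$, use the energy bounds and Lemma~\ref{lemma-compactness-3} to extract an eternal limit orbit with $\hu\le\hu(\Sigma^2)$, and classify this limit (constant at $\Sigma^2$ or $\Sigma^3$, or the unique $\Gamma(2,3)$ orbit from \cite{CHHW}) to reach a contradiction with $\tilde C_\infty(0)\notin\mc N$. The only differences are organizational: you isolate the $\omega$-limit claim as a preliminary step via the Lyapunov argument (the paper instead invokes Huisken's theorem directly on the compact factor, using Lemma~\ref{lemma-rescaling-time}(a)), and you take $\tau_m$ to be the \emph{first} exit time so that $\phi^{\tau_m}(C_m)\in\partial\mc N$, whereas the paper takes any time with $\phi^{\tau_j}(C_j)\notin\mc N$.
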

\begin{proof}
If this were not true then there would exist sequences $C_j\to\Sigma^2$ and
$\tau_j>0$, with $\phi^{\tau_j}(C_j)\not\in \mc N$.  Since $C_j\in Z_s$, by
definition we have $T_{\max}(C_j)=1$, that is the semiflow $\phi^\tau(C_j)$ exists
for all $\tau \geq 0$.  Since $\phi^\tau(\Sigma^2)=\Sigma^2$ for all $\tau\geq 0$,
the continuity of the semiflow (Proposition \ref{lemma-semiflow-continuous}) implies
that $\tau_j\to\infty$.  By Lemma \ref{lemma-huisken-cont} it also follows from
$C_j\to\Sigma^2$ that $\hu(C_j)\to\hu(\Sigma^2) < 2$.

Proposition \ref{lemma-rescaling-time} tell us that each $C_j$ is either compact or
of the form $\bar C_j \times \R^{3-k}$ for some compact symmetric convex set
$\bar C_j \subset \R^{k+1}$.  Hence by Huisken's theorem \cite{Hu} we have
$\lim_{\tau \to \infty} \phi^\tau(C_j) = \Sigma^k$ and because $C_j \to \Sigma^2$, by
the monotonicity of Huisken's energy we must have either
$\lim_{\tau \to \infty} \phi^\tau(C_j) = \Sigma^3$ or
$\lim_{\tau \to \infty} \phi^\tau(C_j) = \Sigma^2$.

For each $j$, consider the solution to RMCF given by
$\tilde C_j (\tau) = \phi^{\tau_j+\tau}(C_j)$, $\tau \in [-\tau_j, +\infty)$.  From
our discussion above we have $\lim_{\tau \to \infty} \tilde C_j (\tau) = \Sigma^2$ or
$\lim_{\tau \to \infty} \tilde C_j(\tau) = \Sigma^3$ and
$\lim_{j \to \infty} \tilde C_j(-\tau_j) = \Sigma^2$.  Hence, the monotonicity of
Huisken's energy along the flow this implies that
\[
\delta \le \mathcal{H}(\tilde{C}_j(\tau)) \le 2 - \delta,
\]
for all $\tau\in [-\tau_j,\infty)$ and some $\delta >0$.

This implies that for every $\tau_0 \in \R$, there exists $j_0$ so that for all
$j \ge j_0$, $\tilde{C}_j(\tau_0) \in X(\delta, 2-\delta)\cap Z_s$.  By Lemma
\ref{lemma-compactness-3} we can extract a subsequence, call it still
$\tilde{C}_j(\tau_0)$ which converges, as $j\to\infty$, to
$\tilde{C}_{\infty}(\tau_0) \in X(\delta, 2-\delta)\cap Z_s$.  By Lemma
\ref{lemma-semiflow-continuous} one can show that
$\lim_{j\to\infty} \phi^{\tau}(\tilde{C}_j(\tau_0)) =
\phi^{\tau}(\tilde{C}_{\infty}(\tau_0))$, for all $\tau\in [\tau_0,\infty)$ and the
convergence is uniform on compact subsets of $\tau\in [\tau_0,\infty)$.  Letting
$\tau_0\to -\infty$, using a diagonal procedure for extracting a convergent
subsequence and the uniqueness of the limit, we conclude that a subsequence, call it
again $\tilde{C}_j(\tau)$ converges as $j\to\infty$ to a solution
$\tilde{C}_{\infty}(\tau)$ to RMCF and $\tau\in (-\infty,\infty)$, and hence,
$\tilde{C}_{\infty}(0) \in I(\delta,2-\delta)$.  Since
$\tilde C_j(0) = C_j\not\in\mc N$ for all $j$, we have
$\tilde C_\infty(0)\not\in\mc N$.

By Proposition \ref{prop-descr-I}, the solution
$\{\tilde C_\infty(\tau)\}_{\tau\in (-\infty,\infty)} \in I_s(\delta, 2-\delta)$ must
be a connecting orbit between two fixed points $\Sigma^i$ and $\Sigma^j$.  It follows
from $\hu(\tilde C_j(\tau))\leq \hu(C_j)\to\hu(\Sigma^2)$ that the Huisken energy of
the limits $\tilde C_\infty(\pm\infty)$ is bounded from above by $\hu(\Sigma^2)$.
Hence $\tilde C_\infty(\tau)$ either is constant, in which case
$\tilde C_\infty(\tau)=\Sigma^2$ or $\Sigma^3$ for all $\tau\in\R$, or else
$\tilde C_\infty(\tau)$ is the connecting orbit from $\Sigma^2$ to $\Sigma^3$, which
is unique by \cite{CHHW}.  The fixed points $\Sigma^2$, $\Sigma^3$, and the
connecting orbit between them are all contained in the neighborhood $\mc N$.
Therefore we have $\tilde C_\infty(0)\in\mc N$, contradicting our earlier observation
that $\tilde C_\infty(0)\not\in\mc N$.
\end{proof}

\begin{lemma}
\label{prop-nbhd-sigma2-backwards}
There is a neighborhood $\mc U_2\ni \Sigma^2$ such that for every ancient orbit
$C:(-\infty, 0] \to Z_s$ of RMCF with $C(0) \in\mc U_2$ one either has
$C(\tau)\to \Sigma^2$ or $C(\tau)\to\Sigma^1$ as $\tau\to-\infty$, and in both cases
one has $C(\tau)\in \mc N$ for all $\tau\leq 0$.
\end{lemma}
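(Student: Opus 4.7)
The plan is to mirror the contradiction argument of Lemma~\ref{prop-nbhd-sigma2}, with the caveat that the orbits here extend backward from a neighborhood of $\Sigma^2$, so that Huisken monotonicity provides a lower bound rather than the usual upper bound on the energy. Arguing by contradiction, suppose no such $\mc U_2$ exists; then there exist ancient orbits $C_j:(-\infty,0]\to Z_s$ with $C_j(0)\to\Sigma^2$ and times $\tau_j\leq 0$ with $C_j(\tau_j)\notin\mc N$. Since $C_j(0)\in Z_s$, each $C_j$ extends by forward flow to an eternal orbit $C_j:\R\to Z_s$.

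The main technical point is a uniform two-sided Huisken energy bound: there is $\delta>0$ with $\delta\leq \hu(C_j(\tau))\leq 2-\delta$ for all $\tau\in\R$ and all large $j$. The lower bound is immediate from monotonicity together with $\hu(C_j(0))\to\hu(\Sigma^2)$. The upper bound is the actual obstacle, since monotonicity runs in the wrong direction; to obtain it I would invoke Theorem~1.2 of \cite{BLL} to conclude that each convex ancient orbit $C_j$ is $\alpha$-noncollapsed (the union of the corresponding unrescaled flow over time sweeps out $\R^{n+1}$, because $C_j(0)$ is close to $\Sigma^2$ hence nearly contains $B^3\times\R$, while backward in unrescaled time the convex sets grow to fill the remaining directions). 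The tangent flow of $C_j$ at $\tau=-\infty$ is then a generalized cylinder $\Sigma^{k_j}$, and the lower bound $\hu(C_j(\tau))\geq\hu(\Sigma^2)-o(1)$ forces $k_j\in\{1,2\}$ for large $j$, yielding $\hu(C_j(\tau))\leq\hu(\Sigma^1)<2$ uniformly.

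With these bounds I would argue $\tau_j\to-\infty$ as follows. If instead $\tau_j\to\tau_*\in(-\infty,0]$ along a subsequence, then Lemma~\ref{lemma-compactness-3} allows a further extraction with $C_j(\tau_j)\to D\in Z_s$. Continuity of the forward semiflow (Proposition~\ref{lemma-semiflow-continuous}) gives
\[
\phi^{-\tau_*}(D)=\lim_{j\to\infty}\phi^{-\tau_j}\bigl(C_j(\tau_j)\bigr)=\lim_{j\to\infty}C_j(0)=\Sigma^2,
\]
so by uniqueness of smooth convex MCF (Theorem~\ref{thm-uniqueness}) and $\phi^{-\tau_*}(\Sigma^2)=\Sigma^2$ we must have $D=\Sigma^2\in\mc N$, contradicting $C_j(\tau_j)\notin\mc N$.

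Given $\tau_j\to-\infty$, I shift the orbits, setting $\tilde C_j(\tau):=C_j(\tau_j+\tau)$ on $(-\infty,-\tau_j]$, so $\tilde C_j(0)=C_j(\tau_j)\notin\mc N$ while $\tilde C_j(-\tau_j)=C_j(0)\to\Sigma^2$ with $-\tau_j\to\infty$. Lemma~\ref{lemma-compactness-3}, continuity of $\phi^\tau$, and a diagonal subsequence extraction produce a limit eternal orbit $\tilde C_\infty:\R\to Z_s$ in the topology of uniform convergence on compact $\tau$-intervals, still satisfying $\tilde C_\infty(0)\notin\mc N$. Huisken monotonicity along each $\tilde C_j$ together with $\tilde C_j(-\tau_j)\to\Sigma^2$ and $-\tau_j\to\infty$ gives $\hu(\tilde C_\infty(\tau))\geq\hu(\Sigma^2)$ for every $\tau\in\R$, so by Proposition~\ref{prop-good} both the $\alpha$- and $\omega$-limits of $\tilde C_\infty$ lie in $\{\Sigma^1,\Sigma^2\}$. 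Hence $\tilde C_\infty$ is either the constant orbit $\Sigma^1$, the constant orbit $\Sigma^2$, or the unique connecting orbit in $\Gamma(1,2)$ (uniqueness in dimension $n=3$ from \cite{ADS2,CHHW}). All three possibilities lie in $\mc N$ (since $\mc N$ is an open neighborhood of $\Gamma(1,2)\cup\Gamma(2,3)$ together with their common endpoints $\Sigma^1,\Sigma^2,\Sigma^3$), contradicting $\tilde C_\infty(0)\notin\mc N$.
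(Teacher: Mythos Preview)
Your proposal is correct and follows the same contradiction argument as the paper: assume failure, shift the offending orbits in time, extract a limiting eternal orbit via compactness, and use the Huisken energy lower bound $\hu\geq\hu(\Sigma^2)$ to identify the limit as lying in $\{\Sigma^1\}\cup\{\Sigma^2\}\cup\Gamma(1,2)\subset\mc N$. You are in fact more careful than the paper on two points it glosses over: the upper energy bound $\hu\leq 2-\delta$ (the paper just writes ``similarly as in the proof of Lemma~\ref{prop-nbhd-sigma2}'' even though monotonicity now runs the wrong way, while you supply an argument via \cite{BLL} noncollapsing), and the justification that $\tau_j\to-\infty$ (which you give explicitly via forward-flow continuity and uniqueness).
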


\begin{proof}
This lemma follows by very similar arguments as those in the proof of the previous
lemma, hence we will just briefly sketch it.  Assume the lemma were not true, which
would mean there existed a sequence $C_j \in Z_s$ and $\tau_j < 0$, so that
$C_j \to \Sigma^2$, lying on an ancient orbit $C_j(\tau)$, for $\tau\in (-\infty,0]$,
$C_j(0) = C_j$, and $C_j(\tau) \not\in \mathcal{N}$.  As in Lemma
\ref{prop-nbhd-sigma2} we conclude that $\tau_j\to -\infty$.  Consider the solutions
to RMCF given by $\tilde{C}_j(\tau) = C_j(\tau_j + \tau)$, that are defined for
$\tau \in (-\infty,-\tau_j]$.

Huisken's monotonicity implies that
\[\mathcal{H}(C_j) = \mathcal{H}(\tilde{C}_j(-\tau_j))\le
\mathcal{H}(\tilde{C}_j(\tau)),\] for all $\tau\in (-\infty, -\tau_j]$ and similarly
as in the proof of Lemma \ref{prop-nbhd-sigma2} we get that
\[\mathcal{H}(\Sigma_2) - \delta \le \mathcal{H}(\tilde{C}_j(\tau)) \le 2-\delta,\]
for all $\tau\in (-\infty,-\tau_j]$, some small $\delta > 0$ and $j$ sufficiently
big.  As in the proof of Lemma \ref{prop-nbhd-sigma2} we conclude that a subsequence,
call it again $\tilde{C}_j(\tau)$ converges as $j\to\infty$ to a solution
$\tilde{C}_{\infty}(\tau)$ to rescaled MCF which is defined for all
$\tau\in (-\infty,\infty)$ and hence $\tilde C_{\infty}(0) \in I_s(\delta,2-\delta)$.
Since $\tilde C_j(0) = C_j\not\in\mc N$ for all $j$, we have
$\tilde C_\infty(0)\not\in\mc N$.

By Proposition \ref{prop-descr-I}, the solution
$\{\tilde C_\infty(\tau)\}_{\tau\in (-\infty,\infty)} \in Z_s$ must be a connecting
orbit between two fixed points $\Sigma^i$ and $\Sigma^j$.  It follows from
$\hu(\tilde C_j(\tau))\geq \hu(C_j)\to\hu(\Sigma^2)$ that the Huisken energy of the
limits $\tilde C_\infty(\pm\infty)$ is bounded from below by $\hu(\Sigma^2)$.
Therefore, $\tilde C_\infty(\tau)$ is either constant, in which case
$\tilde C_\infty(\tau)=\Sigma^2$ or $\Sigma^1$ for all $\tau\in\R$, or else
$\tilde C_\infty(\tau)$ is the connecting orbit from $\Sigma^2$ to $\Sigma^1$, which
is unique by \cite{ADS1}.  The fixed points $\Sigma^2$, $\Sigma^1$, and the
connecting orbit between them are all contained in the neighborhood $\mc N$.
Therefore we have $\tilde C_\infty(0)\in\mc N$, contradicting our earlier observation
that $\tilde C_\infty(0)\not \in\mc N$.

\end{proof}

\subsection*{Ellipsoids}
For $\alpha=(\alpha_1,\alpha_2,\alpha_3,\alpha_4)\in\R^4$ with $\alpha_j\geq 0$ we
consider the ellipsoids
\[
E_1(\alpha) := \left\{ x\in \R^4 \mid
\alpha_1^2x_1^2+\alpha_2^2x_2^2+\alpha_3^2x_3^2+\alpha_4^2x_4^2\leq 1\right\}
\]
Let $T_\alpha = T_{\max}\bigl(E_1(\alpha)\bigr)$ be the MCF-extinction time of
$E_1(\alpha)$ and define
\[
E(\alpha) = T_\alpha^{-1/2}\,E_1(\alpha).
\]
Then the MCF starting at $E(\alpha)$ becomes singular at time
$T_{\max}(E(\alpha))=1$, and therefore the rescaled MCF $\phi^\tau(E(\alpha))$ is
defined for all $\tau\geq 0$, and converges to $\Sigma^k$ for some~$k\in\{1,2,3\}$.

By scaling we have
\[
E(\lambda \alpha) = E(\alpha)
\]
for all $\lambda\in(0,\infty)$ and all $\alpha\in\R^4$ with $\alpha_i\geq 0$
($1\leq i\leq 4$).

Some special cases of the ellipsoids are as follows:
\begin{itemize}
\item The ellipsoid $E(1, 1, 0, 0)$ is up to scaling $S^1\times\R^2$.
\item The ellipsoid $E(1, 1, \alpha, 0)$ is a cylinder, namely
\[
E(1, 1, \alpha, 0) = E(1, 1, \alpha) \times \R
\]
where $E(1, 1, \alpha)\subset\R^3$ is the analogously defined lower dimensional
ellipsoid.
\end{itemize}

The ellipsoid $E(1, 1, \alpha, \alpha)$ is $O(2)\times O(2)$ symmetric, and its
forward evolution by RMCF is also $O(2)\times O(2)$ symmetric.  Therefore we have
$\phi^\tau(E(1,1,\alpha,\alpha))\to \Sigma^3$ (the limit cannot be $\Sigma^1$ because
it is not $O(2)\times O(2)$ invariant.)

\subsection*{Proof of Theorem~\ref{thm-shadowing}}.
Choose a compact set $\mc K\subset \mc U_2$ with $\Sigma^2\in\mc K$, so that
$\Sigma^2$ lies in the interior of $\mc K$.  For any small $\varepsilon>0$ and
$\theta\in[0,1]$ we consider the ellipsoids
\[
D(\varepsilon, \theta) = E(1, 1, \varepsilon, \theta\varepsilon).
\]
For each small $\varepsilon>0$ we note that
\begin{itemize}
\item $\phi^{[0, \infty)}(D(\varepsilon, 1))$ consists of $O(2)\times O(2)$ symmetric
hypersurfaces.  Hence
$\phi^{[0, \infty)}(D(\varepsilon, 1)) \cap \mc U_2 = \varnothing$.
\item $\phi^{[0,\infty)}(D(\varepsilon, 0))$ consists of cylinders of the form
$C\times\R$.  This orbit must therefore converge to $\Sigma^2$ as $\tau\to\infty$.
In particular, $\phi^{[0,\infty)}(D(\varepsilon, 0)) \cap \mc K\neq\varnothing$.
\end{itemize}
We now let $\theta_1\in (0,1)$ be the largest value of $\theta$ for which
$\phi^{[0,\infty)}(D(\varepsilon, \theta))$ intersects $\mc K$, i.e.
\[
\theta_1(\varepsilon) = \sup \bigl\{\theta\in[0,1] \mid
\phi^{[0,\infty)}(D(\varepsilon, \theta))\cap\mc K\neq \varnothing \bigr\}.
\]
Since $\mc K$ is compact it follows that
\[
\phi^{[0,\infty)}(D(\varepsilon, \theta_1))\cap\mc K\neq \varnothing
\]
and
\begin{equation}
\label{eq-notinK}
\phi^{[0,\infty)}(D(\varepsilon, \theta_1))\cap\mathring {\mc K} = \varnothing.
\end{equation}
For each $\varepsilon>0$ let $\tau_\varepsilon\geq 0$ be the last time that
$\phi^{\tau}(D(\varepsilon, \theta_1))$ is in $\mc K$,
\[
\tau_\varepsilon = \sup \{\tau\geq 0 \mid \phi^\tau(D(\varepsilon, \theta_1)) \in \mc
K\}.
\]
If we let $\varepsilon\to0$ then $D(\varepsilon, \theta_1(\varepsilon))$ converges to
the fixed point $E(1, 1, 0, 0) = \Sigma^1$.  This implies that
$\tau_\varepsilon\to\infty$ as $\varepsilon\to 0$.  Consider the sequence of
solutions to RMCF defined by
\[
D_i(\tau) = \phi^{\tau_{\varepsilon_i}+\tau}\bigl(D(\varepsilon_i,
\theta_1(\varepsilon_i))\bigr),
\]
for $\tau\in [-\tau_{\varepsilon_i}, \infty)$.  By monotonicity of Huisken's
functional we have
\[
\mathcal{H}(D_i(\tau)) \le \mathcal{H}(D_i(-\tau_{\varepsilon_i})) =
\mathcal{H}(D(\varepsilon_i, \theta_1(\varepsilon_i)),
\]
for all $\tau\in [-\tau_{\varepsilon_i},\infty)$.  Since
$D(\varepsilon_i,\theta_1(\varepsilon_i))$ converges to $\Sigma^1$ as $i\to\infty$,
by Lemma \ref{lemma-huisken-cont} we have that for any $\delta > 0$ there exists an
$i_0$ so that for all $i\ge i_0$ and all $\tau\in [-\tau_{\varepsilon_i},\infty)$ we
have
\[
\mathcal{H}(D_i(\tau)) \le 2 - \delta.
\]
On the other hand, we know that for every fixed $i$, $D_i(\tau)$ converges as
$\tau\to\infty$ to $\Sigma^3$, and hence by monotonicity of Huisken's functional we
also have $\mathcal{H}(\Sigma_3) \le \mathcal{H}(D_i(\tau))$, for all
$\tau\in [-\tau_{\varepsilon_i},\infty)$, hence implying that
$D_i(\tau) \in X(\delta, 2-\delta)\cap Z_s$ for all $i \ge i_0$ and all
$\tau\in [-\tau_{\varepsilon_i},\infty)$, and some $\delta >0$.  By Lemma
\ref{lemma-compactness-3} and Proposition \ref{lemma-semiflow-continuous} we have
that $D_i(\tau)$ converges uniformly on bounded time intervals.  The limit
\[
D_\infty(\tau) = \lim _{i\to\infty} D_i(\tau)
\]
is a complete orbit of RMCF with the following properties:
\begin{itemize}
\item $D_\infty(0)\in \mc K$, since $D_i(0) \in \mc K$ for every $i$.
\item $D_{\infty}(\tau) \not\in \mathring{\mc K}$ for all $\tau \in \mathbb{R}$, due
to \eqref{eq-notinK}

\end{itemize}
The second property implies that $D_{\infty}(\tau)$ cannot converge to $\Sigma^2$ as
$\tau\to \pm\infty$, because $\Sigma^2\in\mathring{\mc K}$.  On the other hand
$D_\infty(\tau)$ is a complete orbit so its limits $D_\infty(\pm\infty)$ are fixed
points.  If the $\lim_{\tau\to\infty} D_{\infty}(\tau) = \Sigma^1$, by monotonicity
of Huisken's functional we would also have
$\lim_{\tau\to-\infty} D_{\infty}(\tau) = \Sigma^1$, implying that
$D_{\infty}(\tau) \equiv \Sigma^1$ for all $\tau\in (-\infty,\infty)$, contradicting
the property that $D_{\infty}(0) \in \mc K$.  Hence, the
$\lim_{\tau\to\infty} D_{\infty}(\tau) = \Sigma^3$, and therefore $D_{\infty}(\tau)$
is a compact eternal solution to RMCF.  In other words, $D_{\infty}(\tau)$ is a
complete orbit so its $\lim_{\tau\to-\infty} D_{\infty}(\tau)$ is also a fixed point.
If it were $\Sigma^3$, then by monotonicity of Huisken's functional we would have
that $D_{\infty}(\tau) \equiv \Sigma^3$ for all $\tau\in (-\infty,\infty)$, which
again contradicts the fact that $D_{\infty}(0) \in \mc K$.  Since $D_{\infty}(\tau)$
can not converge to $\Sigma^2$, then the
$\lim_{\tau\to-\infty}D_{\infty}(\tau) = \Sigma^1$.  Hence, we have
\[
D_\infty(-\infty) = \Sigma^1,\qquad D_\infty(+\infty) = \Sigma^3.
\]
Since $D_\infty(0)\in \mc K\subset \mc U_2$ it follows from
Lemmas~\ref{prop-nbhd-sigma2} and~\ref{prop-nbhd-sigma2-backwards} that
$D_\infty(\tau)\in \mc N$ for all $\tau\in\R$, as desired.

\section{Arguments supporting Conjecture \ref{conj-main}}
\label{sec-conj}

In this final section we will show that Conjecture \ref{conj-main} holds when $n=3$,
under an additional assumption which we believe can be removed.  This result is based
on the works \cite{ADS2, CHHW} regarding the classification of ancient compact
non-collapsed solutions from $\Sigma^1 \to \Sigma^3$ and $\Sigma^2 \to \Sigma^3$
(c.f~the discussion in section \ref{sec-hetero}), as well as the recent work in
\cite{CDDHS} regarding the uniqueness of {\em bubble-sheet ovals} in $\R^4$.  The
latter are ancient compact non-collapsed solutions to MCF in $\R^4$ connecting
$\Sigma^1$ to $\Sigma^3$, whose {\em bubble sheet matrix has rank 2} (for more
details and definitions regarding the bubble sheet matrix we refer the reader to
\cite{DH1}).  In this work it was shown that:

\begin{theorem} [c.f.  Theorem 1.3 in  \cite{CDDHS}]
Any bubble-sheet oval in $\mathbb{R}^{4}$ belongs, up to space-time rigid motion and
parabolic dilation, to the class $\mathcal{A}^{\circ}$ of
$\mathbb{Z}_2^2\times \textrm{O}(2)$-symmetric, ancient, noncollapsed, bubble-sheet
ovals constructed in \cite{DH}.
\end{theorem}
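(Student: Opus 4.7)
The plan follows the strategy developed in \cite{ADS1, ADS2} for neck-type ancient solutions, now adapted to the bubble-sheet setting where the tangent flow at $-\infty$ is $\R^2\times\sphere^1(\sqrt{2|t|})$ instead of $\R\times\sphere^{n-1}$.

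First, I would carry out a detailed asymptotic analysis of the RMCF solution $M_\tau$ as $\tau\to-\infty$. Since $M_\tau$ is a bubble-sheet oval, on any compact set it is a graph $u(y_1,y_2,\theta,\tau)$ of a small function over the round bubble-sheet $\R^2\times\sphere^1(\sqrt 2)$. The linearization of RMCF at the bubble-sheet is a drift Laplace-type operator on $\R^2\times\sphere^1$ with respect to the Gaussian measure $e^{-|y|^2/4}\,dy\,d\theta$; its spectrum is computable in closed form. The unstable eigenfunctions are $1, y_1, y_2$; the neutral (kernel) eigenfunctions are $y_1^2-2,\;y_1 y_2,\;y_2^2-2$, together with $\cos\theta,\sin\theta$ (the last two correspond to rigid translations of the $\sphere^1$ factor and are removable by a rigid motion). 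All remaining modes are stable.

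Next, I would apply a Merle--Zaag ODE dichotomy to show that either the neutral or the unstable modes dominate as $\tau\to-\infty$. The hypothesis that the bubble-sheet matrix has rank $2$ rules out the unstable regime, and the leading asymptotic profile must take the form
\[
u(y,\theta,\tau)=-\frac{1}{2|\tau|}\bigl(a_{11}(y_1^2-2)+2 a_{12}\,y_1 y_2+a_{22}(y_2^2-2)\bigr)+o(|\tau|^{-1})
\]
for a positive definite $2\times 2$ matrix $A=(a_{ij})$ of rank $2$. Rotating coordinates in the $\R^2$ factor (an allowed rigid motion) diagonalizes $A$, so after this normalization the leading profile is manifestly $\Z_2^2\times\textrm{O}(2)$-symmetric.

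The main obstacle is the \emph{symmetry improvement} step: one must upgrade the symmetry visible at leading order to a genuine symmetry of the full flow $M_\tau$. I would argue by backwards unique continuation. If $R$ is one of the intended reflections and $u$ is not $R$-invariant, the difference $w:=u-u\circ R$ satisfies a linear parabolic equation in the Gaussian weighted space, and the sharp asymptotics force $w$ to vanish to infinite order at $\tau=-\infty$; a Poon-type frequency monotonicity argument as in \cite{ADS2, DH1} then yields $w\equiv 0$. Applied to the infinitesimal rotation generator in the $\sphere^1$ factor, the same technique produces the $\textrm{O}(2)$ invariance. Once $M_\tau$ is known to be $\Z_2^2\times\textrm{O}(2)$-symmetric, I invoke uniqueness within the one-parameter family $\mc A^\circ$ constructed in \cite{DH}, parametrized (after fixing parabolic scale and rigid motion) by the ratio of eigenvalues of $A$. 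The real technical core, and where \cite{CDDHS} concentrates its effort, is upgrading the leading Merle--Zaag expansion to an asymptotic expansion strong enough to drive the backwards unique continuation; this requires iterated spectral decompositions, weighted energy estimates, and careful control of the $\textrm{O}(2)$-breaking modes coming from the $\sphere^1$ factor.
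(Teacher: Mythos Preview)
The paper does not prove this theorem at all: it is quoted verbatim as Theorem 1.3 of \cite{CDDHS} and used as a black box in the proof of Theorem~\ref{thm-conjecture}. There is no ``paper's own proof'' to compare your proposal against. Your sketch is a plausible high-level outline of the strategy actually carried out in \cite{CDDHS} (Merle--Zaag dichotomy on the bubble-sheet linearization, diagonalization of the quadratic profile matrix, symmetry improvement via parabolic unique continuation, then matching with the one-parameter family of \cite{DH}), but none of that work is reproduced or even summarized in the present paper; it is simply cited.

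One substantive correction to your sketch: the rank-$2$ hypothesis on the bubble-sheet matrix is not what ``rules out the unstable regime.'' The unstable modes $1,y_1,y_2$ are eliminated by the normalizations (centering and the fact that the flow is ancient and compact, forcing the neutral-dominated scenario via Merle--Zaag). The rank condition instead concerns the neutral quadratic form $A$ itself: rank $1$ would correspond to a degenerate direction and a different (cylindrical) ancient model, and it is precisely the rank-$2$ case that gives the genuinely two-dimensional oval behavior classified by $\mc A^\circ$. Also, in \cite{CDDHS} the profile matrix satisfies a trace constraint coming from the nonlinear analysis, so after diagonalization only one free parameter remains---this is what produces the half-open interval $[0,1)$ parametrization used later in the paper.
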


\begin{remark}
\label{rem-bel}
It is strongly believed that all compact ancient solutions which have a bubble sheet
$S^1\times\R^2$ as tangent flow at $-\infty$ are immediately of rank two.  In fact
this is addressed in a forthcoming work in preparation by K.  Choi, Haslhofer and
Hershkovits.
\end{remark}

Having in mind Remark \ref{rem-bel}, whose statement still needs to be shown, we will
present below the proof of Conjecture \ref{conj-main}.

\begin{theorem}[Conjecture \ref{conj-main} for $n=3$ assuming that the statement in Remark \ref{rem-bel} holds]\label{thm-conjecture}
Assume that $n=3$, i.e.~consider three-dimensional ancient solutions in $\R^4$.
Furthermore, assume that the statement in Remark \ref{rem-bel} holds true.  The
invariant set $I_s(h_0,h_1) = I(h_0, h_1) \cap X_s$, with $0 < h_0 < h_1 < 2$,
containing the generalized cylinders as fixed points is homeomorphic to a
two-dimensional simplex, i.e.~a triangle.
\end{theorem}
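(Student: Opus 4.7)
My plan is to construct an explicit $\phi^\tau$-equivariant homeomorphism $\Phi:\Delta_2 \to I_s(h_0,h_1)/\SO_4$. The first step is to assemble a complete catalogue of the orbits in $I_s^3/\SO_4$ under the hypothesis of Remark~\ref{rem-bel}. By Proposition~\ref{prop-good}, $I_s^3$ consists of the fixed points $\Sigma^1,\Sigma^2,\Sigma^3$ and connecting orbits, and Theorem~\ref{thm-uniqueness-of-simple-connections} furnishes unique orbits $\Gamma(1,2)$ and $\Gamma(2,3)$ together with a unique $\SO_2\times\SO_2$-symmetric orbit $\Gamma^*(1,3)$ from $\Sigma^1$ to $\Sigma^3$. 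Any remaining orbit in $I_s^3$ necessarily connects $\Sigma^1$ to $\Sigma^3$, is compact by Lemma~\ref{lemma-rescaling-time}, and has tangent flow $S^1\times\R^2$ at $-\infty$; under Remark~\ref{rem-bel} it is a rank-two bubble-sheet oval, and \cite{CDDHS} identifies it with a member of the one-parameter family $\{\Gamma_a\}_{a\in[0,1)}$ from \cite{DH}, with $\Gamma_0=\Gamma^*(1,3)$.

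With this catalogue in hand, I would construct $\Phi$ by matching canonical parametrizations. Identify the vertices $e_0,e_1,e_2$ of $\Delta_2$ with $\Sigma^1,\Sigma^2,\Sigma^3$. On the three edges $[e_0,e_1]$, $[e_1,e_2]$, $[e_0,e_2]$ send the standard-flow orbit onto the closures of $\Gamma(1,2)$, $\Gamma(2,3)$, $\Gamma^*(1,3)$ respectively, using the Huisken energy $\hu$ to synchronize time. In the interior of $\Delta_2$, the quantity $c:=b_1^2/(b_0 b_2)$ is conserved by $\sigma^\tau$ and labels interior orbits by $c\in(0,\infty)$; choose a homeomorphism $a(\cdot):(0,\infty)\to(0,1)$ and map the standard-flow orbit with label $c$ onto $\Gamma_{a(c)}$, again using Huisken energy to pin down the position along the orbit. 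By construction $\Phi$ is $\sigma^\tau$--$\phi^\tau$ equivariant, and the classification makes it bijective.

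The main obstacle is the continuity of $\Phi$ near the broken edge $[e_0,e_1]\cup[e_1,e_2]$, where $c\to\infty$. What must be shown is that $\Gamma_a$ converges in the $\rho$-topology on $X_s$, as $a\to 1$, to the concatenation $\Gamma(1,2)\cup\Gamma(2,3)$, in a manner compatible with Huisken-energy synchronization. This is exactly the shadowing content already assembled in Section~\ref{sec-hetero}: by Lemmas~\ref{prop-nbhd-sigma2} and \ref{prop-nbhd-sigma2-backwards}, any subsequential limit of $\Gamma_{a_k}$ with $a_k\to 1$ that enters a small neighborhood of $\Sigma^2$ must split, after time reparametrization, into an orbit from $\Sigma^1$ to $\Sigma^2$ and one from $\Sigma^2$ to $\Sigma^3$, which the uniqueness in Theorem~\ref{thm-uniqueness-of-simple-connections} forces to coincide with $\Gamma(1,2)$ and $\Gamma(2,3)$; the compactness of $I_s^3$ from Lemma~\ref{lemma-I-compact} then upgrades subsequential convergence to genuine convergence in $X_s$. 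Continuity and monotonicity of the parametrization $a\mapsto\Gamma_a$ come from the smooth dependence on parameters in the family $\mathcal A^\circ$ of \cite{DH}. Once continuity of $\Phi$ on all of $\Delta_2$ is established, compactness of $\Delta_2$ together with the Hausdorff property of $I_s^3/\SO_4$ (inherited from the metrizability established in Section~\ref{sec-convex-topology}) promote $\Phi$ to a homeomorphism, and equivariance with the flows is built into the construction, completing the proof.
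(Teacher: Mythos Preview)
Your overall architecture is close to the paper's: both parametrize the $\Sigma^1\to\Sigma^3$ orbits by the Du--Haslhofer/CDDHS parameter in $[0,1)$ and use the Huisken energy $\hu$ to fix position along each orbit. The paper packages this as a map $\psi:[0,1]\times[\hu(\Sigma^3),\hu(\Sigma^1)]\to I_s(h_0,h_1)$ from a rectangle whose top and bottom edges are collapsed to the vertices $\Sigma^3,\Sigma^1$; your $\Delta_2$-based construction is a reparametrization of the same picture.

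There is, however, a genuine gap in your treatment of the limit $a\to 1$. You invoke Lemmas~\ref{prop-nbhd-sigma2} and~\ref{prop-nbhd-sigma2-backwards} to argue that ``any subsequential limit of $\Gamma_{a_k}$ \ldots\ that enters a small neighborhood of $\Sigma^2$ must split'', but those lemmas run in the opposite direction: they say that \emph{if} an orbit is near $\Sigma^2$ \emph{then} its forward/backward evolution stays near the edges $\Gamma(1,2)\cup\Gamma(2,3)$. They do not establish that subsequential limits of $\Gamma_{a_k}$ approach $\Sigma^2$ in the first place, and nothing in your argument rules out the possibility that the limit is another connecting orbit from $\Sigma^1$ to $\Sigma^3$. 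The paper closes this gap directly: by \cite{CDDHS} the space $\mathcal{A}^\circ$ of bubble-sheet ovals is homeomorphic to the half-open interval $[0,1)$, so if a subsequential limit $C_{\infty,\tau}$ were a $\Sigma^1\to\Sigma^3$ orbit it would equal some $h(\alpha^*)$ with $\alpha^*\in[0,1)$, forcing $\alpha_k\to\alpha^*<1$ rather than $\alpha_k\to 1$. Only after this exclusion does the limit have to lie on $\Gamma(1,2)\cup\{\Sigma^2\}\cup\Gamma(2,3)$, at which point the Huisken-energy level $\eta$ uniquely determines the point.

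A secondary issue: your claim of $\sigma^\tau$--$\phi^\tau$ equivariance is stronger than Theorem~\ref{thm-conjecture} asserts (the theorem only claims a homeomorphism, not the full topological conjugacy of Conjecture~\ref{conj-main}), and your construction via Huisken-energy synchronization does not actually deliver it, since $\hu$ is strictly monotone along orbits but in general not an affine function of $\tau$. This is harmless for the theorem as stated, but the equivariance claim should be dropped.
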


Before we proceed with the proof of the above theorem, we set $h_0=\delta$,
$h_1=2- \delta$, for $\delta >0$, and recall that the invariant set $I_s(h_0, h_1)$
consists of all $C\in X$ such that $C=-C$, and for which there is an entire solution
$\{C_\tau\}_{\tau\in\R} \subset X(h_0,h_1)$ of RMCF with $C_0=C$.  Furthermore,
$\{C_\tau\}_{\tau\in\R}$ are orbits connecting $\Sigma^1 \to \Sigma^2$, or
$\Sigma^2 \to \Sigma^3$, or $\Sigma^1 \to \Sigma^3$, and thus satisfy
\begin{equation} \label{eq-cond} \delta < \mc H(\Sigma^3) < \mc H(C_\tau) < \mc
H(\Sigma^1) < 2-\delta
\end{equation}
for all $\tau\in\R$.

\subsection*{Proof of Theorem \ref{thm-conjecture}}
Let $\mathcal{A}^{\circ}$ be the class of bubble-sheet ovals constructed in
\cite{DH}.  In \cite{DH, CDDHS} the class $\mathcal{A}^{\circ}$ consists by
definition of equivalence classes of ancient solutions to MCF whose vanishing time is
$T_{\max}=0$, and where two ancient Mean Curvature Flows $\{N^1_t\mid t<0\}$ and
$\{N^2_t\mid t<0\}$ are considered equivalent if there exist a rotation
$\mc R\in\SO_{n+1}$ and a scale factor $\lambda>0$ such that
$N^2_t = \lambda\mc R N^1_{\lambda^2t}$ for all $t<0$.

To connect the construction in \cite{DH,CDDHS} with our set up in this work, we
identify each ancient MCF $\{N_t\mid t<0\}$ with $T_{\max}=0$ with a complete orbit
$\{C_\tau\mid \tau\in\R\}$ of RMCF, by letting $C_\tau$ be the convex set whose
boundary is given by
\[
\partial C_\tau = e^{\tau/2}N_{-e^{-\tau}}.
\]
The action of $\SO_{n+1}$ on hypersurfaces $(\mc R, N)\mapsto \mc R\cdot N$ and on
convex sets $(\mc R, C)\mapsto \mc R\cdot C$are equivalent, while the parabolic
rescaling $\{N_t\}\mapsto \{\lambda N_{\lambda^2 t}\}$ of ancient solutions is
equivalent with translation in time $\{C_\tau\}\mapsto \{C_{\tau-2\ln\lambda}\}$ of
the corresponding complete orbit of RMCF.

Thus we can identify any equivalence class $[\mc O] \in\mc A^\circ$ of ancient
solutions of MCF with an equivalence class of complete orbits
$\{C_\tau\mid \tau\in\R\}$ of the flow $\phi^\tau$ on $X_s/\SO_{n+1}$, where two
orbits $\{C^1_\tau\}$ and $\{C^2_\tau\}$ are considered equivalent if there is a
$\tau_0\in\R$ such that $C^2_\tau = C^1_{\tau+\tau_0}$ for all $\tau\in\R$.

In \cite{CDDHS} (c.f.~Corollary 1.5) it was shown that $\mathcal{A}^{\circ}$ is
homeomorphic to the half open interval $[0,1)$, namely that there exists a
homeomorphism: $ h: [0,1) \to \mathcal{A}^{\circ}$,
$h(\alpha) = [\mathcal{O}_\alpha] \in \mathcal{A}^{\circ}$.  The endpoint $\alpha =0$
corresponds to the $\mathrm{O}(2) \times \mathrm{O}(2)$-symmetric connecting orbit
from $\Sigma^1$ to $\Sigma^3$ which was constructed in \cite{HH}.

Each equivalence class of orbits $h(\alpha) = [\mathcal{O}_\alpha]$ can be
represented by a complete orbit $\{C_\tau\mid \tau\in\R\}$ of RMCF, which is unique
up to translation in $\tau$: for any two complete orbits
$C_{\alpha,\tau}^1, C^2_{\alpha,\tau} \in [\mathcal{O}_\alpha]$ of RMCF that
represent $h(\alpha)=[\mc O_\alpha]$, there exists a $\tau_0$ such that
$C_{\alpha, \tau+\tau_0}^1=C_{\alpha,\tau}^2$.

Let $C_{\alpha,\tau}$ be a representative of $h(\alpha)$, chosen so that
$(\alpha,\tau)\mapsto C_{\alpha,\tau}$ is a continuous map into $X_s/\SO_{n+1}$.  For
each $\alpha\in[0, 1)$ the Huisken energy $\hu(C_{\alpha,\tau})$ is a continuous
function of $(\alpha, \tau)$ which is strictly decreasing in $\tau$, and for which
\[
\lim_{\tau\to-\infty}\hu(C_{\alpha, \tau}) = \hu\bigl(\Sigma^1\bigr),\qquad
\lim_{\tau\to+\infty}\hu(C_{\alpha, \tau}) = \hu\bigl(\Sigma^3\bigr).
\]
It follows that for each $\alpha\in[0, 1)$ and
$\eta\in\bigl(\hu(\Sigma^3), \hu(\Sigma^1)\bigr)$ there is a unique
$\bar\tau(\alpha, \eta)\in\R$ with
\[
\hu\bigl(C_{\alpha, \bar\tau}\bigr) = \eta.
\]
We now consider the map
$\psi:[0,1)\times(\hu(\Sigma^3), \hu(\Sigma^1))\to I_s(h_0, h_1)$ defined by
\[
\psi(\alpha, \eta) = C_{\alpha, \bar\tau(\alpha, \eta)} \,.
\]
\begin{claim}\label{claim-psi-has-limits}
For each $\eta\in(\hu(\Sigma^3), \hu(\Sigma^1))$ the limit
\[
\psi(1, \eta) := \lim_{\alpha\to 1}\psi(\alpha,\eta)
\]
exists, and convergence is uniform for $\eta\in(\hu(\Sigma^3), \hu(\Sigma^1))$.

For each $\eta\in(\hu(\Sigma^3), \hu(\Sigma^1))$ the limit $\psi(1, \eta)$ is the
unique $C_*\in\Gamma(1,2)\cup\Gamma(2,3)$ with $\hu(C_*)=\eta$.

We also have
\[
\lim_{\eta\searrow\hu(\Sigma^1)} \psi(\alpha, \eta) = \Sigma^1, \qquad
\lim_{\eta\nearrow\hu(\Sigma^3)} \psi(\alpha, \eta) = \Sigma^3
\]
where the convergence is uniform in $\alpha\in[0, 1)$.
\end{claim}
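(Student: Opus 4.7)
The plan is to combine the compactness Lemma~\ref{lemma-compactness-3} with the classification of orbits in $I_s(h_0,h_1)$ from Proposition~\ref{prop-good} and Theorem~\ref{thm-uniqueness-of-simple-connections}, plus the homeomorphism $h:[0,1)\to\mathcal{A}^\circ$ from \cite{CDDHS}, to identify every subsequential limit of $\psi(\alpha,\eta)$ as $\alpha\nearrow 1$ and show that there is only one.

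Fix $\eta\in(\hu(\Sigma^3),\hu(\Sigma^1))$ and take any sequence $\alpha_k\nearrow 1$. Since each $\psi(\alpha_k,\eta)$ lies on a complete orbit in $I_s(h_0,h_1)$ and has $\hu=\eta$, Lemma~\ref{lemma-compactness-3} together with the Arzel\`a--Ascoli extraction used in the proof of Lemma~\ref{lemma-I-compact} allows us to pass to a subsequence such that the full orbits through $\psi(\alpha_k,\eta)$ converge locally uniformly in $\tau$ to a complete orbit $\gamma_\infty\subset I_s(h_0,h_1)$, and in particular $\psi(\alpha_k,\eta)\rto C^*_\eta$ with $\hu(C^*_\eta)=\eta$. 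By Proposition~\ref{prop-good}, $\gamma_\infty$ is either a fixed point or lies in $\Gamma(1,2)\cup\Gamma(2,3)\cup\Gamma(1,3)$.

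The crucial identification step is to rule out $\gamma_\infty\subset\Gamma(1,3)=\mathcal{A}^\circ$. If it did, then the equivalence class $[\gamma_\infty]$ would equal $h(\alpha^*)$ for some $\alpha^*\in[0,1)$; using the continuity of $h^{-1}$ together with the continuous dependence of the chosen representatives $(\alpha,\tau)\mapsto C_{\alpha,\tau}$ (and invariance under time translation), one deduces $\alpha_k\to\alpha^*\ne 1$, contradicting $\alpha_k\nearrow 1$. Once $\Gamma(1,3)$ is excluded, Theorem~\ref{thm-uniqueness-of-simple-connections} pins the limit down: for $\eta\in(\hu(\Sigma^2),\hu(\Sigma^1))$ the limit $C^*_\eta$ is the unique point of the single orbit $\Gamma(1,2)$ with energy $\eta$; for $\eta\in(\hu(\Sigma^3),\hu(\Sigma^2))$ it is the unique such point of $\Gamma(2,3)$; and for $\eta=\hu(\Sigma^2)$ it must be $\Sigma^2$ itself. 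Since the subsequential limit is independent of the subsequence, $\psi(\alpha,\eta)\rto \psi(1,\eta)$ as $\alpha\nearrow 1$ with $\psi(1,\eta)$ the claimed element of $\overline{\Gamma(1,2)\cup\Gamma(2,3)}$.

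Uniformity in $\eta$ is obtained by contradiction: if it failed, extract $\alpha_k\to 1$ and $\eta_k\to\eta_\infty\in[\hu(\Sigma^3),\hu(\Sigma^1)]$ with $\psi(\alpha_k,\eta_k)$ bounded away from $\psi(1,\eta_k)$, then rerun the compactness/identification argument above to see that every subsequential limit of $\psi(\alpha_k,\eta_k)$ equals $\psi(1,\eta_\infty)$; combined with continuity of $\eta\mapsto\psi(1,\eta)$ on each of the two open sub-intervals (and at $\eta=\hu(\Sigma^2)$ via Lemmas~\ref{prop-nbhd-sigma2} and~\ref{prop-nbhd-sigma2-backwards}, which imply that orbits passing close to $\Sigma^2$ lie in arbitrarily small neighborhoods of $\Gamma(1,2)\cup\Gamma(2,3)$), this yields the required contradiction. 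For the endpoint statements, strict monotonicity of $\hu$ along each orbit gives $\bar\tau(\alpha,\eta)\to-\infty$ as $\eta\nearrow\hu(\Sigma^1)$ and $\bar\tau(\alpha,\eta)\to+\infty$ as $\eta\searrow\hu(\Sigma^3)$; uniformity in $\alpha\in[0,1)$ is again a compactness argument, using Lemma~\ref{lemma-compactness-3} to prevent escape and Lemmas~\ref{prop-nbhd-sigma2},~\ref{prop-nbhd-sigma2-backwards} to trap the tails near $\Sigma^1,\Sigma^3$. The main obstacle is Step~2: translating the abstract homeomorphism $h:[0,1)\to\mathcal{A}^\circ$ into an obstruction to convergence in the geometric topology of $X_s/\SO_{n+1}$, which requires a careful compatibility between the topology used on $\mathcal{A}^\circ$ in \cite{CDDHS} and the $\rho$-topology on convex sets defined here.
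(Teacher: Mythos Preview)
Your approach is essentially the paper's: extract a complete limiting orbit by compactness, rule out $\Gamma(1,3)$ using that $h:[0,1)\to\mathcal{A}^\circ$ is a homeomorphism, and then use uniqueness of the simple connections $\Gamma(1,2)$ and $\Gamma(2,3)$ to pin the limit down. The paper phrases the exclusion step slightly differently (``otherwise $\mathcal{A}^\circ$ would fail to be homeomorphic to $[0,1)$''), but it is the same argument and rests on exactly the compatibility of topologies you flag at the end.

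Two small points where you diverge from the paper. First, for the endpoint limits $\eta\to\hu(\Sigma^1)$ and $\eta\to\hu(\Sigma^3)$ the paper does not need Lemmas~\ref{prop-nbhd-sigma2} and~\ref{prop-nbhd-sigma2-backwards} at all: it simply observes that along any subsequential limit orbit $C_{\infty,\tau}$ one has $\hu(C_{\infty,\tau})\equiv\hu(\Sigma^1)$ for $\tau\le 0$ (by monotonicity and the squeeze $\eta_k\le\hu(C_{k,\tau})\le\hu(\Sigma^1)$), forcing $C_{\infty,\tau}\equiv\Sigma^1$. Your invocation of those two lemmas here is misdirected in any case, since they concern neighborhoods of $\Sigma^2$, not of $\Sigma^1$ or $\Sigma^3$. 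Second, continuity of $\eta\mapsto\psi(1,\eta)$ across $\hu(\Sigma^2)$ follows directly from the fact that $\Gamma(1,2)$ and $\Gamma(2,3)$ both limit to $\Sigma^2$; Lemmas~\ref{prop-nbhd-sigma2}--\ref{prop-nbhd-sigma2-backwards} are not needed for this either.
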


This implies that the map $\psi$ extends to a continuous map
$\psi : [0, 1]\times[\hu(\Sigma^1), \hu(\Sigma^3)] \to I_s(h_0, h_1)$.  This extended
map is constant on the top and bottom edges $[0, 1]\times\{\hu(\Sigma^k)\}$
($k\in\{1,3\}$).

\begin{claim}\label{claim-psi-injective}
$\psi(\alpha, \eta) = \psi(\alpha', \eta')$ if and only if
$(\alpha, \eta)=(\alpha', \eta')$ or $\eta=\eta'\in\{\hu(\Sigma^1), \hu(\Sigma^3)\}$.
\end{claim}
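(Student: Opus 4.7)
The plan is to reduce injectivity to two ingredients: a Huisken-energy calculation to recover $\eta$ from the value $\psi(\alpha, \eta)$, and a uniqueness principle stating that any non-fixed point of $I_s^3$ lies on a single complete orbit in $X_s/\SO_{n+1}$. Once those are in hand, the three configurations $\alpha, \alpha' \in [0,1)$; $\alpha \in [0,1)$ and $\alpha' = 1$; and $\alpha = \alpha' = 1$ each collapse to a short argument.

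First I would apply the Huisken functional to both sides of the hypothesis $\psi(\alpha, \eta) = \psi(\alpha', \eta')$. On the interior $[0,1) \times (\hu(\Sigma^3), \hu(\Sigma^1))$ the defining relation $\hu(C_{\alpha, \bar\tau(\alpha, \eta)}) = \eta$ gives $\hu \circ \psi = \eta$, while on the top (resp.\ bottom) edge the extension provided by Claim~\ref{claim-psi-has-limits} sends $\psi$ to $\Sigma^1$ (resp.\ $\Sigma^3$), whose Huisken energy is $\hu(\Sigma^1)$ (resp.\ $\hu(\Sigma^3)$). In all cases one concludes $\eta = \eta'$, and if $\eta = \eta' \in \{\hu(\Sigma^1), \hu(\Sigma^3)\}$ we are in the excluded alternative, so it suffices to treat $\eta = \eta' \in (\hu(\Sigma^3), \hu(\Sigma^1))$.

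Next I would set $C := \psi(\alpha, \eta) = \psi(\alpha', \eta)$ and observe that $C$ cannot be a fixed point of $\phi^\tau$: every orbit $\mc O_\alpha$ with $\alpha \in [0,1)$ is a non-constant connecting orbit from $\Sigma^1$ to $\Sigma^3$, and by Claim~\ref{claim-psi-has-limits} the point $\psi(1, \eta)$ lies on $\Gamma(1,2) \cup \Gamma(2,3)$ (equal to $\Sigma^2$ only when $\eta = \hu(\Sigma^2)$, in which case the same fixed-point argument forces both sides of the equation to come from $\alpha = \alpha' = 1$, so the claim holds trivially). Assuming $C$ is not a fixed point, the uniqueness of the complete orbit through $C$, discussed in the next paragraph, forces the two orbits to coincide as subsets of $X_s/\SO_{n+1}$. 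When $\alpha, \alpha' \in [0, 1)$ this gives $[\mc O_\alpha] = [\mc O_{\alpha'}]$ in $\mc A^\circ$ and the bijection $h$ yields $\alpha = \alpha'$; when $\alpha' = 1$ and $\alpha \in [0, 1)$ the orbit $\mc O_\alpha$ would have to coincide with $\Gamma(1,2)$ or $\Gamma(2,3)$, contradicting their distinct $\alpha$- or $\omega$-limits; and when $\alpha = \alpha' = 1$ strict monotonicity of $\hu$ along $\Gamma(1, 2)$ and $\Gamma(2, 3)$, combined with their disjoint Huisken ranges $(\hu(\Sigma^2), \hu(\Sigma^1))$ and $(\hu(\Sigma^3), \hu(\Sigma^2))$, implies that $\eta \mapsto \psi(1, \eta)$ is injective.

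The main obstacle is the uniqueness-of-orbit ingredient: every non-fixed $C \in I_s^3$ lies on exactly one complete orbit. Forward uniqueness is immediate from Theorem~\ref{thm-mcf-exist-unique}, so the content lies in backward uniqueness inside $I_s^3$. By Proposition~\ref{prop-good} the only complete orbits in $I_s^3$ are the fixed points together with connecting orbits among $\Sigma^1, \Sigma^2, \Sigma^3$; combining this with the uniqueness up to rigid motion of the orbits in $\Gamma(1,2)$ and $\Gamma(2,3)$ from \cite{ADS2, CHHW} and the parametrization $h : [0,1) \to \mc A^\circ$ from \cite{CDDHS} (invoking Remark~\ref{rem-bel}), the entire space of complete orbits in $I_s^3$ is the explicit family $\{\mc O_\alpha\}_{\alpha \in [0,1)} \cup \Gamma(1,2) \cup \Gamma(2,3)$. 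If two distinct members of this family shared a non-fixed point they would agree forward by MCF uniqueness, but coinciding forward trajectories would identify their equivalence classes in $\mc A^\circ$, contradicting either the injectivity of $h$ or the uniqueness modulo rigid motion of the simple connecting orbits.
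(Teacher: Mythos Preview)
Your strategy matches the paper's: recover $\eta$ from $\hu(\psi(\alpha,\eta))=\eta$, then recover $\alpha$ from the fact that distinct members of the orbit family are disjoint. Your case split is more explicit than the paper's very terse argument, which simply cites \cite{CDDHS} for disjointness of $h(\alpha_1)$ and $h(\alpha_2)$ when $\alpha_1\neq\alpha_2$, observes $\psi(1,\eta)\in\Gamma(1,2)\cup\Gamma(2,3)$ with $\hu(\psi(1,\eta))=\eta$, and stops.

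There is, however, a genuine gap in your uniqueness-of-orbit step. You argue that if two complete orbits in $I_s^3$ share a non-fixed point they agree forward by Theorem~\ref{thm-mcf-exist-unique}, and then assert that ``coinciding forward trajectories would identify their equivalence classes in $\mc A^\circ$''. But an element of $\mc A^\circ$ is a full ancient orbit modulo time-translation, not merely its forward half; two complete orbits that agree for all $\tau\ge 0$ could in principle diverge for $\tau<0$, and forward uniqueness of MCF says nothing about that. What actually closes this gap is \emph{backward} uniqueness for smooth MCF (true, e.g., by analyticity or parabolic unique-continuation), which you neither prove nor cite. Without it your argument is circular: you are trying to show distinct $h(\alpha)$ are disjoint, and your justification presumes exactly that. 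The paper sidesteps the issue by citing the disjointness of the orbits $h(\alpha)$ directly from \cite{CDDHS}; you should either do the same or invoke a backward-uniqueness theorem explicitly. The same gap recurs in your mixed case $\alpha\in[0,1)$, $\alpha'=1$ when $\eta<\hu(\Sigma^2)$: both $\mc O_\alpha$ and $\Gamma(2,3)$ consist of compact sets with $\omega$-limit $\Sigma^3$, so distinguishing them really does require knowing the backward orbit through the common point.

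A smaller point: your first paragraph claims $\hu\circ\psi=\eta$ ``in all cases'' but only checks the interior $[0,1)\times(\hu(\Sigma^3),\hu(\Sigma^1))$ and the horizontal edges; the right edge $\{1\}\times(\hu(\Sigma^3),\hu(\Sigma^1))$ is covered by Claim~\ref{claim-psi-has-limits}, which already records $\hu(\psi(1,\eta))=\eta$, and you should say so.
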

This implies that $\psi$ is a homeomorphism between $I_s(h_0, h_1)$ and the space one
obtains by collapsing the top and bottom edges of the rectangle
$[0, 1]\times\{\hu(\Sigma^k)\}$, i.e.~$[0, 1]\times\{\hu(\Sigma^k)\} / \sim$ where
$\sim$ is the equivalence relation
$(\alpha,\eta)\sim(\alpha', \eta')\iff (\alpha,\eta)=(\alpha',\eta')\text{ or
}\eta=\eta'\in\{1, 3\}$.  Figure~\ref{fig:triangle} exhibits a homeomorphism between
$[0, 1]\times[\hu(\Sigma^3),\hu(\Sigma^1)] / \sim$ and a 2-simplex.

\begin{figure}[t]
\centering 
\includegraphics[width=0.7\textwidth]{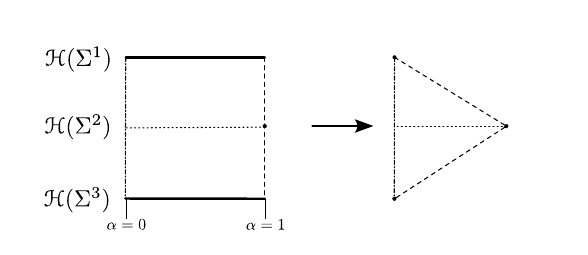}
\caption{A homeomorphism between $[0,1]\times[\hu(\Sigma^3),\hu(\Sigma^1)]/\!\sim$ and
  a triangle, which collapses the top and bottom edges of the rectangle.  It is given
  by $\varphi(\alpha,\eta) = (r(\eta)\alpha, \eta)$ where
  $r:[\hu(\Sigma^3), \hu(\Sigma^1)]\to[0,1]$ is piecewise linear with
  $r(\hu(\Sigma^1))=r(\hu(\Sigma^3))=0$ and $r(\hu(\Sigma^2))=1$.  }
\label{fig:triangle}
\end{figure}

\subsection*{Proof of Claim~\ref{claim-psi-has-limits}}
Let $\eta\in(\hu(\Sigma^3), \hu(\Sigma^1))$ be given.  Then the set
$\{\psi(\alpha, \eta) \mid 0\leq \alpha<1\}$ is contained in the compact set
$I_s(h_0, h_1)$.  It follows that any sequence $\alpha_k\nearrow 1$ has a subsequence
$\alpha_{k_j}$ for which $\psi(\alpha_{k_j}, \eta)$ converges.  Since
$\hu(\psi(\alpha, \eta))=\eta$, we know that
$\hu\bigl(\lim_j \psi(\alpha_{k_j}, \eta)\bigr) = \eta$.  For each $k$ there is a
complete orbit $C_{k, \tau}\in I_s(h_0,h_1)$ of RMCF with
$C_{k, 0} = \psi(\alpha_k, \eta)$.  We can choose the subsequence $\alpha_{k_j}$ so
that the orbits $C_{k_j, \tau}$ converge uniformly for bounded $\tau$ intervals.  It
follows that $C_{\infty,\tau} := \lim_j C_{k_j, \tau}$ is a complete orbit of RMCF.
If $C_{\infty,\tau}$ connects $\Sigma^1$ to $\Sigma^3$, then by the classification in
\cite{CDDHS}\cite{DH} it would have to be one of the $h(\alpha)$, and one would
conclude that the set of connecting orbits from $\Sigma^1$ to $\Sigma^3$ is no longer
homeomorphic to a half open interval $[0, 1)$, this in contradiction with
\cite{CDDHS}.  Thus $C_{\infty,\tau}$ either connects $\Sigma^1$ to $\Sigma^2$, or
$\Sigma^2$ to $\Sigma^3$, or the whole orbit $C_{\infty,\tau}$ is the fixed point
$\Sigma^2$.  Which of these alternatives occurs is completely determined by the fact
that $\hu(C_{\infty,0})=\eta$: if $\eta>\hu(\Sigma^2)$ then $C_{\infty, \tau}$
connects $\Sigma^1$ with $\Sigma^2$, if $\eta=\hu(\Sigma^2)$ then $C_{\infty, \tau}$
connects $\Sigma^2$ with $\Sigma^3$, and if $\eta=\hu(\Sigma^2)$ then
$C_{\infty, \tau}=\Sigma^2$ for all $\tau\in\R$.

The connecting orbits between $\Sigma^1$ and $\Sigma^2$, and between $\Sigma^2$ and
$\Sigma^3$ are unique.  We see that the limit $C_{\infty, 0}$ is the unique point
$C_*$ on the connecting orbit between $\Sigma^1$ and $\Sigma^2$ or $\Sigma^2$ and
$\Sigma^3$, respectively, for which $\hu(C_*)=\eta$; if $\eta=\hu(\Sigma^2)$ then
$C_*=\Sigma^2$.  Thus the limit $C_{\infty,0}$ does not depend on the particular
subsequence $k_j$, and therefore the whole family $\psi(\alpha,\eta)$ converges to
$C_*$ as $\alpha\nearrow1$.

We now show that $\psi(\alpha, \eta)\to\Sigma^1$ as $\eta\nearrow \hu(\Sigma^1)$.
Since the family $\psi(\alpha, \eta)$ lies in the compact set $I_s(h_0, h_1)$ we have
to show that if $\alpha_k\in[0, 1)$ and $\eta_k$ are sequences with
$\eta_k\nearrow\hu(\Sigma^1)$ for which the limit $\psi(\alpha_k, \eta_k)$ exists,
then the limit must be $\Sigma^1$.  Let $(\alpha_k, \eta_k)$ be such a sequence and
consider the complete orbits $C_{k,\tau}$ with $C_{k,0} = \psi(\alpha_k, \eta_k)$.
Then $\hu(\Sigma^3)<\hu(C_{k,\tau}) < \hu(\Sigma^1)$ for all $\tau$, while
$\hu(C_{k,0}) =\hu(\psi(\alpha_k, \eta_k))= \eta_k\nearrow \hu(\Sigma^1)$.  Since
$\hu(C_{k,\tau})$ is non increasing in $\tau$ we conclude that the limit
$C_{\infty, \tau}:=\lim_{k\to\infty}C_{k,\tau}$ is a complete orbit with
$\hu(C_{\infty,\tau})\equiv \hu(\Sigma^1)$ for all $\tau\leq0$.  This implies that
$C_{\infty, \tau} = \Sigma^1$ for all $\tau$ and in particular, that
$\lim_{k\to\infty}\psi(\alpha_k, \eta_k) = C_{\infty,0}=\Sigma^1$, as claimed.

The proof that $\psi(\alpha,\eta)\to\Sigma^3$ as $\eta\searrow\hu(\Sigma^3)$ follows
along the same lines.

\subsection*{Proof of Claim~\ref{claim-psi-injective} } 
It follows from the results in \cite{CDDHS} that the orbits
$h(\alpha_1), h(\alpha_2)$ are disjoint if $0\leq \alpha_1\neq\alpha_2<1$, so that
$\psi:[0, 1)\times(\hu(\Sigma^3), \hu(\Sigma^1)) \to I_s(h_0, h_1)$ is injective.  We
have shown that $\psi$ is constant on $[0, 1)\times\{\hu(\Sigma^j)\}$ for
$j\in\{1, 3\}$.  To complete the argument we observe that
$\psi(1, \eta)\in\Gamma(1,2)\cup\Gamma(2, 3)$ for
$\eta\in[\hu(\Sigma^3), \hu(\Sigma^1)]$, and that $\hu(\psi(1, \eta))=\eta$ so that
$\psi$ also is injective on $\{1\}\times[\hu(\Sigma^3), \hu(\Sigma^1)]$.

\appendix

\section{The structure of complete convex hypersurfaces}

In this section we present some basic known results on the general structure of
convex hypersurfaces.

Convex hypersurfaces are studied in both convex geometry and differential geometry of
submanifolds.  As a result there are different notions of convexity preferred in
different subjects.  In this paper, we use the following definition.

\begin{definition}[Convex hypersurfaces in $\mathbb{R}^{n+1}$]
\label{def-convexity}

A hypersurface $M \subset \mathbb{R}^{n+1}$ is called convex if it is the boundary of
a convex set $C$ of non-empty interior.  \end{definition}

\smallskip

The following is a known observation on the structure of convex sets.  For its proof
we refer the reader to \cite{CD}.  However, this result has been known for a long
time (see for example the work by Hung-Hsi Wu \cite{Wu}).

\begin{prop} \label{prop-classification} Let $M=\partial C$ be the boundary of a
closed convex set $C \subset \R^{n+1}$ with non-empty interior, that is $M$ is a
convex hypersurface in $\mathbb{R}^{n+1}$.  Then either $M=\mathbb{R}^n$ or
$M= \mathbb{R}^k \times \hat M$, for some $0\le k < n$ and $\hat M=\partial \hat C$
where $\hat C\subset \mathbb{R}^{n+1-k}$ is a closed convex set with non-empty
interior which {\em contains no infinite line.  } Moreover, such $\hat M$ is either
homeomorphic to $\mathbb{S}^{n-k}$ or $\mathbb{R}^{n-k}$.  In the former case
$\hat M = \partial \hat C$ is a compact hypersurface.

\end{prop}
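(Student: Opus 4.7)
The plan is to build the decomposition of $C$ from its lineality space and then invoke a classical topological classification of line-free closed convex sets with non-empty interior.

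First, I would define the lineality space
\[
L := \{v \in \R^{n+1} : C + \R v \subseteq C\}
\]
and check that $L$ is a linear subspace of $\R^{n+1}$: closure under scalar multiplication is immediate from the definition, and closure under addition follows from convexity of $C$. Letting $k = \dim L$, I would fix an orthogonal splitting $\R^{n+1} = L \oplus L^\perp$ and set $\hat C := C \cap L^\perp$. Standard convex-geometry arguments then yield (i)~$C = L \oplus \hat C$ (given $x = x_L + x_\perp \in C$, translation by $-x_L \in L$ keeps $x_\perp$ in $C$), (ii)~$\hat C$ is closed and convex in $L^\perp \cong \R^{n+1-k}$ with non-empty relative interior (obtained by projecting an interior point of $C$ onto $L^\perp$ and applying an $L$-translation), and (iii)~$\hat C$ contains no infinite line, since the direction of any such line would necessarily lie in $L \cap L^\perp = \{0\}$.

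Second, I would note that $k \le n$: the case $k = n+1$ would force $C = \R^{n+1}$ and $\partial C = \varnothing$, contradicting the hypothesis. The identity $C = L \oplus \hat C$ then gives $M = \partial C = L \oplus \partial \hat C \cong \R^k \times \hat M$ with $\hat M = \partial \hat C$. The special case $M = \R^n$ in the statement corresponds to $k = n$ and $\hat C \subset \R$ a closed half-line, so that $\partial \hat C$ is a single point and $M = \R^n \times \{\mathrm{pt}\}$.

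Finally, to identify the topology of $\hat M$, I would invoke the classical fact that any closed convex set $K \subset \R^m$ with non-empty interior and no infinite line is homeomorphic to the closed unit ball $\bar B^m$ if bounded, and to a closed half-space if unbounded; correspondingly $\partial K$ is homeomorphic to $S^{m-1}$ or $\R^{m-1}$. In the bounded case this is proved by radial projection from any interior point $p$: the function $r(v) = \max\{t > 0 : p + tv \in K\}$ is continuous on $S^{m-1}$ by convexity, and the map $v \mapsto p + r(v) v$ realizes the desired homeomorphism $S^{m-1} \to \partial K$. In the unbounded case one exploits the pointed recession cone of $K$ to foliate $K$ by bounded convex slices and glues slice-by-slice homeomorphisms together; this is the main obstacle in the argument, but it is a classical fact from convex geometry (see \cite{CD} and \cite{Wu}) which I would cite rather than reprove. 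Applied to $\hat C \subset \R^{n+1-k}$ this yields $\hat M \cong S^{n-k}$ or $\R^{n-k}$, with the former occurring precisely when $\hat C$ is compact, completing the proof.
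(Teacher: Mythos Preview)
Your proposal is correct and follows the standard lineality-space decomposition. Note that the paper itself does not give a proof of this proposition: it simply refers the reader to \cite{CD} and \cite{Wu}, and then records a few supporting lemmas (a convex set containing a line splits off that line; a closed convex set with no ray is compact). Your argument makes explicit the decomposition $C = L \oplus \hat C$ that those lemmas point toward, and for the topological identification of $\hat M$ you cite the same references the paper does, so there is no substantive difference in approach.
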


\smallskip

The following theorem shown in \cite{Wu} concerns with complete non-compact
hypersurfaces $M$ that contain no infinite line.

\begin{prop}[Hung-Hsi Wu \cite{Wu}]
\label{prop-Wu}
Assume that $M =\partial C$ is a convex hypersurface in $\mathbb{R}^{n+1}$ that is
homeomorphic to $\R^n$ and contains no infinite line.  Then, coordinates can be so
chosen that $\{ x_{n+1}=0 \}$ is a supporting hyperplane to the convex set $C$ at the
origin, and it has the following additional properties:
\begin{enumerate}
\item[\bfseries\upshape a.]  Let $D:=\pi(C)$, where
$\pi: \R^{n+1} \to \{ x_{n+1} =0 \}$ is the standard orthogonal projection, and let
$D^\circ$ be its interior relative to the hyperplane $ \{ x_{n+1} =0 \}$.  Then, $M$
can be expressed as the graph of a nonnegative convex function $u: D^\circ \to \R$.
If $M$ is $C^\infty$, then $u$ is a $C^\infty$ function on $D^\circ$.  \smallskip

\item[\bfseries\upshape b.]  For every $x_0 \in D \setminus D^\circ$, $\pi^{-1}(x_0)$
is a semi-infinite line segment.  \smallskip
\item[\bfseries\upshape c.]  If the image $\gamma(M)$ of the Gauss map
$\gamma: M \to S^{n}$ has non-empty interior relative to $S^n$, then for any $c >0$
the level set $M \cap \{ x_{n+1} = c \}$ is homeomorphic to $S^{n-1}$.  This
homeomorphism is a diffeomorphism if $M$ is $C^\infty$ smooth.

\end{enumerate}

\end{prop}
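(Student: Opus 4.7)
The plan is to exploit the recession cone of $C$ to set up coordinates, then read off the three conclusions from the resulting graph representation. Since $M \cong \R^n$ implies $C$ is unbounded, its recession cone $\mathrm{rec}(C) := \{v \in \R^{n+1} : C + \R_{\geq 0}\, v \subset C\}$ is nontrivial, and the no-line hypothesis forces $\mathrm{rec}(C)$ to be a pointed closed convex cone. I would first choose a unit vector $v$ in the relative interior of $\mathrm{rec}(C)$ and rotate so $v = e_{n+1}$; pointedness then lets me pick a supporting hyperplane $H$ to $C$ with $\mathrm{rec}(C) \cap H = \{0\}$, and a further translation arranges $H = \{x_{n+1}=0\}$ tangent to $C$ at the origin with $C \subset \{x_{n+1} \geq 0\}$.

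For part (a) I would argue that $e_{n+1} \in \mathrm{rec}(C)$ forces each fiber $\pi^{-1}(x') \cap C$ to be upward-closed, hence of the form $\{x'\} \times [u(x'), \infty)$ with $u(x') := \inf\{y : (x', y) \in C\}$. Convexity of $C$ yields convexity of $u$, and the supporting hyperplane at the origin yields $u \geq 0$ and $u(0) = 0$. At a point $x' \in D^\circ$ the inward normal to $C$ at $(x', u(x'))$ must have strictly positive vertical component, for otherwise one extracts a horizontal recession direction that contradicts the choice of $H$; the implicit function theorem then produces $u \in C^\infty(D^\circ)$ when $M$ is smooth. For part (b), a boundary point $x_0 \in D \setminus D^\circ$ admits a supporting line to $D$ inside $\{x_{n+1}=0\}$ at $x_0$, which lifts to a vertical half-space of $\R^{n+1}$ supporting $C$; this half-space contains the entire ray $\{x_0\} \times [u(x_0), \infty)$, placing the whole ray on $\partial C = M$.

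For part (c) I would translate the hypothesis that $\gamma(M) \subset S^n$ has nonempty interior via convex duality into a statement about the recession cone: the closure of $\gamma(M)$ consists of the unit vectors $-\nu$ with $\nu \in \mathrm{rec}(C)^\circ$ (the polar cone), so nonempty interior of $\gamma(M)$ is equivalent to $\mathrm{rec}(C) \cap \{x_{n+1}=0\} = \{0\}$ in our coordinates. From this I would deduce that each level set $L_c := M \cap \{x_{n+1}=c\}$, $c > 0$, is bounded, since otherwise a sequence $p_k \in L_c$ with $\|p_k\| \to \infty$ yields a nonzero horizontal recession direction via $p_k/\|p_k\|$. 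Boundedness together with convexity will then force $L_c$ to be a compact $(n-1)$-dimensional convex hypersurface, hence homeomorphic to $S^{n-1}$; smoothness in the $C^\infty$ case will follow from the regular-value property of $c$ for $x_{n+1}|_M$, again via the transversality of $e_{n+1}$ to $M$.

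The hard part will be the duality identification in step three, particularly when $M$ is merely topological: the Gauss map must then be interpreted in the multi-valued sense (the collection of outward normals to supporting hyperplanes at each point), and the identification of its image with $-\mathrm{rec}(C)^\circ$ requires a careful use of the bipolar theorem in this generalized setting. A secondary technical point is the simultaneous normalization in the coordinate step: arranging both $e_{n+1} \in \mathrm{rec}(C)$ and $\{x_{n+1}=0\}$ to be a supporting plane transverse to the recession cone is immediate when $\mathrm{rec}(C)$ has nonempty interior in $\R^{n+1}$, but will need a limiting or perturbation argument when $\mathrm{rec}(C)$ is degenerate (i.e.\ lies in a proper subspace).
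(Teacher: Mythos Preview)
The paper does not give its own proof of this proposition: it is stated in the appendix as a known result attributed to Hung-Hsi Wu, with only the citation \cite{Wu} and no argument. There is therefore nothing in the paper to compare your proposal against.

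Your sketch is a reasonable outline of how one would prove such a statement from first principles of convex analysis, and the recession-cone framework is the natural one. A couple of small points: in part (a), your justification that the normal at $(x',u(x'))$ has positive vertical component is slightly off --- the cleaner argument is that a vertical supporting hyperplane through $(x',u(x'))$ would put all of $C$, hence all of $D=\pi(C)$, on one side of a hyperplane in $\R^n$ through $x'$, contradicting $x'\in D^\circ$. In part (c), the identification of the closure of the Gauss image with $-\mathrm{rec}(C)^\circ\cap S^n$ is standard for closed convex sets (no smoothness needed, using the multivalued Gauss map as you say), so the ``hard part'' you flag is less delicate than you suggest; the bipolar theorem handles it directly. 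The coordinate normalization you worry about at the end is also harmless: pointedness of $\mathrm{rec}(C)$ already guarantees a hyperplane meeting it only at the origin, regardless of whether $\mathrm{rec}(C)$ has full dimension.
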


\smallskip

Since the set $D:=\pi(C)$ that is defined in the previous Proposition will play
crucial role in the discussion of uniqueness of MCF solutions, we will refer to it as
the {\em shadow} of $M$ or $C$.

\begin{definition}[The shadow of a convex hypersurface]
\label{defn-shadow} Let $M =\partial C$ be a convex hypersurface in
$\mathbb{R}^{n+1}$ that is homeomorphic to $\R^n$ and contains no infinite line.  The
set $D:=\pi(C)$, defined as in Proposition \ref{prop-Wu} is called the shadow of $M$
or $C$.
\end{definition}

\smallskip

Below we summarize more classical facts about convex sets.  Reference??  \bigskip

Let $C\subset \R^{n+1}$ be a closed convex set.

\begin{lemma}[when $C$ contains a line]
If $C$ contains a line $\ell\subset C$ and $x\in C$ is given, then $C$ contains the
line through $x$ that is parallel to $\ell$.  In particular, $C$ is a product of
$\ell$ and a convex set $C'\subset \ell^\perp$.
\end{lemma}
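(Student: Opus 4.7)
The plan is to prove translation invariance of $C$ in the direction of $\ell$, and then derive the product structure as a direct consequence. Without loss of generality, by translating we may assume $\ell$ passes through the origin, so $\ell=\{tv\mid t\in\mathbb{R}\}$ for some unit vector $v$, and $\ell\subset C$ means $sv\in C$ for every $s\in\mathbb{R}$.

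The key step is to fix an arbitrary $x\in C$ and $t\in\mathbb{R}$, and show that $x+tv\in C$. For $t=0$ this is trivial. For $t\neq 0$, pick $s\in\mathbb{R}$ with $|s|>|t|$ and $\operatorname{sgn}(s)=\operatorname{sgn}(t)$, and consider the convex combination
\[
p_s=\Bigl(1-\tfrac{t}{s}\Bigr)x+\tfrac{t}{s}\bigl(sv\bigr)=\Bigl(1-\tfrac{t}{s}\Bigr)x+tv.
\]
Since $x\in C$, $sv\in C$, and $t/s\in[0,1]$, convexity of $C$ gives $p_s\in C$. Letting $|s|\to\infty$ we obtain $p_s\to x+tv$, and closedness of $C$ yields $x+tv\in C$. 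This proves that $C$ is invariant under every translation $x\mapsto x+tv$, $t\in\mathbb{R}$.

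For the product structure, set $C':=C\cap\ell^{\perp}$, which is a closed convex subset of the hyperplane $\ell^{\perp}\cong\mathbb{R}^{n}$. The translation invariance just established shows $C'+\ell\subset C$. Conversely, given any $x\in C$, write $x=x'+tv$ with $x'\in\ell^{\perp}$ the orthogonal projection of $x$ onto $\ell^{\perp}$ and $t=\langle x,v\rangle$; by invariance under translation by $-tv$ we have $x'=x-tv\in C\cap\ell^{\perp}=C'$, so $x\in C'+\ell$. Hence $C=C'+\ell$, which is the asserted product decomposition (after identifying $\mathbb{R}^{n+1}$ with $\ell^{\perp}\times\ell$).

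No serious obstacle is expected: the only nontrivial point is ensuring the convex combination $p_s$ is a valid element of $C$, which requires $t/s\in[0,1]$, hence the choice of $s$ with the same sign as $t$ and $|s|\geq|t|$. All else is formal manipulation together with the closedness of $C$.
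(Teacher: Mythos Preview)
Your proof is correct and follows essentially the same approach as the paper: both take a convex combination of $x$ with a point far out on the line $\ell$ and let that point tend to infinity, then invoke closedness. The paper parametrizes this with $\theta_n=1/n$ and $\rho_n=n\sigma$ rather than your $t/s$, and it leaves the product decomposition $C=C'+\ell$ implicit, whereas you spell it out; otherwise the arguments are the same.
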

\begin{proof}
Assume $\ell= \{ a+\sigma b \mid \sigma\in\R\}$ for suitable vectors
$a, b\in\R^{n+1}$ and let $x\in C$ be given.  We will show
$\{x+\sigma b\mid \sigma\in\R\}\subset C$.

For any $\rho\in\R$ the line segment connecting $x$ and $a+\rho b$ is contained in
$C$.  Thus for all $\rho\in\R$ and $\theta\in[0,1]$
\[
(1-\theta)x + \theta a + \theta\rho b \in C
\]
For $n=1, 2, 3, \dots$ choose $\theta_n=n^{-1}$, $\rho_n=n\sigma$.  Then
\[
x_n = (1-n^{-1})x + n^{-1}a + \sigma b\in C.
\]
Let $n\to\infty$ and recall that $C$ is closed to conclude that $C$ contains
$\lim_{n\to\infty} x_n = x+\sigma b$.
\end{proof}

\begin{lemma}
\label{lem:rays}
If $C$ contains a ray $\ell_+ = \{a+\sigma b \mid \sigma\geqslant 0\}$ and if
$x\in C$ is any point, then $C$ contains the ray starting at $x$ in the same
direction as $\ell_+$, i.e.~$\{x+\sigma b\mid \sigma\geq 0\}$.
\end{lemma}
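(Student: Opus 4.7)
The plan is to mimic closely the argument used for the previous lemma about lines, taking advantage of the fact that the only modification needed is to restrict the parameter along the ray to nonnegative values. The essential mechanism is the same: convex combinations of $x$ with points far out along $\ell_+$ produce points arbitrarily close to $x + \sigma b$, and closedness of $C$ then delivers the limit.

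More concretely, I would fix $\sigma \geq 0$ and, for each integer $n \geq 1$, form the convex combination of $x \in C$ and the point $a + (n\sigma)\, b \in \ell_+ \subset C$ with weights $1 - n^{-1}$ and $n^{-1}$ respectively. This gives
\[
x_n := (1 - n^{-1})\, x + n^{-1} a + \sigma\, b \in C,
\]
because $C$ is convex and the point $a + (n\sigma)\, b$ lies on the \emph{forward} ray $\ell_+$ (this is where $\sigma \geq 0$ is used; one cannot run the same argument for $\sigma < 0$ since then $n\sigma < 0$ would lie outside $\ell_+$). Letting $n \to \infty$ one has $x_n \to x + \sigma b$, and since $C$ is closed the limit belongs to $C$.

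The argument works for every $\sigma \geq 0$, which yields exactly the conclusion $\{x + \sigma b \mid \sigma \geq 0\} \subset C$. There is no real obstacle here: the proof is a direct, slightly restricted version of the line case, and the only subtlety worth flagging is the sign restriction on the parameter, which corresponds precisely to the fact that a ray gives only a half-space of recession directions rather than a full line.
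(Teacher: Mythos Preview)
Your proof is correct and is exactly the approach the paper intends: the paper's own proof simply reads ``The proof is the same as in the previous case,'' and what you wrote is the natural specialization of that line-case argument to $\sigma\geq 0$. Your remark that the restriction $\sigma\geq 0$ is what keeps $a+n\sigma b$ on the ray $\ell_+$ is precisely the only point that distinguishes this from the line lemma.
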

\begin{proof}
The proof is the same as in the previous case.
\end{proof}

\begin{lemma}
If the closed convex set $C\subset\R^{n+1}$ does not contain a ray, then $C$ is
compact.
\end{lemma}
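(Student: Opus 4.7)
The plan is to prove the contrapositive: if $C$ is not compact, then $C$ contains a ray. Since $C$ is already closed in $\R^{n+1}$, non-compactness is equivalent to unboundedness, so it suffices to show that every closed unbounded convex set contains a ray.

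First I would fix any point $x_0 \in C$ and, using unboundedness, choose a sequence $x_m \in C$ with $r_m := \|x_m - x_0\| \to \infty$. Normalizing, set $v_m := (x_m - x_0)/r_m \in S^n$. By compactness of the unit sphere $S^n$, after passing to a subsequence we may assume $v_m \to v$ for some unit vector $v \in S^n$. I claim the ray $\ell_+ := \{x_0 + tv \mid t \geq 0\}$ is contained in $C$, which will contradict the hypothesis and complete the proof.

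To verify the claim, fix any $t \geq 0$. For all $m$ large enough we have $r_m \geq t$, so $\lambda_m := t/r_m \in [0,1]$ is well-defined, and
\[
y_m := (1 - \lambda_m)\, x_0 + \lambda_m\, x_m = x_0 + t\, v_m
\]
lies in $C$ by convexity. Since $v_m \to v$ we have $y_m \to x_0 + tv$, and since $C$ is closed, the limit $x_0 + tv$ belongs to $C$. As $t \geq 0$ was arbitrary, the whole ray $\ell_+$ is contained in $C$.

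There is no real obstacle here; the only ingredients are the convex combination identity $x_0 + tv_m = (1-\lambda_m)x_0 + \lambda_m x_m$, sequential compactness of $S^n$, and closedness of $C$. The argument is a standard application of the Blaschke-type ``recession direction'' construction and requires no further input from mean curvature flow or from the preceding lemmas in the appendix.
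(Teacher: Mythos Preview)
Your proof is correct and follows essentially the same approach as the paper: prove the contrapositive by picking a base point, extracting a convergent sequence of normalized directions to far-away points via compactness of $S^n$, and using convexity plus closedness to show the limiting ray lies in $C$. If anything, your normalization $v_m=(x_m-x_0)/\|x_m-x_0\|$ is cleaner than the paper's choice $b_k=p_k/\|p_k\|$, which makes the convex-combination step slightly more transparent.
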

\begin{proof}
Let $a\in C$ and, assuming $C$ is not bounded, we choose a sequence of points
$p_k\in C$ with $\|p_k\|\to \infty$.  The line segments connecting $a$ and $p_k$ are
all contained in $C$.  Let $b_k = p_k/\|p_k\| \in S^n$, and pass to a subsequence for
which $b_k\to B\in S^n$.  Then the points $x_k = a + \sigma b_k$ all belong to $C$
provided $0\leq \sigma\leq \|p_k\|$.  We let $k\to\infty$ and find that
$\lim x_k = a+\sigma b\in C$ for any $\sigma\geq 0$.  Hence $C$ contains the ray in
the direction $b$ starting at $a$.
\end{proof}

\begin{lemma}
\label{lem:convex-is-Lipschitz}
If $h:B_R(0)\to\R$ is a convex function that is bounded by $|h(x)|\leq M$ for all
$x\in B_R(0)$, then $h$ is Lipschitz on $B_{(1-\delta)R}(0)$ with Lipschitz constant
$2M/\delta R$.
\end{lemma}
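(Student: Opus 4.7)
The plan is to use the standard trick for bounded convex functions: compare values at interior points to values at boundary points by extending chords, and use convexity to get a one-sided Lipschitz bound in each direction.

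Concretely, let $x, y \in B_{(1-\delta)R}(0)$ be distinct points. I would first extend the ray from $x$ through $y$ until it hits $\partial B_R(0)$ at a point $z$. Since $|x| \leq (1-\delta)R$ we have $|z - x| \geq R - |x| \geq \delta R$. Writing $y = (1-t)x + tz$ with $t = |y-x|/|z-x|$, convexity of $h$ gives
\[
h(y) \leq (1-t)h(x) + t\,h(z),
\]
so $h(y) - h(x) \leq t\bigl(h(z) - h(x)\bigr) \leq 2Mt \leq \frac{2M}{\delta R}\,|y-x|.$

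Next I would swap the roles of $x$ and $y$: extend the ray from $y$ through $x$ to a point $w \in \partial B_R(0)$, note $|w-y|\geq \delta R$, and run the same argument to obtain $h(x) - h(y) \leq \frac{2M}{\delta R}|x-y|$. Combining the two one-sided estimates yields $|h(x)-h(y)| \leq \frac{2M}{\delta R}|x-y|$, which is the claimed Lipschitz bound.

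There is really no obstacle here; the only point requiring a line of care is verifying the geometric bound $|z-x|\geq \delta R$, which follows immediately from $x \in B_{(1-\delta)R}(0)$ and $z\in\partial B_R(0)$ by the triangle inequality applied along the ray.
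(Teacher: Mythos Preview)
Your proof is correct and is essentially the same argument as the paper's: extend the chord to the boundary sphere and use convexity along that line together with the bound $|h|\le M$. The only cosmetic differences are that the paper phrases the convexity step as the slope-monotonicity inequality $\frac{h(q)-h(p)}{\|q-p\|}\le\frac{h(r)-h(q)}{\|r-q\|}$ (using $\|r-q\|\ge\delta R$) and dispenses with your second direction by assuming WLOG $h(q)>h(p)$, whereas you write the convex-combination inequality and handle both directions explicitly.
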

\begin{proof}
Let $p, q\in B_{(1-\delta)R}(0)$ with $h(q)>h(p)$ be given, and extend the line
segment $pq$ until it intersects $\partial B_R(0)$, say at the point $r$.  Then the
length of $qr$ is at least $\delta R$.  Convexity of $h$ implies
\[
\frac{h(q)-h(p)}{\|q-p\|} \leq \frac{h(r)-h(q)}{\|r-q\|} \leq \frac{2M}{\delta R}.
\qedhere
\]
\end{proof}

This directly implies the following:

\begin{lemma}
\label{lem:convex-in-cylinder-is-graph}
Let $C\subset\R^{n+1}$ be a closed convex set, and let $p, q$ be two points with
$B_\delta(p)\subset\interior C$ and $B_{\delta}(q)\cap C=\varnothing$.  Let $pq$ be
the line segment connecting $p$ and $q$.  Then $\partial C\cap B_{\delta/2}(pq)$ is
the graph of a Lipschitz continuous function in a coordinate system in which the
segment $pq$ is the vertical axis.  The Lipschitz constant of the function is bounded
by $2\|p-q\|/\delta$.
\end{lemma}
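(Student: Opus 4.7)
The plan is to choose coordinates in which $p=0$ and $q=h\,e_{n+1}$ with $h=\|p-q\|$, so that the segment $pq$ is the vertical axis and every point of $B_{\delta/2}(pq)$ has its horizontal projection in the open $n$-disk $B^n_{\delta/2}(0)\subset\{x_{n+1}=0\}$. Writing $\pi$ for the vertical projection, the inclusion $B_\delta(p)\subset\interior C$ forces $B^n_\delta(0)\subset \pi(C)$, so the upper envelope
\[
u(x'):=\sup\{t\in\R\mid (x',t)\in C\}
\]
is finite on $B^n_\delta(0)$ and concave there by convexity of $C$. I will bound $u$ above and below on $B^n_\delta(0)$ using the two balls in the hypothesis: the inclusion $B_\delta(p)\subset C$ gives $u(x')\geq \sqrt{\delta^2-|x'|^2}$, while $B_\delta(q)\cap C=\varnothing$, combined with the observation that the interval $\{t\mid (x',t)\in C\}$ contains $0$ and so cannot jump across the vertical slice of $B_\delta(q)$, yields $u(x')\leq h-\sqrt{\delta^2-|x'|^2}$. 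In particular $0<u(x')<h$ throughout $B^n_\delta(0)$.

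The second step is to identify $\partial C\cap B_{\delta/2}(pq)$ with the graph of $u$ restricted to $B^n_{\delta/2}(0)$. Each vertical line $\{(x',t)\mid t\in\R\}$ with $|x'|<\delta/2$ meets $C$ in a closed interval $[a(x'),u(x')]$, and hence meets $\partial C$ in at most the two points $(x',a(x'))$ and $(x',u(x'))$. The lower one satisfies $a(x')\leq -\sqrt{\delta^2-|x'|^2}$, so its distance to $pq$ is at least $\sqrt{|x'|^2+a(x')^2}\geq \delta$, placing it outside $B_{\delta/2}(pq)$. The upper point $(x',u(x'))$, having $0<u(x')<h$, sits at distance $|x'|<\delta/2$ from $pq$ and lies inside $B_{\delta/2}(pq)$. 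Since every point of $B_{\delta/2}(pq)$ has horizontal component in $B^n_{\delta/2}(0)$, this identifies the boundary piece with the graph of $u|_{B^n_{\delta/2}(0)}$.

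Finally, to obtain the Lipschitz estimate, I will apply Lemma~\ref{lem:convex-is-Lipschitz} to the convex function $f(x'):=\tfrac{h}{2}-u(x')$ on the larger disk $B^n_\delta(0)$. The two-sided bound on $u$ gives $|f|\leq h/2$, and choosing the shrinkage factor $1/2$ (so that the inner disk is $B^n_{\delta/2}(0)$) the lemma yields Lipschitz constant at most $2(h/2)/(\tfrac{1}{2}\cdot\delta)=2h/\delta=2\|p-q\|/\delta$ on $B^n_{\delta/2}(0)$; since $f$ and $u$ differ by a constant and a sign, $u$ has the same Lipschitz constant. The main subtlety is the second step: one must rule out contributions from the ``lower'' branch of $\partial C$, and the half-radius $\delta/2$ appearing in the statement is precisely calibrated so that this branch is pushed outside the cylinder by the ball $B_\delta(p)$.
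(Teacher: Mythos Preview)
Your proof is correct and follows the same approach as the paper, which simply states that the lemma ``directly follows'' from Lemma~\ref{lem:convex-is-Lipschitz} without elaborating. You have supplied precisely the details that make that implication work: the coordinate choice, the two-sided bound on the concave upper envelope $u$ coming from the two balls, the exclusion of the lower branch of $\partial C$ from $B_{\delta/2}(pq)$, and the application of Lemma~\ref{lem:convex-is-Lipschitz} with $R=\delta$, $M=\|p-q\|/2$, and shrinkage factor $1/2$ to obtain the stated constant $2\|p-q\|/\delta$.
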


\section{Some useful facts}
A family of hypersurfaces $M_t\subset \R^n\times\R$ of the form
\[
M_t = \bigl\{(x, y)\in\R^n\times\R \mid \|x\| = r(y, t)\bigr\}
\]
evolves by MCF if
\[
r_t=\frac{r_{yy}}{1+r_y^2}-\frac{n-1}{r}
\]

\subsection{Expanding solitons from cones}
The hypersurfaces $M_t$ are a self similar expanding soliton exactly when
$r(y, t) = \sqrt{t}\, E(y/\sqrt t)$ for some function $E:\R\to(0,\infty)$, which then
must satisfy
\begin{equation}
\label{eq:expanding-soliton}
\frac{E''(\eta)}{1+E'(\eta)^2}+\frac{\eta}{2}E'(\eta)-\frac{1}{2}E(\eta)-\frac{n-1}{E(\eta)}=0.
\end{equation}
We recall that it was shown in \cite{ACP95,Helm12} that for each $a>0$ there is a
unique solution $E_a:\R\to\R$ of \eqref{eq:expanding-soliton} with
\[
E_a(0) = a,\qquad E_a'(0)=0.
\]
This solution is an even function which is defined for all $\eta\in\R$, which is
strictly increasing for $\eta>0$, and for which
\[
\lim_{\eta\to\infty}\frac{E_a(\eta)}{\eta} = A_a>0
\]
exists.

Let $R>0$ be given.  Let $\bs u\in \R^n\times\{0\}$ be any unit vector, and consider
the solution to MCF given by
\[
\forall (\hat x, \hat y)\in\R^n\times\R:\quad (\hat x, \hat y)\in M_t \iff \|\hat x -
R\bs u\| = \sqrt{t}\, E_{2R}\bigl(\frac{\hat y}{\sqrt t}\bigr).
\]
For $t\searrow0$ this solution converges to the cone $\|\hat x - R\bs u\|=A|\hat y|$,
where $A=A_{2R} = \lim_{\eta\to\infty}E_{2R}(\eta)/\eta$.  We get a solution to RMCF
by setting $\hat x=e^{-\tau/2}x$, $\hat y=e^{-\tau/2}y$, and $t=1-e^{-\tau}$:
\[
\|x-Re^{\tau/2}\bs u\| = \sqrt{e^\tau-1} E_{2R}\bigl(\frac{y}{\sqrt{e^\tau-1}}\bigr).
\]

\begin{lemma}\label{lemma-clearing-out-with-expanders}
For any $R>0$ there is an $A>0$ such that for any $C\in X$ which lies in the region
$\{(x,y)\in\R^n\times\R : \|x-R\bs u\|\geq A|y|\}$ the set $\phi^{\tau}(C)$ lies in
the region $\|x\|\geq R$ if $\tau\geq 2$.
\end{lemma}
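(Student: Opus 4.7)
The plan is to use the MCF expanding solitons from cones, as described above, as spatial barriers for the evolution. Choose $A = A_{2R}$, the asymptotic slope of the expanding soliton profile $E_{2R}$, so that the cone $\|\hat x - R\bs u\| = A|\hat y|$ is the $t \to 0^+$ limit of the smooth expanding soliton
\[
\hat M^*_t = \bigl\{(\hat x, \hat y) \in \R^n\times\R : \|\hat x - R\bs u\| = \sqrt t\, E_{2R}(\hat y/\sqrt t)\bigr\}.
\]
Because $E_{2R}$ is even, strictly increasing on $[0, \infty)$, and attains its minimum $E_{2R}(0) = 2R$ at the origin, the region bounded by $\hat M^*_t$ on the side containing $\{R\bs u\}\times\R_{\hat y}$ contains both the open double cone $\{\|\hat x - R\bs u\| < A|\hat y|\}$ and a Euclidean ball of radius $2R\sqrt t$ about $R\bs u$.

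The first step is to show the barrier inclusion
\begin{equation}\label{eq:bplan}
\hat C_t \subset \bigl\{\|\hat x - R\bs u\| \geq \sqrt t\, E_{2R}(\hat y/\sqrt t)\bigr\}, \qquad t>0,
\end{equation}
for the smooth MCF evolution $\hat C_t$ of $C$ from Theorem~\ref{prop-existence}. By Proposition~\ref{prop-solution-from-sequences} I would approximate $C$ from inside by smooth compact convex sets $K_m\subset\interior(K_{m+1})\subset\interior(C)$ with $\overline{\bigcup_m K_m}=C$; each $K_m$ is then compactly contained in the open exterior of the cone and therefore at positive distance from $\hat M^*_t$ for every sufficiently small $t>0$. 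The avoidance principle for MCF applied to the compact smooth MCF $K_{m,t}$ and the smooth (non-compact) soliton $\hat M^*_t$ then keeps $K_{m,t}$ in the closed exterior of $\hat M^*_t$ up to extinction of $K_{m,t}$; non-compactness of $\hat M^*_t$ is handled via its linear cone asymptotics together with compactness of each $K_{m,t}$. Taking the increasing union over $m$ and invoking Proposition~\ref{prop-solution-from-sequences} once more yields \eqref{eq:bplan}.

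Substituting $(\hat x,\hat y) = (e^{-\tau/2}x, e^{-\tau/2}y)$ and $t = 1-e^{-\tau}$ transports \eqref{eq:bplan} into
\[
C_\tau \subset \bigl\{\|x - Re^{\tau/2}\bs u\| \geq \sqrt{e^\tau - 1}\, E_{2R}(y/\sqrt{e^\tau - 1})\bigr\}.
\]
Since $E_{2R}\geq E_{2R}(0) = 2R$ pointwise, every $(x,y)\in C_\tau$ obeys $\|x - Re^{\tau/2}\bs u\| \geq 2R\sqrt{e^\tau - 1}$. If $\|x\| < R$, the triangle inequality gives $\|x - Re^{\tau/2}\bs u\| \leq \|x\| + Re^{\tau/2} < R(1 + e^{\tau/2})$, so $R(1 + e^{\tau/2}) > 2R\sqrt{e^\tau - 1}$. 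Setting $s = e^{\tau/2}$ and squaring yields $3s^2 - 2s - 5 < 0$, i.e.~$(3s - 5)(s + 1) < 0$, which forces $s < 5/3$. But $s = e^{\tau/2} \geq e > 5/3$ for $\tau\geq 2$, a contradiction. Hence $\|x\| \geq R$ on $\phi^\tau(C)$ for every $\tau\geq 2$.

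The main obstacle is making the barrier \eqref{eq:bplan} fully rigorous: the soliton $\hat M^*_t$ is non-compact, its $t\to 0^+$ limit (the cone) can touch $C$ at the vertex $R\bs u$, and the remark after Theorem~\ref{thm-uniqueness} notes that convexity of non-compact MCF is not yet known to be preserved. These difficulties are absorbed by the interior approximation of $C$ by compact smooth $K_m$: each $K_m$ sits strictly inside the open exterior of the cone, so that the standard compact-avoidance principle goes through and the linear cone asymptotics of $E_{2R}$ controls the behavior of $\hat M^*_t$ at infinity, uniformly in the compact regions where $K_{m,t}$ lives.
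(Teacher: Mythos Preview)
Your proof is correct and follows essentially the same strategy as the paper: choose $A=A_{2R}$, use the expanding soliton as a barrier to conclude that $\phi^\tau(C)$ lies outside the region $\|x-Re^{\tau/2}\bs u\|<2R\sqrt{e^\tau-1}$, and then finish with a triangle-inequality estimate. The paper's write-up simply asserts the barrier step and then applies the reverse triangle inequality directly to get $\|x\|\geq (2\sqrt{e^\tau-1}-e^{\tau/2})R$, checking that the coefficient exceeds $1$ at $\tau=2$; your contradiction version of the last step is equivalent, and your extra care in justifying the avoidance principle via compact approximations (Proposition~\ref{prop-solution-from-sequences}) makes explicit what the paper leaves implicit.
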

\begin{proof}
Given $R$ we choose $A=A_{2R}$ as above.  If $C$ is a closed convex set that lies in
the region $\|x-R\bs u\|\geq A|y|$, then $\phi^\tau(C)$ is contained in the region
\[
\|x-Re^{\tau/2}\bs u\|\geq E_{2R}(0)\sqrt{e^\tau-1} = 2R\sqrt{e^\tau-1}.
\]
This implies
\[
\|x\|\geq \|x-Re^{\tau/2}\bs u\| - Re^{\tau/2}\geq
\bigl(2\sqrt{e^\tau-1}-e^{\tau/2}\bigr)R.
\]
If $\tau\geq 2$ then
$2\sqrt{e^\tau-1}-e^{\tau/2} \geq 2\sqrt{e^2-1}-e \approx 2.337\ldots > 1$.
\end{proof}

\subsection{The BLT pancake}
In \cite{BLT21} Bourni, Langford, and Tinaglia established the existence and
uniqueness of an $O(n)\times O(1)$ symmetric ancient solution that fills the slab
$\{(x, y)\in\R^n\times\R : |y|<\pi\}$ as $t\to-\infty$.

We will denote the BLT pancake of width $\pi$ that becomes singular at time $t=0$ by
$\hat P_t\subset\R^{n+1}, t<0$.  For $t\to-\infty$ the pancake $P_t$ is contained in
a pill-box $B^n_{R(t)}\times[-\pi, \pi]$ whose radius is asymptotically given by
$R_{\rm BLT}(t)=-t+o(t)$.  For $t\nearrow0$ the pancake shrinks to a round point at
the origin, according to Huisken and Gage-Hamilton's theorem.

\begin{lemma}\label{lemma-clearing-out-with-pancakes}
There is a $\varrho>0$ such that for any $\delta>0$ there exist $r, T>0$ with the
property that for any $C\in X$ that is disjoint from the set
$B^n_r\times[-\delta, \delta]\subset\R^{n+1}$, the image $\phi^T (C)$ is disjoint
from $B^{n+1}_{\varrho/\delta}$.
\end{lemma}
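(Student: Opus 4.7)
The plan is to place a suitably rescaled BLT pancake as an interior barrier inside the pill-box $B^n_r\times[-\delta,\delta]$, and then leverage the Huisken--Gage--Hamilton asymptotic roundness to upgrade this barrier into disjointness from a ball whose radius is of order $\varrho/\delta$ after the RMCF rescaling $\phi^T(C)=e^{T/2}\hat C_{1-e^{-T}}$.

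Concretely, set $\lambda:=\delta/\pi$ and introduce the rescaled BLT pancake $Q_s:=\lambda\hat P_{s/\lambda^2}$; this is a compact, convex, $O(n)\times O(1)$-symmetric ancient MCF centered at the origin, with slab width $2\delta$, $x$-radius $\sim |s|\pi/\delta$, and shrinking to the origin at $s=0$. Fix a small dimensional constant $\gamma>0$, take $T^*:=1+\gamma\delta^2$ and $r:=\pi T^*/\delta$, and consider the time-shifted MCF barrier $\tilde P_t:=Q_{t-T^*}$ on $[0,T^*)$. Then $\tilde P_0\subset B^n_r\times[-\delta,\delta]$, and since $C$ is disjoint from this pill-box, the MCF comparison principle yields $\hat C_t\cap\tilde P_t=\varnothing$ for all $t\in[0,T^*)$.

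The crucial step is a quantitative roundness statement. The pancake $\{\hat P_s\}_{s\in(s_0,0)}$ is a compact convex MCF for every $s_0<0$, so Huisken's theorem applies, and his classification of convex self-shrinkers identifies its parabolic blow-up at $s=0$ with the round sphere $S^n_{\sqrt{2n}}$. Combined with the smooth compactness of rescaled blow-up sequences at such a round singularity, this produces dimensional constants $c_1,s_*>0$ with $\hat P_s\supset B^{n+1}_{c_1\sqrt{|s|}}$ for all $s\in(-s_*,0)$. The scaling $Q_s=\lambda\hat P_{s/\lambda^2}$ transfers this to $\tilde P_t\supset B^{n+1}_{c_1\sqrt{T^*-t}}$ for $T^*-t\in(0,s_*\delta^2/\pi^2)$, and choosing $\gamma<s_*/\pi^2$ ensures this window includes an interval around $t=1$. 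Because $\hat C_t$ is convex with nonempty interior, it cannot lie inside such a small ball, so being disjoint from $\tilde P_t$ forces $\hat C_t\cap B^{n+1}_{c_1\sqrt{T^*-t}}=\varnothing$.

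Finally, pick $T$ so that $t^*:=1-e^{-T}\in\bigl(T^*-s_*\delta^2/\pi^2,\,T^*\bigr)$; the RMCF rescaling gives
\[
\phi^T(C)\cap B^{n+1}_R=\varnothing,\qquad R=c_1\,e^{T/2}\sqrt{T^*-t^*}=c_1\sqrt{e^T(T^*-1)+1}.
\]
Setting $R=\varrho/\delta$ and using $T^*-1=\gamma\delta^2$ yields $e^T=\bigl(\varrho^2/(c_1^2\delta^2)-1\bigr)/(\gamma\delta^2)\sim\varrho^2/(c_1^2\gamma\delta^4)$, a finite positive value for small $\delta$, while $T^*-t^*\sim\gamma\delta^2$ remains safely inside the round regime. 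Hence a single universal $\varrho=\varrho(n)$ works, with $r=\pi T^*/\delta\sim\pi/\delta$ and $T=T(\delta)$ as above. The main obstacle is the quantitative inclusion $\hat P_s\supset B^{n+1}_{c_1\sqrt{|s|}}$ on a uniform window $s\in(-s_*,0)$: asymptotic roundness alone is not enough and the window must be dimension-universal, which comes from Huisken's smooth compactness applied to parabolic blow-ups at the round singular point of the BLT pancake.
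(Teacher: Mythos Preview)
Your argument is correct, but it takes a substantially more complicated route than the paper's.  Both proofs rescale a BLT pancake so that at $\tau=0$ it sits inside the pill-box $B^n_r\times[-\delta,\delta]$, invoke avoidance against $C$, and then exhibit a ball of radius $\varrho/\delta$ inside the evolved pancake.  The difference is \emph{where} that ball comes from.

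The paper works directly in RMCF: it writes $P_\tau=e^{\tau/2}\hat P_{-1-e^{-\tau}}$ and considers the time-translate $Q_\tau=P_{\tau-\vartheta}$ with $\vartheta=2\ln(\pi/\delta)$.  Then at $\tau=T=2\vartheta$ one has $Q_T=(\pi/\delta)\,\hat P_{-1-\delta^2/\pi^2}$, i.e.\ the \emph{unrescaled} pancake at a time in the fixed compact window $[-2,-1]$, dilated by $\pi/\delta$.  On that window $\hat P_t$ contains a fixed ball $B_{\varrho/\pi}$ by elementary compactness, and dilation gives $B_{\varrho/\delta}$.  No Huisken-type roundness is needed; the whole argument is three lines of scaling.

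You instead set the extinction of the scaled pancake just past $t=1$ and rely on the quantitative statement $\hat P_s\supset B_{c_1\sqrt{|s|}}$ for $s\in(-s_*,0)$, which you correctly derive from Huisken's convergence to a round point.  This works, but it imports a nontrivial asymptotic result that the paper avoids entirely.  Two small points to tighten: your $r=\pi T^*/\delta$ uses only the asymptotic $R_{\mathrm{BLT}}(s)\sim|s|$, so strictly one should take $r=\lambda\,R_{\mathrm{BLT}}(-T^*/\lambda^2)$; and your constraints (positivity of $e^T$ and the window $T^*-t^*<s_*\delta^2/\pi^2$) force $\delta$ below a universal constant, so you should note---as the paper does by writing ``we may assume $\delta<\pi$''---that large $\delta$ reduces trivially to the small-$\delta$ case since $B_{\varrho/\delta}$ only shrinks.
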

\begin{proof}
For any $\theta\in\R$ we consider the time translate $\hat P_{t-2}$ which becomes
singular at time $t=2$.  The corresponding RMCF is given by
\[
P_\tau = e^{\tau/2}\hat P_{-1-e^{-\tau}}.
\]
Next we translate in the rescaled time by an amount $\vartheta>0$ and consider
$Q_\tau:=P_{\tau-\vartheta} = e^{(\tau-\vartheta)/2} \hat P_{-1-e^{\vartheta-\tau}}$.
At time $\tau=0$ we have $Q_0 = e^{-\vartheta/2}\hat P_{-1-e^{\vartheta}}$, which is
contained in
$B^n_{R_{\rm BLT}(-1-e^\vartheta)}\times[-\pi e^{-\vartheta/2}, \pi
e^{-\vartheta/2}]$.  We choose $\vartheta = 2\ln \frac{\pi}{\delta}$, so that
$\pi e^{-\vartheta/2}=\delta$, and let
$r= R_{\rm BLT}(-1-e^{\vartheta}) = R_{\rm BLT}(-1-\pi^2/\delta^2)$.  This ensures
that $Q_0\subset B^n_r\times [-\delta, \delta]$ so that $Q_0$ is disjoint from $C$.

Choose $T=2\vartheta$.  Then $\phi^T(C)$ must be disjoint from
\[
Q_T = e^{\vartheta/2}\hat P_{-1-e^{-\vartheta}} = \frac{\pi}{\delta} \hat
P_{-1-\delta^2/\pi^2}.
\]
We may assume that $\delta<\pi$, so that $1<1+\delta^2/\pi^2<2$.  Let $\varrho>0$ be
small enough so that $B^{n+1}_{\varrho/\pi}$ is contained in $ \hat P_t$ for all
$t\in[-2, -1]$.  These choices imply that $\phi^T(C)$ is disjoint from
$ \frac{\pi}{\delta}B^{n+1}_{\varrho/\pi} = B^{n+1}_{\varrho/\delta}, $ as claimed.
\end{proof}

\subsection{Decay of the Gaussian mass}
Even though we did not use it in this paper, the following result does seem like it
might be useful in another setting.

\begin{lemma}\label{lemma-mass-decay}
On any time interval $[\tau_0, \tau_1]$ the mass of $C_\tau$ is bounded from below by
\[
\gvol(C_{\tau_1}) \geq \gvol(C_{\tau_0}) - \frac{1}{\sqrt
  2}\hu(C_{\tau_0})\sqrt{\tau_1-\tau_0}.
\]
\end{lemma}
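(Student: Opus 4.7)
The plan is to differentiate $\gvol(C_\tau)$ along RMCF and apply Cauchy--Schwarz twice: once in space against the Gaussian surface measure (to relate $|\gvol'|$ to the $L^2$-norm of the RMCF velocity and thereby to Huisken's monotonicity), and once in time. By the smoothing lemma we may assume $\partial C_\tau$ is a smooth, convex hypersurface for each $\tau\in(\tau_0,\tau_1]$, and by continuity it suffices to prove the bound after replacing $\tau_0$ by $\tau_0+\eta$ and letting $\eta\searrow 0$ at the end.

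First I would compute the outward normal velocity of $\partial C_\tau$ under RMCF, namely $V = -H + \tfrac{1}{2}\langle x,\nu\rangle$, where $\nu$ is the outward unit normal and $H$ is mean curvature in the convention $\Delta F = -H\nu$. Since $\gvol(C_\tau) = \int_{C_\tau}\varphi\, dx$ with $\varphi(x) = (4\pi)^{-n/2}e^{-\|x\|^2/4}$, the first variation formula yields
\[
\frac{d}{d\tau}\gvol(C_\tau) = \int_{\partial C_\tau} V\,\varphi\, dH^n.
\]
Second, Huisken's monotonicity formula in rescaled form (which is what underlies the Lyapunov property of $\hu$ used throughout the paper) gives
\[
\frac{d}{d\tau}\hu(C_\tau) = -\int_{\partial C_\tau} V^2\,\varphi\, dH^n.
\]
Now Cauchy--Schwarz applied to the measure $\varphi\, dH^n$ (whose total mass on $\partial C_\tau$ is exactly $\hu(C_\tau)$) yields
\[
\Bigl|\tfrac{d}{d\tau}\gvol(C_\tau)\Bigr|^2 \leq \Bigl(\int V^2\varphi\, dH^n\Bigr)\cdot \hu(C_\tau) = \bigl(-\hu'(\tau)\bigr)\,\hu(\tau).
\]

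The key observation is then to rewrite the right-hand side as a total derivative: since $\tfrac{d}{d\tau}\hu(\tau)^2 = 2\hu(\tau)\hu'(\tau)$, we get $|\gvol'(\tau)| \leq \sqrt{-\tfrac{1}{2}\tfrac{d}{d\tau}\hu(\tau)^2}$. Integrating from $\tau_0$ to $\tau_1$ and applying Cauchy--Schwarz in time,
\[
\gvol(C_{\tau_0}) - \gvol(C_{\tau_1}) \leq \int_{\tau_0}^{\tau_1}|\gvol'(\tau)|\, d\tau \leq \Bigl(\int_{\tau_0}^{\tau_1} -\tfrac{1}{2}\tfrac{d}{d\tau}\hu(\tau)^2\, d\tau\Bigr)^{1/2}(\tau_1-\tau_0)^{1/2}.
\]
The time integral telescopes to $\tfrac{1}{2}\bigl(\hu(C_{\tau_0})^2 - \hu(C_{\tau_1})^2\bigr) \leq \tfrac{1}{2}\hu(C_{\tau_0})^2$, and the desired bound follows.

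The only real subtlety is justifying the two differentiation formulas at the initial instant $\tau_0$ when $C_{\tau_0}$ may merely be a closed convex set rather than smooth; but this is handled by Lemma \ref{lemma-reg}, which ensures smoothness for $\tau>\tau_0$, applying the above argument on $[\tau_0+\eta,\tau_1]$, and passing to the limit $\eta\searrow 0$ using the continuity of $\gvol$ (Lemma \ref{lemma-gvol-continuous}) and of $\hu$ (Lemma \ref{lemma-huisken-cont}) under $\rho$-convergence, combined with the fact that $C_{\tau_0+\eta}\rto C_{\tau_0}$.
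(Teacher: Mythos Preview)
Your proof is correct and follows essentially the same route as the paper's: differentiate $\gvol(C_\tau)$ along RMCF, apply Cauchy--Schwarz against the Gaussian surface measure to bound $|\gvol'|$ by $\sqrt{-\tfrac12\frac{d}{d\tau}\hu(C_\tau)^2}$, then apply Cauchy--Schwarz once more in $\tau$ and telescope. Your additional paragraph on approximating $\tau_0$ from above to handle possible non-smoothness of $C_{\tau_0}$ is a reasonable piece of hygiene that the paper's appendix proof omits.
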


\begin{proof}
The normal velocity of $\partial C_\tau$ is $V=H+\frac12 \langle x, N\rangle$, so the
rate at which the mass of $C_\tau$ decreases satisfies
\begin{align*}
  -\frac{d \gvol(C_\tau)}{d\tau} &= \int_{\partial C_\tau}e^{-\|x\|^2/4}V\frac{dH^n}{(4\pi)^{n/2}} \\
                                 &\leq \left(\int_{\partial C_\tau}e^{-\|x\|^2/4}\frac{dH^n}{(4\pi)^{n/2}}\right)^{1/2}
                                   \left(\int_{\partial C_\tau}e^{-\|x\|^2/4}V^2\frac{dH^n}{(4\pi)^{n/2}}\right)^{1/2}\\
                                 &=\sqrt{\hu(C_\tau)}\, \left(\int_{\partial C_\tau}e^{-\|x\|^2/4}V^2 \frac{dH^n}{(4\pi)^{n/2}}\right)^{1/2}
\end{align*}
The last integral is the rate at which $\hu(C_\tau)$ decreases:
\[
\int_{\partial C_\tau}e^{-\|x\|^2/4}V^2 \frac{dH^n}{(4\pi)^{n/2}} =
-\frac{d\hu(C_\tau)}{d\tau}.
\]
We therefore get
\[
-\frac{d \gvol(C_\tau)}{d\tau} \leq \sqrt{-\hu(C_\tau)\frac{d\hu(C_\tau)}{d\tau}}
=\sqrt{-\frac12\frac{d\hu(C_\tau)^2}{d\tau}}.
\]
On any time interval $[\tau_0, \tau_1]$ we therefore have
\begin{align*}
  \gvol(C_{\tau_0})-\gvol(C_{\tau_1})
  &\leq \int_{\tau_0}^{\tau_1} \sqrt{-\frac12\frac{d\hu(C_\tau)^2}{d\tau}} \,d\tau\\
  &\leq \sqrt{\int_{\tau_0}^{\tau_1}\frac 12 \, d\tau}\sqrt{-\int_{\tau_0}^{\tau_1}\frac{d\hu(C_\tau)^2}{d\tau}\, d\tau}\\
  &\leq \sqrt{\tfrac 12 (\tau_1-\tau_0)}\, \sqrt{\hu(C_{\tau_0})^2-\hu(C_{\tau_1})^2}\\
  &\leq \frac{1}{\sqrt 2}\hu(C_{\tau_0}) \sqrt {\tau_1-\tau_0}.
\end{align*}
\end{proof}

\end{document}